\theoremstyle{plain}
\newtheorem{defn}{Definition}[section]
\newtheorem{lem}{Lemma}[section]
\newtheorem{prop}[lem]{Proposition}
\newtheorem{thm}[lem]{Theorem}
\newtheorem{cor}[lem]{Corollary}
\theoremstyle{definition}
\newtheorem*{rem}{Remark}
\newtheorem*{notation}{Notation}
\newtheorem*{acknowledgment}{Acknowledgment}
\newcommand{\lra}{\longrightarrow}
\newcommand{\into}{\hookrightarrow}
\newcommand{\image}{\operatorname{im}}
\newcommand{\myim}{\operatorname{Im}}
\newcommand{\myre}{\operatorname{Re}}
\newcommand{\A}{{\mathbb{A}}}
\newcommand{\C}{\mathbb{C}}
\newcommand{\Q}{\mathbb{Q}}
\newcommand{\R}{\mathbb{R}}
\newcommand{\T}{\mathbb{T}}
\newcommand{\Z}{\mathbb{Z}}
\newcommand{\bfa}{\mathbf{a}}
\newcommand{\bfe}{\mathbf{e}}
\newcommand{\bfk}{\mathbf{k}}
\newcommand{\calD}{\mathcal{D}}
\newcommand{\calF}{\mathcal{F}}
\newcommand{\calH}{\mathcal{H}}
\newcommand{\calL}{\mathcal{L}}
\newcommand{\calO}{\mathcal{O}}
\newcommand{\calS}{\mathcal{S}}
\newcommand{\calT}{\mathcal{T}}
\newcommand{\calZ}{\mathcal{Z}}
\newcommand{\itH}{\mathit{H}}
\newcommand{\itN}{\mathit{N}}
\newcommand{\frakH}{\mathfrak{H}}
\newcommand{\frakS}{\mathfrak{S}}
\newcommand{\frakU}{\mathfrak{U}}
\newcommand{\frakh}{\mathfrak{h}}
\newcommand{\frakk}{\mathfrak{k}}
\newcommand{\frakp}{\mathfrak{p}}
\newcommand{\frakq}{\mathfrak{q}}
\newcommand{\frakt}{\mathfrak{t}}
\newcommand{\fraku}{\mathfrak{u}}
\newcommand{\an}[1]{\langle #1 \rangle}
\newcommand{\sgn}{\operatorname{sgn}}
\newcommand{\fin}{\mathit{fin}}
\newcommand{\M}{\operatorname{M}}
\newcommand{\GL}{\operatorname{GL}}
\newcommand{\PGL}{\operatorname{PGL}}
\newcommand{\Sp}{\operatorname{Sp}}
\newcommand{\GSp}{\operatorname{GSp}}
\newcommand{\PGSp}{\operatorname{PGSp}}
\newcommand{\Mp}{\operatorname{Mp}}
\newcommand{\SO}{\operatorname{SO}}
\newcommand{\SL}{\operatorname{SL}}
\newcommand{\U}{\operatorname{U}}
\newcommand{\Lie}{\operatorname{Lie}}
\newcommand{\fraksp}{\mathfrak{sp}}
\newcommand{\mmatrix}[4]{\begin{pmatrix} #1 & #2 \\ #3 & #4 \end{pmatrix}}
\newcommand{\tp}[1]{\prescript{\mathrm t}{}{#1}}
\newcommand{\diag}{\operatorname{diag}}
\newcommand{\tr}{\operatorname{tr}}
\newcommand{\cent}{\operatorname{Cent}}
\newcommand{\SW}{\mathit{SW}}
\newcommand{\spin}{\mathrm{spin}}
\newcommand{\Hom}{\operatorname{Hom}}
\newcommand{\End}{\operatorname{End}}
\newcommand{\Sym}{\operatorname{Sym}}
\newcommand{\Ind}{\operatorname{Ind}}
\newcommand{\inv}{^{-1}}
\newcommand{\wtil}{\widetilde}
\newcommand{\what}{\widehat}
\title{Proofs of Ibukiyama's conjectures on Siegel modular forms of half-integral weight and of degree 2}
\date{[\today]}
\author{Hiroshi Ishimoto}
\begin{document}

\maketitle

\begin{abstract}
We prove Ibukiyama's conjectures on Siegel modular forms of half-integral weight and of degree 2 by using Arthur's multiplicity formula on the split odd special orthogonal group $\SO_5$ and Gan-Ichino's multiplicity formula on the metaplectic group $\Mp_4$.
In the proof, the representation theory of the Jacobi groups also plays an important role.
\end{abstract}

\setcounter{tocdepth}{1}
\tableofcontents

\section{Introduction} \label{Intr}
In a paper \cite{shi}, Shimura revolutionized the study of half-integral weight modular forms by establishing a lifting from Hecke eigenforms of half-integral weight to Hecke eigenforms of integral weight.
Kohnen \cite{koh} introduced the plus space to show that Shimura's lifting gives an isomorphism
\begin{align*}
S_{k+\frac{1}{2}}^+(\Gamma_0(4))
\cong S_{2k}(\SL_2(\Z)).
\end{align*}
Here the left hand side is Kohnen's plus space, which means a subspace of ``level $\SL_2(\Z)$''.
Later, Ibukiyama \cite{ibuconj, iburef} proposed conjectures on vector valued Siegel modular forms of degree 2 of half-integral weight.
Some of them are similar to Shimura's isomorphism.

Our aim in this paper is to prove Ibukiyama's conjectures by using Gan-Ichino's multiplicity formula.
The conjectures are stated as follows.
The first one is a Shimura type conjecture on vector valued Siegel modular forms of degree 2 of half-integral weight with character (Neben type).
\begin{thm}[\cite{ibuconj}]\label{isc}
For any natural number $k\geq3$ and any even integer $j\geq0$, there is a linear isomorphism
\begin{align*}
S_{\det^{k-\frac{1}{2}} \Sym_j}^+ \left(\Gamma_0(4), \left(\frac{-1}{\cdot}\right) \right)
\cong S_{\det^{j+3} \Sym_{2k-6}} \left( \Sp_4(\Z) \right),
\end{align*}
which preserves $L$-functions.
\end{thm}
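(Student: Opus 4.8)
The plan is to realize the two sides as spaces of automorphic forms---on $\Mp_4$ for the left-hand side and on $\SO_5\cong\PGSp_4$ for the right---and to compare them through Arthur's classification. The point is that Gan--Ichino's multiplicity formula for the discrete automorphic spectrum of $\Mp_4(\A)$ and Arthur's for that of $\SO_5(\A)$ are both indexed by the same set of elliptic $A$-parameters $\psi$ valued in $\Sp_4(\C)=\widehat{\SO_5}$, so it will be enough to match, for each such $\psi$, the contributions satisfying the local conditions imposed by the two classical spaces.

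First I would establish the two dictionaries. On the orthogonal side this is essentially classical: a cuspidal Hecke eigenform in $S_{\det^{j+3}\Sym_{2k-6}}(\Sp_4(\Z))$ generates a cuspidal automorphic representation of $\PGSp_4(\A)\cong\SO_5(\A)$ that is unramified at every finite place and whose archimedean component is the holomorphic (limit of) discrete series with Harish-Chandra parameter determined by $(k,j)$; conversely every such representation contributes, so the right-hand space becomes Hecke-isomorphic to a well-understood space of automorphic forms on $\SO_5$. On the metaplectic side this is where the Jacobi groups enter. Via Fourier--Jacobi expansions and the Eichler--Zagier--Ibukiyama theory of Jacobi forms of index $1$, I would show that $S^{+}_{\det^{k-\frac12}\Sym_j}(\Gamma_0(4),(\tfrac{-1}{\cdot}))$ is Hecke-isomorphic to a space of cusp forms on $\Mp_4(\A)$ that is unramified at the odd primes, has a fixed holomorphic-type component at $\infty$ governed by $(k,j)$, and at $p=2$ has a specific irreducible component, the one cut out by the plus-space condition together with the Nebentypus $(\tfrac{-1}{\cdot})$.

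Next comes the comparison. Fix an elliptic $A$-parameter $\psi$ for $\Sp_4(\C)$. Arthur's formula computes the multiplicity on $\SO_5$ of the everywhere-unramified, holomorphic member of the global $A$-packet as $1$ or $0$ according to whether its associated character of the component group $S_\psi$ equals the sign character $\varepsilon_\psi$; Gan--Ichino's formula does the same on $\Mp_4$, with the essential wrinkle that the metaplectic local characters and global sign differ from their orthogonal counterparts by explicit symplectic root-number factors $\varepsilon(\tfrac12,\cdot)$. The heart of the proof is to show these two conditions are equivalent. At the odd finite places both local $A$-packets are singletons and contribute equal character values. At $\infty$ one compares the Adams--Johnson packets for $\SO_5$ and for $\Mp_4$ attached to $\psi_\infty$, checks that each contains exactly one representation meeting the holomorphy requirement, and matches the attached characters. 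At $p=2$ one must identify the plus-space-with-Nebentypus representation of $\Mp_4(\Q_2)$ inside the local $A$-packet and compute its character value precisely enough to see that it absorbs the root-number discrepancy, so that the $\Mp_4$ sign condition holds if and only if the $\SO_5$ one does. Summing over $\psi$---and separately checking the bookkeeping for the various Arthur types (general type, Yoshida type, Saito--Kurokawa type, and the degenerate ones, several of which are vacuous at level one in the weight range $k\ge3$, $j\ge0$)---gives equality of dimensions; tracking Satake parameters at the unramified primes, the theta correspondence then upgrades this numerical identity into an explicit $L$-function-preserving linear isomorphism, its injectivity and surjectivity following from the nonvanishing of the global theta lifts, which Gan--Ichino's results guarantee exactly when the parameters match.

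I expect the principal obstacle to be the local analysis at $p=2$: pinning down which irreducible representation of $\Mp_4(\Q_2)$ is carved out by the classical plus-space condition with Nebentypus $(\tfrac{-1}{\cdot})$, locating it in the relevant local $A$-packet, and evaluating its local character with enough precision to make the metaplectic global sign condition coincide with the automatically-satisfied orthogonal one. Translating the plus-space condition into such a representation-theoretic statement is precisely what the Jacobi-group input is for, but it is delicate. Secondary difficulties are the archimedean $A$-packet computation on $\Mp_4$ for the holomorphic discrete series, the treatment of the Saito--Kurokawa-type and Yoshida-type (CAP) contributions on both sides, and the verification that the isomorphism so obtained is the $L$-function-preserving one of the statement rather than merely a dimension count.
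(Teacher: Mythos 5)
Your overall architecture coincides with the paper's (adelize both sides, Arthur's formula for $\SO_5$, Gan--Ichino's for $\Mp_4$, Jacobi forms of index $1$ to interpret the plus space), but the proposal leaves unresolved exactly the step on which the argument turns: what, representation-theoretically, the plus space gives you at $p=2$. You defer this to ``the Jacobi-group input,'' yet the content needed there is a theorem, not a translation. One must prove that the bijection $\pi'\mapsto\pi'\otimes\pi_{\SW,\psi}$ between genuine irreducible representations of $\Mp_4(\Q_2)$ and irreducible representations of $\Sp^J_4(\Q_2)$ with central character $\psi$ matches the $\Sp^J_4(\Z_2)$-spherical representation with the $\psi$-unramified representation in the sense of the Gan--Savin correspondence (there is no notion of $\Sp_4(\Z_2)$-spherical on the metaplectic side), with the same Satake parameter; this is Theorem \ref{ccu}, proved by an explicit nonvanishing computation $\calT^J\Phi_0\neq0$ of intertwining operators applied to the spherical vector, valid at $p=2$. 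Without this you cannot place the $2$-adic component of your lift in any local packet, much less evaluate its character, so the ``heart of the proof'' you describe cannot be carried out as stated.

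Moreover, the role you assign to that $2$-adic component is not the one it plays. Because the Jacobi form has level $\Sp^J_4(\Z)$, the component at $2$ comes out unramified, its character of the local component group is trivial, and it absorbs nothing; the discrepancy between Arthur's $\epsilon_\phi$ (trivial for tempered parameters) and Gan--Ichino's $\wtil{\epsilon}_\phi$ (a central $\epsilon$-factor) is accounted for entirely at the archimedean place. There the holomorphic versus skew-holomorphic dichotomy (equivalently Neben versus Haupt, via the parity of $k+l$) decides whether the character of $S_{\varphi'_\infty}\cong(\Z/2\Z)^2$ is $(-1)^c$ or $(-1)^d$, and comparing with $\epsilon(\tfrac{1}{2},\calD_{k+j-\frac{3}{2}})$ and $\epsilon(\tfrac{1}{2},\calD_{k-\frac{5}{2}})$ is what excludes the parameters $\sigma\boxtimes S_1\oplus\sigma'\boxtimes S_1$ in the Neben case and forces $\phi=\tau\boxtimes S_1$ with $\tau$ cuspidal on $\GL_4(\A)$ (Lemma \ref{apj}); Lemmas \ref{api} and \ref{iap} do the corresponding work on the $\SO_5$ side. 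Once both spaces of eigenforms are in bijection (up to scalars) with the same set of everywhere-unramified symplectic cuspidal $\tau$ with $\tau_\infty=\calD_{k+j-\frac{3}{2}}\oplus\calD_{k-\frac{5}{2}}$, matching eigenforms already yields the $L$-function-preserving isomorphism, since both $L$-functions equal $L(s-j-k+\tfrac{3}{2},\tau)$; the concluding appeal to nonvanishing of global theta lifts is unnecessary, and you would in any case still need to define the Hecke operator at $2$ on the plus space (the paper does this through the Jacobi side) before ``preserves $L$-functions'' is even meaningful at that place.
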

Here the superscript $+$ is a generalization of Kohnen's plus space.
This means a ``level $\Sp_4(\Z)$'' part called the plus space.

The next one is a lifting to vector valued Siegel modular forms of degree 2 of half-integral weight without character (Haupt type).
\begin{thm}[{\cite[Conjecture 1.1]{iburef}}]\label{ilc}
For any integer $k\geq0$ and any even integer $j\geq0$, there exists an injective linear map
\begin{equation*}
\calL : S_{2k-4}(\SL_2(\Z)) \otimes S_{2k+2j-2}(\SL_2(\Z)) \lra S_{\det^{k-\frac{1}{2}} \Sym_j}^+(\Gamma_0(4))
\end{equation*}
such that if $f \in S_{2k-4}(\SL_2(\Z))$ and $g \in S_{2k+2j-2}(\SL_2(\Z))$ are Hecke eigenforms, then so is $\calL(f\otimes g) \in S_{\det^{k-\frac{1}{2}} \Sym_j}^+(\Gamma_0(4))$, and they satisfy
\begin{equation*}
L(s, \calL(f\otimes g)) = L(s-j-1, f) L(s,g).
\end{equation*}
\end{thm}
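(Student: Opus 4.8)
The plan is to move the problem to the metaplectic double cover $\Mp_4$ of $\Sp_4$, where a Siegel cusp form of half-integral weight and degree $2$ gives rise to a genuine cuspidal automorphic representation of $\Mp_4(\A)$ and Gan--Ichino's multiplicity formula (which rests on Arthur's classification for $\SO_5$ together with the theta correspondence between $\Mp_4$ and $\SO_5$) describes precisely which such representations occur. I would show that a pair $(f,g)$ of elliptic eigenforms corresponds to a Yoshida-type Arthur parameter $\phi=\phi_f\boxplus\phi_g$ for $\Mp_4$, that the associated packet contributes a one-dimensional space of plus forms of the prescribed holomorphic type, and that the $L$-function identity is forced by the shape of $\phi$.

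First I would set up the adelic dictionary on the target side. A Hecke eigenform $F$ in $S^+_{\det^{k-\frac{1}{2}}\Sym_j}(\Gamma_0(4))$ generates a cuspidal automorphic representation $\Pi=\bigotimes_v\Pi_v$ of $\Mp_4(\A)$ with $\Pi_p$ unramified for every odd prime $p$, with $\Pi_\infty$ the holomorphic discrete series of Harish-Chandra parameter $(k+j-\tfrac{3}{2},\,k-\tfrac{5}{2})$, which is the one dictated by the weight $\det^{k-\frac{1}{2}}\Sym_j$, and with $\Pi_2$ a distinguished genuine representation of $\Mp_4(\Q_2)$ singled out by the plus-space condition. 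Making the condition at $2$ precise is where the representation theory of the Jacobi group enters: through Fourier--Jacobi expansion the plus space is identified with a space of Jacobi cusp forms of index $1$, and the index-$1$ Jacobi theory, together with the local theta correspondence at $2$, pins down $\Pi_2$ among the members of a local packet. Conversely, each admissible $\Pi$ contributes a one-dimensional space of plus vectors of the prescribed type, so that $S^+_{\det^{k-\frac{1}{2}}\Sym_j}(\Gamma_0(4))$ is the direct sum of these one-dimensional eigenspaces over the admissible $\Pi$.

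Next I would run the multiplicity formula. For normalized Hecke eigenforms $f\in S_{2k-4}(\SL_2(\Z))$ and $g\in S_{2k+2j-2}(\SL_2(\Z))$ let $\phi_f,\phi_g$ be the associated $2$-dimensional symplectic ($\SL_2$-type) parameters; by Deligne's bound both are tempered, and as the two weights differ they are inequivalent, so $\phi=\phi_f\boxplus\phi_g$ is a multiplicity-free tempered Arthur parameter valued in $\Sp_4(\C)$. A direct computation shows that its archimedean component is the parameter of the holomorphic discrete series of weight $\det^{k-\frac{1}{2}}\Sym_j$. I would then determine the local packets $\Pi(\phi_v)$ and the characters $\an{\cdot,\Pi_v}$ of the component group $S_\phi$ at all places and substitute them into
\begin{equation*}
m(\Pi)=\frac{1}{|S_\phi|}\sum_{s\in S_\phi}\epsilon_\phi(s)\prod_v\an{s,\Pi_v},
\end{equation*}
where $\epsilon_\phi$ is Gan--Ichino's sign character, built from the root numbers $\epsilon(\tfrac{1}{2},\phi_f\times\phi_g)$. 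Since $f$ and $g$ are everywhere unramified over $\Q$, the character $\epsilon_\phi$ is concentrated at the archimedean place, and the expected outcome is that $\Pi(\phi)$ contains exactly one member $\Pi$ that is unramified at every odd prime, holomorphic of weight $\det^{k-\frac{1}{2}}\Sym_j$ at infinity, and equipped with a plus vector at $2$, and that this $\Pi$ has multiplicity one; the parity hypothesis on $j$ is what makes the resulting sign condition hold. Setting $\calL(f\otimes g)$ to be a generator of the corresponding one-dimensional eigenspace and extending linearly over Hecke eigenbases defines $\calL$. It is injective because distinct pairs $(f,g)$ yield parameters with distinct systems of Hecke eigenvalues --- the two weights being different, the ``$f$-part'' and ``$g$-part'' of $\phi$ are distinguishable, so by strong multiplicity one for $\GL_2$ the pair $(f,g)$ is recovered --- whence the images are linearly independent. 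Finally, because the Satake parameters of $\Pi$ at an odd prime $p$ are those of $\phi_f\boxplus\phi_g$, the identity
\begin{equation*}
L(s,\calL(f\otimes g))=L(s-j-1,f)\,L(s,g)
\end{equation*}
follows, the shift $j+1=\tfrac{1}{2}\bigl((2k+2j-2)-(2k-4)\bigr)$ being the half-difference of the two weights, as is typical of Yoshida-type lifts of elliptic forms of unequal weight.

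The main obstacle is the local analysis, not the bookkeeping. At the archimedean place one must identify, inside the tempered $L$-packet of $\phi_\infty$, the holomorphic discrete series of weight $\det^{k-\frac{1}{2}}\Sym_j$ and compute its character on $S_\phi$. At $p=2$ --- the delicate prime for $\Mp_4$ --- one must describe the local metaplectic packet, isolate the member carrying a plus vector, and compute its character; this is exactly where the index-$1$ Jacobi forms and the local theta correspondence do the work. One then has to check that the product of these local characters against $\epsilon_\phi$ equals $1$ precisely for the holomorphic-plus member, and for all $k\geq0$ and all even $j\geq0$, so that $\calL$ is everywhere defined and its image lies in the cuspidal plus space; verifying this sign identity uniformly in $k$ and $j$ is the technical heart of the argument.
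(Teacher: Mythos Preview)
Your proposal is correct and follows essentially the same route as the paper: identify the plus space with Jacobi cusp forms of index $1$, lift those to $\Sp^J_4(\A)$ (getting unramifiedness everywhere, including at $2$), pass to $\Mp_4(\A)$ via the Schr\"odinger--Weil correspondence, and then apply Gan--Ichino's multiplicity formula to the Yoshida-type parameter $\phi_f\boxplus\phi_g$ to produce the eigenform $\calL(f\otimes g)$. One small correction: Gan--Ichino's sign character $\wtil\epsilon_\phi$ for a tempered parameter is built from the \emph{individual} root numbers $\epsilon(\tfrac{1}{2},\phi_f)$ and $\epsilon(\tfrac{1}{2},\phi_g)$, not from the Rankin--Selberg factor $\epsilon(\tfrac{1}{2},\phi_f\times\phi_g)$; with the correct formula the archimedean computation (as in the paper's Lemma~\ref{apj}, case (5), and Lemma~\ref{ds}) shows the sign condition is met precisely in the Haupt case, which is what you need.
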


We shall write $S_{\det^{k-\frac{1}{2}} \Sym_j}^{+,0}(\Gamma_0(4))$ for the orthogonal complement of the image of the injective map $\calL$.
\begin{thm}[{\cite[Conjecture 1.2]{iburef}}]\label{icc}
For any integer $k\geq3$ and any even integer $j\geq0$, there exists a linear isomorphism
\begin{equation*}
\rho^\itH : S_{\det^{k-\frac{1}{2}} \Sym_j}^{+,0}(\Gamma_0(4)) \lra S_{\det^{j+3}\Sym_{2k-6}}(\Sp_4(\Z)),
\end{equation*}
which preserves $L$-functions.
\end{thm}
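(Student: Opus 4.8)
The plan is to combine the two previously stated isomorphisms/maps with the automorphic machinery underlying them. First I would recall that Theorems \ref{isc} and \ref{ilc} are proved (as they must be, earlier in the paper) by passing through the representation theory of $\Mp_4$ and $\SO_5$: a half-integral weight Siegel cusp form of degree $2$ generates an automorphic representation of $\Mp_4(\A)$, and Gan--Ichino's multiplicity formula partitions these into near-equivalence classes indexed by Arthur parameters, which correspond via the theta/transfer correspondence to automorphic representations of $\PGSp_4 \cong \SO_5$. The plus space condition $+$ selects precisely those representations whose local component at $2$ is the one singled out by Kohnen's condition, and the ``level $\Sp_4(\Z)$'' condition selects everywhere-unramified vectors on the orthogonal side. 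Under this dictionary, the image of $\calL$ in $S^{+}_{\det^{k-1/2}\Sym_j}(\Gamma_0(4))$ corresponds exactly to the \emph{Yoshida-type} (endoscopic, non-tempered in the appropriate sense, or more precisely the Saito--Kurokawa/Yoshida-type) Arthur parameters built from the pair $(f,g)$ of elliptic cusp forms, i.e.\ parameters of the form associated to $S_{2k-4}(\SL_2(\Z))\otimes S_{2k+2j-2}(\SL_2(\Z))$; equivalently, on the $\Sp_4(\Z)$ side these are exactly the vector-valued Siegel cusp forms of weight $\det^{j+3}\Sym_{2k-6}$ that are \emph{not} of general type (Yoshida lifts).

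Granting that, the argument for Theorem \ref{icc} is: the orthogonal complement $S^{+,0}_{\det^{k-1/2}\Sym_j}(\Gamma_0(4))$ is spanned by those Hecke eigenforms whose Arthur parameter is of \emph{general type} (a cuspidal self-dual parameter on $\GL_4$, i.e.\ not an isobaric sum coming from smaller groups). The same multiplicity-formula analysis that proves Theorem \ref{isc} shows that the transfer attaches to each such eigenform a Hecke eigenform in $S_{\det^{j+3}\Sym_{2k-6}}(\Sp_4(\Z))$ with the same spin $L$-function, and conversely every general-type eigenform in $S_{\det^{j+3}\Sym_{2k-6}}(\Sp_4(\Z))$ arises this way. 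Concretely I would: (i) show $\calL$ is Hecke-equivariant onto the span of Yoshida-type eigenforms, so that $S^{+,0}$ is exactly the span of general-type eigenforms; (ii) invoke the classification of Arthur parameters for $\SO_5$ with everywhere-unramified components and the relevant archimedean $L$-parameter (determined by the weight $\det^{j+3}\Sym_{2k-6}$), matching them bijectively with the general-type parameters contributing to the plus space at the half-integral weight $\det^{k-1/2}\Sym_j$; (iii) check that the multiplicity is $1$ on both sides for each such parameter, using Gan--Ichino's formula on $\Mp_4$ together with the plus-space (local at $2$) sign computation, and Arthur's formula on $\SO_5$; (iv) conclude that $\rho^\itH$, defined by matching Hecke eigensystems (equivalently, by the same theta correspondence used in Theorem \ref{isc} restricted to $S^{+,0}$), is a well-defined linear isomorphism, and that it preserves standard/spin $L$-functions because the corresponding Arthur parameters on $\GL_4$ coincide.

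The main obstacle I expect is step (iii), the precise matching of local multiplicity signs. On the $\Mp_4$ side, the plus-space condition forces a specific choice of local representation at $p=2$ inside the relevant local Arthur packet (the analogue of Kohnen's plus condition for $\SL_2$), and one must verify that Gan--Ichino's local sign character, evaluated on this choice together with the unramified choices at all other places and the prescribed archimedean representation, gives exactly $+1$ (so the global representation occurs) precisely when the corresponding $\SO_5$ sign on the unramified-plus-archimedean side also gives $+1$ --- and that for general-type parameters this always happens with multiplicity one. This requires a careful bookkeeping of the characters $\epsilon_\psi$ on both sides, the explicit determination of the local packets at $2$ (where the group is genuinely metaplectic and the relevant computations involve the Weil representation and the theory of Jacobi forms as flagged in the abstract), and a compatibility check between the two multiplicity formulas under the $\SO_5 \leftrightarrow \Mp_4$ theta correspondence. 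A secondary technical point is verifying that the archimedean component attached to the weight $\det^{k-1/2}\Sym_j$ really corresponds to the holomorphic (lowest-weight) member of the archimedean $A$-packet matching the weight $\det^{j+3}\Sym_{2k-6}$ on $\Sp_4(\R)$, and that this member is unique in its packet so that $\rho^\itH$ is genuinely a bijection and not merely a bijection up to multiplicity spaces.
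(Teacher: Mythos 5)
Your strategy is essentially the paper's: attach $A$-parameters to Hecke eigenforms on both sides (Gan--Ichino's formula on the metaplectic side, Arthur's formula on $\SO_5\cong\PGSp_4$), identify the image of $\calL$ with the span of plus-space eigenforms whose parameter has the endoscopic form $\sigma\boxtimes S_1\oplus\sigma'\boxtimes S_1$, conclude that $S_{k-\frac{1}{2},j}^{+,0}(\Gamma_0(4))$ is spanned by the eigenforms of general type $\tau\boxtimes S_1$ with $\tau$ cuspidal on $\GL_4$, and then define $\rho^\itH$ by matching such $\tau$ with eigenforms of $S_{j+3,2k-6}(\Sp_4(\Z))$; this is exactly the combination of Lemmas \ref{apj} and \ref{jap} with Lemmas \ref{api} and \ref{iap} in the paper. (One methodological remark: the local-at-$2$ sign bookkeeping you flag as the main obstacle is circumvented in the paper by transporting plus-space forms to Jacobi forms of level $\Sp_4^J(\Z)$ via $\Psi$ and using the Jacobi-group/metaplectic correspondence, so that unramifiedness at $2$ comes from the Jacobi side rather than from an analysis of Kohnen's condition inside a local packet at $2$.)

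There is, however, a genuine gap, and one of your assertions points the wrong way. Your construction only yields an isomorphism of $S_{k-\frac{1}{2},j}^{+,0}(\Gamma_0(4))$ onto the span of the \emph{general-type} eigenforms of $S_{j+3,2k-6}(\Sp_4(\Z))$; to get the theorem as stated you must also prove that this span is the whole space, i.e.\ that no eigenform of level $\Sp_4(\Z)$ and weight $\det^{j+3}\Sym_{2k-6}$ has a parameter of any of the degenerate shapes (Saito--Kurokawa type $\chi\boxtimes S_2\oplus\sigma\boxtimes S_1$, Yoshida type $\sigma\boxtimes S_1\oplus\sigma'\boxtimes S_1$, etc.). Your parenthetical claim that the image of $\calL$ corresponds on the $\Sp_4(\Z)$ side to ``the forms not of general type (Yoshida lifts)'' is actually incompatible with the statement being proved: Yoshida-type parameters do contribute to the Haupt-type plus space (that is what the image of $\calL$ is), but they contribute \emph{nothing} at level $\Sp_4(\Z)$, precisely because Arthur's multiplicity formula kills them there; if nonzero Yoshida lifts existed in $S_{j+3,2k-6}(\Sp_4(\Z))$, the asserted isomorphism could not hold. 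This exclusion is the content of the paper's Lemma \ref{api}: for $k\geq4$ it follows from Chenevier--Lannes, while for $k=3$ the archimedean packet also admits the non-tempered parameter $\calD_{j+\frac{3}{2}}\boxtimes S_1\oplus\chi\boxtimes S_2$, and one must compute Arthur's character $\epsilon_\phi$ (using that $j$ is even and that unramifiedness forces $\chi$ to be trivial) and compare it with the character attached to the holomorphic discrete series member of the packet to rule this case out via the multiplicity formula. Without this step your $\rho^\itH$ is, as written, only an injection into $S_{j+3,2k-6}(\Sp_4(\Z))$, not an isomorphism onto it.
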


The details of the notation and the definitions of $L$-functions will be reviewed in \S \ref{Main} and \S \ref{Jacobi form}, and we shall state the conjecture again.
In order to prove Ibukiyama's conjectures, we will apply the multiplicity formulae of Arthur and Gan-Ichino, and the representation theory of the Jacobi groups.

Let us recall the history and the statement of the multiplicity formulae briefly.
After Shimura's result \cite{shi}, Waldspurger \cite{wal1, wal2} studied Shimura's correspondence in the framework of automorphic representations of the metaplectic group $\Mp_2$, which is a nonlinear two-fold covering group of $\Sp_2=\SL_2$.
Namely, he described the automorphic discrete spectrum of $\Mp_2$ in terms of that of $\SO_3=\PGL_2$ via the global theta lifts between $\Mp_2$ and (inner forms of) $\SO_3$.
About 30 years after that, Gan-Ichino \cite{gi18, gi20} studied the automorphic discrete spectrum $L^2_{\mathrm{disc}}(\Mp_{2n})$ of the metaplectic group $\Mp_{2n}$, which is a nonlinear two-fold covering group of the symplectic group $\Sp_{2n}$ of rank $n$, and  partially proved the following decomposition predicted by Gan \cite{gan}:
\begin{align}\label{gi}
L^2_{\mathrm{disc}}(\Mp_{2n})
=\bigoplus_\phi \bigoplus_\eta m_{\phi,\eta} \pi_{\phi,\eta},
\end{align}
where $\phi$ and $\eta$ run over elliptic $A$-parameters for $\SO_{2n+1}$ and continuous characters of the adelic component group $S_{\phi, \A}$ of $\phi$ respectively, and $m_{\phi,\eta}$ and $\pi_{\phi,\eta}$ are the corresponding nonnegative integer and irreducible automorphic representation, respectively.
Strictly speaking, $m_{\phi,\eta}$ and $\pi_{\phi,\eta}$ depend on an auxiliary choice of an additive character of the ring of adeles.
Their works can be considered as a generalization of the work of Waldspurger.
In \cite{gi18}, they partially proved the decomposition \eqref{gi} for any rank $n$, and later, in the other paper \cite{gi20}, they completely proved it in the case that the rank $n$ is 2.
Note that thus the decomposition \eqref{gi} is still conjecture in general.
In both papers, they used the theta correspondence and Arthur's result \cite{art}.
Here, the result of Arthur that Gan-Ichino used is a decomposition
\begin{align*}
L^2_{\mathrm{disc}}(\SO_{2n+1})
=\bigoplus_\phi \bigoplus_\eta n_{\phi,\eta} \sigma_{\phi,\eta}
\end{align*}
of the automorphic discrete spectrum $L^2_{\mathrm{disc}}(\SO_{2n+1})$ of the split odd special orthogonal group $\SO_{2n+1}$, where $\phi$ and $\eta$ run over elliptic $A$-parameters for $\SO_{2n+1}$ and continuous characters of the adelic component group $S_{\phi, \A}$ of $\phi$ respectively, and $n_{\phi,\eta}$ and $\sigma_{\phi,\eta}$ are the corresponding nonnegative integer and irreducible automorphic representation, respectively.
We review the multiplicity formulae for the automorphic discrete spectrum of $\SO_5$ given by Arthur and of $\Mp_4$ given by Gan-Ichino in \S \ref{MF} more precisely.

In \S \ref{Jacobi form}, we complete the definition of the $L$-function of a Siegel modular form of half-integral weight in the plus space $S_{\det^{k-\frac{1}{2}} \Sym_j}^+(\Gamma_0(4), \left(\frac{-1}{\cdot}\right)^l)$.
Since the elements in the plus space are of level $\Gamma_0(4)$, the Euler factor of $L$-function at 2 cannot be defined in the usual way involving Hecke operators.
However, we can define the Euler 2-factor of $L$-function by using Hecke operators on the Jacobi forms of level $\Sp^J_4(\Z)$, by virtue of the following canonical isomorphism.
Here $\Sp^J_4$ denote the Jacobi group of degree 2.
In the beginning of the section, we define holomorphic and skew-holomorphic Jacobi forms and recall from \cite{ibuconj} a canonical isomorphism (Theorem \ref{5.1})
\begin{align}\label{5.1.0}
J_{\det^k \Sym_j,1}^{\star, \mathrm{cusp}} \overset{\simeq}{\lra} S_{\det^{k-\frac{1}{2}} \Sym_j}^+(\Gamma_0(4), \left(\frac{-1}{\cdot}\right)^l),
\end{align}
between a space of holomorphic or skew-holomorphic Jacobi cusp forms of level $\Sp^J_4(\Z)$ and the plus space.
Here, $\star$ means ``holomorphic'' if $k+l$ is even, and ``skew-holomorphic'' if it is odd.
In the rest of the section, we state the definition of the Hecke operators acting on Jacobi forms.
Then we can define the Hecke operators at 2 on Siegel modular forms in the plus space by the pullback of the Hecke operators on Jacobi forms.

The next section \S \ref{Jacobi group} plays the key role in this paper.
In order to utilize the multiplicity formulae of Arthur and Gan-Ichino for proving the main theorems (Theorems \ref{isc}, \ref{ilc}, and \ref{icc}), we should consider the adelic lifts of Hecke eigenforms in $S_{\det^{j+3} \Sym_{2k-6}} \left( \Sp_4(\Z) \right)$ and $S_{\det^{k-\frac{1}{2}} \Sym_j}^+ \left(\Gamma_0(4), \left(\frac{-1}{\cdot}\right)^l \right)$ to $\SO_5(\A_\Q)$ and $\Mp_4(\A_\Q)$, respectively.
Nevertheless, it is difficult to study the adelic lifts of Hecke eigenforms in the plus space to $\Mp_4(\A_\Q)$.
They are expected to be unramified everywhere, but we cannot show that they are unramified at $p=2$ in the usual way.
This is because the level is $\Gamma_0(4)$ and there is no notion of spherical representations of $\Mp_{2n}(\Q_2)$.
(The adelic interpretation of the plus spaces is already known by \cite{hi, su}, but the author do not know how to apply it to our aim.)
However, we have the isomorphism \eqref{5.1.0} and we can also see that the adelic lifts of eigenforms in $J_{\det^k \Sym_j,1}^{\star, \mathrm{cusp}}$ is unramified everywhere by the usual argument, since the level is $\Sp^J_4(\Z)$.
Thus we need the representation theory of the Jacobi group $\Sp^J_4$.
In \S \ref{Jacobi group}, we introduce the representation theory of the Jacobi groups $\Sp^J_{2n}$ of general degree over local fields and rings of adeles.
Roughly speaking, we shall prove that there is a natural bijection between representations of $\Sp^J_{2n}$ with a fixed nontrivial central character and genuine representations of $\Mp_{2n}$, and this bijective correspondence preserves some canonical properties, such as unramifiedness.
Consequently, we can obtain automorphic representations of $\Mp_4(\A_\Q)$ from Jacobi forms and utilize Gan-Ichino's multiplicity formula.
As for the representation theory of the Jacobi groups, the case of degree 2 is enough for our aim, but the arguments hold in general case and some of them are originated in this paper, so we shall do for general degree.

In \S \ref{Adelic lift}, we consider the adelic lifts of Siegel modular forms and those of Jacobi forms, and prove Theorems \ref{isc}, \ref{ilc}, and \ref{icc}.
On the one hand, we consider the adelic lift of each Hecke eigenform in $S_{\det^{j+3} \Sym_{2k-6}} \left( \Sp_4(\Z) \right)$ to $\GSp_4(\A_\Q)$ in the usual way.
It can be seen that the adelic lift is an irreducible cuspidal automorphic representation of $\GSp_4(\A_\Q)$ with trivial central character, and it is known that there is an accidental isomorphism $\PGSp_4 \cong \SO_5$, so we obtain an irreducible cuspidal automorphic representation of $\SO_5(\A_\Q)$.
Thus by Arthur's multiplicity formula, we can attach an elliptic $A$-parameter for $\SO_5$ to any Hecke eigenform in $S_{\det^{j+3} \Sym_{2k-6}} \left( \Sp_4(\Z) \right)$.
Moreover, we shall show in Lemmas \ref{api} and \ref{iap} that the attached $A$-parameters are tempered and of the form
\begin{align}\label{12}
\phi=\tau \boxtimes S_1,
\end{align}
where $\tau$ is an irreducible cuspidal symplectic automorphic representation of $\GL_4(\A_\Q)$ with certain conditions and $S_1$ denotes the trivial representation of $\SL_2(\C)$, and this attachment is bijective in some sense.
On the other hand, we show that the adelic lift of a Hecke eigenform in $J_{\det^k \Sym_j,1}^{\star, \mathrm{cusp}}$ is an irreducible cuspidal automorphic representation of the Jacobi group $\Sp^J_4(\A_\Q)$.
By the result of \S \ref{Jacobi group}, we obtain an irreducible cuspidal automorphic representation of the metaplectic group $\Mp_4(\A_\Q)$.
In addition, by Gan-Ichino's multiplicity formula, we can attach an elliptic $A$-parameter for $\SO_5$ to a Hecke eigenform in $J_{\det^k \Sym_j,1}^{\star, \mathrm{cusp}}$.
Then in Lemmas \ref{apj} and \ref{jap} we shall show that the attached $A$-parameters are tempered and of the form \eqref{12} or
\begin{align}\label{13}
\phi=\sigma \boxtimes S_1 \oplus \sigma' \boxtimes S_1,
\end{align}
where $\sigma$ and $\sigma'$ are irreducible cuspidal symplectic automorphic representations of $\GL_2(\A_\Q)$, and this attachment is bijective in some sense.
More precisely, combining this with the isomorphism \eqref{5.1.0}, we shall see that the plus space $S_{\det^{k-\frac{1}{2}} \Sym_j}^+ \left(\Gamma_0(4), \left(\frac{-1}{\cdot}\right) \right)$ of Neben type corresponds to the $A$-parameters of the form \eqref{12}, and that the plus space $S_{\det^{k-\frac{1}{2}} \Sym_j}^+ \left(\Gamma_0(4)\right)$ of Haupt type corresponds to the both $A$-parameters of the forms \eqref{12} and \eqref{13}.
Since Hecke eigenforms $f \in S_{2k-4}(\SL_2(\Z))$ and $g \in S_{2k+2j-2}(\SL_2(\Z))$ give irreducible cuspidal automorphic representations $\sigma_f$ and $\sigma_g$ of $\GL_2(\A_\Q)$, the image of $\calL$ is the subspace corresponding to the $A$-parameters of the form \eqref{13}.
Then the subspace corresponding to the $A$-parameters of the form \eqref{12} is $S_{\det^{k-\frac{1}{2}} \Sym_j}^{+,0} \left(\Gamma_0(4)\right)$.
Consequently, the main theorems are proved.
As mentioned above, the reason why we have to consider the Jacobi forms and the representations of the Jacobi groups is that it is difficult to actually see that the plus space is a level $\Sp_4(\Z)$ part by studying only the plus space.
In other words, it is difficult to show that the adelic lift of a Hecke eigenform in the plus space $S_{\det^{k-\frac{1}{2}} \Sym_j}^+ \left(\Gamma_0(4), \left(\frac{-1}{\cdot}\right)^l \right)$ to $\Mp_4(\A_\Q)$ is unramified at a finite place 2.
However, it is easier to show that the adelic lift of a Jacobi form of level $\Sp^J_4(\Z)$ is unramified everywhere.
Thus the theory of the relation between representations of the Jacobi group and those of the metaplectic group is the key ingredient of this paper.\\

Although the arguments in appendix \ref{Appendix} are not needed to prove our main theorem, the result is so interesting that it is contained in this paper.
In the appendix, we describe the relation between the adelic lifts of Siegel modular forms of half-integral weight and those of Jacobi forms corresponding to them by the canonical isomorphism \eqref{5.1.0}.
Thanks to the multiplicity formula of Gan-Ichino and our results on the representations of the Jacobi groups given in \S \ref{Jacobi group}, we can show not only that the adelic lifts of Siegel modular forms in the plus space is unramified at a finite place 2, but also Theorem \ref{appthm}, which asserts that the canonical isomorphism \eqref{5.1.0} is compatible with a natural bijection between representations of $\Mp_4$ and those of $\Sp^J_4$ given in \S \ref{Jacobi group}.
Theorem \ref{appthm} seems not so natural but canonical and interesting.

\begin{notation}
If $F$ is a number field, we write $\A_F$ for the ring of the adeles of $F$.
In particular, if $F$ is the field $\Q$ of rational numbers, let $\A_\Q$ be abbreviated as $\A$.
We write $\calS(\A_F^n)$ for the Schwartz space on $\A_F^n$.
For a nontrivial additive character $\psi$ of $F\backslash \A_F$, put $\overline{\psi}(x)=\overline{\psi(x)}$ and we call the additive character $\overline{\psi}$ its complex conjugate.
Moreover, if $v$ is a place of $F$, we write $\psi_v$ for the local component of $\psi$ at $v$.

If $F$ is a local field, let $W_F$ be the Weil group of $F$ and put
\begin{align*}
L_F=
\begin{cases}
W_F \times \SL_2(\C),&\text{if $F$ is nonarchimedean},\\
W_F,&\text{if $F$ is archimedean}.
\end{cases}
\end{align*}
We write $\calS(F^n)$ for the Schwartz space on $F^n$.

We shall write $e_i$ for the column vector with entry 1 at $i$-th position and zeros elsewhere, and $E_{i,j}$ for the matrix with entry 1 at $(i,j)$-th position and zeros elsewhere.
For any representation $\pi$, let $\pi^\vee$ denote its contragredient representation.
For any finite abelian group $S$, let $\what{S}$ denote the group of characters of $S$.
For any positive integer $d\geq1$, we write $S_d$ for the unique irreducible representation of dimension $d$ of $\SL_2(\C)$.
For $z \in \C$, we set $\bfe(z)=\exp(2\pi i z)$.
In this paper, trivial characters are included in quadratic characters.
\end{notation}

\begin{acknowledgment}
The author thanks his supervisor Atsushi Ichino for suggesting the problem and for his helpful advice, and Tamotsu Ikeda and Hiraku Atobe for sincere and useful comments.
The author also thanks Hiraku Atobe and Tomoyoshi Ibukiyama for bringing a book \cite{cl} and a paper \cite{iburef}, respectively, to the attention of the author.
This work was partially supported by JSPS Research Fellowships for Young Scientists KAKENHI Grant Number 20J11779.
\end{acknowledgment}


\section{Main theorem}\label{Main}
In this section, after reviewing the definitions of several types of cusp forms and their $L$-functions, we state Ibukiyama's conjectures.

\subsection{Review on elliptic cusp forms}
In this subsection, we shall review the notation and the definition of elliptic cusp forms.

For an integer $n\geq1$, let $\frakH_n$ denote the Siegel upper half space of degree $n$, i.e.,
\begin{equation*}
\frakH_n
=\Set{Z=X+iY \in \M_n(\C) | X=\tp{X}, Y=\tp{Y} \in \M_n(\R), \ Y >0 }.
\end{equation*}
Here, $Y>0$ means that $Y$ is positive definite.
We write $X=\myre(Z)$, and $Y=\myim(Z)$, for $Z=X+iY\in\frakH_n$, and call the imaginary part and the real part, respectively.
By abuse of notation, we shall write $i$ for the scalar matrix in $\frakH_n$ of the diagonal entries $i$.
For any commutative ring $R$ with unity 1, let $\Sp_{2n}(R)$ be the symplectic group over $R$ of degree $n$, and $\GSp_{2n}(R)$ the symplectic similitude group over $R$ of degree $n$ i.e.,
\begin{align*}
\Sp_{2n}(R)
&=\Set{g \in \GL_{2n}(R) | \tp{g} J_n g = J_n},\\
\GSp_{2n}(R)
&=\Set{ g\in\GL_{2n}(R)| \tp{g}J_ng=\nu(g)J_n, \ \exists\nu(g)\in \GL_1(R)},
\end{align*}
where $J_n=\mmatrix{0}{1_n}{-1_n}{0}$.
Note that $\Sp_2=\SL_2$ and $\GSp_2=\GL_2$.

The group $\GSp_{2n}^+(\R)=\{g \in \GSp_{2n}(\R)|\nu(g)>0\}$ acts on $\frakH_n$ in the following way:
\begin{align*}
gZ &=(AZ+B)(CZ+D)\inv,& g&=\mmatrix{A}{B}{C}{D} \in \GSp_{2n}^+(\R), \quad Z\in\frakH_n.
\end{align*}
We set a factor of automorphy $J(g,Z)=CZ+D$.\\

Elliptic cusp forms are defined as follows.
\begin{defn}
Let $f(z)$ be a $\C$-valued function on $z\in \frakH_1$.
When $f$ satisfies the next conditions (0)-(2), we say that $f$ is an elliptic cusp form of weight $k$.
\begin{enumerate}[(1)]
\setcounter{enumi}{-1}
\item $f$ is holomorphic;
\item $J(\gamma, z)^{-k} f(\gamma z)=f(z)$ for all $\gamma \in \SL_2(\Z)$;
\item $f$ has a Fourier expansion of the following form:
\begin{align*}
f(z)
=\sum_{n\in\Z_{>0}} a_n \bfe(nz),
\end{align*}
where $\Z_{>0}$ denotes the set of all positive integers.
\end{enumerate}
We write $S_k(\SL_2(\Z))$ for the space of such functions.
\end{defn}

The Hecke operator $T(p)$ on $S_k(\SL_2(\Z))$ at a prime number $p$ is defined by
\begin{align*}
[f|_k T(p)] (z)
=p^{k-1} \sum_g J(g,z)^{-k} f(gz),
\end{align*}
where $g$ runs over a complete representative system of $\SL_2(\Z) \backslash \SL_2(\Z) \mmatrix{1}{0}{0}{p}\SL_2(\Z)$.

For a Hecke eigenform $f \in S_k(\SL_2(\Z))$, let $c_p$ denotes the eigenvalue of $T(p)$ associated with $f$.
Then the $L$-function of $f$ is defined as an Euler product
\begin{equation*}
L(s,f)
=\prod_p ( 1 - c_p p^{-s} + p^{k-1-2s} )\inv.
\end{equation*}

Finally, let us recall the Petersson inner product on $S_k(\SL_2(\Z))$.
For $f_1$, $f_2 \in S_k(\SL_2(\Z))$, it is defined by
\begin{equation*}
\an{f_1, f_2}
=\int_{\SL_2(\Z) \backslash \frakH_1} f_1(z) \overline{f_2(z)} y^{k-2} dxdy,
\end{equation*}
where $x$ and $y$ are the real and imaginary part of $z$, respectively.

It is known that the Hecke operators are Hermite with respect to the Petersson inner product, and the $\C$-vector space $S_k(\SL_2(\Z))$ has an orthonormal basis consisting of Hecke eigenforms.
\subsection{Vector valued Siegel cusp forms of integral weight}
In order to state the definition of vector valued Siegel modular forms of degree 2, we recall rational irreducible representations of $\GL_2(\C)$ here.
For any integer $j\geq0$, we write $(\Sym_j, V_j)$ for the symmetric tensor representation of degree $j$ (i.e., dimension $j+1$) of $\GL_2(\C)$.
Any rational irreducible representation of $\GL_2(\C)$ can be written as $\det^k \otimes \Sym_j$ by using an integer $k$ and a nonnegative integer $j$.
For any $V_j$-valued function $F(Z)$ on $\frakH_2$ and $g\in\GSp_4^+(\R)$, we define the slash operator of weight $(k,j)$ by
\begin{align*}
\left[F|_{k,j}g\right] (Z)
=\nu(g)^{k+\frac{j}{2}} \det(J(g,Z))^{-k} \Sym_j(J(g,Z))\inv F(gZ).
\end{align*}

\begin{defn}
Let $F(Z)$ be a $V_j$-valued function on $Z\in \frakH_2$.
When $F$ satisfies the next conditions (0)-(2), we say that $F$ is a Siegel cusp form of weight $(k,j)$.
\begin{enumerate}[(1)]
\setcounter{enumi}{-1}
\item $F$ is holomorphic;
\item $F|_{k,j} \gamma =F$ for all $\gamma \in \Sp_4(\Z)$;
\item $F$ has a Fourier expansion of the following form:
\begin{align*}
F(Z)
=\sum_{\substack{T \in L_2^* \\ T>0}} A(T) \bfe(\tr(TZ)),
\end{align*}
where $L_2^*$ denotes the set of all half-integral symmetric matrices of size $2\times2$.
\end{enumerate}
We write $S_{k,j}(\Sp_4(\Z))$ for the space of such functions.
\end{defn}

Now we define Hecke operators $T(m)$ and the spinor $L$-functions.
For a natural number $m$, put
\begin{align*}
F|_{k,j} T(m)
=m^{k+\frac{j}{2}-3} \sum_{g \in \Sp_4(\Z) \backslash X(m)} F|_{k,j}g,
\end{align*}
where
\begin{align*}
X(m)
=\set{ x \in \M_4(\Z) \cap \GSp_4^+(\R) | \nu(x)=m },
\end{align*}
and for a Hecke eigenform $F \in S_{k,j}(\Sp_4(\Z))$, let $\lambda(m)$ denotes the eigenvalue of $T(m)$ associated with $F$.
Note that $T(mn)=T(m)T(n)$ if $m, n \in \Z_{>0}$ are coprime.
The spinor $L$-function $L(s,F,\spin)$ of a Hecke eigenform $F\in S_{k,j}(\Sp_4(\Z))$ is defined as an Euler product
\begin{equation*}
L(s,F,\spin)
=\prod_p L(s,F,\spin)_p,
\end{equation*}
where the product runs over all prime numbers $p$,
\begin{equation*}
L(s,F,\spin)_p
=\left(
1 -\lambda(p)p^{-s} +(\lambda(p)^2-\lambda(p^2)-p^{\mu-1})p^{-2s} -\lambda(p)p^{\mu-3s} +p^{2\mu-4s}
\right)\inv,
\end{equation*}
and $\mu=2k+j-3$.

As in the case of elliptic cusp forms, it is known that the Petersson inner product can be defined on $S_{k,j}(\Sp_4(\Z))$, and the $\C$-vector space $S_{k,j}(\Sp_4(\Z))$ has an orthonormal basis consisting of Hecke eigenforms.
\subsection{Vector valued Siegel cusp forms of half-integral weight}
Now we come to vector valued Siegel cusp forms of half-integral weight.
Put
\begin{equation*}
\Gamma_0(4)
=\Set{\gamma=\mmatrix{A}{B}{C}{D} \in \Sp_4(\Z) | C\equiv0 \mod4},
\end{equation*}
and
\begin{equation*}
\theta(Z)
=\sum_{x\in\Z^2} \bfe(\tp{x} Z x),
\quad Z\in\frakH_2.
\end{equation*}
Let us define a character $\left(\frac{-1}{\cdot}\right)$ of $\Gamma_0(4)$ by
\begin{align*}
\left(\frac{-1}{\gamma}\right)
=\left(\frac{-1}{\det D}\right),
\quad
\gamma
=\left(\begin{array}{cc}
A&B\\
C&D
\end{array}\right) \in \Gamma_0(4).
\end{align*}

In order to state the definition of vector valued Siegel modular forms of half-integral weight, we recall the slash operator of half-integral weight.
First we define a covering group of $\GSp_4^+(\R)$.
Let $\wtil{\GSp_4^+}(\R)$ denote the set of pairs $(g, \phi(Z))$, where $g$ is in $\GSp_4^+(\R)$ and $\phi(Z)$ is a $\C$-valued holomorphic function in $Z\in\frakH_2$ such that $\phi(Z)^4=\frac{\det J(g,Z)^2}{\det g}$.
The set $\wtil{\GSp_4^+}(\R)$ is a group under the multiplication defined by
\begin{equation*}
(g_1,\phi_1(Z)) (g_2, \phi_2(Z))
=(g_1 g_2, \phi_1(g_2Z)\phi_2(Z)).
\end{equation*}
The group $\wtil{\GSp_4^+}(\R)$ can be regarded as a 4-fold covering group of $\GSp_4^+(\R)$ by $(g, \phi(Z))\mapsto g$. 
We can embed $\Gamma_0(4)$ in $\wtil{\GSp_4^+}(\R)$ by
\begin{align*}
\gamma \mapsto \left(\gamma, \frac{\theta(\gamma Z)}{\theta(Z)} \right)
\end{align*}
and we write $\wtil{\Gamma}_0(4)$ for the image of $\Gamma_0(4)$.
In this way, we shall identify $\Gamma_0(4)$ with the subgroup $\wtil{\Gamma}_0(4)$ of $\wtil{\GSp_4^+}(\R)$.
Let $j\geq0$ be an integer.
Then for any $V_j$-valued function $F(Z)$ on $\frakH_2$ and $(g,\phi(Z))\in\wtil{\GSp_4^+}(\R)$, we define the slash operator of weight $(k-\frac{1}{2},j)$ by
\begin{align*}
\left[F|_{k-\frac{1}{2},j} (g, \phi(Z)) \right] (Z)
=\nu(g)^\frac{j}{2} \phi(Z)^{-2k+1} \Sym_j(J(g,Z))\inv F(gZ).
\end{align*}

\begin{defn}
Let $F(Z)$ be a $V_j$-valued function on $Z\in \frakH_2$.
When $F$ satisfies the next conditions (0)-(2), we say that $F$ is a Siegel cusp form of weight $(k-\frac{1}{2},j)$, level $\Gamma_0(4)$, character $\left(\frac{-1}{\cdot}\right)^l$ ($l\in(\Z/2\Z)$).
\begin{enumerate}[(1)]
\setcounter{enumi}{-1}
\item $F$ is holomorphic;
\item $F|_{k-\frac{1}{2},j} \gamma = \left(\frac{-1}{\gamma}\right)^lF$ for all $\gamma \in \Gamma_0(4)$;
\item $F$ has a Fourier expansion of the following form:
\begin{align*}
F(Z)
=\sum_{\substack{T \in L_2^* \\ T>0}} A(T) \bfe(\tr(TZ)),
\end{align*}
\end{enumerate}
The space of such functions will be denoted by $S_{k-\frac{1}{2},j}(\Gamma_0(4), \left(\frac{-1}{\cdot}\right)^l)$.
Moreover, we write $S_{k-\frac{1}{2},j}^+(\Gamma_0(4), \left(\frac{-1}{\cdot}\right)^l)$ for the subspace consisting of those $F\in S_{k-\frac{1}{2},j}(\Gamma_0(4), \left(\frac{-1}{\cdot}\right)^l)$ such that $A(T)=0$ unless $T\equiv(-1)^{k+l-1} r \tp{r} \mod4L_2^*$ for some $r \in \Z^2$.
The subspace $S_{k-\frac{1}{2},j}^+(\Gamma_0(4), \left(\frac{-1}{\cdot}\right)^l)$ is called the plus space.
\end{defn}

When $l$ is even, we often write $S_{k-\frac{1}{2},j}^+(\Gamma_0(4))$ for $S_{k-\frac{1}{2},j}^+(\Gamma_0(4), \left(\frac{-1}{\cdot}\right)^l)$.

Now we define Hecke operators $T_s(p)$ and the $L$-functions of Hecke eigenforms of half-integral weight.
For any prime number $p$, we put
\begin{align*}
&
K_s(p^2)
=\left(\begin{array}{cccc}
1_{2-s}&&&\\
&p1_s&&\\
&&p^21_{2-s}&\\
&&&p1_s
\end{array}\right),
&
&
s=0,1,2.
&
\end{align*}
Let $F \in S_{k-\frac{1}{2}, j}(\Gamma_0(4), \left(\frac{-1}{\cdot}\right)^l)$.
Then for any odd prime $p$, we define the Hecke operators
\begin{align*}
F|_{k-\frac{1}{2},j} T_s(p)
=\sum_t \left(\frac{-1}{\wtil{g}_{s,t}}\right)^l F|_{k-\frac{1}{2}, j} \wtil{g}_{s,t},
\qquad
s=0, 1,
\end{align*}
by using the right coset decompositions
\begin{align*}
\wtil{\Gamma}_0(4) (K_s(p^2), p^{1-\frac{s}{2}}) \wtil{\Gamma}_0(4)
=\bigsqcup_t \wtil{\Gamma}_0(4) \wtil{g}_{s,t}
\end{align*}
of the double cosets of $\wtil{\Gamma}_0(4)$ containing $K_s(p^2)$.
Note that although $\wtil{g}_{s,t}$ is not an element of $\Gamma_0(4)$ for some $t$, we can define $\left(\frac{-1}{\wtil{g}_{s,t}}\right)$ in the natural way.
When $p=2$, if $F$ is in the plus space $S_{k-\frac{1}{2}, j}^+(\Gamma_0(4), \left(\frac{-1}{\cdot}\right)^l)$, we can also define the Hecke operators $T_s(2)$ on $F$ by using those of holomorphic or skew-holomorphic Jacobi forms.
The precise definition will be given later in \S\ref{Jacobi form}.
For a Hecke eigenform $F \in S_{k-\frac{1}{2}, j}^+(\Gamma_0(4), \left(\frac{-1}{\cdot}\right)^l)$, let $\eta(p)$ and $\omega(p)$ denote the eigenvalues of $T_1(p)$ and $T_0(p)$ associated with $F$, respectively.
The Euler $p$-factor of $L$-function of such $F$ for a prime $p$ is given by
\begin{equation*}
L(s,F)_p
=\left(
1 -\eta^*(p)p^{-s} +(p\omega^*(p) +p^{\nu-2}(1+p^2))p^{-2s} -\eta^*(p)p^{\nu-3s} +p^{2\nu-4s}
\right)\inv,
\end{equation*}
where $\eta^*(p)=\left(\frac{-1}{p}\right)^l p^{k+j-\frac{7}{2}} \eta(p)$, $\omega^*(p)=p^{2k+2j-7}\omega(p)$, and $\nu=2k+2j-3$.
The $L$-function $L(s,F)$ of $F$ is defined as an Euler product
\begin{equation*}
L(s,F)
=\prod_p L(s,F)_p,
\end{equation*}
where the product runs over all prime numbers $p$.

As in the case of elliptic cusp forms, the Petersson inner product can be defined on $S_{k-\frac{1}{2},j}^+(\Gamma_0(4), \left(\frac{-1}{\cdot}\right)^l)$, and it is known that $S_{k-\frac{1}{2},j}^+(\Gamma_0(4), \left(\frac{-1}{\cdot}\right)^l)$ has an orthonormal basis consisting of Hecke eigenforms.
\subsection{Ibukiyama's conjectures}

Now we can state Ibukiyama's conjectures, our main theorems.
\begin{thm}[Shimura type isomorphism on the Neben type]\label{main}
For any integer $k\geq3$ and any even integer $j\geq0$, there exists a linear isomorphism
\begin{equation*}
\rho^\itN : S_{k-\frac{1}{2},j}^+(\Gamma_0(4), \left(\frac{-1}{\cdot}\right)) \lra S_{j+3, 2k-6}(\Sp_4(\Z))
\end{equation*}
such that if $F \in S_{k-\frac{1}{2},j}^+(\Gamma_0(4), \left(\frac{-1}{\cdot}\right))$ is a Hecke eigenform, then so is $\rho^\itN(F) \in S_{j+3, 2k-6}(\Sp_4(\Z))$, and they satisfy
\begin{equation*}
L(s,F) =L(s,\rho^\itN(F), \spin).
\end{equation*}
\end{thm}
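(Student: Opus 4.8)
The plan is to parametrise both sides of the claimed isomorphism by the same family of elliptic $A$-parameters for $\SO_5$ and to read the isomorphism off this common parametrisation. Both $S_{k-\frac{1}{2},j}^+(\Gamma_0(4),\left(\frac{-1}{\cdot}\right))$ and $S_{j+3,2k-6}(\Sp_4(\Z))$ are finite-dimensional and, as recalled above, admit bases of Hecke eigenforms, so it suffices to construct a bijection between their sets of Hecke eigensystems which matches $L$-functions, together with the fact that each eigensystem occurs with multiplicity one on each side. I will identify both sets of eigensystems with the set of parameters of the form \eqref{12}, that is $\phi=\tau\boxtimes S_1$ with $\tau$ an irreducible cuspidal symplectic automorphic representation of $\GL_4(\A)$ that is unramified at every finite place and has the archimedean component forced by the weight, using Arthur's multiplicity formula on $\SO_5$ for one side and Gan--Ichino's on $\Mp_4$ for the other.

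First, the integral-weight side. Given a Hecke eigenform $F'\in S_{j+3,2k-6}(\Sp_4(\Z))$, its adelic lift generates an irreducible cuspidal automorphic representation of $\GSp_4(\A)$ with trivial central character, unramified at all finite places, whose archimedean component is the holomorphic (limit of) discrete series attached to the weight $(j+3,2k-6)$. Through the exceptional isomorphism $\PGSp_4\cong\SO_5$ this becomes a cuspidal automorphic representation of $\SO_5(\A)$, to which Arthur's multiplicity formula attaches an elliptic $A$-parameter $\phi$. By Lemmas \ref{api} and \ref{iap} this $\phi$ is tempered — so the non-tempered CAP parameters do not occur for these weights and this level — of the shape \eqref{12}, and $F'\mapsto\tau$ is a bijection onto the family of parameters described above, each occurring with multiplicity one in $L^2_{\mathrm{disc}}(\SO_5)$; in particular each Hecke eigensystem in $S_{j+3,2k-6}(\Sp_4(\Z))$ has multiplicity one.

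Next, the half-integral-weight side, which carries the real difficulty. Because the level is $\Gamma_0(4)$, one cannot see directly that the adelic lift of a plus-space eigenform to $\Mp_4(\A)$ is unramified at $2$, and indeed there is no notion of a spherical representation of $\Mp_4(\Q_2)$. I therefore route through Jacobi forms: by the canonical isomorphism \eqref{5.1.0} (Theorem \ref{5.1}), a Hecke eigenform $F$ in the plus space corresponds to a Hecke eigenform in $J_{\det^k\Sym_j,1}^{\star,\mathrm{cusp}}$ of the maximal level $\Sp^J_4(\Z)$, and the adelic lift of the latter is an irreducible cuspidal automorphic representation of the Jacobi group $\Sp^J_4(\A)$ that is unramified at every place. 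The Jacobi--metaplectic dictionary of \S\ref{Jacobi group} then converts this, preserving irreducibility, cuspidality and unramifiedness everywhere, into a genuine irreducible cuspidal automorphic representation of $\Mp_4(\A)$ relative to the fixed additive character $\psi$, still unramified at all finite places and with archimedean component determined by the weight $(k-\frac{1}{2},j)$. Gan--Ichino's multiplicity formula attaches to it an elliptic $A$-parameter, which by Lemmas \ref{apj} and \ref{jap} is tempered and of the shape \eqref{12} or \eqref{13}.

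Finally, I isolate the Neben type and match $L$-functions. For the character $\left(\frac{-1}{\cdot}\right)$, i.e. the odd choice of $l$ in \eqref{5.1.0}, Lemmas \ref{apj} and \ref{jap} force the attached parameter to be of the form \eqref{12} with irreducible $\tau$ — the reducible parameters \eqref{13} occur only in the Haupt case, where they are precisely the ones attached to the image of $\calL$ — and, combined with unramifiedness everywhere and the archimedean matching, the composite of \eqref{5.1.0} with the correspondence of \S\ref{Jacobi group} gives a bijection between the Hecke eigensystems of $S_{k-\frac{1}{2},j}^+(\Gamma_0(4),\left(\frac{-1}{\cdot}\right))$ and the very same parameters $\tau\boxtimes S_1$ that parametrise $S_{j+3,2k-6}(\Sp_4(\Z))$, with multiplicity one on each side. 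Composing the two bijections defines $\rho^\itN$ on eigenforms and, after extending linearly, on the whole space. For the $L$-functions, I would check that in the normalisations of \S\ref{Main} both $L(s,F)$ and $L(s,\rho^\itN(F),\spin)$ coincide with the same degree-$4$ Euler product, namely the standard $L$-function of $\tau$; the only delicate Euler factor is the one at $p=2$, defined on the plus-space side through the Hecke operators $T_s(2)$ on Jacobi forms, and it will match because the local Jacobi--metaplectic correspondence at $2$ from \S\ref{Jacobi group}, together with the local $\Mp_4$--$\SO_5$ theta correspondence, carries the relevant unramified data to the Satake parameter of $\tau_2$. The hard part is precisely this control at $p=2$: showing that a plus-space eigenform of level only $\Gamma_0(4)$ nonetheless gives rise to an everywhere-unramified automorphic representation of $\Mp_4(\A)$ and identifying its local factor there — which is exactly why the representation theory of the Jacobi groups in \S\ref{Jacobi group} is indispensable. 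A secondary source of care is the bookkeeping of archimedean Arthur parameters (holomorphic versus skew-holomorphic discrete series, and the weight shift by $j+1$ that is visible in Theorems \ref{ilc} and \ref{icc}), of the additive character $\psi$, and of the component-group characters $\eta$, so that the multiplicities $n_{\phi,\eta}$ and $m_{\phi,\eta}$ come out equal to $1$ and the resulting bijection is honest.
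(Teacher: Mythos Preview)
Your proposal is correct and follows essentially the same approach as the paper: both sides are parametrised, via Lemmas \ref{api}, \ref{iap} on the integral-weight side and Lemmas \ref{apj}, \ref{jap} (with $l=1$) on the half-integral-weight side, by the same set of everywhere unramified cuspidal symplectic $\tau$ on $\GL_4(\A)$ with $\tau_\infty=\calD_{k+j-\frac{3}{2}}\oplus\calD_{k-\frac{5}{2}}$, and the $L$-function identity holds because both equal $L(s-j-k+\tfrac{3}{2},\tau)$. The paper's proof is in fact just the one-line observation that the theorem follows from these four lemmas together with the existence of eigenbases; your write-up unpacks the same logic in more detail, with the only superfluous ingredient being the explicit appeal to the local $\Mp_4$--$\SO_5$ theta correspondence at $p=2$, since the Euler $2$-factor is matched directly by Lemma \ref{lfn} and the definition of $T_s(2)$ via Jacobi forms.
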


\begin{thm}[Lifting to the Haupt type]\label{lifting}
For any integer $k\geq0$ and any even integer $j\geq0$, there exists an injective linear map
\begin{equation*}
\calL : S_{2k-4}(\SL_2(\Z)) \otimes S_{2k+2j-2}(\SL_2(\Z)) \lra S_{k-\frac{1}{2},j}^+(\Gamma_0(4))
\end{equation*}
such that if $f \in S_{2k-4}(\SL_2(\Z))$ and $g \in S_{2k+2j-2}(\SL_2(\Z))$ are Hecke eigenforms, then so is $\calL(f\otimes g) \in S_{k-\frac{1}{2},j}^+(\Gamma_0(4))$, and they satisfy
\begin{equation*}
L(s, \calL(f\otimes g)) = L(s-j-1, f) L(s,g).
\end{equation*}
\end{thm}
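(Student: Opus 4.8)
The plan is to translate the assertion into a statement about the automorphic discrete spectrum of $\Mp_4$ and to deduce it from Gan-Ichino's multiplicity formula \cite{gi18,gi20}, the canonical isomorphism \eqref{5.1.0}, and the relation between representations of $\Sp^J_4$ and of $\Mp_4$ developed in \S\ref{Jacobi group}; the image of $\calL$ is to be the subspace of the plus space that corresponds to $A$-parameters of the shape \eqref{13}. First one reduces to the case in which both $S_{2k-4}(\SL_2(\Z))$ and $S_{2k+2j-2}(\SL_2(\Z))$ are nonzero, there being nothing to prove otherwise. Since each of these spaces has an orthonormal basis of Hecke eigenforms and $\calL$ is required to be linear, it suffices to define $\calL$ on the pure tensors $f\otimes g$ of eigenforms, verify the eigenform and $L$-function assertions there, and extend by linearity; injectivity of the extension then follows once distinct basis tensors are sent to eigenforms with distinct Hecke eigensystems.

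Fix Hecke eigenforms $f\in S_{2k-4}(\SL_2(\Z))$ and $g\in S_{2k+2j-2}(\SL_2(\Z))$ and let $\sigma_f,\sigma_g$ be the associated cuspidal automorphic representations of $\PGL_2(\A)$. Both are symplectic (their central characters are trivial, since the level is $\SL_2(\Z)$), unramified at every finite place, and tempered, and their archimedean components are the discrete series of the indicated weights, hence have distinct infinitesimal characters at $\infty$ because $2k-4<2k+2j-2$ for $j\geq0$. Therefore
\[
\phi \;=\; \sigma_f\boxtimes S_1 \;\oplus\; \sigma_g\boxtimes S_1
\]
is a multiplicity-free tempered elliptic $A$-parameter for $\SO_5$ of the form \eqref{13}. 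Applying Gan-Ichino's multiplicity formula to $\phi$ (for the additive character used in the adelic lifts of \S\ref{Jacobi group}), one computes the adelic component group $S_{\phi,\A}$, the local $A$-packets, and the sign character, and so determines which $\eta\in\what{S_{\phi,\A}}$ contribute to $L^2_{\mathrm{disc}}(\Mp_4)$ and with what multiplicity. The decisive output — to be established in Lemmas \ref{apj} and \ref{jap} — is that there is exactly one $\eta$ for which $m_{\phi,\eta}=1$ and $\pi_{\phi,\eta}$ contains a nonzero vector that is unramified at every finite place and whose archimedean component is the member of the local $A$-packet contributing holomorphic or skew-holomorphic vectors of weight $(k-\tfrac12,j)$ (the parity of $k$ deciding which); for all other $\eta$ the required local vector is absent. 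Transporting $\pi_{\phi,\eta}$ through the dictionary of \S\ref{Jacobi group} to an automorphic representation of $\Sp^J_4(\A)$ and then through \eqref{5.1.0} yields a nonzero Hecke eigenform in $S^+_{k-\frac12,j}(\Gamma_0(4))$, unique up to scalar; one fixes such a vector and sets it equal to $\calL(f\otimes g)$.

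The remaining points are formal. The eigenform property holds by construction. For the $L$-function identity, the spin $L$-function attached to $\phi$ factors as $L(s,\sigma_f)L(s,\sigma_g)$ in the unitary normalization; passing to the classical normalization of \S\ref{Main}, in which the plus-space $L$-function has motivic weight $\nu=2k+2j-3$, turns the $\sigma_g$-factor into $L(s,g)$ (as $g$ has motivic weight $\nu$) and the $\sigma_f$-factor into $L(s-j-1,f)$ (as $f$ has motivic weight $2k-5=\nu-2(j+1)$), so $L(s,\calL(f\otimes g))=L(s-j-1,f)L(s,g)$. For injectivity, $\phi$ recovers the unordered pair $\{\sigma_f,\sigma_g\}$, and the strict inequality of weights then recovers the ordered pair, hence $f$ and $g$ up to scalars; since automorphic representations attached to distinct $A$-parameters lie in distinct near-equivalence classes, the images of distinct basis tensors have distinct Hecke eigensystems and so are linearly independent, whence the linear extension of $\calL$ is injective.

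The main obstacle is the middle step, namely the local analysis underlying Lemmas \ref{apj} and \ref{jap}. One must (i) identify the archimedean $A$-packet of $\Mp_4$ attached to $\phi_\infty$ and single out its member that contributes the weight-$(k-\tfrac12,j)$ (skew-)holomorphic vectors, together with the character of the local component group it determines; (ii) pin down the unramified members and their characters at every finite place, including $p=2$, where there is no direct notion of sphericity for $\Mp_4(\Q_2)$ and one must instead argue on the Jacobi group $\Sp^J_4(\Q_2)$; and (iii) check that the product over all places of these local characters is the trivial character of $S_{\phi,\A}$ required by the sign condition in Gan-Ichino's formula, so that $m_{\phi,\eta}=1$ and in particular $\calL(f\otimes g)\neq0$. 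This is precisely where replacing the plus space (of level $\Gamma_0(4)$) by Jacobi forms of level $\Sp^J_4(\Z)$ becomes indispensable, and it rests on the full strength of the results of \S\ref{Jacobi group} together with the explicit shape of Gan-Ichino's formula.
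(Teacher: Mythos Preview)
Your proposal is correct and follows essentially the same route as the paper: reduce to the case where the source space is nonzero, attach to Hecke eigenforms $f,g$ the cuspidal $\GL_2$-representations $\sigma_f,\sigma_g$, form the tempered elliptic $A$-parameter $\phi=\sigma_f\boxtimes S_1\oplus\sigma_g\boxtimes S_1$, and invoke Lemma~\ref{jap} (whose proof packages the Gan--Ichino multiplicity formula together with the Jacobi-group dictionary of \S\ref{Jacobi group} and the isomorphism \eqref{5.1.0}) to produce the required eigenform in the plus space; injectivity and the $L$-function identity then follow as you describe. One small inaccuracy: in step~(iii) the condition from Theorem~\ref{gimf} is not that $\eta\circ\Delta$ be trivial but that it equal the (in general nontrivial) character $\wtil{\epsilon}_\phi$ of $S_\phi$; the actual verification, carried out in the proof of Lemma~\ref{apj} case~(5), shows that the archimedean character forced by Lemma~\ref{ds} matches $\wtil{\epsilon}_\phi$ precisely when $l=0$.
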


Note that if $k\leq7$, then $2k-4\leq10$, and $S_{2k-4}(\SL_2(\Z))=0$.
Thus the lifting theorem holds trivially with $\calL=0$ when $k\leq7$.

Let $S_{k-\frac{1}{2},j}^{+,0}(\Gamma_0(4))$ denote the orthogonal complement of the image of the injective map $\calL$.
\begin{thm}[Shimura type isomorphism on the Haupt type]\label{compl}
For any integer $k\geq3$ and any even integer $j\geq0$, there exists a linear isomorphism
\begin{equation*}
\rho^\itH : S_{k-\frac{1}{2},j}^{+,0}(\Gamma_0(4)) \lra S_{j+3, 2k-6}(\Sp_4(\Z))
\end{equation*}
such that if $F \in S_{k-\frac{1}{2},j}^{+,0}(\Gamma_0(4))$ is a Hecke eigenform, then so is $\rho^\itH(F) \in S_{j+3, 2k-6}(\Sp_4(\Z))$, and they satisfy
\begin{equation*}
L(s,F) =L(s,\rho^\itH(F), \spin).
\end{equation*}
\end{thm}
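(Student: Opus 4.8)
The plan is to realize $S^{+,0}_{k-\frac{1}{2},j}(\Gamma_0(4))$ and $S_{j+3,2k-6}(\Sp_4(\Z))$ as Hecke modules whose Hecke eigensystems are parametrized by one and the same set of elliptic $A$-parameters for $\SO_5$, and then to transport eigenforms across that common parametrization. This uses the same adelic machinery as Theorems \ref{main} and \ref{lifting}; the point that is special to Theorem \ref{compl} is the identification of the orthogonal complement of $\image(\calL)$ with exactly the half of the plus space that matches $S_{j+3,2k-6}(\Sp_4(\Z))$.

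First I would lift the source side adelically. Using the canonical isomorphism \eqref{5.1.0} of Theorem \ref{5.1} (the character being trivial, so $\star$ is determined by the parity of $k$), identify $S^+_{k-\frac{1}{2},j}(\Gamma_0(4))$ with the space $J^{\star,\mathrm{cusp}}_{\det^k\Sym_j,1}$ of Jacobi cusp forms of level $\Sp^J_4(\Z)$. Because this level is $\Sp^J_4(\Z)$, the standard argument shows that the adelic lift of a Jacobi Hecke eigenform generates an irreducible cuspidal automorphic representation of $\Sp^J_4(\A)$ that is unramified at every finite place; by the correspondence of \S\ref{Jacobi group} between representations of $\Sp^J_4$ with fixed nontrivial central character and genuine representations of $\Mp_4$, which preserves unramifiedness, this yields an irreducible cuspidal $\pi\subset L^2_{\mathrm{disc}}(\Mp_4)$, again unramified everywhere. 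Gan--Ichino's multiplicity formula \eqref{gi} attaches to $\pi$ an elliptic $A$-parameter $\phi$ for $\SO_5$ together with an $\eta\in\what{S_{\phi,\A}}$; since $\pi$ is cuspidal and everywhere unramified, Lemmas \ref{apj} and \ref{jap} force $\phi$ to be tempered of the form \eqref{12} $\phi=\tau\boxtimes S_1$ or \eqref{13} $\phi=\sigma\boxtimes S_1\oplus\sigma'\boxtimes S_1$, and the assignment (eigenform) $\mapsto\phi$ is a bijection onto the set of such $\phi$ compatible with the weight $(k,j)$, the relevant $m_{\phi,\eta}$ being $1$.

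Next I would separate the two halves and treat the target side. For Hecke eigenforms $f\in S_{2k-4}(\SL_2(\Z))$ and $g\in S_{2k+2j-2}(\SL_2(\Z))$, the associated $\GL_2(\A)$-representations $\sigma_f,\sigma_g$ are cuspidal and symplectic, and by Theorem \ref{lifting} the parameter attached to $\calL(f\otimes g)$ is exactly the type-\eqref{13} parameter built from $\{\sigma_f,\sigma_g\}$, the shift $s\mapsto s-j-1$ in $L(s,\calL(f\otimes g))=L(s-j-1,f)L(s,g)$ being the passage between the arithmetic and unitary normalizations; conversely every type-\eqref{13} parameter compatible with $(k,j)$ arises this way. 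Hence, under the bijection of the previous step, $\image(\calL)$ is the span of the Hecke eigenforms whose parameter is of type \eqref{13}, so its orthogonal complement $S^{+,0}_{k-\frac{1}{2},j}(\Gamma_0(4))$ — Hecke-stable since the Hecke operators are self-adjoint for the Petersson product — is the span of those whose parameter is of type \eqref{12}. On the other side, the adelic lift of a Hecke eigenform of $S_{j+3,2k-6}(\Sp_4(\Z))$ generates an irreducible cuspidal automorphic representation of $\GSp_4(\A)$ with trivial central character, unramified everywhere; via $\PGSp_4\cong\SO_5$ it gives a cuspidal member of $L^2_{\mathrm{disc}}(\SO_5)$, and Arthur's multiplicity formula with Lemmas \ref{api} and \ref{iap} attaches to it a tempered parameter of the form \eqref{12} $\phi=\tau\boxtimes S_1$, the assignment being a bijection onto exactly the set of type-\eqref{12} parameters appearing above, with $n_{\phi,\eta}=1$. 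Composing, I obtain a bijection between the Hecke eigensystems occurring in $S^{+,0}_{k-\frac{1}{2},j}(\Gamma_0(4))$ and those in $S_{j+3,2k-6}(\Sp_4(\Z))$; sending each eigenform to the (once-and-for-all normalized) eigenform with the matching parameter and extending linearly gives the linear isomorphism $\rho^\itH$, as both spaces are multiplicity-one Hecke modules with a basis of eigenforms. Finally $L(s,F)=L(s,\rho^\itH(F),\spin)$ is checked Euler factor by Euler factor: for odd $p$ both factors equal the degree-$4$ spin factor of the common local parameter $\phi_p$, computed from the Hecke eigenvalues via the explicit formulas of \S\ref{Main}; at $p=2$ the parameter $\phi_2$ is unramified by the first step, and the Euler $2$-factor of $L(s,F)$, defined in \S\ref{Jacobi form} through the Hecke operators on Jacobi forms, equals the same unramified spin factor.

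\emph{The main obstacle} is the first step: showing that the everywhere-unramified automorphic representation of $\Sp^J_4(\A)$ coming from a level-$\Sp^J_4(\Z)$ Jacobi eigenform transfers, under the \S\ref{Jacobi group} correspondence, to a representation of $\Mp_4(\A)$ unramified at $p=2$ — there being no naive notion of a spherical representation of $\Mp_4(\Q_2)$ — and then pinning down precisely which tempered $A$-parameters of types \eqref{12} and \eqref{13} occur, each with multiplicity one, for a given weight $(k,j)$; this is the content of Lemmas \ref{apj} and \ref{jap} (and, on the target side, Lemmas \ref{api} and \ref{iap}). Once these two bijections onto the same parameter set are in place, assembling $\rho^\itH$ and matching the $L$-functions is formal.
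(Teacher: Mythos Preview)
Your proposal is correct and follows essentially the same route as the paper's own proof: you lift Hecke eigenforms in the plus space through Jacobi forms to $\Mp_4(\A)$, invoke Lemmas \ref{apj} and \ref{jap} to see that the attached $A$-parameters are exactly those of types \eqref{12} and \eqref{13}, identify $\image(\calL)$ with the type-\eqref{13} span so that $S^{+,0}_{k-\frac{1}{2},j}(\Gamma_0(4))$ is the type-\eqref{12} span, and then match against $S_{j+3,2k-6}(\Sp_4(\Z))$ via Lemmas \ref{api} and \ref{iap}. This is precisely the argument of \S\ref{Proof}, with your exposition spelling out the $L$-function comparison and the role of the $p=2$ unramifiedness (Theorem \ref{ccu}) somewhat more explicitly than the paper does.
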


Our main aim in this paper is to prove them.
By these theorems, we obtain the following corollaries.

\begin{cor}
For any integer $k\geq3$ and any even integer $j\geq0$, there exists a linear isomorphism
\begin{equation*}
S_{k-\frac{1}{2},j}^{+,0}(\Gamma_0(4)) \cong S_{k-\frac{1}{2},j}^+(\Gamma_0(4), \left(\frac{-1}{\cdot}\right))
\end{equation*}
which preserves the $L$-functions.
\end{cor}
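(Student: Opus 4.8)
The plan is to compose the two Shimura-type isomorphisms already established. Theorem~\ref{main} provides a linear isomorphism $\rho^\itN : S_{k-\frac{1}{2},j}^+(\Gamma_0(4),(\frac{-1}{\cdot})) \to S_{j+3,2k-6}(\Sp_4(\Z))$ carrying Hecke eigenforms to Hecke eigenforms and satisfying $L(s,F)=L(s,\rho^\itN(F),\spin)$, while Theorem~\ref{compl} provides a linear isomorphism $\rho^\itH : S_{k-\frac{1}{2},j}^{+,0}(\Gamma_0(4)) \to S_{j+3,2k-6}(\Sp_4(\Z))$ with the analogous properties. Since these two isomorphisms have the same target space, I would set
\begin{equation*}
\Phi = (\rho^\itN)^{-1} \circ \rho^\itH : S_{k-\frac{1}{2},j}^{+,0}(\Gamma_0(4)) \lra S_{k-\frac{1}{2},j}^+\!\left(\Gamma_0(4),\left(\tfrac{-1}{\cdot}\right)\right),
\end{equation*}
which is a linear isomorphism as a composite of two such.

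Next I would check that $\Phi$ sends Hecke eigenforms to Hecke eigenforms with the same $L$-function. For a Hecke eigenform $F \in S_{k-\frac{1}{2},j}^{+,0}(\Gamma_0(4))$, Theorem~\ref{compl} gives that $\rho^\itH(F)$ is a Hecke eigenform with $L(s,\rho^\itH(F),\spin)=L(s,F)$. It then remains to see that $(\rho^\itN)^{-1}$ sends the eigenform $\rho^\itH(F)$ back to an eigenform: this follows because both $S_{k-\frac{1}{2},j}^+(\Gamma_0(4),(\frac{-1}{\cdot}))$ and $S_{j+3,2k-6}(\Sp_4(\Z))$ admit orthonormal bases of Hecke eigenforms, the $A$-parameters attached to the eigenforms of $S_{j+3,2k-6}(\Sp_4(\Z))$ arising here are tempered of the form~\eqref{12} and hence satisfy multiplicity one, so $\rho^\itN$ carries an eigenform basis of the source to scalar multiples of the eigenform basis of the target, and therefore its inverse carries eigenforms to eigenforms. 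Granting this, $\Phi(F)$ is a Hecke eigenform and
\begin{equation*}
L(s,\Phi(F)) = L\!\left(s,\rho^\itN(\Phi(F)),\spin\right) = L\!\left(s,\rho^\itH(F),\spin\right) = L(s,F),
\end{equation*}
where the first equality is the $L$-function compatibility of Theorem~\ref{main} and the last that of Theorem~\ref{compl}. Thus $\Phi$ is an $L$-function-preserving linear isomorphism.

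The only point that is not purely formal is the claim that $(\rho^\itN)^{-1}$ respects Hecke eigenforms, and I expect this to be the main (and only) obstacle; it is handled by the multiplicity-one statement for the relevant tempered $A$-parameters. Equivalently, one may bypass it by constructing $\Phi$ directly on a common basis of eigenforms indexed by their spinor $L$-functions, using that the set of spinor $L$-functions realized by $S_{k-\frac{1}{2},j}^{+,0}(\Gamma_0(4))$ coincides with that realized by $S_{k-\frac{1}{2},j}^+(\Gamma_0(4),(\frac{-1}{\cdot}))$, namely those coming from parameters of the form~\eqref{12}. No input beyond Theorems~\ref{main} and~\ref{compl} is required.
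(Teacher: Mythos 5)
Your proposal is correct and matches the paper's intent: the corollary is stated there as an immediate consequence of Theorems \ref{main} and \ref{compl}, obtained exactly by composing $(\rho^\itN)^{-1}\circ\rho^\itH$, with the eigenform-to-eigenform issue settled (as you note) by the multiplicity-one parametrization of eigenforms by tempered parameters of the form \eqref{12} from Lemmas \ref{api}, \ref{iap}, \ref{apj}, and \ref{jap}.
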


\begin{cor}
For any integer $k\geq3$ and any even integer $j\geq0$, we have
\begin{equation*}
\dim S_{k-\frac{1}{2},j}^+(\Gamma_0(4)) - \dim S_{2k-4}(\SL_2(\Z)) \times \dim S_{2k+2j-2}(\SL_2(\Z))
=\dim S_{k-\frac{1}{2},j}^+(\Gamma_0(4), \left(\frac{-1}{\cdot}\right)).
\end{equation*}
\end{cor}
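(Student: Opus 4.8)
The plan is to deduce this identity purely formally from the three main theorems, using only that $\dim(A\otimes B)=\dim A\cdot\dim B$ and that a vector space which is an orthogonal direct sum of two subspaces has dimension equal to the sum of their dimensions. First I would note that, by Theorem~\ref{lifting}, the map
\begin{equation*}
\calL : S_{2k-4}(\SL_2(\Z)) \otimes S_{2k+2j-2}(\SL_2(\Z)) \lra S_{k-\frac{1}{2},j}^+(\Gamma_0(4))
\end{equation*}
is \emph{injective}, so its image has dimension $\dim S_{2k-4}(\SL_2(\Z)) \times \dim S_{2k+2j-2}(\SL_2(\Z))$. Since $S_{k-\frac{1}{2},j}^{+,0}(\Gamma_0(4))$ is by definition the orthogonal complement of this image inside $S_{k-\frac{1}{2},j}^+(\Gamma_0(4))$ with respect to the Petersson inner product, we obtain
\begin{equation*}
\dim S_{k-\frac{1}{2},j}^+(\Gamma_0(4))
= \dim S_{k-\frac{1}{2},j}^{+,0}(\Gamma_0(4)) + \dim S_{2k-4}(\SL_2(\Z)) \times \dim S_{2k+2j-2}(\SL_2(\Z)).
\end{equation*}

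Next I would invoke Theorem~\ref{compl}, which furnishes a linear isomorphism $\rho^\itH : S_{k-\frac{1}{2},j}^{+,0}(\Gamma_0(4)) \lra S_{j+3, 2k-6}(\Sp_4(\Z))$, and Theorem~\ref{main}, which furnishes a linear isomorphism $\rho^\itN : S_{k-\frac{1}{2},j}^+(\Gamma_0(4), \left(\frac{-1}{\cdot}\right)) \lra S_{j+3, 2k-6}(\Sp_4(\Z))$. Comparing dimensions through the common target $S_{j+3,2k-6}(\Sp_4(\Z))$ gives
\begin{equation*}
\dim S_{k-\frac{1}{2},j}^{+,0}(\Gamma_0(4)) = \dim S_{k-\frac{1}{2},j}^+(\Gamma_0(4), \left(\frac{-1}{\cdot}\right)),
\end{equation*}
consistently with the preceding corollary. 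Substituting this into the displayed dimension formula above and rearranging yields exactly the asserted equality.

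Since the argument is pure bookkeeping once Theorems~\ref{main}, \ref{lifting}, and \ref{compl} are available, I do not anticipate any real obstacle. The only points that deserve a moment's care are that it is the injectivity of $\calL$ (and not merely its being well defined) that identifies the dimension of its image with the product $\dim S_{2k-4}(\SL_2(\Z)) \times \dim S_{2k+2j-2}(\SL_2(\Z))$, and that the definition of $S_{k-\frac{1}{2},j}^{+,0}(\Gamma_0(4))$ as an orthogonal complement is what makes the two relevant dimensions add; all the genuine content lies in the proofs of the three main theorems.
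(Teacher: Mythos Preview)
Your proposal is correct and matches the paper's approach: the corollary is stated without proof as an immediate consequence of Theorems~\ref{main}, \ref{lifting}, and \ref{compl}, and the dimension-counting argument you give (injectivity of $\calL$, orthogonal decomposition defining $S_{k-\frac{1}{2},j}^{+,0}$, and the two isomorphisms to $S_{j+3,2k-6}(\Sp_4(\Z))$) is precisely the intended derivation.
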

\section{Multiplicity formulae for split $\SO_5$ and $\Mp_4$}\label{MF}
In this section, we recall the multiplicity formulae for split $\SO_5$ \cite{art} and $\Mp_4$ \cite{gi18, gi20}.
They play important roles in this paper.
\subsection{Orthogonal and metaplectic groups}
First, we fix some notations for orthogonal and metaplectic groups.

We shall write $\SO_{2n+1}$ and $\Sp_{2n}$ for the split odd special orthogonal group and the symplectic group of rank $n$, which are defined by
\begin{align*}
\SO_{2n+1}
&=
\Set{h \in \SL_{2n+1} | \tp{h} \left(\begin{array}{cc}1_{n+1}&\\&-1_n\end{array}\right) h = \left(\begin{array}{ccc}1_{n+1}&\\&-1_n\end{array}\right)},\\
\Sp_{2n}
&=
\Set{g \in \SL_{2n} | \tp{g} \left(\begin{array}{cc}&1_n\\-1_n&\end{array}\right) g = \left(\begin{array}{cc}&1_n\\-1_n&\end{array}\right)},
\end{align*}
respectively.

Let $F$ be a local field of characteristic 0.
The 2-cocycle $c_F(-,-):\Sp_{2n}(F)\times \Sp_{2n}(F) \to \{\pm1\}$ given by Ranga Rao \cite[\S5]{rao} defines a double cover
\begin{equation*}
1 \lra \set{\pm1} \lra \Mp_{2n}(F) \lra \Sp_{2n}(F) \lra 1,
\end{equation*}
which is nonlinear unless $F=\C$.
The group $\Mp_{2n}(F)$ is called the metaplectic group over $F$.
We shall identify $\Mp_{2n}(F) = \Sp_{2n}(F) \times \{\pm1\}$ as sets, and then the multiplication law is given by
\begin{equation*}
(g,\epsilon) \cdot (g',\epsilon')
=(gg', \epsilon\epsilon' c_F(g,g')).
\end{equation*}
Moreover, if $F$ is a nonarchimedean local field of residual characteristic other than 2, with the ring of integers $\calO$, then the covering splits over $\Sp_{2n}(\calO)$.
Let us write $s_F$ for this splitting:
\begin{equation}\label{spl}
\Sp_{2n}(\calO) \into \Mp_{2n}(F) , \quad g \mapsto (g, s_F(g)).
\end{equation}

Let $F$ be a number field.
For any place $v$ of $F$, we shall write $c_v=c_{F_v}$ and $s_v=s_{F_v}$, for simplicity.
For a finite set $\mathfrak{S}$ of places of $F$ that contains all places above $\infty$ and 2, put
\begin{equation*}
\Sp_{2n}(\A_F)_\mathfrak{S}
=\prod_{v\in\mathfrak{S}}\Sp_{2n}(F_v) \times \prod_{v \notin \mathfrak{S}}\Sp_{2n}(\calO_v),
\end{equation*}
where $\calO_v$ denotes the ring of integers of $F_v$.
Then we define the double covering $\Mp_{2n}(\A_F)_{\mathfrak{S}_1}\to\Sp_{2n}(\A_F)_{\mathfrak{S}_1}$ by the 2-cocycle $\prod_{v\in\mathfrak{S}}c_v(-,-)$.
In other words, we set
\begin{equation*}
\Mp_{2n}(\A_F)_\mathfrak{S}
=\Sp_{2n}(\A_F)_\mathfrak{S} \times \set{\pm1}
\end{equation*}
as sets, and define the multiplication by
\begin{equation*}
((g_v)_v, \epsilon)\cdot((g_v')_v, \epsilon')
=((g_vg_v')_v, \epsilon\epsilon'\prod_{v \in \mathfrak{S}}c_v(g_v,g_v')).
\end{equation*}
For two such finite sets $\mathfrak{S}_1\subset\mathfrak{S}_2$, we define the embedding $\Mp_{2n}(\A_F)_{\mathfrak{S}_1}\into\Mp_{2n}(\A_F)_{\mathfrak{S}_2}$ by
\begin{equation*}
((g_v)_v, \epsilon)
\mapsto ((g_v)_v, \epsilon\prod_{v \in \mathfrak{S}_2\setminus\mathfrak{S}_1}s_v(g_v)).
\end{equation*}
The global metaplectic group $\Mp_{2n}(\A_F)$ is defined by the inductive limit
\begin{equation*}
\Mp_{2n}(\A_F)
=\varinjlim_{\mathfrak{S}} \Mp_{2n}(\A_F)_\mathfrak{S},
\end{equation*}
where $\mathfrak{S}$ extends over all such finite sets of places of $F$.
This is a double cover of $\Sp_{2n}(\A_F)$:
\begin{equation*}
1 \lra \set{\pm1} \lra \Mp_{2n}(\A_F) \lra \Sp_{2n}(\A_F) \lra 1,
\end{equation*}
but note that in the expression $(g,\epsilon)\in\Mp_{2n}(\A_F)_\frakS$, the second component $\epsilon=\pm1$ depends on the choice of $\mathfrak{S}$.
Thus, we cannot identify $\Mp_{2n}(\A_F)$ with $\Sp_{2n}(\A_F)\times\{\pm1\}$ as sets.
The covering splits over $\Sp_{2n}(F)$, and the image of $\gamma \in \Sp_{2n}(F)$ is given by $(\gamma, 1) \in \Mp_{2n}(\A_F)_\mathfrak{S}$, for sufficiently large $\mathfrak{S}$ depending on $\gamma$.
In this paper, we regard $\Sp_{2n}(F)$ as a subgroup of $\Mp_{2n}(\A_F)$ in this way.\\

A representation of the metaplectic group over a local field or a ring of adeles is said to be genuine if it does not factor through the covering map.
A function on the metaplectic group is also said to be genuine if it satisfies the same condition.
Let $L^2(\Mp_{2n})$ be the subspace of $L^2(\Sp_{2n}(F) \backslash \Mp_{2n}(\A_F))$ consisting of the genuine functions, that is to say, the maximum subspace on which $\{\pm1\}$ acts as the nontrivial character.

\subsection{Elliptic $A$-parameters}
Let $F$ be a number field, $\A_F$ ring of adeles of $F$, and $\psi$ a nontrivial additive character of $F\backslash \A_F$.
The notion of elliptic $A$-parameters for $\SO_5$ and for $\Mp_4$ are the same.
Recall from \cite{art, gi18, gi20} that an elliptic $A$-parameter for $\SO_5$ or $\Mp_4$ is a formal unordered finite direct sum
\begin{align*}
\phi
=\bigoplus_i \phi_i \boxtimes S_{d_i},
\end{align*}
where
\begin{itemize}
\item each $\phi_i$ is an irreducible self-dual cuspidal automorphic representation of $\GL_{n_i}(\A_F)$;
\item $S_d$ denotes the unique irreducible $d$-dimensional representation of $\SL_2(\C)$;
\item if $d_i$ is odd, then $\phi_i$ is symplectic;
\item if $d_i$ is even, then $\phi_i$ is orthogonal;
\item if $(\phi_i, d_i)=(\phi_j, d_j)$, then $i=j$;
\item $\sum_i n_i d_i = 4$.
\end{itemize}
Moreover, if $d_i=1$ for all $i$, then we say that $\phi$ is tempered.

The global component group $S_\phi$ of an elliptic $A$-parameter $\phi=\bigoplus_i \phi_i \boxtimes S_{d_i}$ is defined as a free $\Z/2\Z$-module
\begin{align*}
S_\phi
=\bigoplus_i (\Z/2\Z) a_i
\end{align*}
with a formal basis $\{a_i\}$ such that $a_i$ corresponds to $\phi_i\boxtimes S_{d_i}$.
Recall also that Arthur \cite{art} associated a character $\epsilon_\phi$ of $S_\phi$ with $\phi$, and Gan-Ichino \cite{gi20} did another character $\wtil{\epsilon}_\phi$.
\subsection{Local $A$-packets}
For an elliptic $A$-parameter $\phi=\bigoplus_i \phi_i \boxtimes S_{d_i}$ for $\SO_5$ or $\Mp_4$ and a place $v$ of $F$, the localization $\phi_v : L_{F_v} \times \SL_2(\C) \to \Sp_4(\C)$ is defined by
\begin{align*}
\phi_v
= \bigoplus_i \phi_{i, v} \boxtimes S_{d_i}.
\end{align*}
Here, the irreducible representation $\phi_{i,v}$ of $\GL_{n_i}(F_v)$ is identified with an $L$-parameter $L_{F_v} \to \GL_{n_i}(\C)$ via the local Langlands correspondence \cite{lan, ht, hen}.
For such $\phi_v$, the associated $L$-parameter $\varphi_{\phi_v} : L_{F_v} \to \Sp_4(\C)$ is defined by
\begin{align*}
\varphi_{\phi_v} (w)
=\phi_v(w, \mmatrix{|w|_v^{\frac{1}{2}}}{}{}{|w|_v^{-\frac{1}{2}}}).
\end{align*}
We let $S_{\phi_v}$ and $S_{\varphi_{\phi_v}}$ denote the component groups of the centralizers $\cent(\image(\phi_v), \Sp_4(\C))$ and $\cent(\image(\varphi_{\phi_v}), \Sp_4(\C))$, respectively.
Note that there exists a natural map $S_\phi \to S_{\phi_v}$.

Let us write $\Pi_{\phi_v}(\SO_5)$ and $\Pi_{\phi_v, \psi_v}(\Mp_4)$ for the local $A$-packets associated to $\phi_v$.
Recall that $\Pi_{\phi_v}(\SO_5)$, given by Arthur \cite{art}, is a finite set
\begin{align*}
\Pi_{\phi_v}(\SO_5)
=\Set{\sigma_{\eta_v} | \eta_v \in \what{S}_{\phi_v}}
\end{align*}
of semisimple representations of $\SO_5(F_v)$ of finite length indexed by characters of $S_{\phi_v}$, and $\Pi_{\phi_v, \psi_v}(\Mp_4)$, given by Gan-Ichino \cite{gi18, gi20}, is a finite set
\begin{align*}
\Pi_{\phi_v, \psi_v}(\Mp_4)
=\Set{\pi_{\eta_v}=\pi_{\eta_v, \psi_v} | \eta_v \in \what{S}_{\phi_v}}
\end{align*}
of semisimple genuine representations of $\Mp_4(F_v)$ of finite length indexed by characters of $S_{\phi_v}$.
Note that the latter packet $\Pi_{\phi_v, \psi_v}(\Mp_4)$ depends on the additive character $\psi_v$ of $F_v$.
Recall also that
\begin{itemize}
\item if $v$ is a finite place and $\sigma_{\eta_v}$ (resp. $\pi_{\eta_v', \psi_v}$) has an unramified constituent, then $\phi_{i,v}$ are unramified and $\eta_v$ (resp. $\eta_v'$) is a trivial character;
\item if $\phi$ is tempered, then the local $A$-packets $\Pi_{\phi_v}(\SO_5)$ and $\Pi_{\phi_v, \psi_v}(\Mp_4)$ coincide with $L$-packets associated to the $L$-parameter $\varphi_{\phi_v}=\phi_v$ given by \cite{art, ab95, ab98, gs}.
\end{itemize}

\subsection{Multiplicity formulae}
Let us write $L^2_{\mathrm{disc}}(\Mp_4)$ for the discrete spectrum of the genuine unitary representation $L^2(\Mp_4)$ of $\Mp_4(\A_F)$, and $L^2_{\mathrm{disc}}(\SO_5)$ for the discrete spectrum of the unitary representation $L^2(\SO_5(F) \backslash \SO_5(\A_F))$ of $\SO_5(\A_F)$.
The multiplicity formulae describe the decomposition of $L^2_{\mathrm{disc}}(\Mp_4)$ and $L^2_{\mathrm{disc}}(\SO_5)$ into near equivalence classes of irreducible representations, and the multiplicity of any irreducible representation in each near equivalence classes.
Here we say that two irreducible representations $\pi=\otimes_v\pi_v$ and $\pi'=\otimes_v\pi_v'$ of $\Mp_4(\A_F)$ or $\SO_5(\A_F)$ are nearly equivalent if $\pi_v$ and $\pi_v'$ are equivalent for almost all places $v$ of $F$.

The adelic component group $S_{\phi, \A}$ of an elliptic $A$-parameter $\phi$ for $\SO_5$ or $\Mp_4$ is defined by the infinite direct product
\begin{align*}
S_{\phi, \A}
=\prod_v S_{\phi_v},
\end{align*}
and we shall write $\Delta$ for the diagonal map $S_\phi \to S_{\phi, \A}$.
Let $\what{S}_{\phi, \A}$ be the group of continuous characters of $S_{\phi, \A}$.
Note that $\what{S}_{\phi, \A}= \bigoplus_v \what{S}_{\phi_v}$.
For any $\eta=\bigotimes_v \eta_v \in \what{S}_{\phi, \A}$, let $\sigma_\eta$ and $\pi_\eta=\pi_{\eta, \psi}$ denote semisimple representations
\begin{align*}
\sigma_\eta &= \bigotimes_v \sigma_{\eta_v},&
\pi_{\eta, \psi} &= \bigotimes_v \pi_{\eta_v, \psi_v}
\end{align*}
of $\SO_5(\A_F)$ and $\Mp_4(\A_F)$, respectively.
Then the multiplicity formulae for $\SO_5$ and $\Mp_4$ are stated as follows.
\begin{thm}[{\cite[Theorem 1.5.2]{art}}]\label{amf}
For every elliptic $A$-parameter $\phi$ for $\SO_5$, put
\begin{align*}
L^2_\phi(\SO_5)&=\bigoplus_{\eta \in \what{S}_{\phi, \A}} n_\eta \sigma_\eta,\\
n_\eta&=
\begin{cases}
1, & \text{if $\eta \circ \Delta = \epsilon_\phi$},\\
0, & \text{otherwise}.
\end{cases}
\end{align*}
Then each $L^2_\phi(\SO_5)$ is a full near equivalence class of irreducible representations in the discrete spectrum $L^2_{\mathrm{disc}}(\SO_5)$, and $L^2_{\mathrm{disc}}(\SO_5)$ can be decomposed into a direct sum
\begin{align*}
L^2_{\mathrm{disc}}(\SO_5)
=\bigoplus_\phi L^2_\phi(\SO_5).
\end{align*}
\end{thm}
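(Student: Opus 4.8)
The plan is to quote Arthur's classification directly: the theorem stated above is the specialization to $\SO_5$ (the case $n=2$) of \cite[Theorem 1.5.2]{art}, so strictly speaking nothing new is required. Since reproducing Arthur's book is out of the question, I will only indicate the structure of his argument and why the statement takes the displayed form in our low-rank situation; in the body of the paper this ``proof'' will simply be the citation, with the paragraphs below serving to orient the reader.

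The backbone of Arthur's proof is a comparison of trace formulae. On one side sits the stable trace formula for the quasi-split group $\SO_{2n+1}$ (here $n=2$), which is split, so no inner-form subtleties intervene; on the other side sits the twisted trace formula for $\GL_{2n+1}$, that is $\GL_5$, equipped with its standard outer automorphism, whose twisted dual group is $\Sp_{2n}(\C)=\Sp_4(\C)$. The self-dual cuspidal representations $\phi_i$ entering an elliptic $A$-parameter are precisely the constituents furnished by the classification of the discrete spectrum of $\GL_N$ (Moeglin-Waldspurger) together with the twisted endoscopic character identities for this $\GL_5$-datum. Comparing the two sides term by term in the fine spectral expansion, running Arthur's induction on the rank, and carrying out the sign analysis of the global normalized intertwining operators, one obtains simultaneously the decomposition of $L^2_{\mathrm{disc}}(\SO_5)$ into near-equivalence classes indexed by elliptic $A$-parameters $\phi$ and the multiplicity formula $n_\eta = \langle\, \cdot\,, \epsilon_\phi\rangle$, where $\epsilon_\phi$ is Arthur's sign character built from the symplectic root numbers $\epsilon(1/2,\phi_i\times\phi_j)$. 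The local input is Arthur's construction of the $A$-packets $\Pi_{\phi_v}(\SO_5)$ and the local intertwining relation; by the facts recalled above these collapse, whenever $\phi$ is tempered, to the already-known tempered $L$-packets of $\SO_5 \cong \PGSp_4$, and for the applications in this paper only tempered $\phi$ of the shapes \eqref{12} and \eqref{13} ever occur.

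The hardest part is exactly the part I am permitting myself to cite: Arthur's theorem rests on the full stabilization of the ordinary and twisted trace formulae, which in turn relies on the weighted fundamental lemma, the stabilization of the twisted trace formula, and a compatible normalization of transfer factors, as well as on a delicate multi-step induction whose base cases and whose working hypotheses in the twisted setting are themselves substantial. Even restricting attention to tempered parameters of $\SO_5$ --- all that is needed here --- does not escape this machinery, so there is no genuinely shorter route worth writing out. I would therefore present the theorem as a consequence of \cite{art}, and reserve independent effort for the later lemmas, where the attachment of $A$-parameters of the forms \eqref{12} and \eqref{13} to Hecke eigenforms of level one is where the actual content of this paper begins.
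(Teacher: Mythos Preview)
Your proposal is correct and matches the paper's treatment: the paper states this theorem purely as a citation of \cite[Theorem 1.5.2]{art} and gives no proof whatsoever, so your plan to simply cite Arthur is exactly what is done. The orienting paragraphs you include go beyond what the paper provides, but they are accurate commentary and harmless as exposition.
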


\begin{thm}[{\cite[Theorem 2.1]{gi20}}]\label{gimf}
For every elliptic $A$-parameter $\phi$ for $\Mp_4$, put
\begin{align*}
L^2_{\phi,\psi}(\Mp_4)&=\bigoplus_{\eta \in \what{S}_{\phi, \A}} m_\eta \pi_\eta,\\
m_\eta&=
\begin{cases}
1, & \text{if $\eta \circ \Delta = \wtil{\epsilon}_\phi$},\\
0, & \text{otherwise}.
\end{cases}
\end{align*}
Then each $L^2_{\phi,\psi}(\Mp_4)$ is a full near equivalence class of irreducible representations in the discrete spectrum $L^2_{\mathrm{disc}}(\Mp_4)$, and $L^2_{\mathrm{disc}}(\Mp_4)$ can be decomposed into a direct sum
\begin{align*}
L^2_{\mathrm{disc}}(\Mp_4)
=\bigoplus_\phi L^2_{\phi,\psi}(\Mp_4).
\end{align*}
\end{thm}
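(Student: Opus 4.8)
Since this is \cite[Theorem 2.1]{gi20}, the plan is to recall and implement the strategy of Gan-Ichino, which transports Arthur's multiplicity formula for $\SO_5$ (Theorem \ref{amf}) to $\Mp_4$ by means of the global theta correspondence. The relevant geometry is that over a number field $F$ there are exactly two quadratic spaces of dimension $5$ with trivial discriminant that enter here: the split one, whose special orthogonal group is the quasi-split $\SO_5 \cong \PGSp_4$, and its inner form $\SO_5'$ attached to the quaternion division algebra (more precisely, one for each admissible collection of local Hasse invariants). For the fixed additive character $\psi$, one has Weil representations and theta lifts $\theta_\psi$ between genuine representations of $\Mp_4$ and representations of $\SO_5$ or $\SO_5'$, both locally at every place $v$ and globally. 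Gan-Ichino in fact \emph{define} the local $A$-packet $\Pi_{\phi_v,\psi_v}(\Mp_4)$ and the labelling $\eta_v \leftrightarrow \pi_{\eta_v,\psi_v}$ by pulling back, through the local theta correspondence normalised by $\psi_v$, the $A$-packets of $\SO_5(F_v)$ and $\SO_5'(F_v)$ furnished by Arthur and by the local Langlands classification for $\GSp_4$ and its inner form; so the content of the theorem is that these local recipes glue into the asserted global decomposition with the asserted multiplicities.

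First I would fix an elliptic $A$-parameter $\phi=\bigoplus_i \phi_i\boxtimes S_{d_i}$ and a continuous character $\eta=\bigotimes_v\eta_v$ of $S_{\phi,\A}$ with $\eta\circ\Delta=\wtil\epsilon_\phi$, and realise $\pi_\eta=\bigotimes_v\pi_{\eta_v,\psi_v}$ as a global theta lift. The choice of target, namely $\SO_5$ versus $\SO_5'$, and of the orthogonal-side character $\eta'$, is forced by $\eta$ through the behaviour of the local theta lifts; the crucial bookkeeping is that one can arrange Arthur's condition $\eta'\circ\Delta=\epsilon_\phi$ exactly when $\eta\circ\Delta=\wtil\epsilon_\phi$, because $\wtil\epsilon_\phi$ is Arthur's $\epsilon_\phi$ twisted by a sign built from the root numbers $\epsilon(\frac{1}{2},\phi_i)$ of the constituents, this twist being dictated by the Rallis inner product formula. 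That formula expresses the Petersson norm of the global theta lift of $\sigma_{\eta'}\in L^2_\phi(\SO_5)$ (or $\SO_5'$) as a nonzero constant times a central $L$-value assembled from the $\phi_i$; since the $\phi_i$ are cuspidal, the factors at the edge of the critical strip are nonzero, so the lift to one of the two spaces is nonzero exactly when a certain global sign is $+1$, and otherwise the lift to the other space is nonzero. Running this over all $\eta$ permitted by the formula shows that each $\pi_\eta$ occurs in $L^2_{\mathrm{disc}}(\Mp_4)$ with multiplicity at least one; multiplicity exactly one, and the separation of distinct $(\phi,\eta)$ into distinct near-equivalence classes (equivalently, that $\phi$ is recovered from $\pi_v$ for almost all $v$), then follow from the corresponding properties on the $\SO_5$/$\SO_5'$ side in Theorem \ref{amf} together with Howe duality, i.e.\ local uniqueness of theta lifts. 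Exhaustion of $L^2_{\mathrm{disc}}(\Mp_4)$ goes the other way: any irreducible genuine automorphic representation of $\Mp_4(\A_F)$ has a nonzero global theta lift to $\SO_5$ or $\SO_5'$ by the tower property and local non-vanishing, Theorem \ref{amf} places that lift in some $L^2_\phi(\SO)$, and pulling it back identifies the original representation as a $\pi_\eta$; this yields $L^2_{\mathrm{disc}}(\Mp_4)=\bigoplus_\phi L^2_{\phi,\psi}(\Mp_4)$.

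The main obstacle, and the reason \cite{gi20} is a substantial paper beyond the partial result \cite{gi18} valid for all ranks, is establishing the non-vanishing statements that make the theta transfer an equivalence rather than a one-way comparison. Concretely one needs: (i) non-vanishing of the archimedean and, most delicate here, the $2$-adic and other ramified \emph{local} theta lifts between $\Mp_4(F_v)$ and $\SO_5(F_v)$ or $\SO_5'(F_v)$ for every constituent of every local $A$-packet; (ii) non-vanishing of the completed $L$-values entering the Rallis inner product formula, which is standard for cuspidal $\phi_i$ but requires the analytic theory of those $L$-functions; and (iii) the precise local theta correspondence in rank $2$, including the conservation relation and the compatibility of the packet labels with the $\psi_v$-normalisation, which rests in turn on the explicit local Langlands correspondence and $A$-packets for $\GSp_4$. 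The rank-$2$ case is exactly the range in which all of (i)--(iii) are available, so once they are cited as inputs the global assembly is essentially formal. I would therefore organise the write-up to isolate these three local and analytic facts, and then present the global gluing along the lines above.
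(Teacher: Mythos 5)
This statement is not proved in the paper at all: it is quoted verbatim from Gan--Ichino \cite[Theorem 2.1]{gi20}, exactly as Theorem \ref{amf} is quoted from Arthur, and it is used downstream as a black box. So there is no internal argument to compare your proposal with; what you have written is an outline of the proof in the cited work, and as such it correctly identifies the overall strategy (transfer of Arthur's multiplicity formula through the $\psi$-normalised theta correspondence between $\Mp_4$ and the two $5$-dimensional quadratic spaces of trivial discriminant, with the Rallis inner product formula governing nonvanishing and the discrepancy between $\epsilon_\phi$ and $\wtil{\epsilon}_\phi$).

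Two steps in your sketch are compressed to the point of being gaps if this were to stand as a proof. First, the exhaustion argument: the Rallis tower property only guarantees a nonzero theta lift sufficiently high in the Witt tower, not at the equal-size level $\dim V=5$; for an arbitrary irreducible summand of $L^2_{\mathrm{disc}}(\Mp_4)$ the lift to $\SO_5$ or $\SO_5'$ may vanish, and getting the parameter requires the finer first-occurrence/doubling analysis (this is precisely where \cite{gi18, gi20} invest serious work), so ``by the tower property and local non-vanishing'' does not suffice. Second, you invoke ``the corresponding properties on the $\SO_5$/$\SO_5'$ side in Theorem \ref{amf}'', but Theorem \ref{amf} as stated (and as Arthur proves it) concerns only the split group $\SO_5$; the multiplicity formula for the non-quasi-split inner forms attached to nontrivial collections of Hasse invariants is not a consequence of that statement and is a separate substantive input that Gan--Ichino must supply. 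With those two points repaired by explicit citation, your text would be an accurate account of the provenance of the theorem, but it would still be a programme resting on the quoted local and analytic inputs rather than a proof one could splice into this paper.
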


\section{Jacobi forms}\label{Jacobi form}
In this section, we shall recall the general definitions of holomorphic and skew-holomorphic Jacobi forms, and a correspondence between half-integral weight modular forms and holomorphic or skew-holomorphic Jacobi forms of degree 2.
\subsection{Holomorphic and skew holomorphic Jacobi forms of index 1}
For any commutative ring $R$ with unity 1, let $\calH_n(R)$ be the Heisenberg group of degree $n$, i.e.,
\begin{align*}
\calH_n(R)
=\Set{([\lambda, \mu], \kappa) | \lambda, \mu \in R^n, \ \kappa \in R},
\end{align*}
with
\begin{align*}
([\lambda_1, \mu_1], \kappa_1) \cdot ([\lambda_2, \mu_2], \kappa_2)
=([\lambda_1 +\lambda_2, \mu_1+\mu_2], \kappa_1+\kappa_2+\tp{\lambda_1}\mu_2-\tp{\mu_1}\lambda_2).
\end{align*}
By using embeddings
\begin{align*}
\Sp_{2n}(R) &\into \Sp_{2(n+1)}(R),&
\mmatrix{A}{B}{C}{D} &\mapsto
\left(\begin{array}{cccc}
1&&&\\
&A&&B\\
&&1&\\
&C&&D\end{array}\right),&
\\
\calH_n(R) &\into \Sp_{2(n+1)}(R),&
([\lambda, \mu], \kappa) &\mapsto
\left(\begin{array}{cccc}
1&\tp{\lambda}&\kappa&\tp{\mu}\\
&1_n&\mu&\\
&&1&\\
&&-\lambda&1_n\end{array}\right),&
\end{align*}
we can regard the Jacobi group over $R$ of degree $n$ as a subgroup $\Sp^J_{2n}(R)=\Sp_{2n}(R) \ltimes \calH_n(R)$ of $\Sp_{2(n+1)}(R)$.
The center of $\Sp^J_{2n}(R)$ is $\{([0,0], \kappa)| \kappa \in R\}$.
We shall write $\calZ(R)$ for this subgroup.
For future references we note an equation here:
\begin{equation*}
\left(\begin{array}{cc}
A&B\\C&D
\end{array}\right)
([\lambda,\mu],\kappa)
\left(\begin{array}{cc}
A&B\\C&D
\end{array}\right)\inv
=
([-C\mu+D\lambda,A\mu-B\lambda],\kappa).
\end{equation*}

Let $(\rho, V)$ be an irreducible finite dimensional polynomial representation of $\GL_n(\C)$.
For any function $F : \frakH_n\times\C^n \to V$, we define a group action of $\Sp_{2n}^J(\R)$ (of index 1):
\begin{align*}
\left[F|^\mathrm{hol}_{\rho, 1} ([\lambda,\mu], \kappa)\right](Z,w)
&=\bfe(\tp{\lambda}Z\lambda + 2 \tp{\lambda}w +\tp{\lambda}\mu + \kappa)F(Z,w+Z\lambda+\mu),\\
\left[F|^\mathrm{hol}_{\rho, 1} g\right](Z,w)
&=\bfe(-\tp{w} J(g,Z)\inv Cw) \rho(J(g,Z))\inv F(gZ, \tp{J(g,Z)}\inv w),
\end{align*}
where $g=\mmatrix{*}{*}{C}{*}\in\Sp_{2n}(\R)$.

\begin{defn}
Let $F(Z,w)$ be a $V$-valued function on $(Z,w)\in \frakH_n \times \C^n$.
When $F$ satisfies the next conditions (0)-(2), we say that $F$ is a holomorphic Jacobi form of weight $\rho$ with index 1.
\begin{enumerate}[(1)]
\setcounter{enumi}{-1}
\item $F$ is holomorphic on $\frakH_n\times\C^n$;
\item $F|^\mathrm{hol}_{\rho, 1} g =F$ for all $g \in \Sp^J_{2n}(\Z)$;
\item $F$ has a Fourier expansion of the following form:
\begin{align*}
F(Z,w)
=\sum_{\substack{(N,r) \in L_n^* \times \Z^n \\ 4N-r\tp{r} \geq 0}} A(N,r) \bfe(\tr(NZ)+\tp{r}w),
\end{align*}
where $L_n^*$ denotes the set of all half-integral symmetric matrices.
\end{enumerate}
Moreover, if $F$ satisfies the next condition, we say that $F$ is a holomorphic Jacobi cusp form.
\begin{enumerate}[(1)]
\setcounter{enumi}{2}
\item $A(N,r)=0$ unless $4N-r\tp{r} >0$.
\end{enumerate}
The space of holomorphic Jacobi forms (resp. holomorphic Jacobi cusp forms) of weight $\rho$ with index 1 is denoted by $J^\mathrm{hol}_{\rho,1}$ (resp. $J_{\rho,1}^{\mathrm{hol, cusp}}$).
\end{defn}

Next, let us recall the definition of skew-holomorphic Jacobi forms.
For any function $F : \frakH_n\times\C^n \to V$, we define a group action of $\Sp^J_{2n}(\R)$ (of index 1):
\begin{align*}
F|_{\rho, 1}^\mathrm{skew} ([\lambda,\mu], \kappa)
&=F|^\mathrm{hol}_{\rho, 1} ([\lambda,\mu], \kappa),\\
[F|_{\rho, 1}^\mathrm{skew} g](Z,w)
&=\bfe(-\tp{w} J(g,Z) C w) \frac{|\det J(g,Z)|}{\det J(g,Z)} \overline{\rho(J(g,Z))}\inv F(gZ, \tp{J(g,Z)}\inv w),
\end{align*}
where $g=\mmatrix{*}{*}{C}{*} \in \Sp_{2n}(\R)$.

\begin{defn}
Let $F(Z,w)$ be a $V$-valued function on $(Z,w) \in \frakH_n \times \C^n$.
When $F$ satisfies the next conditions (0)-(2), we say that $F$ is a skew-holomorphic Jacobi form of weight $\rho$ with index 1.
\begin{enumerate}[(1)]
\setcounter{enumi}{-1}
\item $F$ is holomorphic in $w \in \C^n$ and real analytic in the real part and imaginary part of $Z \in \frakH_n$;
\item $F|_{\rho, 1}^\mathrm{skew} g = F$ for all $g \in \Sp^J_{2n}(\Z)$;
\item $F$ has a Fourier expansion of the following form:
\begin{align*}
F(Z,w)
=\sum_{\substack{(N, r) \in L_n^* \times \Z^n \\ 4N-r\tp{r} \leq 0}}
 A(N,r) \bfe(\tr(NZ-\frac{1}{2}i(4N-r\tp{r})Y)) \bfe(\tp{r}w),
\end{align*}
where $Y$ is the imaginary part of $Z$.
\end{enumerate}
Moreover, if $F$ satisfies the next condition, we say that $F$ is a skew-holomorphic Jacobi cusp form.
\begin{enumerate}[(1)]
\setcounter{enumi}{2}
\item $A(N,r)=0$ unless $4N-r\tp{r}<0$.
\end{enumerate}
The space of skew-holomorphic Jacobi forms (resp. skew-holomorphic Jacobi cusp forms) of weight $\rho$ with index 1 is denoted by $J_{\rho,1}^\mathrm{skew}$ (resp. $J_{\rho,1}^\mathrm{skew, cusp}$).
\end{defn}

\subsection{Hecke operators and the relation to modular forms of half-integral weight}
In this subsection, we define Hecke operators $T^J_s(p)$ ($s=0,1,2$) of holomorphic or skew-holomorphic Jacobi forms of degree 2.
In this case, $(\rho, V)$ is an irreducible finite dimensional polynomial representation of $\GL_2(\C)$, and any such representation is isomorphic to $(\det^k \otimes \Sym_j, V_j)$ for some $k$ and $j$.
Let us write $(k,j)$ for the representation $(\det^k \otimes \Sym_j, V_j)$.\\

To define Hecke operators, let us first extend the slash operators $|^\mathrm{hol}$ and $|^\mathrm{skew}$ from $\Sp_4(\R)$ to $\GSp_4^+(\R)$ so that the actions of scalar matrices are trivial, i.e.,
\begin{align*}
\left[F|^\mathrm{hol}_{(k,j),1}g\right] (Z,w)
&=\nu(g)^{k+\frac{j}{2}} \bfe(-\tp{w} J(g,Z)\inv Cw)\\
&\quad \times \det(J(g,Z))^{-k} \Sym_j(J(g,Z))\inv F(gZ, \nu(g)^\frac{1}{2} \tp{J(g,Z)}\inv w),\\
\left[F|^\mathrm{skew}_{(k,j),1}g\right] (Z,w)
&=\nu(g)^{k+\frac{j}{2}} \bfe(-\tp{w} J(g,Z)\inv Cw)\\
& \quad \times \frac{|\det J(g,Z)|}{\det J(g,Z)} \overline{\det J(g,Z)^{-k} \Sym_j(J(g,Z))\inv}
 F(gZ, \nu(g)^\frac{1}{2} \tp{J(g,Z)}\inv w),
\end{align*}
for $g \in \GSp_4^+(\R)$ and $F : \frakH_2 \times \C^2 \to V_j$.\\

Let $s=0,1,2$.
Then for any prime number $p$, the Hecke operator $T^J_s(p)$ is defined by
\begin{align*}
\Sp_4(\Z) K_s(p^2) \Sp_4(\Z)
=\bigsqcup_t \Sp_4(\Z) g_{s,t},
\end{align*}
and
\begin{align*}
F|_{(k,j),1}^\star T^J_s(p)
=\sum_{\lambda, \mu \in (\Z/p\Z)^2} \sum_t F|_{(k,j),1}^\star g_{s,t} |_{(k,j),1}^\star([\lambda,\mu], 0),
\end{align*}
for $F \in J_{(k,j),1}^\star$, where $\star \in \{ \mathrm{hol}, \mathrm{skew} \}$.
One can also define Hecke operators in the higher degree case in the similar way.
Here we remark that $g_{s,t}$ is the $\Sp_4(\R)$ component of $\wtil{g}_{s,t}$ if $p\neq2$.

As explained in \cite{ibuconj}, there exists a canonical isomorphism between $J_{(k,j),1}^\mathrm{hol, cusp}$ or $J_{(k,j),1}^\mathrm{skew, cusp}$ and $S_{k-\frac{1}{2}, j}^+(\Gamma_0(4), \left(\frac{-1}{\cdot}\right)^l)$.
\begin{thm}\label{5.1}
There exist linear isomorphisms
\begin{align*}
J_{(k,j),1}^\mathrm{hol, cusp}
&\overset{\simeq}{\lra} S_{k-\frac{1}{2},j}^+(\Gamma_0(4), \left(\frac{-1}{\cdot}\right)^k),\\
J_{(k,j),1}^\mathrm{skew, cusp}
&\overset{\simeq}{\lra} S_{k-\frac{1}{2},j}^+(\Gamma_0(4), \left(\frac{-1}{\cdot}\right)^{k-1}),
\end{align*}
for which we shall write $\Psi$, such that
\begin{align*}
\Psi\left( F|_{(k,j),1}^\star T^J_s(p) \right)
=p^{3+\frac{s}{2}} \left(\frac{-1}{p}\right)^{(k+\delta)s} \Psi(F)|_{k-\frac{1}{2}, j} T_s(p),
\end{align*}
for any odd prime $p$ and $F \in J_{(k,j),1}^{\star, \mathrm{cusp}}$, where $\delta=0$ if $\star=\mathrm{hol}$, and $\delta=1$ if $\star=\mathrm{skew}$.
\end{thm}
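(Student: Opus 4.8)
The statement is attributed to Ibukiyama \cite{ibuconj}, so the plan is essentially to reconstruct his argument in our normalization and notation. The backbone is the classical correspondence between Jacobi forms of index $1$ and half-integral weight modular forms via theta decomposition, adapted to degree $2$ and vector-valued weight. First I would recall the theta decomposition: a holomorphic Jacobi form $F\in J^{\mathrm{hol}}_{(k,j),1}$ of index $1$ on $\frakH_2\times\C^2$ decomposes as $F(Z,w)=\sum_{r\in\Z^2/2\Z^2} h_r(Z)\,\theta_{1,r}(Z,w)$, where $\theta_{1,r}$ are the standard index-$1$ Jacobi theta functions and the coefficient functions $h_r$ assemble into a single vector-valued modular form of weight $k-\tfrac12$ for a suitable metaplectic cover; the Fourier support condition $4N-r\tp r\geq0$ (resp. $>0$ for cusp forms) matches exactly the plus-space support condition $T\equiv(-1)^{k-1}r\tp r\bmod 4L_2^*$. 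The skew-holomorphic case is handled identically except that the theta functions $\theta_{1,r}$ are replaced by their skew-holomorphic analogues (anti-holomorphic in the appropriate variable), which shifts the relevant quadratic character by one, accounting for the exponent $k$ versus $k-1$; this is the source of the parameter $\delta$.

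The map $\Psi$ is then defined by sending $F$ to the modular form obtained from the tuple $(h_r)_r$ after the standard rearrangement into a function on $\frakH_2$ transforming under $\wtil\Gamma_0(4)$ with character $\left(\frac{-1}{\cdot}\right)^l$, $l\equiv k+\delta\pmod 2$. That $\Psi$ is a well-defined linear map into the plus space, and that it is bijective, is the content of Theorem \ref{5.1}'s first assertion; I would either cite \cite{ibuconj} directly for this or sketch injectivity (the $h_r$ are recovered from the Fourier coefficients $A(N,r)$, which are recovered from $F$) and surjectivity (given a plus-space form, its Fourier coefficients satisfying the congruence condition can be repackaged as $(h_r)_r$ and then as a Jacobi form, using the plus-space condition to check the theta-transformation law). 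The precise dictionary between Fourier coefficients on both sides — $A(N,r)$ of the Jacobi form versus $A(T)$ of the Siegel form, with $T$ determined by $N$ and $r$ via $T=N$ up to the congruence twist — should be stated explicitly, since the Hecke compatibility is proved at the level of these coefficients.

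For the Hecke compatibility, the strategy is to compute the action of both sides on Fourier coefficients and compare. On the Jacobi side, $T^J_s(p)$ is given by the double coset of $K_s(p^2)$ in $\Sp_4(\Z)$ together with a sum over Heisenberg translations $([\lambda,\mu],0)$ with $\lambda,\mu\in(\Z/p\Z)^2$; the latter sum over the Heisenberg part is exactly what implements, after theta decomposition, the passage from the degree-$2$ symplectic Hecke operator to the half-integral weight Hecke operator $T_s(p)$ on the coefficient forms $h_r$. On the half-integral side, $T_s(p)$ is defined via the double coset of $K_s(p^2)$ in $\wtil\Gamma_0(4)$ with the character twist $\left(\frac{-1}{\wtil g_{s,t}}\right)^l$. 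The key bookkeeping is: (i) matching coset representatives $g_{s,t}$ on the two sides — they coincide as $\Sp_4(\R)$-elements for $p\neq2$, as already remarked in the excerpt; (ii) tracking the normalizing powers of $p$, which come from $\nu(g_{s,t})^{k+j/2}$ versus $p^{k+j/2-3}$-type factors and produce the prefactor $p^{3+s/2}$; (iii) tracking the quadratic symbols, where the theta multiplier contributes $\left(\frac{-1}{p}\right)^{(k+\delta)s}$ — this is where holomorphic versus skew-holomorphic differ and the $\delta$ enters. The main obstacle, and the step requiring the most care, is exactly this third point: verifying that the Weil-representation/theta multipliers attached to the coset representatives $\wtil g_{s,t}$ combine to give precisely the stated power of $\left(\frac{-1}{p}\right)$, uniformly in $t$ and for both parities of $k+\delta$. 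This amounts to a careful local computation at $p$ of the metaplectic cocycle on the elements $K_s(p^2)$ and the Heisenberg translates, which I would isolate as a lemma and carry out by reducing to the rank-one Jacobi theta transformation formula; everything else is a reasonably mechanical, if lengthy, Fourier-coefficient comparison.
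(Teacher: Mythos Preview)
The paper does not prove this theorem at all: immediately after the statement it says ``The isomorphism $\Psi$ is defined explicitly. See \cite[Theorem~5.1]{ibuconj} for detail,'' and moves on. So Theorem~\ref{5.1} is imported wholesale from Ibukiyama's paper as a black box, and there is no ``paper's own proof'' to compare against.

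Your plan --- theta decomposition $F(Z,w)=\sum_{r}h_r(Z)\theta_{1,r}(Z,w)$, matching the Fourier-support condition on $(N,r)$ with the plus-space congruence on $T$, and then checking Hecke compatibility by a Fourier-coefficient computation that tracks the metaplectic/theta multiplier --- is exactly the standard route and is indeed how Ibukiyama proceeds in \cite{ibuconj}. So as a reconstruction of the cited result your outline is on target. If you want to include an actual proof rather than a citation, the one place that genuinely needs work beyond bookkeeping is, as you identified, the uniform evaluation of the theta multiplier on the coset representatives $\wtil g_{s,t}$; everything else is mechanical. But be aware that for the purposes of this paper a citation suffices, and that is what the author does.
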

The isomorphism $\Psi$ is defined explicitly.
See \cite[Theorem5.1]{ibuconj} for detail.

Now we state the definition of the Hecke operator $T_s(2)$ on $S_{(k-\frac{1}{2},j)}^+(\Gamma_0(4), \left(\frac{-1}{\cdot}\right)^l)$ for $p=2$ by
\begin{equation*}
\Psi \left( F|_{(k,j),1}^\star T^J_s(2) \right)
=2^{3+\frac{s}{2}} \Psi(F)|_{k-\frac{1}{2}, j} T_s(2),
\end{equation*}
for $F \in J_{(k,j),1}^{\star, \mathrm{cusp}}$ and $\star \in \{ \mathrm{hol}, \mathrm{skew} \}$.

\section{Representation theory of the Jacobi groups}\label{Jacobi group}
In this section, following \cite{bs}, we recall the general theory of representations of the Jacobi groups.
Although \cite{bs} treats only the case of $n=1$, a lot of arguments in the book also go when $n\geq1$.
\subsection{The $p$-adic case}
First we shall consider representations of the Jacobi group over a $p$-adic field.
Let $F$ be a $p$-adic field, i.e., $F$ is a finite extension of $\Q_p$ for some $p$.
Let $\varpi \in F$ be a uniformizer, $\calO$ the maximal compact subring of $F$, $q=q_F$ the number of elements in the residual field $\calO/(\varpi)$,  $|-|_F$ the normalized absolute value on $F$ so that $|\varpi|_F=q\inv$.
Let $\psi : F\to\C^1$ be a nontrivial additive character of order zero, i.e., $\psi$ is trivial on $\calO$ and nontrivial on $\varpi\inv \calO$.
The Schr\"odinger representation $\pi_{S,\psi}$ of $\calH_n(F)$ on the Schwartz space $\calS(F^n)$ is defined by
\begin{equation*}
\left[\pi_{S,\psi}(([\lambda, \mu], \kappa)) f \right](x)
=\psi(\kappa+\tp(2x+\lambda)\mu) f(x+\lambda),
\quad
f \in \calS(F^n), \ ([\lambda,\mu],\kappa)\in\calH_n(F).
\end{equation*}
The following fact is known as the Stone-von Neumann theorem.
\begin{thm}
\begin{enumerate}[(1)]
\item The Schr\"odinger representation $\pi_{S,\psi}$ is the unique irreducible smooth representation of $\calH_n(F)$ with central character $\psi$.
\item Any smooth representation of $\calH_n(F)$ with central character $\psi$ is isomorphic to a direct sum of $\pi_{S,\psi}$.
\end{enumerate}
\end{thm}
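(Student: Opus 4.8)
The plan is to establish this smooth $p$-adic Stone--von Neumann theorem by the method of \cite{bs} for $n=1$, noting that the argument goes through for all $n$. The strategy has two parts: (i) realize $\pi_{S,\psi}$ as a representation compactly induced from a polarization and deduce its irreducibility; (ii) show that every smooth representation of $\calH_n(F)$ with central character $\psi$ is a direct sum of copies of $\pi_{S,\psi}$. Then statement (1) is immediate from (ii) applied to an irreducible $V$, together with $\pi_{S,\psi}\neq0$. The only structural input is that the ``commutator pairing'' $\bigl((\lambda_1,\mu_1),(\lambda_2,\mu_2)\bigr)\mapsto\tp{\lambda_1}\mu_2-\tp{\mu_1}\lambda_2$ on $F^n\times F^n$, which appears as the central correction term in the multiplication law of $\calH_n(F)$, is a nondegenerate bilinear form; since this holds for every $n$, the arguments of \cite{bs} transpose verbatim.

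For (i), set $A=\{([\lambda,0],\kappa)\mid\lambda\in F^n,\ \kappa\in F\}$; from the multiplication law $A$ is abelian with centralizer equal to itself (using $2\neq0$ in $F$), hence a maximal abelian subgroup containing $\calZ(F)$, and similarly for $B=\{([0,\mu],\kappa)\}$. Let $\psi_A$ be the character $([\lambda,0],\kappa)\mapsto\psi(\kappa)$ of $A$. Then $\pi_{S,\psi}\cong\operatorname{ind}_A^{\calH_n(F)}\psi_A$ (compact induction): both restrict to $B$ as translation on functions of the $\mu$-variable and to $A$ as multiplication by a character, and the intertwiner is the Fourier transform on $F^n$ relative to $\psi$. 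Now $B$ normalizes $A$, one has $\calH_n(F)=AB$, and for $g=([0,\mu_0],0)$ with $\mu_0\neq0$ a direct computation gives ${}^g\psi_A([\lambda,0],\kappa)=\psi(\kappa+2\tp{\mu_0}\lambda)$, which differs from $\psi_A$ on $A$ because $\psi$ is nontrivial and the pairing is nondegenerate. By Mackey's irreducibility criterion (equivalently, Mackey's restriction formula forces $\End_{\calH_n(F)}(\pi_{S,\psi})=\C$, and $\pi_{S,\psi}$ is admissible) the representation $\pi_{S,\psi}$ is irreducible.

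For (ii), let $(\pi,V)$ be a nonzero smooth representation with central character $\psi$. The main point is that $V$ contains a subrepresentation isomorphic to $\pi_{S,\psi}$: choosing $0\neq v\in V$, by smoothness $v$ is fixed by the compact subgroup generated by $\{([\lambda,\mu],0)\mid\lambda,\mu\in L\}$ for a small enough lattice $L\subset\calO^n$ (its central part lies in $\tp{L}L\subseteq\calO$, where $\psi$ is trivial); restricting $V$ to an increasing exhausting family of such compact-open-modulo-center subgroups $\widetilde K$ and applying the elementary Stone--von Neumann theorem for the finite Heisenberg groups $\widetilde K/(\widetilde K\cap\calZ)$ — each having, for the fixed central character, a unique irreducible representation — shows that $V|_{\widetilde K}$ is isotypic of that type, compatibly as $\widetilde K$ increases, which in the limit yields an embedding $\pi_{S,\psi}\hookrightarrow V$. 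A standard socle argument, combined with the vanishing of $\operatorname{Ext}^1$ in this category (any extension of $\pi_{S,\psi}$ by a direct sum of copies of itself splits, again checked at finite level), then gives $V\cong\pi_{S,\psi}^{(I)}$ for some index set $I$. This proves (2), and (1) follows.

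The step I expect to be the main obstacle is the ``generation'' claim in (ii): controlling $V$ restricted to the family of compact-open-mod-center subgroups and verifying that the finite Heisenberg pictures glue to a genuine copy of $\pi_{S,\psi}$ inside $V$ (equivalently, that the natural map $\pi_{S,\psi}\otimes_\C\Hom_{\calH_n(F)}(\pi_{S,\psi},V)\to V$ is an isomorphism). The irreducibility in (i) and the group theory of $A$ and $B$ are routine; it is the interplay of the two lattice filtrations of $V$ — one recording $([\lambda,\mu],0)$-invariance, one used for averaging — and the passage to the limit that require the careful bookkeeping carried out in \cite{bs}.
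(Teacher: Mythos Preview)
The paper does not prove this statement at all: it is stated as a known fact (``The following fact is known as the Stone--von Neumann theorem'') with no argument supplied, so there is nothing to compare against. Your sketch is a reasonable outline of the standard proof and is in the spirit of what the paper cites, namely the treatment in \cite{bs} for $n=1$ transported to general $n$.

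A few minor points on your write-up. In (i), the identification of $\pi_{S,\psi}$ with $\operatorname{ind}_A^{\calH_n(F)}\psi_A$ is direct and does not require a Fourier transform: restricting a function in the induced space to the section $\{([0,\mu],0)\}$ already lands in $\calS(F^n)$ with the correct action (the Fourier transform would intertwine with induction from the other polarization $B$). In (ii), your ``gluing'' step is fine but can be phrased more cleanly: once you know $\pi_{S,\psi}$ is irreducible and admissible, Schur's lemma gives $\End_{\calH_n(F)}(\pi_{S,\psi})=\C$, and then the functor $V\mapsto\Hom_{\calH_n(F)}(\pi_{S,\psi},V)$ together with the evaluation map $\pi_{S,\psi}\otimes\Hom_{\calH_n(F)}(\pi_{S,\psi},V)\to V$ does the job; the surjectivity is exactly your finite-Heisenberg/lattice argument showing every nonzero $V$ contains a copy of $\pi_{S,\psi}$, and injectivity is immediate from irreducibility. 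The $\operatorname{Ext}^1$-vanishing you invoke is equivalent to this.
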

The Stone-von Neumann theorem gives us the Weil representation $\omega_\psi$ of the metaplectic group $\Mp_{2n}(F)$ on $\calS(F^n)$.
Combining the Schr\"odinger representation $\pi_{S,\psi}$ and the Weil representation $\omega_\psi$, we obtain the Schr\"odinger-Weil representation $\pi_{\SW,\psi}$ of $\Mp^J_{2n}(F)=\Mp_{2n}(F)\ltimes \calH_n(F)$, the metaplectic double covering group of $\Sp^J_{2n}(F)$.
Note that the Schr\"odinger-Weil representation $\pi_{\SW,\psi}$ is a genuine representation, i.e., it does not factor through $\Mp^J_{2n}(F)\to\Sp^J_{2n}(F)$.

If $\pi'$ is a genuine representation of $\Mp_{2n}(F)$, then a representation $\pi=\pi' \otimes \pi_{\SW,\psi}$ is not genuine and can be regarded as a representation of $\Sp^J_{2n}(F)$.
Also, we have the following fact.
(The proof is the same as \cite[Theorem 2.6.2 and Proposition 5.1.2]{bs}.)
\begin{thm}\label{ccp}
The correspondence $\pi' \mapsto \pi:=\pi' \otimes \pi_{\SW,\psi}$ gives a bijection between the irreducible genuine smooth representations of $\Mp_{2n}(F)$ and the irreducible smooth representations of $\Sp^J_{2n}(F)$ with central character $\psi$.
Moreover, $\pi$ is admissible if and only if $\pi'$ is admissible.
\end{thm}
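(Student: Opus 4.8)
The plan is to set up the bijection on both sides and check it is well defined and mutually inverse, imitating the $n=1$ treatment in \cite{bs}. First I would define the candidate inverse: given an irreducible smooth representation $\pi$ of $\Sp^J_{2n}(F)$ with central character $\psi$, one restricts $\pi$ to the Heisenberg subgroup $\calH_n(F)$; since the center $\calZ(F)$ acts by $\psi$, the Stone--von Neumann theorem (the theorem quoted just above) forces $\pi|_{\calH_n(F)}$ to be a (possibly infinite) direct sum of copies of the Schr\"odinger representation $\pi_{S,\psi}$, i.e.\ $\pi|_{\calH_n(F)}\cong \pi_{S,\psi}\otimes M$ for a multiplicity space $M$. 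Pulling $\pi$ back to $\Mp^J_{2n}(F)=\Mp_{2n}(F)\ltimes\calH_n(F)$ and twisting by $\pi_{\SW,\psi}^\vee$, one obtains a genuine representation of $\Mp^J_{2n}(F)$ on which $\calH_n(F)$ now acts trivially (because $\pi_{S,\psi}\otimes\pi_{S,\psi}^\vee$ contains the trivial $\calH_n(F)$-representation with multiplicity one, by Schur/Stone--von Neumann), hence a genuine representation $\pi'$ of $\Mp_{2n}(F)$ realized on $M$. The content of the theorem is that $\pi\mapsto\pi'$ and $\pi'\mapsto\pi'\otimes\pi_{\SW,\psi}$ are inverse bijections, and that irreducibility and admissibility are preserved in both directions.

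The key steps, in order, are: (1) Verify that $\pi'\otimes\pi_{\SW,\psi}$ really descends to $\Sp^J_{2n}(F)$ and has central character $\psi$ --- this is immediate since $\pi_{\SW,\psi}$ is genuine and has central character $\psi$ on $\calZ(F)$, so the two genuine factors cancel the metaplectic sign. (2) Show that the multiplicity space $M$ in step one above carries a genuine $\Mp_{2n}(F)$-action; here one uses that $\Mp_{2n}(F)$ normalizes $\calH_n(F)$ and permutes the $\pi_{S,\psi}$-isotypic structure compatibly with the Weil representation, so $\mathrm{Hom}_{\calH_n(F)}(\pi_{S,\psi},\pi)$ inherits an intertwining action of $\Mp_{2n}(F)$, and the genuineness is forced by that of $\pi_{\SW,\psi}$. (3) Check the two composites are the identity: starting from $\pi'$, tensoring with $\pi_{\SW,\psi}$ and then extracting the multiplicity space returns $\pi'\otimes(\pi_{S,\psi}^\vee\otimes\pi_{S,\psi})^{\calH_n}\cong\pi'$; and starting from $\pi$, the reconstruction $M\otimes\pi_{\SW,\psi}$ recovers $\pi$ because $\pi|_{\calH_n}\cong M\otimes\pi_{S,\psi}$ determines $\pi$ once the $\Mp_{2n}(F)$-action on $M$ is known. (4) Irreducibility: $\pi$ is irreducible as a representation of $\Sp^J_{2n}(F)=\Sp_{2n}(F)\ltimes\calH_n(F)$ iff $M$ is irreducible under $\Mp_{2n}(F)$, since any invariant subspace of $\pi$ is $\calH_n(F)$-stable hence of the form $M_0\otimes\pi_{S,\psi}$, and the correspondence $M_0\leftrightarrow M_0\otimes\pi_{S,\psi}$ respects $\Sp_{2n}(F)$-stability. (5) Admissibility: compare the fixed spaces under congruence subgroups; a small open compact subgroup of $\Sp^J_{2n}(F)$ contains an open compact of $\calH_n(F)$, under which $\pi_{S,\psi}$ has finite-dimensional fixed space, and one relates $\dim\pi^{K}$ to $\dim(\pi')^{K'}$ up to a finite factor.

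The main obstacle I anticipate is step (2) together with the careful bookkeeping of the metaplectic cocycle: one must check that the action of $\Mp_{2n}(F)$ on the multiplicity space $M$ is genuinely a group action (not merely projective), and this rests on the precise way the Schr\"odinger--Weil representation $\pi_{\SW,\psi}$ of $\Mp^J_{2n}(F)$ splits the relevant normalizer cocycle --- exactly the point where the $n=1$ proof in \cite{bs} (their Theorem 2.6.2 and Proposition 5.1.2) does the real work, and one has to confirm that nothing in that argument used $n=1$ essentially. A secondary technical point is ensuring smoothness of $M$: vectors fixed by small open compact subgroups of $\Mp_{2n}(F)$ must match up with smooth vectors of $\pi$, which follows once one knows $\pi_{\SW,\psi}$ is smooth and admissible, but deserves an explicit remark. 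Everything else is formal once these two points are in place; the argument is essentially a form of Mackey theory / Stone--von Neumann descent uniform in $n$.
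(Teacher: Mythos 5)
Your proposal is correct and follows essentially the same route as the paper, which simply invokes the Stone--von Neumann/multiplicity-space argument of Berndt--Schmidt \cite[Theorem 2.6.2 and Proposition 5.1.2]{bs} and observes that nothing there uses $n=1$ in an essential way. Your identification of the genuineness of the $\Mp_{2n}(F)$-action on the multiplicity space as the crux is exactly the point carried by the Schr\"odinger--Weil representation in that cited argument.
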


Next, we shall consider principal series representations.
Let $T$ be the standard maximal torus in $\Sp_{2n}$ consisting of the diagonal matrices, and $B$ the standard Borel subgroup of $\Sp_{2n}(F)$ consisting of matrices of the form $\mmatrix{A}{AS}{0}{\tp{A}\inv}$, with $A\in\GL_n(F)$ upper triangular and $S$ symmetric.
Let $B^J$ be a subgroup of $\Sp^J_{2n}(F)$ generated by $B$ and $\{ ([0,\mu], \kappa) | \mu \in F^n, \ \kappa\in F \}$.
For characters $\chi_1, \ldots, \chi_n : F^\times \to \C^\times$, following \cite{gs}, we write $I_{B,\psi}(\chi_1,\ldots, \chi_n)$ for the principal series representation of $\Mp_{2n}(F)$.
Let $I_{B^J}(\chi_1, \ldots, \chi_n; \psi)$ be a space of functions $\Phi : \Sp^J_{2n}(F) \to \C$ satisfying the following conditions:
\begin{itemize}
\item there exists an open subgroup $H \subset \Sp^J_{2n}(F)$ such that $\Phi$ is right $H$-invariant;
\item for any $g\in \Sp^J_{2n}(F)$, $\kappa\in F$, $\mu \in F^n$, a symmetric matrix $S\in \operatorname{M}_{n\times n}(F)$, and an upper triangular matrix $A \in \GL_n(F)$ with diagonal elements $(a_1, \ldots, a_n)$,
\begin{align*}
\Phi\left( \left(\begin{array}{cc} A&AS\\0&\tp{A}\inv \end{array}\right) ([0,\mu],\kappa) g \right)
=\prod_{i=1}^n |a_i|_F^{n+\frac{3}{2}-i} \chi_i(a_i) \cdot \psi(\kappa) \Phi(g).
\end{align*}
\end{itemize}
The Jacobi group $\Sp^J_{2n}(F)$ acts on $I_{B^J}(\chi_1, \ldots, \chi_n; \psi)$ by right translation.
This is called a principal series representation with central character $\psi$, and we have the following theorem.
(The proof is the same as \cite[Theorem 5.4.2]{bs}.)
\begin{thm}\label{ps}
The principal series representations of $\Sp^J_{2n}$ and $\Mp_{2n}$ are corresponding in a canonical way:
\begin{align*}
I_{B, \overline{\psi}}(\chi_1, \ldots, \chi_n) \otimes \pi_{\SW, \psi}
&\cong I_{B^J}(\chi_1,\ldots, \chi_n; \psi),\\
\varphi \otimes f 
&\mapsto \Phi_{\varphi \otimes f},
\end{align*}
where $\Phi_{\varphi \otimes f}(gh) = \varphi(g) [\pi_{\SW, \psi}(gh) f](0)$  for $g \in \Mp_{2n}(F)$ and $h \in \calH_n(F)$.
\end{thm}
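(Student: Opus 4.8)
The plan is to mimic the proof of \cite[Theorem 5.4.2]{bs} but to keep track of the extra $\GL_n$-directions coming from the higher degree. First I would make precise the map $\varphi \otimes f \mapsto \Phi_{\varphi \otimes f}$. Given $\varphi \in I_{B,\overline\psi}(\chi_1,\dots,\chi_n)$ and $f \in \calS(F^n)$, set $\Phi_{\varphi\otimes f}(gh) = \varphi(g)\,[\pi_{\SW,\psi}(gh)f](0)$ for $g \in \Mp_{2n}(F)$ and $h \in \calH_n(F)$. The first thing to check is that this is well defined, i.e.\ independent of the decomposition of an element of $\Sp^J_{2n}(F)$ (lifted suitably to $\Mp^J_{2n}(F)$) as a product $gh$; this uses that $\calH_n(F)$ is normal in $\Mp^J_{2n}(F)$ together with the cocycle identity for the Schr\"odinger--Weil representation, and the fact that the genuine sign ambiguities on $\Mp_{2n}(F)$ in $\varphi(g)$ and in $[\omega_\psi(g)\,\cdot\,](0)$ cancel (this is the point of taking $\overline\psi$ on the $\Mp_{2n}$-factor, matching the $\psi$ in $\pi_{\SW,\psi}$). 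Then I would verify directly that $\Phi_{\varphi\otimes f}$ lies in $I_{B^J}(\chi_1,\dots,\chi_n;\psi)$: right smoothness is clear since both $\varphi$ and $f$ are smooth; the left transformation law under $\left(\begin{smallmatrix} A & AS \\ 0 & \tp A\inv\end{smallmatrix}\right)([0,\mu],\kappa)$ follows by computing the action of the Schr\"odinger--Weil representation on the Schwartz function evaluated at $0$, where the $\psi(\kappa)$ comes from the center of $\calH_n(F)$, the factor $\prod_i |a_i|_F^{\,\cdots}\chi_i(a_i)$ from the normalized induction on $\Mp_{2n}$ together with the contribution of $\omega_\psi$ on the Siegel--Levi $\GL_n(F)$, and the $[0,\mu]$-part acts trivially at the origin. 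The shift of the exponent by $n+\tfrac32-i$ versus $n+1-i$ (the usual modulus character of $B\subset\Sp_{2n}$) is exactly the half-integral shift absorbed from $\omega_\psi$, so this bookkeeping is where I would be most careful.

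Next I would show the map is $\Sp^J_{2n}(F)$-equivariant, the target carrying the right-translation action and the source $I_{B,\overline\psi}(\chi_1,\dots,\chi_n)\otimes\pi_{\SW,\psi}$ carrying the tensor of right translation and the Schr\"odinger--Weil action (restricted to $\Sp^J_{2n}(F)\hookrightarrow\Mp^J_{2n}(F)$, noting the whole tensor product is non-genuine). This is a formal check from the definition: $\Phi_{\varphi\otimes f}(x\cdot y) = \varphi(x g)\,[\pi_{\SW,\psi}(xgh)f](0)$ after writing $x = g h$, and one matches this with $\Phi_{(R(g)\varphi)\otimes(\pi_{\SW,\psi}(gh)f)}(x)$. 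Then injectivity: if $\Phi_{\varphi\otimes f}\equiv 0$, evaluate along $\calH_n(F)$ to see $[\pi_{\SW,\psi}(h)f](0)=0$ for all $h$, i.e.\ $f$ vanishes after translation by all of $F^n$, forcing $f=0$ unless $\varphi=0$; more precisely one decomposes a general element of the tensor product as $\sum_i \varphi_i\otimes f_i$ with the $f_i$ linearly independent and uses the irreducibility of the Schr\"odinger representation of $\calH_n(F)$ (Stone--von Neumann) to separate the summands, exactly as in \cite[\S5.4]{bs}. For surjectivity, given $\Phi \in I_{B^J}(\chi_1,\dots,\chi_n;\psi)$, I would restrict $\Phi$ to $\Mp_{2n}(F)\cdot\calH_n(F)$ and use that every such $\Phi$ is determined by its restriction to $\Mp_{2n}(F) \cdot \calH_n(F)$ (by the Bruhat-type decomposition $\Sp^J_{2n}(F) = B^J \cdot \Mp_{2n}(F)\cdot\calH_n(F)$ up to the left transformation law), then recover $f$ from the $\calH_n(F)$-direction via the Stone--von Neumann uniqueness and $\varphi$ from the $\Mp_{2n}(F)$-direction; one checks the resulting $\varphi$ actually lies in $I_{B,\overline\psi}(\chi_1,\dots,\chi_n)$ using the transformation law of $\Phi$ under $B\subset\Sp_{2n}$.

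The main obstacle I expect is not conceptual but bookkeeping: keeping the metaplectic cocycle $c_F$ and the associated genuine signs consistent through the semidirect product $\Mp^J_{2n}(F) = \Mp_{2n}(F)\ltimes\calH_n(F)$ while simultaneously tracking the modulus/normalization factors $|a_i|_F^{\,n+\frac32-i}$, and making sure the degree-$n$ generalization of the Bruhat decomposition used for surjectivity is stated correctly (this is the place where \cite{bs}, working only with $n=1$, has nothing to cite, so a short independent argument via the $\GL_n$-Iwasawa decomposition inside the Siegel parabolic is needed). Since Theorem \ref{ccp} already gives the bijection at the level of irreducible representations, the content here is really the explicit intertwiner, and once well-definedness and equivariance are in place the remaining steps are routine. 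Hence I would spend most of the writeup on the transformation law computation and the Stone--von Neumann separation argument, and treat the rest briefly, referring to \cite[Theorem 5.4.2]{bs} for the structurally identical parts.
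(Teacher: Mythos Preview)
Your proposal is correct and takes essentially the same approach as the paper: the paper's proof of this theorem is the single sentence ``The proof is the same as \cite[Theorem 5.4.2]{bs},'' and your plan is precisely to carry out that argument in degree $n$, tracking the higher-rank modulus factors and the metaplectic cocycle. Your elaboration of the well-definedness, $B^J$-transformation law, equivariance, and the Stone--von Neumann separation argument is a faithful expansion of what the reference contains, so nothing further is needed.
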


Assume that $\chi_i$ ($i=1,\ldots,n$) are unramified, and put $\alpha_i=\chi_i(\varpi) \in \C^\times$.
Then the representations $I_{B^J}(\chi_1, \ldots, \chi_n; \psi)$ and $I_{B,\psi}(\chi_1,\ldots, \chi_n)$ are called unramified principal series representations.
A representation of $\Sp^J_{2n}(F)$ is called $\Sp^J_{2n}(\calO)$-spherical if it contains a nonzero vector fixed by $\Sp^J_{2n}(\calO)$.
By a straightforward calculation, one can see that $I_{B^J}(\chi_1, \ldots, \chi_n; \psi)$ has a $\Sp^J_{2n}(\calO)$-fixed nonzero vector $\Phi_0$ such that $\Phi_0([\lambda,0],0)=1_{\calO^n}(\lambda)$, and any $\Sp^J_{2n}(\calO)$-fixed vector is a scalar multiple of $\Phi_0$.
Since any principal series representation of $\Mp_{2n}(F)$ has finite length, Theorems \ref{ccp} and \ref{ps} imply that $I_{B^J}(\chi_1, \ldots, \chi_n; \psi)$ has a unique irreducible $\Sp^J_{2n}(\calO)$-spherical constituent $\pi(\alpha_1,\ldots, \alpha_n;\psi)$ with central character $\psi$ coming from $\Phi_0$.
We shall call such a representation an unramified representation.
\begin{thm}\label{csu}
Any irreducible $\Sp^J_{2n}(\calO)$-spherical representation $\pi$ of $\Sp^J_{2n}(F)$ with central character $\psi$ is isomorphic to $\pi(\alpha_1,\ldots, \alpha_n; \psi)$ for some $\alpha_1, \ldots, \alpha_n \in \C^\times$, and $\pi(\alpha_1,\ldots, \alpha_n; \psi)\cong\pi(\alpha_1'\ldots,\alpha_n'; \psi)$ if and only if there exist $\sigma \in \frakS_n$ and $\varepsilon_1, \ldots,\varepsilon_n \in \{\pm1\}$ such that $\alpha_i'=\alpha_{\sigma(i)}^{\varepsilon_i}$.
\end{thm}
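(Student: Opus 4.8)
\emph{Proof proposal.} The plan is to reduce the statement, via Theorems~\ref{ccp} and~\ref{ps}, to the classification of irreducible genuine representations of $\Mp_{2n}(F)$ that are spherical with respect to $s_F(\Sp_{2n}(\calO))$; such a classification is available precisely because the residual characteristic is odd, so that the splitting~\eqref{spl} exists. The first thing I would establish is the bridge between the two sides: a description of the $\Sp^J_{2n}(\calO)$-invariants of the Schr\"odinger-Weil representation. Since $\psi$ has order zero, a direct computation with the Schr\"odinger action shows that the $\calH_n(\calO)$-fixed vectors of $\pi_{S,\psi}$ form the line $\C\cdot 1_{\calO^n}$, where $1_{\calO^n}$ is the characteristic function of $\calO^n$; and it is a standard property of the Weil representation at an unramified place that $\omega_\psi(s_F(g))1_{\calO^n}=1_{\calO^n}$ for $g\in\Sp_{2n}(\calO)$. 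Hence $1_{\calO^n}$ spans the line of $\Sp^J_{2n}(\calO)$-fixed vectors of $\pi_{\SW,\psi}$, and $\Sp^J_{2n}(\calO)$ acts trivially there. As $\calH_n$ acts trivially in $\pi'$, it follows that for $\pi=\pi'\otimes\pi_{\SW,\psi}$ one has $\pi^{\Sp^J_{2n}(\calO)}=(\pi')^{s_F(\Sp_{2n}(\calO))}\otimes\C\cdot 1_{\calO^n}$; in particular $\pi$ is $\Sp^J_{2n}(\calO)$-spherical if and only if $\pi'$ is spherical, with a one-dimensional space of spherical vectors in either case.

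Granting this, existence follows quickly. Given an irreducible $\Sp^J_{2n}(\calO)$-spherical $\pi$ with central character $\psi$, write $\pi=\pi'\otimes\pi_{\SW,\psi}$ as in Theorem~\ref{ccp}; then $\pi'$ is an irreducible genuine spherical representation of $\Mp_{2n}(F)$. By the standard Borel--Casselman argument, using that the genuine unramified Hecke algebra of $\Mp_{2n}(F)$ is commutative at odd residual characteristic (cf.~\cite{gs}), $\pi'$ is the unique spherical constituent of an unramified principal series $I_{B,\overline{\psi}}(\chi_1,\ldots,\chi_n)$ with each $\chi_i$ unramified. Applying Theorem~\ref{ps} and the construction of $\pi(\alpha_1,\ldots,\alpha_n;\psi)$ from the spherical vector $\Phi_0$ recalled above then identifies $\pi$ with $\pi(\alpha_1,\ldots,\alpha_n;\psi)$, where $\alpha_i=\chi_i(\varpi)$.

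For the isomorphism criterion I would again pass to $\Mp_{2n}$ through Theorem~\ref{ps}. For the ``if'' part: replacing $(\chi_1,\ldots,\chi_n)$ by a permutation together with sign changes $\chi_i\mapsto\chi_i\inv$ is exactly the action of the Weyl group $\frakS_n\ltimes\{\pm1\}^n$ of $\Sp_{2n}$ on unramified data, and two unramified principal series of $\Mp_{2n}$ in the same Weyl orbit have the same spherical constituent (by the theory of intertwining operators, the sign changes being realized by the nontrivial Weyl element in each rank-one factor $\Mp_2$). For the ``only if'' part: if $\pi(\alpha;\psi)\cong\pi(\alpha';\psi)$, then Theorem~\ref{ccp} turns this into an isomorphism of the associated genuine spherical representations of $\Mp_{2n}(F)$; since the Satake parameter of such a representation, namely the $\frakS_n\ltimes\{\pm1\}^n$-orbit of $(\alpha_1,\ldots,\alpha_n)$, is a complete invariant, we conclude $\alpha_i'=\alpha_{\sigma(i)}^{\varepsilon_i}$ for suitable $\sigma\in\frakS_n$ and $\varepsilon_i\in\{\pm1\}$.

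The main obstacle, as I see it, is the first step: making precise the $\Sp^J_{2n}(\calO)$-fixed vectors of $\pi_{\SW,\psi}$ and the triviality of the $\Sp^J_{2n}(\calO)$-action on them. This is where the order-zero hypothesis on $\psi$ and the compatibility of the Weil representation with the splitting~\eqref{spl} are used, and it is the point at which the odd residual characteristic is genuinely needed. Once this is in place, the argument is a formal transfer of the corresponding facts for $\Mp_{2n}$ across the equivalences of Theorems~\ref{ccp} and~\ref{ps}, following the case $n=1$ treated in~\cite{bs}.
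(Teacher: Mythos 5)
Your proposal has a genuine gap: it proves the statement only when the residual characteristic of $F$ is odd, whereas the theorem is stated for an arbitrary $p$-adic field and is applied in the paper precisely at $p=2$ (it feeds into Theorem \ref{ccu}, which explicitly includes $p=2$, and into Lemma \ref{adlif2}, where unramifiedness of $\pi_{F,p}$ is needed at every finite place). Every load-bearing step of your argument breaks at $p=2$: the splitting \eqref{spl} of $\Mp_{2n}(F)$ over $\Sp_{2n}(\calO)$ does not exist there, so ``spherical with respect to $s_F(\Sp_{2n}(\calO))$'' is not defined; the identity $\omega_\psi(s_F(g))1_{\calO^n}=1_{\calO^n}$ has no meaning (and even the preliminary computation fails: for residue characteristic $2$ the $\calH_n(\calO)$-fixed space of $\pi_{S,\psi}$ consists of the functions supported on $\tfrac{1}{2}\calO^n$ that are $\calO^n$-periodic, which is $2^n$-dimensional, not a line); and the Borel--Casselman/Satake theory for the genuine unramified Hecke algebra of $\Mp_{2n}(F)$ that you invoke from \cite{gs} is an odd-$p$ theory. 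You acknowledge this restriction yourself, but the whole point of passing to the Jacobi group in this paper is to have a workable notion of unramifiedness at the prime $2$, where the metaplectic side offers none; a proof of Theorem \ref{csu} that excludes $p=2$ does not suffice for the statement or for its role in the paper.

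For contrast, the paper's proof never leaves the Jacobi group: it introduces the Hecke algebra $\calH(\Sp^J_{2n}(F),\Sp^J_{2n}(\calO))_\psi$ of bi-$\Sp^J_{2n}(\calO)$-invariant functions transforming by $\overline{\psi}$ under the center, uses Murase's Satake-type isomorphism $\calH(\Sp^J_{2n}(F),\Sp^J_{2n}(\calO))_\psi\cong\C[X_1^{\pm1},\ldots,X_n^{\pm1}]^{\frakS_n\ltimes\{\pm1\}^n}$, deduces that all irreducible smooth finite-dimensional modules of this commutative algebra are characters parametrized by $(\C^\times)^n/\frakS_n\ltimes\{\pm1\}^n$, and checks that $\pi(\alpha_1,\ldots,\alpha_n;\psi)$ maps to $(\alpha_1,\ldots,\alpha_n)$; this is uniform in the residue characteristic. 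Your transfer-to-$\Mp_{2n}$ argument is essentially the content of the remark following Theorem \ref{ccu} (the higher-degree analogue of \cite[Proposition 6.3.5]{bs}) and is fine as a proof for odd $p$, but to repair the proposal you would either have to adopt a Hecke-algebra argument on $\Sp^J_{2n}$ of the above kind or otherwise handle $p=2$ directly, e.g.\ along the lines of the intertwining-operator computation the paper carries out for Theorem \ref{ccu}.
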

\begin{proof}
Following \cite[Definition 6.1.2]{bs}, we define the Hecke algebra $\calH(\Sp^J_{2n}(F), \Sp^J_{2n}(\calO))_\psi$ with character $\psi$ to be the space of functions $f : \Sp^J_{2n}(F) \to \C$ which are compactly supported modulo $\calZ(F)$, and satisfy
\begin{equation*}
f(kgk'z)=\overline{\psi(\kappa)} f(g),
\end{equation*}
for any $g \in \Sp^J_{2n}(F)$, $k, k' \in \Sp^J_{2n}(\calO)$, and $z=([0,0],\kappa) \in \calZ(F)$.
The product is defined by the convolution product
\begin{equation*}
f_1*f_2(x)
=\int_{\Sp^J_{2n}(F)/\calZ(F)} f_1(xy\inv)f_2(y) dy.
\end{equation*}
Then, as in \cite[(6.3)]{bs}, we have an injective mapping
\begin{align}\label{satake1}
\left\{
\begin{tabular}{l}
\text{Irreducible admissible}\\
\text{$\Sp^J_{2n}(\calO)$-spherical representations}\\
\text{of $\Sp^J_{2n}(F)$ with central character $\psi$}
\end{tabular}
\right\}_{/\simeq}
\into
\left\{
\begin{tabular}{l}
\text{Irreducible finite dimensional}\\
\text{smooth representations}\\
\text{of $\calH(\Sp^J_{2n}(F), \Sp^J_{2n}(\calO))_\psi$}
\end{tabular}
\right\}_{/\simeq}.
\end{align}
By \cite[\S4]{mu}, we have a canonical isomorphism $\calH(\Sp^J_{2n}(F), \Sp^J_{2n}(\calO))_\psi \cong \C[X_1^{\pm1}, \ldots, X_n^{\pm1}]^{\frakS_n\ltimes\{\pm1\}^n}$, and all irreducible finite dimensional smooth representations of $\calH(\Sp^J_{2n}(F), \Sp^J_{2n}(\calO))_\psi$ are one dimensional.
Therefore the right hand side of \eqref{satake1} can be regarded as 
\begin{align}\label{satake2}
\Hom_\C(\calH(\Sp^J_{2n}(F), \Sp^J_{2n}(\calO))_\psi, \C)
\cong \Hom_\C(\C[X_1^{\pm1}, \ldots, X_n^{\pm1}]^{\frakS_n \ltimes \{\pm1\}^n}, \C)
&\cong (\C^\times)^n/ \frakS_n\ltimes\{\pm1\}^n,\\
(X_i \mapsto \alpha_i)_i
&\mapsto (\alpha_i)_i.\notag
\end{align}
The composition of \eqref{satake1} and \eqref{satake2} sends $\pi(\alpha_1, \ldots, \alpha_n; \psi)$ to $(\alpha_1,\ldots, \alpha_n)$.
Now the assertion follows.
\end{proof}

Note that the mapping \eqref{satake1} is bijective.
We shall call $(\alpha_1, \ldots, \alpha_n; \psi)$ or $(\alpha_1, \ldots, \alpha_n)$ the Satake parameter of $\pi(\alpha_1, \ldots, \alpha_n; \psi)$.\\

On the other hand, we shall call an irreducible genuine representation of $\Mp_{2n}(F)$ corresponding to an unramified $L$-parameter via the LLC \cite{gs} with respect to $\psi$ an irreducible $\psi$-unramified representation, or simply an unramified representation.
Let again $\chi_i$ ($i=1,\ldots,n$) be unramified characters of $F^\times$, and put $\alpha_i=\chi_i(\varpi)\in\C^\times$.
Then we shall write $\pi_\psi(\alpha_1,\ldots,\alpha_n)$ for the irreducible $\psi$-unramified representation of $\Mp_{2n}(F)$ with the $L$-parameter $\chi_1\oplus\cdots\oplus\chi_n\oplus\chi_1^\vee\oplus\cdots\oplus\chi_n^\vee$ with respect to $\psi$, where $\chi_i$ is identified with a character of the Weil group of $F$ through the local class field theory.
The tuple $(\alpha_1,\ldots,\alpha_n) \in \C^n/\frakS_n\ltimes\{\pm1\}^n$ is called its Satake parameter.

If $p\neq2$, the metaplectic covering $\Mp_{2n}(F)\to\Sp_{2n}(F)$ splits uniquely over $\Sp_{2n}(\calO)$, and a representation of $\Mp_{2n}(F)$ is called $\Sp_{2n}(\calO)$-spherical if it contains a nonzero vector fixed by $\Sp_{2n}(\calO)$.
Since $\psi$ has order zero, when $p\neq2$, it is known that an irreducible genuine representation of $\Mp_{2n}(F)$ is unramified if and only if it is $\Sp_{2n}(\calO)$-spherical.
Moreover, by the higher degree generalization of \cite[Proposition 6.3.5]{bs} it can be seen that Theorem \ref{ccp} associates a spherical representation $\pi_{\overline{\psi}}(\alpha_1, \ldots, \alpha_n)$ of $\Mp_{2n}(F)$ with a spherical representation $\pi(\alpha_1,\ldots, \alpha_n; \psi)$ of $\Sp^J_{2n}(F)$.
When $p=2$, however, there is no notion of spherical representations of $\Mp_{2n}(F)$.
Nevertheless, Theorem \ref{ccp} associates an unramified representation of $\Mp_{2n}(F)$ with that of $\Sp^J_{2n}(F)$:
\begin{thm}\label{ccu}
Let the residue characteristic $p$ of $F$ be any prime number including 2.
For any $(\alpha_1, \ldots, \alpha_n) \in (\C^\times)^n/\frakS_n\ltimes\{\pm1\}^n$, we have
\begin{equation*}
\pi(\alpha_1,\ldots, \alpha_n; \psi)
\cong \pi_{\overline{\psi}}(\alpha_1, \ldots, \alpha_n) \otimes \pi_{\SW,\psi}.
\end{equation*}
\end{thm}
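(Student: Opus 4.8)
The statement to prove is Theorem \ref{ccu}: for $p$ arbitrary (including $2$), the unramified representation $\pi(\alpha_1,\ldots,\alpha_n;\psi)$ of $\Sp^J_{2n}(F)$ with central character $\psi$ corresponds under the bijection $\pi'\mapsto \pi'\otimes\pi_{\SW,\psi}$ of Theorem \ref{ccp} to the unramified metaplectic representation $\pi_{\overline\psi}(\alpha_1,\ldots,\alpha_n)$.

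The plan is to deduce this from the constructions already in place rather than redo Hecke-algebra bookkeeping. First, recall from the discussion preceding Theorem \ref{csu} that $\pi(\alpha_1,\ldots,\alpha_n;\psi)$ was defined as the unique irreducible $\Sp^J_{2n}(\calO)$-spherical constituent of the unramified principal series $I_{B^J}(\chi_1,\ldots,\chi_n;\psi)$, the one ``coming from $\Phi_0$''. By Theorem \ref{ps} we have a canonical isomorphism
\begin{equation*}
I_{B^J}(\chi_1,\ldots,\chi_n;\psi)\cong I_{B,\overline\psi}(\chi_1,\ldots,\chi_n)\otimes \pi_{\SW,\psi},
\end{equation*}
so by the exactness/multiplicativity of $(-)\otimes\pi_{\SW,\psi}$ built into Theorem \ref{ccp}, the constituent $\pi(\alpha_1,\ldots,\alpha_n;\psi)$ is of the form $\pi'\otimes\pi_{\SW,\psi}$ where $\pi'$ is the constituent of $I_{B,\overline\psi}(\chi_1,\ldots,\chi_n)$ matched to it. So the task reduces to identifying $\pi'$: I must show that the constituent of $I_{B,\overline\psi}(\chi_1,\ldots,\chi_n)$ whose tensor with $\pi_{\SW,\psi}$ contains the $\Sp^J_{2n}(\calO)$-fixed line $\C\Phi_0$ is exactly the $\overline\psi$-unramified representation $\pi_{\overline\psi}(\alpha_1,\ldots,\alpha_n)$, i.e.\ the one attached to the unramified $L$-parameter $\chi_1\oplus\cdots\oplus\chi_n\oplus\chi_1^\vee\oplus\cdots\oplus\chi_n^\vee$ via the LLC of \cite{gs} with respect to $\overline\psi$ — note the additive character is the complex conjugate, as in the statement of Theorem \ref{ps}.

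For $p\neq 2$ this identification is already available: as noted in the paragraph before the theorem, the higher-degree analogue of \cite[Proposition 6.3.5]{bs} shows Theorem \ref{ccp} carries the $\Sp_{2n}(\calO)$-spherical representation $\pi_{\overline\psi}(\alpha_1,\ldots,\alpha_n)$ to the $\Sp^J_{2n}(\calO)$-spherical $\pi(\alpha_1,\ldots,\alpha_n;\psi)$, which is precisely the claim. So the content is the case $p=2$, where ``$\Sp_{2n}(\calO)$-spherical'' is meaningless on the metaplectic side and one must argue differently. Here I would argue as follows: the isomorphism of Theorem \ref{ps} is natural in $(\chi_1,\ldots,\chi_n)$ and, by its explicit formula $\Phi_{\varphi\otimes f}(gh)=\varphi(g)[\pi_{\SW,\psi}(gh)f](0)$, is defined uniformly in $p$; moreover the Jacquet-module/Satake machinery used to pin down $\pi(\alpha_1,\ldots,\alpha_n;\psi)$ via the isomorphism $\calH(\Sp^J_{2n}(F),\Sp^J_{2n}(\calO))_\psi\cong\C[X_i^{\pm1}]^{\frakS_n\ltimes\{\pm1\}^n}$ of \cite[\S4]{mu} (used in the proof of Theorem \ref{csu}) works verbatim at $p=2$. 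Concretely, I would compute the action of a generating set of this Hecke algebra on the spherical vector $\Phi_0\in I_{B^J}(\chi_1,\ldots,\chi_n;\psi)$, obtaining eigenvalues that are symmetric Laurent polynomials in $\alpha_1,\ldots,\alpha_n$ matching the Satake parameter of $\pi_{\overline\psi}(\alpha_1,\ldots,\alpha_n)$; by the LLC of \cite{gs} an unramified genuine representation of $\Mp_{2n}(F)$ is determined by its Satake parameter, and on the Jacobi side Theorem \ref{csu} says $\pi(\alpha_1,\ldots,\alpha_n;\psi)$ is determined by the same data, so the two correspond. The bijectivity noted just after Theorem \ref{csu} ensures no representation is missed.

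The main obstacle I anticipate is the bookkeeping at $p=2$: one has to be careful that the Weil representation $\omega_\psi$ and the Schr\"odinger--Weil representation $\pi_{\SW,\psi}$, although they have no distinguished $\Sp_{2n}(\calO)$-fixed vector when $2\mid p$, still produce, through the recipe of Theorem \ref{ps}, a well-defined $\Sp^J_{2n}(\calO)$-fixed line $\C\Phi_0$ on the Jacobi side (this is exactly the ``straightforward calculation'' cited before Theorem \ref{csu}, and it needs $\psi$ of order zero). Granting that, together with the $p$-uniform nature of both the isomorphism in Theorem \ref{ps} and the Hecke-algebra description, and the LLC characterization of genuine unramified representations by Satake parameter, the identification follows and the theorem is proved.
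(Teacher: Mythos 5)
Your reduction via Theorems \ref{ccp} and \ref{ps} --- that $\pi(\alpha_1,\ldots,\alpha_n;\psi)\cong\pi'\otimes\pi_{\SW,\psi}$ for some irreducible constituent $\pi'$ of $I_{B,\overline{\psi}}(\chi_1,\ldots,\chi_n)$, so that the task is to identify $\pi'$ --- matches the paper's setup, and citing the higher-degree analogue of \cite[Proposition 6.3.5]{bs} does dispose of $p\neq2$, as the paper itself remarks. But at $p=2$ (and, more to the point, whenever the unramified principal series is reducible) your identification step has a genuine gap. Computing the eigenvalues of the Hecke algebra $\calH(\Sp^J_{2n}(F),\Sp^J_{2n}(\calO))_\psi$ on $\Phi_0$ only recovers the Satake parameter $(\alpha_1,\ldots,\alpha_n)$ of the spherical Jacobi constituent; that datum is attached to the inducing characters and is therefore common to \emph{every} irreducible constituent of $I_{B,\overline{\psi}}(\chi_1,\ldots,\chi_n)$, so it cannot tell you which constituent $\pi'$ is. Saying that an unramified genuine representation of $\Mp_{2n}(F)$ is determined by its Satake parameter begs the question: what must be proved is precisely that $\pi'$ is the $\overline{\psi}$-unramified constituent in the LLC sense of \cite{gs}, and at $p=2$ this property cannot be read off from fixed vectors, so no Hecke bookkeeping on the Jacobi side settles it once $I_{B,\overline{\psi}}(\chi_1,\ldots,\chi_n)$ has more than one constituent.

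The paper closes exactly this gap by the step that is missing from your plan: it realizes $\pi_{\overline{\psi}}(\alpha_1,\ldots,\alpha_n)$ concretely as the image of the standard intertwining operator $\calT_w$ attached to the longest relevant Weyl element (the Langlands-quotient realization of the unramified constituent), so that the theorem reduces to the non-vanishing $\calT^J_w\Phi_0\neq0$, where $\calT^J_w=\calT_w\otimes 1_{\pi_{\SW,\psi}}$. It then proves this non-vanishing by hand: a reduced decomposition of $w$ reduces to rank-one operators, and for $n=1$ one evaluates a conditionally convergent iterated integral (the order of integration cannot be interchanged) using Gauss sums via \cite[Lemma 1.9]{szp}, getting for instance $(\calT^J\Phi_0)(1)=\frac{1-q\inv\chi(\varpi)^2}{1-\chi(\varpi)^2}\neq0$; the higher-rank cases are reduced to this one through embedded copies of $\Sp^J_2$ and $\Sp^J_4$. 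This Weil-representation-twisted analogue of the Gindikin--Karpelevich non-vanishing is the actual content of the theorem at $p=2$; your argument as written is complete only when the principal series is irreducible, where there are no constituents to distinguish and the statement already follows from Theorem \ref{ps}.
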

\begin{rem}
If $p$ is odd, this has been already seen, but the following argument goes regardless of the parity of $p$.
\end{rem}
\begin{rem}
Our argument is completely different from that of \cite{ssj}.
\end{rem}
\begin{proof}
Let $\chi_i$ be the unramified character of $F^\times$ such that $\chi_i(\varpi)=\alpha_i$, and $s_i$ a real number such that $|\chi_i|=|-|_F^{s_i}$.
We may assume $s_1 \geq \cdots \geq s_n \geq 0$.
Thanks to the preceding theorems, it suffices to show that $\pi_{\overline{\psi}}(\alpha_1, \ldots, \alpha_n) \otimes \pi_{\SW,\psi}$ has a nonzero vector fixed by $\Sp^J_{2n(F)}$.\\

First, we treat the case of $n=1$.
In this case, put $s=s_1$, $\chi=\chi_1$, and $\alpha=\alpha_1$.
Let us divide the problem into two cases: I) $s=0$ and II) $s>0$.

I) $s=0$. Then the unramified principal series representation $I_{B, \overline{\psi}}(\chi)$ is irreducible, so the assertion follows from Theorem \ref{ps}.

II)$s>0$. Then $|\chi(\varpi)|<1$.
It is known that the unramified representation $\pi_{\overline{\psi}}(\alpha)$ is the image of an intertwining operator $\calT$ from $I_{B, \overline{\psi}}(\chi)$ to $I_{B, \overline{\psi}}(\chi^\vee)$ that is defined by
\begin{align*}
(\calT \varphi)(g)
=\int_F \varphi ((\mmatrix{0}{1}{-1}{0}\mmatrix{1}{x}{0}{1}, 1) g) dx,
\quad g \in \Mp_2(F).
\end{align*}
Note that since $s>0$ the integral converges absolutely and the intertwining operator $\calT$ is well-defined and nonzero.
As in \cite[p.123]{bs}, we obtain an intertwining operator
\begin{align*}
\calT^J = \calT \otimes 1
:I_{B^J}(\chi; \psi) \lra I_{B^J}(\chi^\vee; \psi).
\end{align*}
Then the image of $\calT^J$ is $\pi_{\overline{\psi}}(\alpha) \otimes \pi_{\SW, \psi}$.
Hence, it suffices to show that $\calT^J \Phi_0 \neq 0$, where $\Phi_0$ is the $\Sp^J_2(\calO)$-fixed nonzero vector in $I_{B^J}(\chi; \psi)$ such that $\Phi_0([\lambda,0],0)=1_{\calO}(\lambda)$.

By \cite[Proposition 5.6.3]{bs}, $(\calT^J \Phi_0)(1)$ is equal, up to a nonzero scalar multiple, to
\begin{equation*}
\int_F \int_F \Phi_0(\mmatrix{0}{1}{-1}{0}\mmatrix{1}{x}{0}{1} ([0,\mu], 0)) d\mu dx,
\end{equation*}
which is convergent, but not absolutely convergent.
In particular, note that the order of integration must not be changed.
Now we shall calculate it carefully.
\begin{align}
&\int_F \int_F \Phi_0(\mmatrix{0}{1}{-1}{0}\mmatrix{1}{x}{0}{1} ([0,\mu], 0)) d\mu dx \notag \\
&=\int_F \left\{
\Phi_0(\mmatrix{0}{1}{-1}{0}\mmatrix{1}{x}{0}{1})
+ \sum_{m=1}^\infty \int_{\calO^\times} \Phi_0(\mmatrix{0}{1}{-1}{0}\mmatrix{1}{x}{0}{1} ([0, \varpi^{-m}u], 0)) q^m du
\right\} dx \notag \\
&=
\int_{\calO} \left\{
\Phi_0(\mmatrix{0}{1}{-1}{0} \mmatrix{1}{x}{0}{1})
+ \sum_{m=1}^\infty \int_{\calO^\times} \Phi_0(\mmatrix{0}{1}{-1}{0} \mmatrix{1}{x}{0}{1} ([0, \varpi^{-m}u], 0)) q^m du
\right\} dx \label{a} \\
&\quad+
\sum_{l=1}^\infty \int_{\calO^\times}\left\{
\Phi_0(\mmatrix{0}{1}{-1}{0}\mmatrix{1}{\varpi^{-l}v}{0}{1})
+ \sum_{m=1}^\infty \int_{\calO^\times} \Phi_0(\mmatrix{0}{1}{-1}{0}\mmatrix{1}{\varpi^{-l}v}{0}{1} ([0, \varpi^{-m}u], 0)) q^m du
\right\} q^l dv. \label{b}
\end{align}
By  the definition of $\Phi_0$, the first term is
\begin{align*}
\eqref{a}
=\Phi_0(1) + \sum_{m=1}^\infty q^m \int_{\calO^\times} \Phi_0(([\varpi^{-m}u,0], 0)) du
=\Phi_0(1)
=1.
\end{align*}
Since we have
\begin{align}\label{a1}
\mmatrix{0}{1}{-1}{0} \mmatrix{1}{\varpi^{-l}v}{0}{1}
=\mmatrix{-\varpi^l v\inv}{1}{0}{-\varpi^{-l}v} \mmatrix{1}{0}{\varpi^l v\inv}{1}
\end{align}
and
\begin{align*}
\mmatrix{1}{0}{\varpi^l v\inv}{1} ([0, \varpi^{-m}u], 0)
=([0, \varpi^{-m}u], -\varpi^{l-2m}v\inv u^2) ([-\varpi^{l-m}v\inv u, 0], 0) \mmatrix{1}{0}{\varpi^l v\inv}{1},
\end{align*}
the second term is
\begin{align}
\eqref{b}
&=
\sum_{l=1}^\infty q^{-\frac{l}{2}} \chi(\varpi)^l
\int_{\calO^\times}
\left\{
1
+ \sum_{m=1}^\infty q^m \int_{\calO^\times} \Phi_0(\mmatrix{1}{0}{\varpi^l v\inv}{1} ([0, \varpi^{-m}u], 0)) du
\right\}  dv\notag \\
&=
\sum_{l=1}^\infty q^{-\frac{l}{2}} \chi(\varpi)^l
\int_{\calO^\times}
\left\{
1
+ \sum_{m=1}^\infty q^m \int_{\calO^\times} \psi(-\varpi^{l-2m}v\inv u^2) \Phi_0(([-\varpi^{l-m}v\inv u, 0], 0)) du
\right\}  dv\notag \\
&=
\sum_{l=1}^\infty q^{-\frac{l}{2}} \chi(\varpi)^l
\int_{\calO^\times}
\left\{
1
+ \sum_{m=1}^l q^m \int_{\calO^\times} \psi(-\varpi^{l-2m}v\inv u^2) du
\right\}  dv\notag \\
&=
\sum_{l=1}^\infty q^{-\frac{l}{2}} \chi(\varpi)^l
\left\{
1-q\inv
+ \sum_{m=1}^l q^m \int_{\calO^\times}\int_{\calO^\times} \psi(-\varpi^{l-2m} u^2 v) dv du
\right\}. \label{c}
\end{align}
By \cite[Lemma 1.9]{szp}, we have
\begin{align*}
\int_{\calO^\times} \int_{\calO^\times} \psi(-\varpi^{l-2m} u^2 v) dv du
=\begin{cases}
(1-q\inv)^2, & l-2m\geq0,\\
-q\inv (1-q\inv), & l-2m=-1,\\
0, & l-2m \leq-2.
\end{cases}
\end{align*}
Thus, \eqref{c} is equal to
\begin{align*}
&\sum_{k=1}^\infty
\sum_{l=2k-1}^{2k} q^{-\frac{l}{2}} \chi(\varpi)^l
\left\{
1-q\inv
+ \sum_{m=1}^l q^m \int_{\calO^\times}\int_{\calO^\times} \psi(-\varpi^{l-2m} u^2 v) dv du
\right\}\\
&=\sum_{k=1}^\infty
q^{-k+\frac{1}{2}} \chi(\varpi)^{2k-1}
\left\{
1-q\inv
+ \sum_{m=1}^{k-1} q^m (1-q\inv)^2
-q^k q\inv(1-q\inv)
\right\}\\
&\quad +
q^{-k} \chi(\varpi)^{2k}
\left\{
1-q\inv
+ \sum_{m=1}^k q^m (1-q\inv)^2
\right\}\\
&=
\sum_{k=1}^\infty
q^{-k+\frac{1}{2}} \chi(\varpi)^{2k-1} \times 0
+ q^{-k} \chi(\varpi)^{2k} \times (1-q\inv)q^k \\
&=
\sum_{k=1}^\infty
(1- q\inv) \chi(\varpi)^{2k}.
\end{align*}
Therefore we obtain
\begin{align*}
\int_F \int_F \Phi_0(\mmatrix{0}{1}{-1}{0}\mmatrix{1}{x}{0}{1} ([0,\mu], 0)) d\mu dx
&=1+\sum_{k=1}^\infty (1- q\inv) \chi(\varpi)^{2k}\\
&=\frac{1-q\inv \chi(\varpi)^2}{1-\chi(\varpi)^2}
\neq0.
\end{align*}

Next, we treat the case of $n=2$.
There are four cases to consider: i)$s_1=s_2=0$, ii)$s_1=s_2>0$, iii)$s_1>s_2=0$, and iv)$s_1>s_2>0$.

i)$s_1=s_2=0$.
Then the unramified principal series representation $I_{B, \overline{\psi}}(\chi_1, \chi_2)$ is irreducible, so the assertion follows from Theorem \ref{ps}.

ii)$s_1=s_2>0$, iii)$s_1>s_2=0$, and iv)$s_1>s_2>0$.
We treat these cases at the same time.
Let $M_2$ and $M_1$ be subgroups of $\Sp_4(F)$ consisting of matrices of the form
\begin{align*}
&\left(\begin{array}{cc}
A&0\\
0&\tp{A}\inv
\end{array}\right),&
A &\in \GL_2(F),
\end{align*}
and
\begin{align*}
&\left(\begin{array}{cccc}
t&&&\\
&a&&b\\
&&t\inv&\\
&c&&d
\end{array}\right),&
t &\in F^\times, \quad
\left(\begin{array}{cc}
a&b\\
c&d
\end{array}\right) \in \Sp_2(F),
\end{align*}
respectively.
Note that $M_2 \cong \GL_2(F)$ and $M_1 \cong \GL_1(F) \times \Sp_2(F)$.
For $j=1,2$, let $P_j$ be a parabolic subgroup of $\Sp_4(F)$ such that $P_j$ contains the Borel subgroup $B$ and that $M_j$ is a Levi subgroup of $P_j$.
Then the unramified principal series representation $I_{B, \overline{\psi}}(\chi_1, \chi_2)$ is a standard module because of isomorphisms
\begin{align*}
I_{B, \overline{\psi}}(\chi_1, \chi_2)
\cong
\begin{cases}
I_{P_2, \overline{\psi}} \left( \Ind_{B_{\GL_2}}^{\GL_2(F)}(\chi_1'\boxtimes\chi_2') \otimes |\det|_F^{s_1} \right) & \text{ii)},\\
I_{P_1, \overline{\psi}} \left( \chi_1'|-|^{s_1}, I_{B, \overline{\psi}}(\chi_2') \right) & \text{iii)}, \\
I_{B, \overline{\psi}} \left(\chi_1'|-|_F^{s_1}, \chi_2'|-|_F^{s_2} \right) & \text{iv)},
\end{cases}
\end{align*}
in the notation of \cite[\S2.5]{gs}, where $B_{\GL_2}$ denotes the Borel subgroup of $\GL_2(F)$ consisting of upper triangular matrices, and $\chi_1'$ and $\chi_2'$ are unitary unramified characters such that $\chi_1'|-|_F^{s_1}=\chi_1$, $\chi_2'|-|_F^{s_2}=\chi_2$.
Put
\begin{align*}
(P,M)
=\begin{cases}
(P_2, M_2)& \text{ii)},\\
(P_1, M_1)& \text{iii)}, \\
(B, T)& \text{iv)}.
\end{cases}
\end{align*}
Let $U$ be the unipotent radical of $P$, and $w \in \Sp_4(F)$ a representative of the longest element in the relative Weyl group $W(M,\Sp_4)=N_{\Sp_4}(M)/M$ for $M\subset\Sp_4(F)$.
Here, $N_{\GL_2}(M)$ denotes the normalizer of $M$ in $\GL_2$.
It is known that the unramified representation $\pi_{\overline{\psi}}(\alpha_1, \alpha_2)$ is the image of an intertwining operator $\calT_w$ from $I_{B, \overline{\psi}}(\chi_1, \chi_2)$ to $I_{B, \overline{\psi}}(\chi_1^\vee, \chi_2^\vee)$ defined by
\begin{align*}
(\calT_w \varphi)(g)
=\int_U \varphi((wu,1)g) du,
\quad g \in \Mp_4(F),
\end{align*}
so it suffices to show that $\calT^J_w \Phi_0 \neq 0$, where $\calT^J_w=\calT_w \otimes 1_{\pi_{\SW, \psi}}$.
Here, $\Phi_0 \in I_{B^J}(\chi_1, \chi_2; \psi)$ is the spherical vector defined before Theorem \ref{csu}.

Put
\begin{align*}
w_1
&=\left(\begin{array}{cccc}
0&1&&\\
-1&0&&\\
&&0&1\\
&&-1&0
\end{array}\right),&
w_2
&=\left(\begin{array}{cccc}
1&&&\\
&0&&1\\
&&1&\\
&-1&&0
\end{array}\right),
\end{align*}
and $w_0=\mmatrix{}{-1_2}{1_2}{}$.
Then $w_1$ and $w_2$ are representatives of simple reflections in the Weyl group $W(T,\Sp_4)=N_{\Sp_4}(T)/T$, and $w_0$ is a representative of the longest element in it.
Moreover, $w_0$ is also a representative of the longest element in $W(M_2,\Sp_4)$ and $W(M_1,\Sp_4)$.
Note that $w_1 \in M_2$ and $w_2 \in M_1$.

Since the image of $\calT_w$ does not depend on the choice of $w$, we may take
\begin{align}\label{eqrev2}
w=w_0
=\left(\begin{array}{cccc}
&&-1&\\
&&&-1\\
1&&&\\
&1&&
\end{array}\right)
=w_2 w_1 w_2 w_1,
\end{align}
in the case iv).
Moreover, because we have $w_1 \in M_2$ and $w_2 \in M_1$, we may take
\begin{align}\label{eqrev3}
w=w_0 w_1\inv
=\left(\begin{array}{cccc}
&&&1\\
&&-1&\\
&-1&&\\
1&&&
\end{array}\right)
=w_2 w_1 w_2,
\end{align}
in the case ii), and
\begin{align}\label{eqrev4}
w=w_2\inv w_0
=\left(\begin{array}{cccc}
&&-1&\\
&-1&&\\
1&&&\\
&&&-1
\end{array}\right)
=w_1 w_2 w_1,
\end{align}
in the case iii).
The equations \eqref{eqrev2}, \eqref{eqrev3}, and \eqref{eqrev4} give the reduced expressions of each of the elements $w$.
Then by the multiplicative property of the intertwining operators in $w$, the intertwining operator $\calT_w$ is equal, up to sign, to a composition
\begin{align*}
\begin{cases}
\calT_2 \circ \calT_1 \circ \calT_2
:I_{B, \overline{\psi}}(\chi_1, \chi_2) \to I_{B, \overline{\psi}}(\chi_1, \chi_2^\vee)
 \to I_{B, \overline{\psi}}(\chi_2^\vee, \chi_1) \to I_{B, \overline{\psi}}(\chi_2^\vee, \chi_1^\vee)
  \quad \left( \cong I_{B, \overline{\psi}}(\chi_1^\vee, \chi_2^\vee) \right) & \text{ii)},\\
\calT_1 \circ \calT_2 \circ \calT_1
:I_{B, \overline{\psi}}(\chi_1, \chi_2) \to I_{B, \overline{\psi}}(\chi_2, \chi_1)
 \to I_{B, \overline{\psi}}(\chi_2, \chi_1^\vee) \to I_{B, \overline{\psi}}(\chi_1^\vee, \chi_2)
  \quad \left( \cong I_{B, \overline{\psi}}(\chi_1^\vee, \chi_2^\vee) \right)& \text{iii)},\\
\calT_2 \circ \calT_1 \circ \calT_2 \circ \calT_1
:I_{B, \overline{\psi}}(\chi_1, \chi_2) \to I_{B, \overline{\psi}}(\chi_2, \chi_1)
\to I_{B, \overline{\psi}}(\chi_2, \chi_1^\vee) \to I_{B, \overline{\psi}}(\chi_1^\vee, \chi_2)
 \to I_{B, \overline{\psi}}(\chi_1^\vee, \chi_2^\vee)& \text{iv)},
\end{cases}
\end{align*}
where $\calT_1$ and $\calT_2$ are given by
\begin{align*}
(\calT_1 \varphi)(g)
&=\int_F \varphi((w_1 \left(\begin{array}{cccc} 1&x&&\\&1&&\\&&1&\\&&-x&1\end{array}\right) ,1)g) dx,& &\\
(\calT_2 \varphi)(g)
&=\int_F \varphi((w_2 \left(\begin{array}{cccc} 1&&&\\&1&&x\\&&1&\\&&&1\end{array}\right) ,1)g) dx,&
g &\in \Mp_4(F).
\end{align*}
Now, $\calT_1$ (resp. $\calT_2$) in the composition operates on a certain unramified principal series representation $I_{B, \overline{\psi}}(\chi', \chi)$ such that $|\chi'\chi\inv(\varpi)|<1$ (resp. $|\chi(\varpi)|<1$).
Thus it suffices to show that $\calT^J_i \Phi_0 \neq 0$ ($i=1,2$) for the spherical vector $\Phi_0 \in I_{B^J}(\chi',\chi;\psi)$, where $\calT^J_i=\calT_i \otimes 1_{\pi_{\SW, \psi}}$.

We shall regard $\Sp^J_2(F)$ as a subgroup of $\Sp^J_4(F)$ by the injection given by
\begin{align*}
\left(\begin{array}{cc}
a&b\\
c&d
\end{array}\right)
&\mapsto
\left(\begin{array}{cccc} 1&&&\\&a&&b\\&&1&\\&c&&d\end{array}\right),&
([\lambda, \mu], \kappa)
&\mapsto
\left(\left[ \left(\begin{array}{c} 0\\ \lambda\end{array}\right), \left(\begin{array}{c} 0\\ \mu\end{array}\right) \right], \kappa\right),
\end{align*}
and consider the restriction $\Phi_0|_{\Sp^J_2(F)}$.
Then the assertion $\calT^J_2 \Phi_0 \neq 0$ follows from the case II) above.

The other assertion can be verified as follows.
By virtue of Theorem \ref{ps}, for any $\Phi \in I_{B^J}(\chi', \chi; \psi)$ we have
\begin{align*}
(\calT^J_1 \Phi) (1)
&=\int_F \Phi(
\left(\begin{array}{cccc}
0&1&&\\
-1&0&&\\
&&0&1\\
&&-1&0
\end{array}\right)
\left(\begin{array}{cccc}
1&x&&\\
&1&&\\
&&1&\\
&&-x&1
\end{array}\right)
)dx.
\end{align*}
Therefore, by the equation \eqref{a1}, we have
\begin{align*}
(\calT^J_1 \Phi_0) (1)
&=\Phi_0(1)
+\sum_{m=1}^\infty \int_{\calO^\times}
\chi'\chi\inv(\varpi)^m q^{-\frac{5}{2}m + \frac{3}{2}m}
\Phi_0(1) q^m dt\\
&=
1 + \sum_{m=1}^\infty (1-q\inv) \chi'\chi\inv(\varpi)^m\\
&=\frac{1-q\inv \chi'\chi\inv(\varpi)}{1-\chi'\chi\inv(\varpi)}
\neq0.\\
\end{align*}

Finally, we treat the case of $n \geq3$.
Since $s_1\geq \cdots \geq s_n\geq0$, there are nonnegative integers $m, n_0, n_1,\ldots,n_m$ such that $n=n_1+\cdots+n_m+n_0$ and
\begin{equation*}
s_1=\cdots=s_{n_1} > s_{n_1+1}=\cdots=s_{n_1+n_2} > \cdots =s_{n_1+\cdots+n_m} > s_{n_1+\cdots+n_m+1} =\cdots= s_{n_1+\cdots+n_m+n_0}=0.
\end{equation*}
If $s_1=\cdots=s_n=0$ or $s_n>0$, we understand $m$ or $n_0$ to be 0, respectively.
When $m=0$, the unramified principal series representation $I_{B, \overline{\psi}}(\chi_1, \ldots, \chi_n)$ is irreducible, so the assertion follows from Theorem \ref{ps}.

Now assume that $m>0$.
As in the case of $n=2$, let $M$ be a subgroup of $\Sp_{2n}(F)$ consisting of matrices of the form
\begin{align*}
&\left(\begin{array}{cccccccc}
A_1&&&&&&&\\
&\ddots&&&&&&\\
&&A_m&&&&&\\
&&&a&&&&b\\
&&&&\tp{A_1}\inv&&&\\
&&&&&\ddots&&\\
&&&&&&\tp{A_m}\inv&\\
&&&c&&&&d
\end{array}\right),&
A_i &\in \GL_{n_i}(F), \quad
\left(\begin{array}{cc}
a&b\\
c&d
\end{array}\right) \in \Sp_{2n_0}(F).
\end{align*}
Note that $M\cong \GL_{n_1}(F)\times\cdots\times\GL_{n_m}(F)\times\Sp_{2n_0}(F)$.
Let $P$ be a proper standard parabolic subgroup of $\Sp_{2n}(F)$ such that $M$ is a Levi subgroup of $P$.
Then the unramified principal series representation $I_{B, \overline{\psi}}(\chi_1, \ldots, \chi_n)$ is a standard module because it is isomorphic to
\begin{align*}
I_{P,\overline{\psi}}
&\left(
\Ind_{B_{\GL_{n_1}}}^{\GL_{n_1}(F)}(\chi_1'\boxtimes\cdots\boxtimes\chi_{n_1}')\otimes|\det|_F^{s_{n_1}},\ldots
\right.\\
&\quad\left. \ldots, \Ind_{B_{\GL_{n_m}}}^{\GL_{n_m}(F)}(\chi_{n_1+\cdots+n_{m-1}+1}'\boxtimes\cdots\boxtimes\chi_{n_1+\cdots+n_{m-1}+n_m}')\otimes|\det|_F^{s_{n_m}},\ 
I_{B,\overline{\psi}}(\chi_{n_1+\cdots+n_m+1}, \ldots, \chi_n) \right),
\end{align*}
in the notation of \cite[\S2.5]{gs}, where $B_{\GL_N}$ denotes the Borel subgroup of $\GL_N(F)$ consisting of upper triangular matrices, and $\chi_i'$ is the unitary unramified character such that $\chi_i'|-|_F^{s_i}=\chi_i$.
Let $U$ be the unipotent radical of $P$, and $w\in\Sp_{2n}(F)$ a representative of the longest element in the relative Weyl group $W(M,\Sp_{2n})=N_{\Sp_{2n}}(M)/M$ for $M$ and $\Sp_{2n}(F)$.
Then the unramified representation $\pi_{\overline{\psi}}(\alpha_1, \ldots, \alpha_n)$ is the image of an intertwining operator $\calT_w$ from $I_{B, \overline{\psi}}(\chi_1, \ldots, \chi_n)$ to $I_{B, \overline{\psi}}(\chi_1^\vee, \ldots, \chi_n^\vee)$ defined by
\begin{equation*}
(\calT_w \varphi)(g)
=\int_U \varphi((wu,1)g) du,
\quad g\in\Mp_{2n}(F),
\end{equation*}
and it suffices to show that  $\calT^J_w \Phi_0 \neq 0$, where $\calT^J_w=\calT_w \otimes 1_{\pi_{\SW, \psi}}$.
Here, $\Phi_0 \in I_{B^J}(\chi_1,\ldots, \chi_n; \psi)$ is the spherical vector defined before Theorem \ref{csu}.
As in the case $n=2$, we may assume that $w$ is a product of a finite number of elements in $\{w_1, \ldots, w_n\}$, where
\begin{align*}
w_i
=\left(\begin{array}{cccccccc}
1_{i-1}&&&&&&&\\
&0&1&&&&&\\
&-1&0&&&&&\\
&&&1_{n-i-1}&&&&\\
&&&&1_{i-1}&&&\\
&&&&&0&1&\\
&&&&&-1&0&\\
&&&&&&&1_{n-i-1}
\end{array}\right), \ (i=1,\ldots,n-1),
\qquad
w_n
=\left(\begin{array}{cccc}
1_{n-1}&&&\\
&0&&1\\
&&1_{n-1}&\\
&-1&&0\\
\end{array}\right).
\end{align*}
For $x\in F$, put
\begin{align*}
u_i(x)
=\left(\begin{array}{cccccccc}
1_{i-1}&&&&&&&\\
&1&x&&&&&\\
&&1&&&&&\\
&&&1_{n-i-1}&&&&\\
&&&&1_{i-1}&&&\\
&&&&&1&&\\
&&&&&x&1&\\
&&&&&&&1_{n-i-1}
\end{array}\right), \ (i=1,\ldots,n-1),
\qquad
u_n(x)
=\left(\begin{array}{cccc}
1_{n-1}&&&\\
&1&&x\\
&&1_{n-1}&\\
&&&1\\
\end{array}\right).
\end{align*}
Then, as in the case of $n=2$, the intertwining operator $\calT_w$ is equal, up to sign, to a composition of a finite number of corresponding operators $\calT_1, \ldots, \calT_n$ defined by
\begin{align*}
(\calT_i \varphi)(g)
=\int_F \varphi((w_i u_i(x) ,1)g) dx, \quad g\in\Mp_{2n}(F).
\end{align*}
Here, each $\calT_i$ in the composition operates on a certain unramified principal series representation $I_{B, \overline{\psi}}(\acute{\chi}_1, \ldots, \acute{\chi}_n)$ such that $|\acute{\chi}_i \acute{\chi}_{i+1}\inv(\varpi)|<1$ if $1\leq i \leq n-1$, and $|\acute{\chi}_n(\varpi)|<1$ if $i=n$.
Thus, it suffices to show that $\calT^J_i \Phi_0\neq0$ ($i=1,\ldots,n$) for the spherical vector $\Phi_0 \in I_{B^J}(\acute{\chi}_1,\ldots, \acute{\chi}_n; \psi)$, where $\calT^J_i=\calT_i\otimes 1_{\pi_{\SW,\psi}}$.

As in the case of $n=2$, we shall regard $\Sp^J_2(F)$ as a subgroup of $\Sp^J_{2n}(F)$ by the injection $\iota_n$ given by
\begin{align*}
\left(\begin{array}{cc}
a&b\\
c&d
\end{array}\right)
&\mapsto
\left(\begin{array}{cccc} 1_{n-1}&&&\\&a&&b\\&&1_{n-1}&\\&c&&d\end{array}\right),&
([\lambda, \mu], \kappa)
&\mapsto
\left(\left[ \left(\begin{array}{c} 0_{n-1}\\ \lambda\end{array}\right), \left(\begin{array}{c} 0_{n-1}\\ \mu\end{array}\right) \right], \kappa\right).
\end{align*}
Moreover, for $i=1,\ldots, n-1$, we shall regard $\Sp^J_4(F)$ as a subgroup of $\Sp^J_{2n}(F)$ by an injection $\iota_i$ given by
\begin{align*}
\left(\begin{array}{cc}
A&B\\
C&D
\end{array}\right)
&\mapsto
\left(\begin{array}{cccccc}
1_{i-1}&&&&&\\
&A&&&B&\\
&&1_{n-i-1}&&&\\
&&&1_{i-1}&&\\
&C&&&D&\\
&&&&&1_{n-i-1}
\end{array}\right),&
([\lambda, \mu], \kappa)
&\mapsto
\left(\left[ \left(\begin{array}{c} 0_{i-1}\\ \lambda \\ 0_{n-i-1}\end{array}\right), \left(\begin{array}{c} 0_{i-1}\\ \mu \\ 0_{n-i-1}\end{array}\right) \right], \kappa\right).
\end{align*}
Let us consider the restriction $\Phi_0|_{\myim \iota_i}$.
Now the assertion $\calT^J_i \Phi_0 \neq 0$ follows from the case $n=2$ or 1.
\end{proof}

\subsection{The real case}
Next we consider representations of the Jacobi group over $\R$.
Let $\psi : \R\to\C^1$ be a nontrivial additive character defined by $\psi(x)=\bfe(x)$ or $\bfe(-x)$.
The Schr\"odinger representation $\pi_{S,\psi}$ of $\calH_n(\R)$ on the Hilbert space $L^2(\R^n)$ is defined by
\begin{equation*}
\left[\pi_{S,\psi}(([\lambda, \mu], \kappa)) f \right](x)
=\psi(\kappa+\tp(2x+\lambda)\mu) f(x+\lambda),
\quad
f \in \calS(\R^n), \ ([\lambda,\mu],\kappa)\in\calH_n(\R),
\end{equation*}
where $\calS(\R^n)$ is the Schwartz space.
The following fact is known as the Stone-von Neumann theorem.
\begin{thm}
\begin{enumerate}[(1)]
\item The Schr\"odinger representation $\pi_{S,\psi}$ is the unique irreducible unitary representation of $\calH_n(\R)$ with central character $\psi$.
\item Any unitary representation of $\calH_n(\R)$ with central character $\psi$ is isomorphic to a direct sum of $\pi_{S,\psi}$.
\end{enumerate}
\end{thm}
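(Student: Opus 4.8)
The plan is to run the classical Weyl--operator proof of the Stone--von Neumann theorem; the real case is entirely parallel to the $p$-adic one above, so I would only indicate the skeleton. Fix $n\ge1$ and let $(\pi,\mathcal V)$ be a nonzero unitary (strongly continuous) representation of $\calH_n(\R)$ with central character $\psi$. For $z=(\lambda,\mu)\in\R^{2n}$ put $W(z)=\pi(([\lambda,\mu],0))$ and $\omega\big((\lambda,\mu),(\lambda',\mu')\big)=\tp\lambda\mu'-\tp\mu\lambda'$; from the group law one gets the Weyl relations $W(z)W(z')=\psi(\omega(z,z'))W(z+z')$ and $W(z)^{*}=W(-z)$, so $W$ is a strongly continuous projective unitary representation of $\R^{2n}$, and likewise $\pi_{S,\psi}$ on $\calS(\R^n)$ (completed to $L^{2}(\R^{n})$) gives operators $W_{S}(z)$. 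For $F\in\calS(\R^{2n})$ the integrated operator $\pi(F)=\int_{\R^{2n}}F(z)W(z)\,dz$ is bounded and satisfies $\pi(F_1)\pi(F_2)=\pi(F_1\natural F_2)$ and $\pi(F)^{*}=\pi(F^{\sharp})$, where $\natural$ and $\sharp$ are the twisted convolution and involution attached to the phase $\psi(\omega(\cdot,\cdot))$; these identities are the same in every such representation, in particular in $\pi_{S,\psi}$. A Gaussian integral shows that for $F_0(z)=e^{-\pi|z|^{2}}$ one has $F_0\natural F_0=2^{-n}F_0$ and $F_0^{\sharp}=F_0$, so $E:=2^{n}\pi(F_0)$ is an orthogonal projection on $\mathcal V$; the same computation identifies $E_{S}:=2^{n}\pi_{S,\psi}(F_0)$ with the rank-one orthogonal projection onto the Gaussian $h_0(x)=e^{-\pi|x|^{2}}$, and it also yields $E\,W(z)\,E=\phi(z)\,E$ with $\phi(z)=e^{-\frac{\pi}{2}|z|^{2}}=\an{W_{S}(z)h_0,h_0}/\|h_0\|^{2}$.

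Next I would check that $E\ne0$. If $E=0$ then, conjugating by the unitaries $W(w)$, the Weyl relations turn $W(w)EW(w)^{-1}=0$ into $\int_{\R^{2n}}F_0(z)\,\chi_w(z)\,\an{W(z)\xi,\zeta}\,dz=0$ for all $\xi,\zeta\in\mathcal V$, where $w\mapsto\chi_w$ sweeps out all unitary characters of $\R^{2n}$ (here the nondegeneracy of $\omega$ is used). Hence the Fourier transform of the integrable function $z\mapsto F_0(z)\an{W(z)\xi,\zeta}$ vanishes identically, so this continuous function is $0$; evaluating at $z=0$ gives $\an{\xi,\zeta}=0$ for all $\zeta$, hence $\xi=0$, contradicting $\mathcal V\ne0$. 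Now for any $0\ne\eta\in E\mathcal V$, using $E\eta=\eta$ together with $EW(z)E=\phi(z)E$ one computes
\[
\an{W(z_1)\eta,\,W(z_2)\eta}=\psi(\omega(z_1,z_2))\,\an{W(z_1-z_2)\eta,\eta}=\psi(\omega(z_1,z_2))\,\phi(z_1-z_2)\,\|\eta\|^{2},
\]
which is precisely $\|\eta\|^{2}$ times the corresponding cyclic Gram function of $h_0$ for $\pi_{S,\psi}$. Since a cyclic unitary representation is determined up to equivalence by the Gram function of a cyclic vector, the closed $W$-invariant subspace $\mathcal V_\eta:=\overline{\myspan\{W(z)\eta:z\in\R^{2n}\}}$ is, as a representation of $\calH_n(\R)$, unitarily equivalent to $\pi_{S,\psi}$ (note that $h_0$ is cyclic for $\pi_{S,\psi}$, by the same vanishing argument applied in the Schr\"odinger model).

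It then remains to decompose $\mathcal V$. Choose an orthonormal basis $\{\eta_i\}_{i\in I}$ of the Hilbert space $E\mathcal V$; the displayed formula (with $E\eta_i=\eta_i$) gives $\an{W(z_1)\eta_i,W(z_2)\eta_j}=0$ for $i\ne j$, so the $\mathcal V_{\eta_i}$ are pairwise orthogonal and $E\mathcal V=\overline{\myspan\{\eta_i\}}\subseteq\bigoplus_{i}\mathcal V_{\eta_i}$. The orthogonal complement $\mathcal K$ of $\bigoplus_i\mathcal V_{\eta_i}$ is $W$-invariant and satisfies $E\mathcal K\subseteq E\mathcal V\cap\mathcal K=0$, hence $\mathcal K=0$ by the vanishing argument applied to $\mathcal K$. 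Therefore $\mathcal V\cong\bigoplus_{i\in I}\pi_{S,\psi}$, which is assertion (2); specializing to irreducible $\pi$ forces $|I|=1$, and $\pi_{S,\psi}$ is itself irreducible (a bounded operator commuting with all $W_{S}(z)$ commutes with $E_{S}$, hence is scalar on the line $E_{S}L^{2}(\R^{n})$, hence scalar by cyclicity of $h_0$), which gives assertion (1). The only real obstacle lies in the Gaussian identities $F_0\natural F_0=2^{-n}F_0$, $F_0^{\sharp}=F_0$, $EW(z)E=\phi(z)E$ and in justifying convergence of the operator-valued integrals $\pi(F)$; these are elementary, the single delicate point being the symplectic-phase bookkeeping and getting the normalizing constants to yield exact idempotency. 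Equivalently, one may organize the whole argument as: $\pi$ factors through the quotient of $C^{*}(\calH_n(\R))$ on which the center acts by $\psi$, the Weyl calculus identifies that quotient with the compact operators on $L^{2}(\R^{n})$ via $\pi_{S,\psi}$, and the nondegenerate representations of the compacts are exactly the multiples of the identity representation.
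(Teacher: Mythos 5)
Your argument is correct; note, though, that the paper offers no proof of this statement at all — it records the Stone--von Neumann theorem for $\calH_n(\R)$ as a known classical fact — so there is no internal argument to compare against. What you give is the standard von Neumann proof: pass to the Weyl operators $W(z)$ with multiplier $\psi(\omega(z,z'))$, integrate against a Gaussian to produce the twisted-convolution idempotent $E=2^{n}\pi(F_0)$, prove $E\neq0$ by the Fourier-nondegeneracy argument, use $EW(z)E=\phi(z)E$ to pin down the Gram function of any unit vector in $E\mathcal V$, and conclude by the cyclic/GNS rigidity that every such vector generates a copy of $\pi_{S,\psi}$, with the complement killed by the same nonvanishing argument; the final C*-algebraic reformulation (the $\psi$-quotient of $C^{*}(\calH_n(\R))$ is the compacts) is an equally valid packaging. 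All the structural identities you rely on ($F_0\,\natural\,F_0=2^{-n}F_0$, self-adjointness of $E$, transfer of $EW(z)E=\phi(z)E$ from the Schr\"odinger model to an arbitrary representation via injectivity of the integrated Schr\"odinger representation on $L^{1}$) do hold, and the decomposition and irreducibility/uniqueness steps are sound, including for an arbitrary (possibly uncountable) orthonormal basis of $E\mathcal V$. One cosmetic caveat: with the paper's normalization $[\pi_{S,\psi}(([\lambda,\mu],\kappa))f](x)=\psi(\kappa+\tp{(2x+\lambda)}\mu)f(x+\lambda)$, the range of $E_S$ is spanned by the Gaussian $e^{-2\pi|x|^{2}}$ (not $e^{-\pi|x|^{2}}$) and correspondingly $\phi(z)=e^{-\pi|z|^{2}}$ rather than $e^{-\pi|z|^{2}/2}$; this only shifts constants tied to the convention and does not affect any step of the argument, since all you use is that $E_S$ is a rank-one projection onto a Gaussian vector and that the same scalar function $\phi$ appears in every representation.
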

The Stone-von Neumann theorem gives us the Weil representation $\omega_\psi$ of the metaplectic group $\Mp_{2n}(\R)$ on $L^2(\R^n)$.
Combining the Schr\"odinger representation $\pi_{S,\psi}$ and the Weil representation $\omega_\psi$, we obtain the Schr\"odinger-Weil representation $\pi_{\SW,\psi}$ of $\Mp^J_{2n}(\R)=\Mp_{2n}(\R)\ltimes \calH_n(\R)$, the metaplectic double covering group of $\Sp^J_{2n}(\R)$.
Note that the Schr\"odinger-Weil representation $\pi_{\SW,\psi}$ is a genuine representation, i.e., it does not factor through $\Mp^J_{2n}(\R)\to\Sp^J_{2n}(\R)$.

If $\pi'$ is a genuine unitary representation of $\Mp_{2n}(\R)$, then a representation $\pi=\pi' \otimes \pi_{\SW,\psi}$ is not genuine and can be regarded as a representation of $\Sp^J_{2n}(\R)$.
Also, we have the following theorem from \cite{sun}.
\begin{thm}[{\cite[Proposition 4.2]{sun}}]\label{ccr}
The correspondence $\pi' \mapsto \pi:=\pi' \otimes \pi_{\SW,\psi}$ gives a bijection between the irreducible genuine unitary representations of $\Mp_{2n}(\R)$ and the irreducible unitary representations of $\Sp^J_{2n}(\R)$ with central character $\psi$.
\end{thm}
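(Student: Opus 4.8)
The argument runs parallel to the $p$-adic case (Theorem~\ref{ccp}): it is a Mackey-type analysis built on the unitary Stone--von Neumann theorem, with smooth representations replaced throughout by unitary ones. First I would verify that $\pi' \mapsto \pi' \otimes \pi_{\SW,\psi}$ is well defined. If $\pi'$ is an irreducible genuine unitary representation of $\Mp_{2n}(\R)$, then $\pi := \pi' \otimes \pi_{\SW,\psi}$ is a unitary representation of $\Mp^J_{2n}(\R)$ on which the central $\{\pm1\}$ acts by $(-\mathrm{Id}) \otimes (-\mathrm{Id}) = \mathrm{Id}$, since both tensor factors are genuine; hence $\pi$ descends to $\Sp^J_{2n}(\R)$. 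Its restriction to $\calZ(\R) \subset \calH_n(\R)$ is the character $\psi$, because $\pi_{\SW,\psi}|_{\calZ(\R)} = \psi$ while $\pi'$ is trivial on $\calZ(\R)$. Irreducibility of $\pi$ will drop out of the commutant computation below.

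For the reverse direction, let $\pi$ be an irreducible unitary representation of $\Sp^J_{2n}(\R)$ with central character $\psi$. Restricting to $\calH_n(\R)$ and applying the unitary Stone--von Neumann theorem, $\pi|_{\calH_n(\R)}$ is unitarily equivalent to $\pi_{S,\psi} \otimes \mathrm{Id}_W$ acting on $L^2(\R^n)\,\widehat{\otimes}\,W$ for a Hilbert space $W$ (the multiplicity space). Because $\Sp_{2n}(\R)$ acts on $\calH_n(\R)$ by automorphisms fixing the center pointwise, conjugation by any $g \in \Sp_{2n}(\R)$ carries $\pi_{S,\psi}$ to another irreducible unitary representation of $\calH_n(\R)$ with central character $\psi$, hence to one isomorphic to $\pi_{S,\psi}$ itself; Stone--von Neumann then supplies an intertwiner, unique up to a scalar of modulus $1$. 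Selecting such intertwiners realizes the operators $\pi(g)$, $g \in \Sp_{2n}(\R)$, in the form $T_g \otimes S_g$ with $g \mapsto T_g$ a projective unitary representation on $L^2(\R^n)$ whose cocycle is, by definition, the metaplectic cocycle --- equivalently, $g\mapsto T_g$ lifts to the Weil representation $\omega_\psi$ of $\Mp_{2n}(\R)$, and $g\mapsto S_g$ lifts with the inverse cocycle. Transporting the $\Sp^J_{2n}(\R)$-action through this identification produces a genuine unitary representation $\pi'$ of $\Mp_{2n}(\R)$ on $W$ with $\pi \cong \pi' \otimes \pi_{\SW,\psi}$; genuineness of $\pi'$ is forced by the fact that $\pi_{\SW,\psi}$ is genuine and $\pi$ is not.

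It then remains to see that the two constructions are mutually inverse and that both preserve irreducibility. This follows from the identification of commutants
\[
\End_{\Sp^J_{2n}(\R)}(\pi' \otimes \pi_{\SW,\psi}) \cong \End_{\Mp_{2n}(\R)}(\pi'):
\]
any bounded operator commuting with the $\calH_n(\R)$-action lies in $\mathrm{Id} \otimes B(W)$ by Schur's lemma applied to the irreducible $\pi_{S,\psi}$, and imposing commutation with the $\Sp_{2n}(\R)$-action via $\omega_\psi$ then cuts this down to $\End_{\Mp_{2n}(\R)}(\pi')$. Hence $\pi' \otimes \pi_{\SW,\psi}$ is irreducible precisely when $\pi'$ is, which completes well-definedness of the forward map; running the equivalence $\pi \cong \pi'\otimes\pi_{\SW,\psi}$ backwards shows the reverse map $\pi \mapsto \pi'$ lands in irreducibles and is a two-sided inverse.

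The main obstacle I anticipate is making the untwisting step of the second paragraph rigorous in the unitary (rather than smooth, finite-length) category: one must know that the projective action of $\Sp_{2n}(\R)$ on the multiplicity space is governed exactly by the metaplectic cocycle --- so that it linearizes over the double cover $\Mp_{2n}(\R)$ and no larger cover is needed --- and that the resulting $\pi'$ is a bona fide strongly continuous unitary representation. This is precisely the existence-and-essential-uniqueness theorem for the Weil representation, together with a direct-integral form of Stone--von Neumann; granting these, the rest of the argument is formal. This is carried out in \cite[Proposition~4.2]{sun}.
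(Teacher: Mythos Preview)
The paper does not supply its own proof of this theorem: it is stated with attribution to \cite[Proposition~4.2]{sun} and left at that. Your sketch is the standard Mackey/Stone--von Neumann argument and is correct in outline; since you also close by deferring the delicate continuity and cocycle issues to the same reference, your proposal is in effect a fleshed-out version of the paper's citation rather than a different approach.
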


Let us write $\fraksp_{2n}$ for the Lie algebra of $\Sp_{2n}$, or equivalently of $\Mp_{2n}$, and put
\begin{align*}
\frakp_\C
&=\Set{
\left(\begin{array}{cc}
A&B\\B&-A
\end{array}\right) \in \fraksp_{2n}(\C)
|
A=\tp{A}, \ B=\tp{B}
},\\
\frakp_\C^\pm
&=\Set{
\left(\begin{array}{cc}
A&\pm iA\\ \pm iA&-A
\end{array}\right) \in \frakp_\C
|
A=\tp{A}
}.
\end{align*}
The standard maximal compact subgroup $K_\infty$ of $\Sp_{2n}(\R)$ is
\begin{align*}
K_\infty
=\set{
\left(\begin{array}{cc} \alpha&\beta\\\-\beta&\alpha\end{array}\right) \in \GL_{2n}(\R)
|
\tp{\alpha}\alpha+\tp{\beta}\beta=1_n, \ \tp{\alpha}\beta=\tp{\beta}\alpha
},
\end{align*}
and the complexification $\frakk_\C$ of the Lie algebra $\frakk$ of $K_\infty$ is
\begin{align*}
\frakk_\C
=\Set{
\left(\begin{array}{cc}
A&B\\-B&A
\end{array}\right) \in \fraksp_{2n}(\C)
|
A=-\tp{A}, \ B=\tp{B}
}.
\end{align*}

For $\bfa=(a_1, \ldots, a_n) \in (\frac{1}{2}+\Z)^n$ with $a_1 \geq \cdots \geq a_n$, let us write $(\rho_\bfa, V_\bfa)$ for the finite dimensional irreducible representation of $\wtil{U(n)}$ of highest weight $\bfa$.
Here, $\wtil{\U(n)}$ denotes the $\det^{\frac{1}{2}}$ cover defined in \cite[p.7]{ada}, and we choose a root system as follows.
Let $\T^n$ be a maximal compact torus of $\U(n)$ consisting of the diagonal matrices of the form $\diag(e^{i\theta_1}, \ldots, e^{i\theta_n})$, ($\theta_j \in \R$), and take an $\R$-basis $\{H_j = i E_{j,j}\}_j$ of the Lie algebra $\frakt=\Lie(\T^n)$.
Let $f_l$ be the linear form on $\frakt_\C$ which sends $H_j$ to $i \delta_{j,l}$.
Then $\{ f_l \}_l$ is an $\R$-basis of $i \frakt^*$, so $i \frakt^*$ can be identified with $\R^n$ by an isomorphism $\sum_l a_l f_l \mapsto (a_1, \ldots, a_n)$, and the root system is given by $\Delta(\fraku(n), \frakt)=\{ \pm(f_j-f_l) \}_{1 \leq j < l \leq n}$.
We shall fix a system of positive roots $\Delta^+(\fraku(n), \frakt)=\{ f_j - f_l\}_{j<l}$.

Following \cite[p.19]{ada}, fix an identification $K_\infty \cong U(n)$ (and $\wtil{K_\infty} \cong \wtil{U(n)}$) by
\begin{align*}
\left(\begin{array}{cc} \alpha&\beta\\\-\beta&\alpha\end{array}\right)
\mapsto
\begin{cases}
\alpha + i\beta, & \psi=\bfe,\\
\alpha - i\beta, & \psi=\overline{\bfe},
\end{cases}
\end{align*}
and we regard $\rho_\bfa$ as a representation of $\wtil{K_\infty}$.
Since $[\frakk_\C, \frakp_\C^\pm] \subset \frakp_\C^\pm$, the differential $d\rho_\bfa : \frakk_\C \to \End_\C(V_\bfa)$ of the representation $\rho_\bfa$ can be extended to a representation of $\frakk_\C \oplus \frakp_\C^\pm$ by setting $d\rho_\bfa(\frakp_\C^\pm)=0$.
Put
\begin{align*}
M^\pm(V_\bfa)
= \frakU(\fraksp_{2n}(\C)) \otimes_{\frakU(\frakk_\C \oplus \frakp_\C^\pm)} V_\bfa,
\end{align*}
where $\frakU$ denotes the functor of universal enveloping algebras.
We fix a positive roots system $\Delta^+(\fraksp_{2n}(\C), \frakt_\C)=\{ f_j - f_l\}_{j<l} \cup \{ f_j + f_l\}_{j\leq l}$, which is compatible with $\Delta^+(\fraku(n), \frakt)$ above.
Put $\varepsilon=\varepsilon_\psi=-i \psi(1/4)$.
Then $\frakp^\varepsilon_\C$ is spanned by the root spaces for $\{ f_j + f_l\}_{j\leq l}$, and the module $M^{-\varepsilon}(V_\bfa)$ has a unique irreducible quotient, for which we shall write $L^{-\varepsilon}(V_\bfa)$.
The $(\fraksp_{2n}(\C), \wtil{K_\infty})$-module $L^{-\varepsilon}(V_\bfa)$ globalizes to an irreducible genuine unitary representation of $\Mp_{2n}(\R)$, for which we shall write $\pi_{\bfa, \psi}$.
The representation $\pi_{\bfa, \psi}$ is a discrete series representation if $a_n > n$.
Also note that the module $L^\pm(V_\bfa)$ is characterized by the property that it contains $V_\bfa$ as a $\frakU(\frakk_\C \oplus \frakp_\C^\pm)$-submodule with multiplicity one.

In addition, for $\bfk=(k_1, \ldots, k_n) \in \Z^n$ with $k_1 \geq \cdots \geq k_n$, we write $(\rho_\bfk, V_\bfk)$ for the finite dimensional irreducible representation of $\U(n)$ of highest weight $\bfk$, and via the identification we shall regard $\rho_\bfk$ as a representation of $K_\infty$.\\

Let us write $\frakh_n$ for the Lie algebra of $\calH_n$, so that the Lie algebra $\fraksp^J_{2n}$ of $\Sp^J_{2n}$ is given by a direct sum $\fraksp_{2n} \oplus \frakh_n$.
Put
\begin{align*}
\frakq_\C
&=\Set{
([\Lambda, M], 0) \in \frakh_n(\C)
|
\Lambda, M \in \C^n
},\\
\frakq_\C^\pm
&=\Set{
([\Lambda, \pm i\Lambda], 0) \in \frakq_\C
|
\Lambda \in \C^n
}.
\end{align*}
Let $\calF=\C[z_1, \ldots, z_n]$ be the Fock model of the Schr\"odinger-Weil representation $\pi_{\SW, \bfe}$.
Recall that $\frakh_n(\C)$, $\frakp_\C$, and $\frakt_\C \subset \frakk_\C$ act on $\calF$ by
\begin{alignat*}{2}
\frakq_\C^- &\ni ([e_j, -ie_j], 0)& &\mapsto -8\pi \frac{d}{dz_j},\\
\frakq_\C^+ &\ni ([e_j, ie_j], 0)& &\mapsto z_j,\\
\Lie(\calZ)_\C &\ni ([0,0], \varkappa)& &\mapsto \bfe(\varkappa),\\
\frakp_\C^- &\ni \left(\begin{array}{cc} F_{j,k}& -iF_{j,k}\\ -iF_{j,k}& -F_{j,k} \end{array} \right)& &\mapsto 16\pi\frac{d^2}{dz_j dz_k}\\
\frakp_\C^+ &\ni \left(\begin{array}{cc} F_{j,k}& i F_{j,k}\\ i F_{j,k}& -F_{j,k} \end{array} \right)& &\mapsto -\frac{1}{4\pi}z_j z_k,\\
\frakt_\C &\ni \left(\begin{array}{cc} 0& -i E_{j,j}\\ i E_{j,j}&0 \end{array} \right)& &\mapsto \frac{1}{2} \left( z_j \frac{d}{dz_j} + \frac{d}{dz_j} z_j \right),
\end{alignat*}
where $F_{j,k}=E_{j,k}+E_{k,j}$.

\begin{lem}\label{lw}
Let $\pi'$ be an irreducible genuine unitary representation of $\Mp_{2n}(\R)$, $\bfk=(k_1, \ldots, k_n)$ an element of $\Z^n$ with $k_1 \geq \cdots \geq k_n$, and put $\bfa=(k_1-\frac{\varepsilon_\psi}{2}, \ldots, k_n-\frac{\varepsilon_\psi}{2})$.
\begin{enumerate}[(1)]
\item Assume that $\psi=\bfe$.
If the unitary representation $\pi' \otimes \pi_{\SW, \bfe}$ has a nonzero subspace $W\neq0$ such that $\frakq_\C^- \cdot W = \frakp_\C^- \cdot W =0$ and $W \cong V_\bfk$ as a representation of $K_\infty$, then $\pi' \cong \pi_{\bfa, \psi}$.
\item Assume that $\psi=\overline{\bfe}$.
If the unitary representation $\pi' \otimes \pi_{\SW, \bfe}$ has a nonzero subspace $W\neq0$ such that $\frakq_\C^- \cdot W =0$,  $(X^++Y^+_1Y^+_2+Y^+_2Y^+_1) \cdot W =0$ for any $X^+=\mmatrix{A}{iA}{iA}{-A}\in\frakp_\C^+$, $Y^+_j = ([\Lambda_j, i\Lambda_j], 0) \in \frakq_\C^+$ ($j=1,2$) with $A=8\pi(\Lambda_1\tp{\Lambda_2}+\Lambda_2\tp{\Lambda_1})$, and $W \cong V_\bfk$ as a representation of $K_\infty$, then $\pi' \cong \pi_{\bfa, \psi}$.
\end{enumerate}
\end{lem}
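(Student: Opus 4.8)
The plan is to turn the hypotheses on $W$ into the single statement that the irreducible module $\pi'$ contains a copy of $V_\bfa$ on which $\frakp_\C^{-\varepsilon_\psi}$ acts by zero, and then to appeal to the description of $\pi_{\bfa,\psi}$ as the globalization of the unique irreducible quotient $L^{-\varepsilon_\psi}(V_\bfa)$ of $M^{-\varepsilon_\psi}(V_\bfa)$. Since $W$ is finite-dimensional and $\wtil{K_\infty}$-stable it consists of smooth vectors, so $\fraksp^J_{2n}(\C)=\fraksp_{2n}(\C)\oplus\frakh_n(\C)$ acts on it, and the decomposition $\pi'\otimes\calF$ lets me compute these actions factorwise. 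First I would localize $W$ onto the vacuum line. Because $\pi'$ is pulled back along $\Mp^J_{2n}(\R)\to\Mp_{2n}(\R)$, the Heisenberg part $\frakh_n(\C)$, in particular $\frakq_\C^-$, acts on $\pi'\otimes\calF$ only through the $\calF$-factor, where by the displayed formulas $\frakq_\C^-$ acts by the first-order operators $-8\pi\frac{d}{dz_j}$. Hence $\ker(\frakq_\C^-|_{\pi'\otimes\calF})=\pi'\otimes\C\cdot1$, and $\frakq_\C^-\cdot W=0$ forces $W\subseteq\pi'\otimes\C\cdot1$. The vacuum line $\C\cdot1\subseteq\calF$ is the minimal $\wtil{K_\infty}$-type of the Fock model and affords a genuine character $\chi$ of $\wtil{K_\infty}$; setting $W_0:=\{v\in\pi' : v\otimes1\in W\}$, one checks directly that $W_0$ is a $\wtil{K_\infty}$-stable subspace of $\pi'$ with $W_0\cong W\otimes\chi^{-1}$.

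Next I would identify $\chi$ and transfer the remaining annihilation condition. Tracking the $\psi$-dependent identification $\wtil{K_\infty}\cong\wtil{U(n)}$ together with the $\frakt_\C$-weight of the vacuum gives $\chi=\det^{\varepsilon_\psi/2}$ in the convention in which $\pi_{\bfa,\psi}$ is defined, so that $W_0\cong V_\bfk\otimes\det^{-\varepsilon_\psi/2}\cong V_\bfa$ as $\wtil{K_\infty}$-representations, matching the shift $\bfa=\bfk-\frac{\varepsilon_\psi}{2}(1,\dots,1)$ built into the statement. For the second hypothesis: in case (1) ($\psi=\bfe$, $\varepsilon_\psi=1$), any $X\in\frakp_\C^-$ acts on $\pi'\otimes\calF$ by $X\otimes1+1\otimes X$ and, by the displayed formula, acts on $\calF$ by the second-order operators $16\pi\frac{d^2}{dz_jdz_k}$, which kill the vacuum; thus $X\cdot(v\otimes1)=(Xv)\otimes1$ and $\frakp_\C^-\cdot W=0$ yields $\frakp_\C^-\cdot W_0=0$. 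In case (2) ($\psi=\overline\bfe$, $\varepsilon_\psi=-1$), the displayed formulas give, as operators on $\calF$, that $X^+=\mmatrix{A}{iA}{iA}{-A}\in\frakp_\C^+$ acts by multiplication by $-\frac{1}{8\pi}\tp{z}Az$ while $Y^+_1Y^+_2+Y^+_2Y^+_1$, for $Y^+_j=([\Lambda_j,i\Lambda_j],0)\in\frakq_\C^+$, acts by multiplication by $\tp{z}(\Lambda_1\tp{\Lambda_2}+\Lambda_2\tp{\Lambda_1})z$; for $A=8\pi(\Lambda_1\tp{\Lambda_2}+\Lambda_2\tp{\Lambda_1})$ these cancel, so $X^++Y^+_1Y^+_2+Y^+_2Y^+_1$ annihilates all of $\calF$. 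Since the $Y^+_j\in\frakh_n(\C)$ act only on the $\calF$-factor, $(X^++Y^+_1Y^+_2+Y^+_2Y^+_1)\cdot(v\otimes1)=(X^+v)\otimes1$, so the hypothesis forces $X^+v=0$ for $v\in W_0$; taking $\Lambda_1=\Lambda_2=\Lambda$, the matrices $\Lambda\tp{\Lambda}$ span all symmetric $n\times n$ matrices and $X^+$ is linear in $A$, whence $\frakp_\C^+\cdot W_0=0$.

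Finally I would conclude. In both cases $W_0$ is a submodule of $\pi'$ over the (Siegel) maximal parabolic subalgebra $\frakk_\C\oplus\frakp_\C^{-\varepsilon_\psi}$ of $\fraksp_{2n}(\C)$, with its abelian nilradical $\frakp_\C^{-\varepsilon_\psi}$ acting by zero, and $W_0\cong V_\bfa$. By the defining adjunction (Frobenius reciprocity) of $M^{-\varepsilon_\psi}(V_\bfa)=\frakU(\fraksp_{2n}(\C))\otimes_{\frakU(\frakk_\C\oplus\frakp_\C^{-\varepsilon_\psi})}V_\bfa$, we get $\Hom_{\fraksp_{2n}(\C)}(M^{-\varepsilon_\psi}(V_\bfa),\pi')\cong\Hom_{\frakk_\C\oplus\frakp_\C^{-\varepsilon_\psi}}(V_\bfa,\pi')\neq0$, so there is a nonzero $\fraksp_{2n}(\C)$-homomorphism $M^{-\varepsilon_\psi}(V_\bfa)\to\pi'$, necessarily surjective since $\pi'$ is irreducible. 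As $L^{-\varepsilon_\psi}(V_\bfa)$ is the unique irreducible quotient of $M^{-\varepsilon_\psi}(V_\bfa)$, this gives $\pi'\cong L^{-\varepsilon_\psi}(V_\bfa)$ as $(\fraksp_{2n}(\C),\wtil{K_\infty})$-modules, hence $\pi'\cong\pi_{\bfa,\psi}$ as unitary representations, in both (1) and (2).

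The routine but delicate points are the explicit computations inside the Fock model — above all the cancellation $X^++Y^+_1Y^+_2+Y^+_2Y^+_1\equiv0$ on $\calF$ in case (2), which is what makes the twisted condition play the role of plain $\frakp_\C^+$-annihilation — and, more importantly, the weight bookkeeping in the second paragraph: identifying $\chi=\det^{\varepsilon_\psi/2}$ requires keeping careful track of the $\psi$-dependent identification $\wtil{K_\infty}\cong\wtil{U(n)}$ (the complex-conjugate one when $\psi=\overline\bfe$), so that twisting by the vacuum character produces exactly the shift $\bfk\mapsto\bfk-\frac{\varepsilon_\psi}{2}(1,\dots,1)=\bfa$. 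That normalization is where I expect the bulk of the care to be needed; everything else is a direct application of the characterization of $L^{-\varepsilon_\psi}(V_\bfa)$ recorded just above the lemma.
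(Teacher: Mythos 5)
Your proposal is correct and follows essentially the same route as the paper's proof: use the $\frakq_\C^-$-annihilation to confine $W$ to the vacuum line $\pi'\otimes\C\cdot1$, transfer the $\frakp_\C^-$ (resp.\ twisted $\frakp_\C^+$) annihilation via the Fock-model cancellation, shift the $\wtil{K_\infty}$-weight by the vacuum character to identify $W'\cong V_\bfa$, and conclude $\pi'\cong L^{-\varepsilon_\psi}(V_\bfa)=\pi_{\bfa,\psi}$. The only difference is that you spell out the final highest-weight step (Frobenius reciprocity for $M^{-\varepsilon_\psi}(V_\bfa)$ and uniqueness of its irreducible quotient), which the paper leaves implicit in the characterization of $L^{\pm}(V_\bfa)$ recorded just before the lemma.
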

\begin{proof}
In both cases, we may assume $W \subset \pi' \otimes \calF$, and since $\frakq_\C^- \cdot W =0$ we have $W = W' \otimes 1$, where $W'$ is a subspace of $\pi'$.

(1)
Since $\frakp_\C^- \cdot W =0$, we have $\frakp_\C^- \cdot W'=0$.
The assumption that $W \cong V_\bfk$ and the fact that the weight of the action of $\wtil{K_\infty}$ on $1 \in \calF$ is $(\frac{1}{2}, \ldots, \frac{1}{2})$ imply that $W' \cong V_\bfa$.
Therefore, we obtain that $\pi' \cong \pi_{\bfa, \psi}$.

(2)
Let $X^+\in\frakp_\C^+$ and $Y^+_j  \in \frakq_\C^+$ ($j=1,2$) be as in the assertion.
Since $(X^++Y^+_1Y^+_2+Y^+_2Y^+_1) \cdot 1 =0$ in the Fock model $\calF$, we have $(X^++Y^+_1Y^+_2+Y^+_2Y^+_1) \cdot W' =0$.
Because $\frakq_\C$ acts on $W'$ by 0, this means that $X^+ \cdot W' =0$.
The assumption that $W \cong V_\bfk$ and the fact that the weight of the action of $\wtil{K_\infty}$ on $1 \in \calF$ is $(-\frac{1}{2}, \ldots, -\frac{1}{2})$ imply that $W' \cong V_\bfa$.
Therefore, we obtain that $\pi' \cong \pi_{\bfa, \psi}$.
\end{proof}

\subsection{The global case}
In this subsection, we shall consider representations of the adelic Jacobi group.
Let $F$ be a number field, $\A_F$ the ring of adeles of $F$, and $\psi$ an additive character of $F\backslash \A_F$.
The Schr\"odinger representation $\pi_{S,\psi}$ of $\calH_n(\A_F)$ on the Hilbert space $L^2(\A_F^n)$ is defined by
\begin{equation*}
\left[\pi_{S,\psi}(([\lambda, \mu], \kappa)) f \right](x)
=\psi(\kappa+\tp(2x+\lambda)\mu) f(x+\lambda),
\quad
f \in \calS(\A_F^n), \ ([\lambda,\mu],\kappa)\in\calH_n(\A_F),
\end{equation*}
where $\calS(\A_F^n)$ denotes the Schwartz space.
The following fact is known as the Stone-von Neumann theorem.
\begin{thm}
\begin{enumerate}[(1)]
\item The Schr\"odinger representation $\pi_{S,\psi}$ is the unique irreducible smooth unitary representation of $\calH_n(\A_F)$ with central character $\psi$.
\item Any smooth unitary representation of $\calH_n(\A_F)$ with central character $\psi$ is isomorphic to a direct sum of $\pi_{S,\psi}$.
\end{enumerate}
\end{thm}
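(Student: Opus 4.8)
The plan is to deduce the adelic statement from the local Stone--von Neumann theorems already recorded above for $p$-adic fields and for $\R$, via the standard restricted--tensor--product mechanism. First I would fix the product structure: writing $\psi=\bigotimes_v\psi_v$, let $\frakS$ be the finite set of places of $F$ consisting of the archimedean places together with the finite places $v$ at which $\psi_v$ does not have order zero. For $v\notin\frakS$ the function $\xi_v^0=1_{\calO_v^n}\in\calS(F_v^n)$ is fixed by $\calH_n(\calO_v)$ under $\pi_{S,\psi_v}$, as one checks directly from the defining formula, and it spans the space of $\calH_n(\calO_v)$-fixed vectors by the local uniqueness. Hence the restricted tensor product of the pairs $(\pi_{S,\psi_v},\xi_v^0)$ is defined, and one identifies its smooth completion, equipped with the evident action, with $\pi_{S,\psi}$ realized on $\calS(\A_F^n)$.

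For part~(1), irreducibility of $\pi_{S,\psi}$ follows from the irreducibility of each local factor together with the one-dimensionality of the spaces of $\calH_n(\calO_v)$-fixed vectors for $v\notin\frakS$, by the usual Flath-type argument: a nonzero closed invariant subspace meets a suitable level subspace in a nonzero invariant subspace of a finite tensor product of the $\pi_{S,\psi_v}$ tensored with a line, and a finite tensor product of irreducible representations is irreducible. Uniqueness is then a consequence of part~(2), since any irreducible smooth unitary representation with central character $\psi$ occurs inside a direct sum of copies of $\pi_{S,\psi}$ and hence must be isomorphic to $\pi_{S,\psi}$.

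For part~(2), let $(\pi,\calH)$ be a smooth unitary representation of $\calH_n(\A_F)$ with central character $\psi$. Restricting to each $\calH_n(F_v)$ and invoking the local Stone--von Neumann theorem, $\pi|_{\calH_n(F_v)}$ is a direct sum of copies of $\pi_{S,\psi_v}$; equivalently there is a multiplicity Hilbert space $\mathcal{M}_v$ carrying the trivial $\calH_n(F_v)$-action together with a unitary $\calH_n(F_v)$-isomorphism $\calH\cong\pi_{S,\psi_v}\otimes\mathcal{M}_v$ (completed tensor product). Since the subgroups $\calH_n(F_v)$ generate $\calH_n(\A_F)$ and pairwise commute modulo the center, and since smoothness forces each vector of $\calH$ to be $\calH_n(\calO_v)$-fixed for almost all $v$, one glues these local factorizations --- using the spherical vectors $\xi_v^0$ to pin down the tensor factorization at the places outside $\frakS$ --- into a single $\calH_n(\A_F)$-equivariant isomorphism $\calH\cong\pi_{S,\psi}\otimes\mathcal{M}$ with $\mathcal{M}$ carrying the trivial action. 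Choosing an orthonormal basis of $\mathcal{M}$ then exhibits $\calH$ as a direct sum of copies of $\pi_{S,\psi}$, as required.

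The step I expect to be the main obstacle is exactly this last gluing: one must check that the local identifications $\calH\cong\pi_{S,\psi_v}\otimes\mathcal{M}_v$ are mutually compatible as $v$ ranges over all places and that they assemble into a genuinely $\calH_n(\A_F)$-equivariant global tensor decomposition, and it is here that the restricted-product structure of $\calH_n(\A_F)$ and the smoothness hypothesis on $\pi$ are used in an essential way. The remaining steps are either short computations with the explicit formula for $\pi_{S,\psi}$ or direct appeals to the local Stone--von Neumann theorems stated above.
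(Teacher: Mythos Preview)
The paper does not supply a proof of this statement at all: it is simply recorded as ``the Stone--von Neumann theorem,'' a known fact, with no argument given. So there is no paper-proof to compare against.

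Your sketch is a reasonable and standard route to the adelic Stone--von Neumann theorem: factor $\psi=\bigotimes_v\psi_v$, identify $\pi_{S,\psi}$ with the restricted tensor product of the local Schr\"odinger representations relative to the distinguished vectors $1_{\calO_v^n}$, deduce irreducibility from the local statements via a Flath-type argument, and obtain part~(2) by decomposing an arbitrary $(\pi,\calH)$ locally and gluing. Two small points are worth tightening. First, to ensure that $1_{\calO_v^n}$ really spans the $\calH_n(\calO_v)$-fixed vectors you should also put the places above $2$ into $\frakS$; your direct computation of invariance goes through at any $v$ with $\psi_v$ of order zero, but the one-dimensionality of the fixed space uses that $2\in\calO_v^\times$. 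Second, the phrase ``by the local uniqueness'' does not literally give you the one-dimensionality of the $\calH_n(\calO_v)$-invariants; that is a short separate computation in the Schr\"odinger model (translation invariance by $\calO_v^n$ forces $f$ to be constant on $\calO_v^n$-cosets, and invariance under $([0,\mu],0)$ for $\mu\in\calO_v^n$ forces the support into $\calO_v^n$). With those adjustments the argument is fine, and your honest flagging of the gluing step as the nontrivial part is accurate.
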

The Stone-von Neumann theorem gives us the Weil representation $\omega_\psi$ of the metaplectic group $\Mp_{2n}(\A_F)$ on $L^2(\A_F^n)$.
Combining the Schr\"odinger representation $\pi_{S,\psi}$ and the Weil representation $\omega_\psi$, we obtain the Schr\"odinger-Weil representation $\pi_{\SW,\psi}$ of $\Mp^J_{2n}(\A_F)=\Mp_{2n}(\A_F)\ltimes \calH_n(\A_F)$, the metaplectic double covering group of $\Sp^J_{2n}(\A_F)$.
Note that the Schr\"odinger-Weil representation $\pi_{\SW,\psi}$ is a genuine representation, i.e., it does not factor through $\Mp^J_{2n}(\A_F)\to\Sp^J_{2n}(\A_F)$.

We can realize the Schr\"odinger-Weil representation on a space of functions on the group $\Mp^J_{2n}(\A_F)$, by using theta functions.
For any $f \in \calS(\A_F^n)$, the theta function $\Theta_f$ is defined by
\begin{align*}
\Theta_f(g)
=\sum_{\xi \in F} [\pi_{\SW, \psi} (g) f] (\xi),
\quad
g \in \Mp^J_{2n}(\A_F).
\end{align*}
It converges absolutely and satisfies that $\Theta_{\pi_{\SW, \psi}(g) f} (x)= \Theta_f (xg)$.
The assignment can be extended to $L^2(\A_F)$.\\

Let $L^2(\Sp^J_{2n}(F) \backslash \Sp^J_{2n}(\A_F))_\psi$ be a Hilbert space consisting of measurable functions $\Phi : \Sp^J_{2n}(\A_F) \to \C$ which satisfy
\begin{align*}
\Phi(\gamma g z)
&= \psi(\kappa) \Phi(g),&
\gamma &\in \Sp^J_{2n}(F), \quad  g \in \Sp^J_{2n}(\A_F), \quad  z=([0,0], \kappa) \in \calZ(\A_F),
\end{align*}
and
\begin{align*}
\int_{\Sp^J_{2n}(F) \calZ(\A_F) \backslash \Sp^J_{2n}(\A_F)} | \Phi(g) |^2 dg
< \infty,
\end{align*}
and $L^2_{\mathrm{cusp}}(\Sp^J_{2n}(F) \backslash \Sp^J_{2n}(\A_F))_\psi$ its subspace consisting of cuspidal functions.
Here, a function $\Phi$ on $\Sp^J_{2n}(\A_F)$ is said to be cuspidal if
\begin{align*}
&\int_{N^J(F) \backslash N^J(\A_F)} \Phi(ng) dn =0,&
g &\in \Sp^J_{2n}(\A_F),
\end{align*}
for the unipotent radical $N$ of any proper standard parabolic subgroup of $\Sp_{2n}$, where $N^J$ denotes the subgroup of $\Sp^J_{2n}$ generated by $N$ and elements of the form $([0,\mu],0)$.
Then we have the following fact.
(The proof is the same as \cite[Theorem 7.3.3]{bs}.)
\begin{thm}\label{cca}
There is a natural isomorphism of Hilbert spaces
\begin{align*}
L^2(\Mp_{2n}) \otimes L^2(\A_F^n)
&\overset{\simeq}{\lra} L^2(\Sp^J_{2n}(F) \backslash \Sp^J_{2n}(\A_F))_\psi, \\
\varphi \otimes f
&\mapsto \left[ gh \mapsto \varphi((g, 1)) \Theta_f(gh) \right],
\quad g \in \Sp_{2n}(\A_F), \ h \in \calH_n(A_F).
\end{align*}
This is also an isomorphism of unitary representations.
Moreover, the restriction to cuspidal functions gives another isomorphism
\begin{align*}
L^2_{\mathrm{cusp}}(\Mp_{2n}) \otimes L^2(\A_F^n)
&\overset{\simeq}{\lra} L^2_{\mathrm{cusp}}(\Sp^J_{2n}(F) \backslash \Sp^J_{2n}(\A_F))_\psi.
\end{align*}
Here, $L^2_{\mathrm{cusp}}(\Mp_{2n})$ denotes the cuspidal part of $L^2(\Mp_{2n})$.
\end{thm}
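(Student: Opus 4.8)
The plan is to follow the strategy of Berndt--Schmidt \cite[Theorem 7.3.3]{bs}, which treats $n=1$, and to check that every step survives for general $n$ once the Heisenberg group $\calH_n$, the Schr\"odinger representation $\pi_{S,\psi}$ on $\calS(\A_F^n)$, the Weil representation $\omega_\psi$ of $\Mp_{2n}(\A_F)$, and the theta functions $\Theta_f$ have been set up as above. Concretely there are three things to establish: that $\Theta_f$ is a genuine automorphic function on $\Mp^J_{2n}(\A_F)$; that $\Xi\colon\varphi\otimes f\mapsto\Phi_{\varphi\otimes f}$, with $\Phi_{\varphi\otimes f}(gh)=\varphi((g,1))\Theta_f(gh)$, is well defined, lands in the target, and is an isometry up to a fixed constant; and that $\Xi$ is surjective. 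The assertion for cuspidal parts will then follow by matching the two cuspidality conditions term by term.

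First I would treat $\Theta_f$. Absolute convergence of $\sum_{\xi\in F^n}[\pi_{\SW,\psi}(g)f](\xi)$ is the usual majorization of a Schwartz function summed over the discrete cocompact subgroup $F^n\subset\A_F^n$; left invariance under $\calH_n(F)$ and the transformation $\Theta_f(gz)=\psi(\kappa)\Theta_f(g)$ for $z=([0,0],\kappa)\in\calZ(\A_F)$ are immediate from the formula for $\pi_{S,\psi}$ and the triviality of $\psi$ on $F$. The one substantial point is invariance under $\Sp_{2n}(F)$, embedded in $\Mp^J_{2n}(\A_F)$ through the canonical splitting over $\Sp_{2n}(F)$: this is the global theta transformation formula, proved by Poisson summation on $\A_F^n/F^n$, and it is precisely where one uses that the splitting over $\Sp_{2n}(F)$ is compatible with the formation of $\Theta$. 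Since $\Sp^J_{2n}(F)$ is generated by $\Sp_{2n}(F)$ and $\calH_n(F)$, this gives the claimed automorphy of $\Theta_f$.

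Next, because $\Mp^J_{2n}=\Mp_{2n}\ltimes\calH_n$ is a semidirect product, every element of $\Mp^J_{2n}(\A_F)$ has a unique expression $\wtil{g}h$ with $\wtil{g}\in\Mp_{2n}(\A_F)$ and $h\in\calH_n(\A_F)$, so $\wtil{g}h\mapsto\varphi(\wtil{g})\Theta_f(\wtil{g}h)$ is well defined on $\Mp^J_{2n}(\A_F)$; as a product of two genuine functions it is non-genuine, hence descends to a function $\Phi_{\varphi\otimes f}$ on $\Sp^J_{2n}(\A_F)$ (this is the precise meaning of ``$\varphi((g,1))\Theta_f(gh)$''). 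Left invariance under $\Sp^J_{2n}(F)$ follows from the automorphy of $\varphi$ and of $\Theta_f$ by writing $\gamma\cdot gh=(\gamma g)h$ for $\gamma\in\Sp_{2n}(F)$ and $\eta\cdot gh=g\,(g\inv\eta g)\,h$ for $\eta\in\calH_n(F)$ and invoking left $\calH_n(F)$-invariance of $\Theta_f$; the central transformation law is then clear, and membership in $L^2$ modulo $\calZ(\A_F)$ reduces, using that the restriction of $\Theta_f$ to $\calZ(\A_F)\calH_n(F)\backslash\calH_n(\A_F)$ has $L^2$-norm a fixed multiple of $\|f\|$ (a Poisson-summation computation) and that $\omega_\psi$ is unitary, to $\|\varphi\|^2\|f\|^2$ up to a constant. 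Thus, after normalizing Haar measures, $\Xi$ is isometric on the algebraic tensor product, extends to an isometry on the completed Hilbert tensor product of $L^2(\Mp_{2n})$ and $L^2(\A_F^n)$ with closed image, and visibly intertwines the right $\Sp_{2n}(\A_F)$-action on the target with the tensor of the $\Mp_{2n}(\A_F)$-action on $L^2(\Mp_{2n})$ and $\omega_\psi$ on $L^2(\A_F^n)$.

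The remaining point, surjectivity, is where the real work lies and is the step I expect to be the main obstacle. Restricting the right regular action of $\calH_n(\A_F)$ on $V:=L^2(\Sp^J_{2n}(F)\backslash\Sp^J_{2n}(\A_F))_\psi$, one gets an $\calH_n(\A_F)$-action with central character $\psi$, so by the Stone--von Neumann theorem $V$ is, as an $\calH_n(\A_F)$-module, a Hilbert sum of copies of the Schr\"odinger model, realized through $\Theta$; the multiplicity space carries a commuting genuine $\Mp_{2n}(\A_F)$-action coming from the residual right $\Sp_{2n}(\A_F)$-translations, and unwinding the left $\Sp^J_{2n}(F)$-invariance identifies it with $L^2(\Mp_{2n})$, the resulting isomorphism being the inverse of $\Xi$. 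Concretely: given $\Phi\in V$, for each $g\in\Sp_{2n}(\A_F)$ the function $h\mapsto\Phi(hg)$ on $\calH_n(\A_F)$ is left $\calH_n(F)$-invariant with central character $\psi$, hence lies in $L^2(\calZ(\A_F)\calH_n(F)\backslash\calH_n(\A_F))_\psi\cong\pi_{S,\psi}$; expanding it against the orthogonal basis $\{\Theta_{f_i}\}$ attached to an orthonormal basis $\{f_i\}$ of $L^2(\A_F^n)$ produces coefficient functions $\varphi_i$ on $\Sp_{2n}(\A_F)$, and one checks, using the left $\Sp_{2n}(F)$-invariance of $\Phi$ and the automorphy of $\Theta$, that each $\varphi_i$ lifts to a genuine automorphic function on $\Mp_{2n}(\A_F)$ and that $\sum_i\Xi(\varphi_i\otimes f_i)=\Phi$. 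Finally, for the cuspidal statement one unfolds the constant term of $\Phi_{\varphi\otimes f}$ along the unipotent radical $N^J$ of a proper standard parabolic of $\Sp^J_{2n}$: since integration of $\Theta_f$ along the purely Heisenberg directions contributes nothing that can make $\Xi(\varphi\otimes f)$ cuspidal, this computes a nonzero multiple of the $N$-constant term of a lift of $\varphi$ along the corresponding parabolic of $\Sp_{2n}$, so $\Phi_{\varphi\otimes f}$ is cuspidal if and only if $\varphi$ is, which gives the restricted isomorphism. The genuinely delicate points throughout are the bookkeeping with the metaplectic cocycle and the compatibility of the local splittings entering the definition of $\Mp_{2n}(\A_F)$ — so that $\Theta_f$ really is $\Sp_{2n}(F)$-invariant and the formulas above are independent of the chosen lifts — and making the Stone--von Neumann decomposition of $V$ honestly equivariant for the global metaplectic group; these are exactly the places where one must redo \cite{bs} rather than merely cite it.
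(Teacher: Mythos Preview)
Your proposal is correct and follows exactly the approach the paper takes: the paper does not give a proof but simply states that ``the proof is the same as \cite[Theorem 7.3.3]{bs},'' and your write-up is precisely a sketch of how that $n=1$ argument carries over to general $n$. If anything, you have supplied more detail than the paper does.
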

As stated in \cite[p. 182]{bs}, the preceding theorem implies the following corollary.
\begin{cor}\label{ccg}
The map $\pi' \mapsto \pi = \pi'\otimes\pi_{\SW, \psi}$ gives a bijective correspondence between genuine automorphic (resp. cuspidal) representations of $\Mp_{2n}(\A_F)$ and automorphic (resp. cuspidal) representation of $\Sp^J_{2n}(\A_F)$ with central character $\psi$, and if
\begin{align*}
\pi' = \bigotimes_v \pi'_v,
\end{align*}
then
\begin{align*}
\pi= \bigotimes_v (\pi'_v \otimes \pi_{\SW, \psi_v}).
\end{align*}
\end{cor}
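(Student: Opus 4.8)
The plan is to deduce the corollary from the global isomorphism of Theorem \ref{cca} together with the local correspondences of Theorems \ref{ccp} and \ref{ccr} (and Theorem \ref{ccu} at the places above $2$), exactly in the spirit of \cite[p.182]{bs}. First I would observe that the isomorphism in Theorem \ref{cca} is not merely an isomorphism of abstract unitary representations but is $\Sp^J_{2n}(\A_F)$-equivariant for the action on the left-hand side in which $\Sp^J_{2n}(\A_F)$ acts through $\Mp^J_{2n}(\A_F)=\Mp_{2n}(\A_F)\ltimes\calH_n(\A_F)$ on $L^2(\Mp_{2n})\otimes L^2(\A_F^n)$ by $g\mapsto (\text{right translation})\otimes\pi_{\SW,\psi}(g)$, an action which descends to $\Sp^J_{2n}(\A_F)$ because the two genuine factors combine to a non-genuine one, and which acts on $\calH_n(\A_F)\subset\Sp^J_{2n}(\A_F)$ through $\pi_{\SW,\psi}$, hence with central character $\psi$. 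Decomposing both sides into irreducibles, an irreducible genuine unitary representation $\pi'$ occurs (discretely, resp. cuspidally) in $L^2(\Mp_{2n})$ if and only if $\pi'\otimes\pi_{\SW,\psi}$ occurs (discretely, resp. cuspidally) in $L^2(\Sp^J_{2n}(F)\backslash\Sp^J_{2n}(\A_F))_\psi$; in particular $\pi'\otimes\pi_{\SW,\psi}$ has central character $\psi$, and the assignment $\pi'\mapsto\pi'\otimes\pi_{\SW,\psi}$ is injective on isomorphism classes because one recovers $\pi'$ by running the isomorphism of Theorem \ref{cca} backwards.

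Next I would verify the asserted factorization into local components. Write an irreducible genuine automorphic representation $\pi'=\bigotimes_v\pi'_v$ with $\pi'_v$ unramified for almost all $v$. Since the Schr\"odinger-Weil representation is itself a restricted tensor product $\pi_{\SW,\psi}=\bigotimes_v\pi_{\SW,\psi_v}$ with respect to the $\Sp^J_{2n}(\calO_v)$-spherical vectors in the appropriate local models, we obtain $\pi'\otimes\pi_{\SW,\psi}=\bigotimes_v(\pi'_v\otimes\pi_{\SW,\psi_v})$ as a restricted tensor product, and by Theorems \ref{ccp} and \ref{ccr} each local factor $\pi'_v\otimes\pi_{\SW,\psi_v}$ is an irreducible smooth (resp. unitary) representation of $\Sp^J_{2n}(F_v)$ with central character $\psi_v$; by Theorem \ref{ccu} the spherical factors on the two sides match up, so almost all $\pi'_v\otimes\pi_{\SW,\psi_v}$ are $\Sp^J_{2n}(\calO_v)$-spherical and the restricted tensor product is legitimate. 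Hence $\pi=\pi'\otimes\pi_{\SW,\psi}$ is irreducible with the claimed local components.

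For surjectivity, start from an irreducible automorphic (resp. cuspidal) representation $\pi$ of $\Sp^J_{2n}(\A_F)$ with central character $\psi$, write $\pi=\bigotimes_v\pi_v$ with each $\pi_v$ of central character $\psi_v$, and apply the local Theorems \ref{ccp} and \ref{ccr} to produce the unique irreducible genuine $\pi'_v$ with $\pi_v\cong\pi'_v\otimes\pi_{\SW,\psi_v}$; almost all $\pi'_v$ are unramified by Theorem \ref{ccu}, so $\pi':=\bigotimes_v\pi'_v$ makes sense and $\pi\cong\pi'\otimes\pi_{\SW,\psi}$, and then the equivariant isomorphism of Theorem \ref{cca} shows that $\pi'$ is automorphic (resp. cuspidal). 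The step I expect to require the most care is the first one: upgrading the Hilbert-space isomorphism of Theorem \ref{cca} into a statement that genuinely matches the $\pi'\otimes\pi_{\SW,\psi}$-structure, preserves multiplicities, and is compatible with the discrete and cuspidal decompositions on both sides --- this is precisely where one must be careful about how $\calH_n(\A_F)$ acts on each side and why the map $\varphi\otimes f\mapsto[\,gh\mapsto\varphi((g,1))\Theta_f(gh)\,]$ intertwines these actions, in parallel with \cite[Theorem 7.3.3]{bs}.
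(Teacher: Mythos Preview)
Your proposal is correct and takes essentially the same approach as the paper, which does not spell out a proof but simply notes that the corollary follows from Theorem~\ref{cca} as in \cite[p.~182]{bs}. You have written out in detail the argument the paper leaves implicit: the $\Sp^J_{2n}(\A_F)$-equivariance of the isomorphism in Theorem~\ref{cca}, the compatibility with restricted tensor products via the local Theorems~\ref{ccp} and~\ref{ccr}, and the matching of spherical vectors at almost all places (for which, incidentally, the easier observation preceding Theorem~\ref{ccu} already suffices, since only finitely many places lie above~$2$).
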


\section{Adelic lifts of Siegel modular forms and Jacobi forms}\label{Adelic lift}
In this section, we review the adelic lifts of Siegel cusp forms and holomorphic or skew-holomorphic Jacobi cusp forms, and then prove the main theorems.
\subsection{The adelic lift of $f \in S_{j+3, 2k-6}(\Sp_4(\Z))$}\label{adlifato}
Before considering the adelic lift, we fix an identification $\PGSp_4 \cong \SO_5$ given in the proof of the following lemma.
\begin{lem}
We have an isomorphism $\PGSp_4 \cong \SO_5$ over any field $F$ of characteristic not 2.
\end{lem}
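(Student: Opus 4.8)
The plan is to realize this exceptional isomorphism through the second exterior power of the standard representation. Let $V=F^4$ be the standard symplectic space, with form $\langle\,,\rangle$ given by $J_2$, so that $\GSp_4$ acts on $V$ with $\langle gv,gw\rangle=\nu(g)\langle v,w\rangle$. Consider the six-dimensional space $\wedge^2V$. Fixing a generator $\omega_0$ of the line $\wedge^4V$, define a symmetric bilinear form $B$ on $\wedge^2V$ by $x\wedge y=B(x,y)\,\omega_0$; it is non-degenerate, and another choice of $\omega_0$ only rescales $B$, so the quadratic space $(\wedge^2V,B)$ is canonical up to similitude. Regarding $\langle\,,\rangle$ as an element of $\wedge^2V^\ast$ gives a contraction $c:\wedge^2V\to F$; equivalently $c(x)=B(x,\varpi)$ where $\varpi\in\wedge^2V$ corresponds to $\langle\,,\rangle$ under $B$ (explicitly a nonzero multiple of $e_1\wedge e_3+e_2\wedge e_4$ in the paper's coordinates). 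Set $W=\ker c$, a five-dimensional subspace. Since $\operatorname{char}F\neq2$ one computes $B(\varpi,\varpi)=c(\varpi)\neq0$, hence $\wedge^2V=W\perp F\varpi$ and $B|_W$ is non-degenerate of rank $5$.

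Next I would set up the group map. An element $g\in\GSp_4(F)$ acts on $\wedge^2V$ by $x\wedge y\mapsto gx\wedge gy$, and on $\wedge^4V$ by $\det g=\nu(g)^2$, so $B(gx,gy)=\nu(g)^2B(x,y)$: thus $g$ is an orthogonal similitude of $(\wedge^2V,B)$ with factor $\nu(g)^2$. Since $g$ also scales the contraction $c$ by $\nu(g)$, the subspace $W$ is $g$-stable, and $g\mapsto\nu(g)^{-1}\,g|_W$ is a homomorphism $\GSp_4\to\Or(W)\cong\Or_5$. Its kernel consists of those $g$ acting as the scalar $\nu(g)$ on $\wedge^2V$; an eigenvalue computation (together with a short argument on the Lie algebra level for the scheme-theoretic statement, using $\operatorname{char}F\neq2$) shows these are exactly the scalar matrices, i.e. the centre $\GL_1$ of $\GSp_4$. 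We thus obtain an injection $\PGSp_4\hookrightarrow\Or_5$; as $\PGSp_4$ is connected the image lies in $\SO_5$, giving a closed immersion of connected smooth $F$-groups $\PGSp_4\hookrightarrow\SO_5$, which is an isomorphism since both sides have dimension $10$.

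Finally I would match $(W,B|_W)$ with the split quadratic space underlying the $\SO_5$ of \S\ref{MF}, to make the identification concrete. In the Witt basis $\{e_i\wedge e_j\}$ one checks directly that $B$ on $\wedge^2V$ is an orthogonal sum of three hyperbolic planes; restricting to $\varpi^\perp=W$ and rescaling $B$ by a suitable unit turns $B|_W$ into the $\perp$-sum of two hyperbolic planes and a one-dimensional space, which is the split form of rank $5$. Since $\SO$ of a quadratic form is unchanged under rescaling, this yields $\SO(W,B|_W)\cong\SO_5$ in the normalisation $\{h\in\SL_5:\tp{h}\operatorname{diag}(1_3,-1_2)h=\operatorname{diag}(1_3,-1_2)\}$, after a fixed change of basis of $W$; composing with the previous step gives the desired $\PGSp_4\cong\SO_5$ and fixes the identification to be used later.

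The routine-but-delicate point — the main obstacle in writing this up — is the simultaneous bookkeeping of similitude factors and the verification that $(W,B|_W)$ is split with the correct discriminant, i.e. that the image is the \emph{split} $\SO_5$ and not a non-split inner form; this is precisely where $\operatorname{char}F\neq2$ and the explicit choice of $\varpi$ enter. Everything else is formal once the wedge-and-contraction setup is in place. Alternatively, one may phrase the entire argument as the classical exceptional isomorphism $\Sp_4\cong\operatorname{Spin}_5$ and deduce the statement for $\PGSp_4$ by passing to adjoint groups.
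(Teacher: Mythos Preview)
Your proposal is correct, but it realizes the exceptional isomorphism differently from the paper. The paper works with the five-dimensional space
\[
X=\{x\in\M_4(F)\mid \tp{x}J_2-J_2x=0,\ \tr x=0\}
\]
equipped with the trace form $b(x,y)=\tr(xy)$, on which $\GSp_4$ acts by conjugation $x\mapsto gxg^{-1}$; the center visibly acts trivially, and an explicit basis $x_{\pm2},x_{\pm1},x_0$ is written down to identify $\SO(X)$ with the split $\SO_5$. Your route is the $\wedge^2$-construction: take $W=\ker(c)\subset\wedge^2V$ with the form coming from $\wedge^4V$, and rescale the natural $\GSp_4$-action by $\nu(g)^{-1}$ to land in $\Or(W)$. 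The two five-dimensional representations are of course isomorphic (indeed $A\mapsto AJ_2^{-1}$ identifies antisymmetric matrices with the paper's matrix space), so the arguments are cousins rather than strangers.

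What each buys: the paper's conjugation model makes the kernel computation and the explicit split basis immediate, which is convenient since the point of the lemma is to \emph{fix} a concrete identification for later use. Your $\wedge^2$-model is more conceptual and ties in directly with the classical $\Sp_4\cong\mathrm{Spin}_5$ picture, at the cost of having to track similitude factors and argue separately (as you note) that the resulting quadratic space is split. Both are standard; for the purposes of this paper the conjugation model is slightly more economical because it hands you the explicit basis used in \S\ref{adlifato} without further work.
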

\begin{proof}
Define a 5-dimensional symmetric bilinear space $(X, b(-,-))$ by
\begin{align*}
&X=\set{x \in \mathrm{M}_{4\times4}(F) | \tp{x} J_2 - J_2 x=0, \ \tr(x)=0},&&
&b(x,y)=\tr(xy).&&
\end{align*}
Then, $\GSp_4(F)$ acts on the space by
\begin{align*}
\GSp_4(F) \times X \ni (g, x) \mapsto gxg\inv \in X.
\end{align*}
This gives an isomorphism $\PGSp_4(F) \cong \SO(X)$.
Finally, we identify $\SO(X)$ with $\SO_5(F)$ by using a basis $\{x_2, x_1, x_0, x_{-1}, x_{-2}\}$ of $X$, where
\begin{align*}
x_2&=
\left(\begin{array}{cccc}
&&&1\\
&&-1&\\
&-1&&\\
1&&&
\end{array}\right),&
x_1&=
\left(\begin{array}{cccc}
&1&&\\
1&&&\\
&&&1\\
&&1&
\end{array}\right),&
x_0&=
\left(\begin{array}{cccc}
1&&&\\
&-1&&\\
&&1&\\
&&&-1
\end{array}\right),&\\
x_{-1}&=
\left(\begin{array}{cccc}
&1&&\\
-1&&&\\
&&&-1\\
&&1&
\end{array}\right),&
x_{-2}&=
\left(\begin{array}{cccc}
&&&1\\
&&-1&\\
&1&&\\
-1&&&
\end{array}\right).&
\end{align*}
We obtain an isomorphism from $\PGSp_4$ to $\SO_5$ as algebraic groups over $F$.
\end{proof}

Let $j\geq0$ be an even integer and $k\geq3$ a positive integer.
Let $f \in S_{j+3, 2k-6}(\Sp_4(\Z))$ be a Hecke eigenform.
As in \cite[\S 4.5 and \S 6.3.4]{cl}, we construct an irreducible cuspidal automorphic representation $\pi_f$ of $\SO_5(\A)$, and let $\phi_f$ be its $A$-parameter.
Note that by Theorem \ref{amf}, there exists one and only one elliptic $A$-parameter of $\pi_f$.

One can determine the form of the $A$-parameter $\phi_f$.
For any $a \in \frac{1}{2} \Z$, we shall write $\calD_a$ for the representation of $W_\R$ induced from the character $z=re^{i\theta} \mapsto e^{2ia\theta}$ of $W_\C=\C^\times$.
Then we have the following lemma.
\begin{lem}\label{api}
There is a unique irreducible everywhere unramified cuspidal symplectic automorphic representation $\tau_f$ of $\GL_4(\A)$ such that
\begin{align}\label{eqrev1}
\begin{aligned}
L(s,f,\mathrm{spin})
&=L(s-j-k+\tfrac{3}{2}, \tau_f),\\
\tau_{f,\infty}
&=\calD_{k+j-\frac{3}{2}} \oplus \calD_{k-\frac{5}{2}}.
\end{aligned}
\end{align}
\end{lem}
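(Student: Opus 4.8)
The plan is to feed the representation $\pi_f$ into Arthur's multiplicity formula for $\SO_5$ (Theorem \ref{amf}) and to identify its $A$-parameter explicitly. By its construction recalled from \cite[\S4.5, \S6.3.4]{cl}, $\pi_f$ is an irreducible cuspidal automorphic representation of $\SO_5(\A)\cong\PGSp_4(\A)$ that is unramified at every finite place (since $f$ has level $\Sp_4(\Z)$) and whose archimedean component $\pi_{f,\infty}$ is the ``holomorphic'', i.e.\ lowest weight, representation attached to $\det^{j+3}\Sym_{2k-6}$. Its infinitesimal character is thereby forced by that weight; equivalently, reading off the motivic weights $\{0,\,k'-2,\,k'+j'-1,\,2k'+j'-3\}$ of the degree-$4$ spin representation with $(k',j')=(j+3,2k-6)$, the associated $L$-parameter $\varphi_{\phi_{f,\infty}}$ has ``spin eigenvalues'' $\{k+j-\tfrac32,\ k-\tfrac52\}$, which are distinct positive half-integers once $k\geq3$. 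By Theorem \ref{amf}, $\pi_f$ admits a unique elliptic $A$-parameter $\phi_f=\bigoplus_i\phi_i\boxtimes S_{d_i}$ with $\sum_i n_id_i=4$, and the goal is to prove $\phi_f=\tau_f\boxtimes S_1$ with $\tau_f$ as in the statement.

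First I would constrain the shape of $\phi_f$. Everywhere unramifiedness of $\pi_f$ makes each $\phi_i$ an everywhere-unramified self-dual cuspidal automorphic representation of $\GL_{n_i}(\A)$; over $\Q$ the only one with $n_i=1$ is the trivial character $\mathbf 1$ (an everywhere-unramified quadratic Hecke character is trivial, and an odd $d_i$ would force $\phi_i$ symplectic, impossible on $\GL_1$). Running through the partitions of $4$, only four shapes survive:
\begin{align*}
\text{(a)}\ \tau\boxtimes S_1,\qquad
\text{(b)}\ \sigma_1\boxtimes S_1\oplus\sigma_2\boxtimes S_1,\qquad
\text{(c)}\ \mathbf 1\boxtimes S_2\oplus\sigma\boxtimes S_1,\qquad
\text{(d)}\ \mathbf 1\boxtimes S_4,
\end{align*}
where $\tau$, resp.\ $\sigma,\sigma_1,\sigma_2$, is an everywhere-unramified cuspidal symplectic representation of $\GL_4(\A)$, resp.\ $\GL_2(\A)$, and $\sigma_1\neq\sigma_2$. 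Shape (d) is excluded because its global packet consists of the residual trivial representation, while $\pi_f$ is cuspidal. To exclude (c) I would match infinitesimal characters: for $k>3$ the forced spin eigenvalues are incompatible with the Saito--Kurokawa parameter (which can only support a scalar weight), while for $k=3$ they force $\sigma$ to have weight $2j+4$, so its central root number is $\epsilon(\tfrac12,\sigma)=i^{2j+4}=(-1)^j=+1$ since $j$ is even; by the explicit description of Arthur's sign character $\epsilon_\phi$, this makes the holomorphic (Langlands quotient) member of the archimedean Saito--Kurokawa $A$-packet correspond to a character different from $\epsilon_\phi$, so it has multiplicity $0$ -- contradicting $f\neq0$. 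To exclude (b), observe that $\sigma_1\times\sigma_2$ is orthogonal, so it contributes no factor to $\epsilon_\phi$, which is hence trivial, whereas the holomorphic member of the (now tempered) archimedean $L$-packet of $\varphi_{\phi_{f,\infty}}$ carries a nontrivial character of its component group, the Whittaker-generic member being the one attached to the trivial character; again the multiplicity vanishes, which is the non-existence of holomorphic Yoshida lifts of level $\Sp_4(\Z)$. Alternatively, this whole shape analysis can be quoted from the classification of level-one automorphic representations in \cite{cl}.

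Granting shape (a), I set $\tau_f$ to be the $\GL_4$-constituent of $\phi_f=\tau_f\boxtimes S_1$. It is cuspidal (a piece of an elliptic $A$-parameter), symplectic (its companion $S_1$ has odd dimension, by the definition of elliptic $A$-parameters), and everywhere unramified (its Satake parameters at each finite place are those of $\pi_f$). The degree-$4$ standard $L$-function of $\phi_f$ is exactly $L(s,\tau_f)$; on the other hand the classical spin $L$-function $L(s,f,\mathrm{spin})$ is this same Euler product written in the motivic normalization, i.e.\ shifted by half the motivic weight $\mu=2k+2j-3$, so that $L(s,f,\mathrm{spin})=L(s-(k+j-\tfrac32),\tau_f)$; this is checked Euler factor by Euler factor, at the finite places using unramifiedness and at $\infty$ as follows. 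Since $S_1$ contributes no twist, $\varphi_{\phi_{f,\infty}}=\tau_{f,\infty}$, and the spin-eigenvalue computation of the first paragraph identifies $\tau_{f,\infty}$ with $\calD_{k+j-3/2}\oplus\calD_{k-5/2}$ (both indices lie in $\tfrac12+\Z$, in accordance with symplecticity). Finally, uniqueness of $\tau_f$ is strong multiplicity one for $\GL_4$: a cuspidal automorphic representation of $\GL_4(\A)$ is determined by its components at almost all places, and the displayed $L$-function identity already pins down almost all of its Satake parameters.

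The step I expect to be the real obstacle is the exclusion of the Saito--Kurokawa and Yoshida shapes (b) and (c): this is precisely where one needs the explicit sign characters of Arthur -- and, in the companion Lemma \ref{iap} and on the $\Mp_4$ side, of Gan--Ichino -- together with the root-number and archimedean $A$-packet bookkeeping sketched above. Everything else amounts to tracking normalizations between the classical and automorphic conventions.
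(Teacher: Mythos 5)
Your overall strategy is the same as the paper's: feed $\pi_f$ into Arthur's multiplicity formula, enumerate the possible elliptic $A$-parameters, use everywhere-unramifiedness together with the archimedean constraint that the packet must contain the discrete series $\pi_{f,\infty}$ (M\oe glin--Renard) to kill all shapes except $\tau\boxtimes S_1$, treat the Saito--Kurokawa shape for $k=3$ by comparing Arthur's sign character $\epsilon_\phi$ (trivial, since $\epsilon(\tfrac{1}{2},\sigma)=i^{2j+4}=+1$ for $j$ even) with the nontrivial character carried by $\pi_{f,\infty}$, and then read off the $L$-function identity and deduce uniqueness (your appeal to strong multiplicity one is equivalent to the paper's ``uniqueness of the $A$-parameter''). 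Where the paper delegates the case $k>3$ and the Yoshida shape to \cite[Proposition 9.1.4]{cl}, you reprove the Yoshida exclusion directly (trivial $\epsilon_\phi$ versus the nontrivial archimedean character of the holomorphic member, all finite characters being trivial); that is in substance the cited argument.

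The one genuine gap is in your enumeration: you assert that ``only four shapes survive,'' but your justification only treats the $\GL_1$ constituents, and you silently drop the shape $\sigma\boxtimes S_2$ with $\sigma$ an irreducible cuspidal \emph{orthogonal} automorphic representation of $\GL_2(\A)$ --- the paper's case (3), which it excludes via the archimedean packet constraint for $k=3$ and via \cite{cl} for $k>3$. Your list is in fact complete, but this must be argued: for instance, an everywhere-unramified self-dual cuspidal representation of $\GL_2(\A)$ over $\Q$ has everywhere-unramified, hence trivial, central character and is therefore symplectic, so no orthogonal $\sigma$ of level one exists; alternatively, $\sigma_\infty\boxtimes S_2$ never has $\pi_{f,\infty}$ in its packet. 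Two smaller points: your exclusion of $\1\boxtimes S_4$ (``the global packet is the residual trivial representation'') is asserted rather than argued --- the clean route, as in the paper, is again that the archimedean $A$-packet of this parameter does not contain $\pi_{f,\infty}$; and in the Saito--Kurokawa case the relevant member is the holomorphic \emph{discrete series}, not a Langlands quotient (the Langlands quotient is the other member of that archimedean $A$-packet), though this mislabel does not affect the logic.
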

\begin{proof}
Our assertion is almost the same as that of \cite[Proposition 9.1.4]{cl} but slightly different, since we treat the case of $k\geq 3$ and assume that $j$ is even.
Note that our $j$ and $k$ are different from $j$ and $k$ in \cite[Proposition 9.1.4]{cl}.

Define a maximal compact torus $S$ of $\SO_5(\R)$ by
\begin{align*}
S
&=\Set{
s(\theta_1, \theta_2)
=\left(\begin{array}{ccccc}
\cos\theta_1& \sin\theta_1&&&\\
-\sin\theta_1&\cos\theta_1&&&\\
&&1&&\\
&&&\cos\theta_2&\sin\theta_2\\
&&&-\sin\theta_2&\cos\theta_2
\end{array}
\right)
\in \SO_5(\R)
|
\theta_1, \theta_2 \in \R
},
\end{align*}
and fix a basis $\{b_1, b_2\}$ of the group $X^*(S)$ of characters given by
\begin{align*}
b_l(s(\theta_1, \theta_2))
&= e^{i\theta_l}.
\end{align*}

The irreducible representation $\pi_{f, \infty}$ of $\SO_5(\R)$ is a discrete series representation with the Blattner parameter $(k-3, k+j)$, and its $L$-parameter is $\calD_{k+j-\frac{3}{2}} \oplus \calD_{k-\frac{5}{2}}$. 
By \cite{mr1, mr2}, if $k \geq 4$ the only local $A$-parameter such that its local $A$-packet contains $\pi_{f, \infty}$ is
\begin{align*}
\calD_{k+j-\frac{3}{2}} \boxtimes S_1 \oplus \calD_{k-\frac{5}{2}}\boxtimes S_1.
\end{align*}
On the other hand, if $k=3$, the local $A$-parameters of which the local $A$-packets contain $\pi_{f, \infty}$ are
\begin{align}\label{rp2}
\calD_{j+\frac{3}{2}} \boxtimes S_1 &\oplus \calD_{\frac{1}{2}}\boxtimes S_1,&
\calD_{j+\frac{3}{2}} \boxtimes S_1 &\oplus \chi_a \boxtimes S_2,
\end{align}
where $\chi_a$ is a quadratic character of $W_\R$.
In the packet $\Pi_{\calD_{j+\frac{3}{2}} \boxtimes S_1 \oplus \chi_a \boxtimes S_2}(\SO_5)$, the representation $\pi_{f, \infty}$ corresponds to the character
\begin{align}\label{ch1}
S_{\calD_{j+\frac{3}{2}} \boxtimes S_1 \oplus \chi_a \boxtimes S_2} \cong (\Z/2\Z)^2
\ni (c,d) \mapsto (-1)^{c+d}.
\end{align}

The $A$-parameter $\phi_f$ for $\SO_5$ is one of the following forms:
\begin{enumerate}[(1)]
\item $\phi_f=\chi \boxtimes S_4$, where $\chi$ is a quadratic character of $\Q^\times \backslash \A^\times$;
\item $\phi_f=\chi \boxtimes S_2 \oplus \chi' \boxtimes S_2$, where $\chi$ and $\chi'$ are distinct quadratic characters of $\Q^\times \backslash \A^\times$;
\item $\phi_f=\sigma \boxtimes S_2$, where $\sigma$ is an irreducible cuspidal orthogonal automorphic representation of $\GL_2(\A)$;
\item $\phi_f=\chi \boxtimes S_2 \oplus \sigma \boxtimes S_1$, where $\chi$ is a quadratic character of $\Q^\times \backslash \A^\times$, and $\sigma$ is an irreducible cuspidal symplectic automorphic representation of $\GL_2(\A)$;
\item $\phi_f=\sigma \boxtimes S_1 \oplus \sigma' \boxtimes S_1$, where $\sigma$ and $\sigma'$ are distinct irreducible cuspidal symplectic automorphic representations of $\GL_2(\A)$;
\item $\phi_f=\tau \boxtimes S_1$, where $\tau$ is an irreducible cuspidal symplectic automorphic representation of $\GL_4(\A)$.
\end{enumerate}

Since $\pi_f$ is unramified everywhere, $\chi$, $\chi'$, $\sigma$, $\sigma'$, $\tau$, and $\tau'$ are also unramified everywhere.
Moreover, $\chi$ must be trivial since $\A^\times =\Q^\times \R^\times_{>0} \hat{\Z}^\times$.
When $k>3$, by \cite[Proposition 9.1.4]{cl}, we have $\phi_f=\tau \boxtimes S_1$ as in the case (6).
Let us consider the case that $k$ is 3.
Since the localization $\phi_{f, \infty}$ at the real place must be one of the forms \eqref{rp2}, the cases (1), (2), and (3) cannot occur.
By the proof of \cite[Proposition 9.1.4]{cl}, neither do the case (5).

Now consider the case (4).
The localization of $\phi_f$ at the real place must be $\phi_{f, \infty}=1\boxtimes S_2 \oplus \calD_{j+\frac{3}{2}} \boxtimes S_1$.
Then Arthur's character $\epsilon_{\phi_f}$ of $S_{\phi_f} \cong (\Z/2\Z)^2$ is given by
\begin{align*}
\epsilon_{\phi_f}(c,d)
=\epsilon(\tfrac{1}{2}, \sigma \times \chi)^{c+d}
=\epsilon(\tfrac{1}{2}, \calD_{j+\frac{3}{2}}, \bfe)^{c+d}
=i^{(2j+3+1)(c+d)}.
\end{align*}
Since $j$ is even, this implies that $\epsilon_{\phi_f}$ is trivial.
However, $\pi_f$ is unramified at every finite place and $\pi_{f, \infty}$ corresponds to the character given by \eqref{ch1}.
The case (4) is impossible since this contradicts the multiplicity formula for $\SO_5$.\\

In conclusion, we have $\phi_f = \tau_f \boxtimes S_1$, where $\tau_f$ is an irreducible cuspidal symplectic automorphic representation of $\GL_4(\A)$ such that
\begin{equation*}
\tau_{f,\infty}
=\calD_{k+j-\frac{3}{2}} \oplus \calD_{k-\frac{5}{2}}.
\end{equation*}
By using a similar argument to the proof of \cite[Theorem 3]{as}, one can check that
\begin{equation*}
L(s,f,\mathrm{spin})
=L(s-j-k+\tfrac{3}{2}, \tau_f).
\end{equation*}

Conversely, an irreducible everywhere unramified cuspidal symplectic automorphic representation $\tau_f$ of $\GL_4(\A)$ satisfying the equation \eqref{eqrev1} gives the $A$-parameter of $\pi_f$.
The uniqueness follows from that of the $A$-parameter.
\end{proof}

Since $\tau_f$ is uniquely determined by $f$, we obtain an assignment $f \mapsto \tau_f$.
We have the following lemma.
\begin{lem}\label{iap}
Let $\tau$ be an irreducible cuspidal symplectic automorphic representation of $\GL_4(\A)$ that is unramified everywhere and satisfies $\tau_\infty=\calD_{k+j-\frac{3}{2}} \oplus \calD_{k-\frac{5}{2}}$.
Then there exists a Hecke eigenform $f \in S_{j+3, 2k-6}(\Sp_4(\Z))$ such that $\tau_f=\tau$, and it is unique up to a scalar multiple.
\end{lem}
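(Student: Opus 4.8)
The plan is to run Arthur's multiplicity formula in the reverse direction to that of Lemma~\ref{api}. Given $\tau$ as in the statement, set $\phi=\tau\boxtimes S_1$. By the hypotheses on $\tau$ this is an elliptic $A$-parameter for $\SO_5$ that is tempered, unramified at every finite place, and has archimedean localization $\phi_\infty=\calD_{k+j-\frac32}\boxtimes S_1\oplus\calD_{k-\frac52}\boxtimes S_1$. I will exhibit a character $\eta=\bigotimes_v\eta_v\in\what{S}_{\phi,\A}$ with $\eta\circ\Delta=\epsilon_\phi$ such that the corresponding summand $\sigma_\eta=\bigotimes_v\sigma_{\eta_v}$ of $L^2_\phi(\SO_5)$ in Theorem~\ref{amf} is unramified at every finite place and has $\sigma_{\eta_\infty}$ equal to the discrete series of $\SO_5(\R)$ with Blattner parameter $(k-3,k+j)$; transporting $\sigma_\eta$ across the accidental isomorphism $\PGSp_4\cong\SO_5$ and the dictionary of \cite{cl} will then yield the desired $f$.

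Since $\tau$ is irreducible on $\GL_4$ we have $S_\phi\cong\Z/2\Z$, and because $\phi$ is tempered $\epsilon_\phi$ is the trivial character. At each finite place $v$ take $\eta_v$ trivial; then $\sigma_{\eta_v}$ is the unramified constituent of the unramified tempered $L$-packet $\Pi_{\phi_v}(\SO_5)$, and the $v$-component of $\eta\circ\Delta$ is trivial. At $v=\infty$ the localization $\phi_\infty$ is a regular discrete $L$-parameter, so $\Pi_{\phi_\infty}(\SO_5)$ is a discrete series $L$-packet whose members are indexed by the characters of $S_{\phi_\infty}\cong(\Z/2\Z)^2$; let $\eta_\infty$ be the character attached to the member with Blattner parameter $(k-3,k+j)$, which is exactly the archimedean representation $\pi_{f,\infty}$ identified in the proof of Lemma~\ref{api}. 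The diagonal map $\Delta$ sends the nontrivial element of $S_\phi$ to $-1_4\in S_{\phi_v}$ at every $v$; at $\infty$ this is the ``diagonal'' element of $S_{\phi_\infty}\cong(\Z/2\Z)^2$, so the condition $\eta\circ\Delta=\epsilon_\phi=1$ amounts to $\eta_\infty(-1_4)=1$, i.e.\ to the holomorphic-type member of the packet being compatible with the trivial global sign. This is the archimedean counterpart of the sign computation carried out (for the parameter in case~(4)) at the end of the proof of Lemma~\ref{api}, and it follows from the explicit parametrization of discrete series $L$-packets of $\SO_5(\R)$ together with the hypothesis that $j$ is even. Hence $\sigma_\eta$ occurs in $L^2_{\mathrm{disc}}(\SO_5)$, with multiplicity one.

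It remains to descend $\sigma_\eta$ to a modular form. Since $\phi$ is a tempered discrete global parameter and $\phi_\infty$ gives a discrete series at infinity, $\sigma_\eta$ is cuspidal; being unramified at all finite places and carrying the holomorphic discrete series of the weight corresponding to $(j+3,2k-6)$ at infinity, it corresponds via \cite[\S4.5, \S6.3.4]{cl} and the isomorphism $\PGSp_4\cong\SO_5$ to a Hecke eigenform $f\in S_{j+3,2k-6}(\Sp_4(\Z))$ whose adelic lift is $\sigma_\eta$. Then $\phi_f=\phi=\tau\boxtimes S_1$ by construction, and arguing as in Lemma~\ref{api} (via the identity $L(s,f,\spin)=L(s-j-k+\tfrac32,\tau_f)$ characterizing $\tau_f$) gives $\tau_f=\tau$.

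For uniqueness, let $f,f'\in S_{j+3,2k-6}(\Sp_4(\Z))$ be Hecke eigenforms with $\tau_f=\tau_{f'}=\tau$. Their adelic lifts $\pi_f,\pi_{f'}$ are irreducible cuspidal representations of $\SO_5(\A)$ with $A$-parameter $\phi=\tau\boxtimes S_1$; both are unramified at all finite places (so their finite components are the trivial-character members of the local packets) and both have archimedean component the discrete series with Blattner parameter $(k-3,k+j)$, so $\pi_f$ and $\pi_{f'}$ are attached to the same character $\eta$ as above. By the multiplicity-one assertion of Theorem~\ref{amf} they are isomorphic and in fact equal as subspaces of $L^2_{\mathrm{disc}}(\SO_5)$, and since the vector of minimal $K_\infty$-type that is spherical at every finite place is unique up to scalar, $f$ and $f'$ are proportional. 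The point requiring the most care is the verification in the second paragraph that $\eta_\infty(-1_4)=1$ for the holomorphic-type discrete series member, i.e.\ that the multiplicity formula actually produces a holomorphic form; the remaining steps are either formal consequences of Theorem~\ref{amf} or already contained in the proof of Lemma~\ref{api} and in \cite{cl}.
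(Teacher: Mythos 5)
Your proposal is correct and follows essentially the same route as the paper, whose proof simply defers to \cite[Proposition 9.1.4 (i) and (iii)]{cl}: one applies Arthur's multiplicity formula (Theorem \ref{amf}) to the tempered parameter $\phi=\tau\boxtimes S_1$ with $\eta_v$ trivial at finite places and the appropriate discrete series member at $\infty$, and then uses the standard dictionary between everywhere-unramified cuspidal representations of $\SO_5(\A)\cong\PGSp_4(\A)$ with that archimedean component and level-$\Sp_4(\Z)$ eigenforms, multiplicity one giving uniqueness up to scalar. Only a cosmetic remark: the verification $\eta_\infty(-1_4)=1$ is automatic because the image of the global component group element is central (Arthur's characters for the split group are trivial on $Z(\widehat{G})$), so the parity of $j$ plays no role at that step.
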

\begin{proof}
The proof is the same as \cite[Proposition 9.1.4 (i) and (iii)]{cl}.
\end{proof}

\subsection{The adelic lifts of Jacobi forms of general degree}\label{aljg}
In this subsection, we consider the adelic lifts of holomorphic or skew-holomorphic Jacobi cusp forms of general degree.

Let $F \in J_{\rho, 1}^{\star, \mathrm{cusp}}$ be a Hecke eigenform, where $\star \in \{\mathrm{hol}, \mathrm{skew} \}$ and $(\rho, V)$ is an irreducible finite dimensional polynomial representation of $\GL_n(\C)$.
The strong approximation theorem for $\Sp_{2n}$ induces that for $\Sp^J_{2n}$:
\begin{align*}
\Sp^J_{2n}(\A)=\Sp^J_{2n}(\Q) \Sp^J_{2n}(\R) \Sp^J_{2n}(\hat{\Z}).
\end{align*}
Now the adelic lift $\Phi_F$ of $F$ is defined by
\begin{align*}
&\Phi_F(g)
=[F|_{\rho, 1}^\star  g_\infty] (i,0),&
&g=\gamma g_\infty \kappa \in \Sp^J_{2n}(\A)=\Sp^J_{2n}(\Q) \Sp^J_{2n}(\R) \Sp^J_{2n}(\hat{\Z}).&
\end{align*}
\begin{lem}\label{adlif}
The function $\Phi_F : \Sp^J_{2n}(\A) \to V$ satisfies the following:
\begin{enumerate}[(1)]
\item $\Phi_F$ is left $\Sp^J_{2n}(\Q)$-invariant;
\item $\Phi_F$ is right $\Sp^J_{2n}(\hat{\Z})$-invariant;
\item
$\Phi_F(grz)=
\begin{cases}
\bfe(\kappa) \overline{\rho(\alpha+i\beta)}\inv \Phi_F(g),&  \star=\mathrm{hol},\\
\bfe(\kappa) \det(\alpha+i\beta) \rho(\alpha+i\beta)\inv \Phi_F(g),&  \star=\mathrm{skew},
\end{cases}
$\\
for any $g \in \Sp^J_{2n}(\R)$, $r=\mmatrix{\alpha}{\beta}{-\beta}{\alpha} \in K_\infty$, $z=([0,0],\kappa) \in \calZ(\R)$;
\item For any $g\in \Sp^J_{2n}(\A)$, the mapping $\Sp^J_{2n}(\R) \ni x\mapsto\Phi_F(xg) \in V$ is $C^\infty$;
\item $
\begin{cases}
X^-\cdot \Phi_F = Y^-\cdot\Phi_F=0,&  \star=\mathrm{hol},\\
(X^++Y^+_1Y^+_2+Y^+_2Y^+_1)\cdot \Phi_F=Y^-\cdot\Phi_F=0,& \star=\mathrm{skew},
\end{cases}
$\\
for any $X^- \in \frakp_\C^-$, $Y^- \in \frakq_\C^-$, and $X^+=\mmatrix{A}{iA}{iA}{-A}\in\frakp_\C^+$, $Y^+_j = ([\Lambda_j, i\Lambda_j], 0) \in \frakq_\C^+$ ($j=1,2$) such that $A=8\pi(\Lambda_1\tp{\Lambda_2}+\Lambda_2\tp{\Lambda_1})$;
\item $\Phi_F$ is cuspidal.
\end{enumerate}
\end{lem}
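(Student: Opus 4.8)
The plan is to verify the six properties one at a time, since each is an essentially formal consequence of the definition of $\Phi_F$ together with the transformation laws defining $J_{\rho,1}^{\star,\mathrm{cusp}}$ and the cocycle relations for the slash operators. Throughout I would use that the slash operator $|_{\rho,1}^\star$ is a genuine right action of $\Sp^J_{2n}(\R)$ (extended to $\GSp_{2n}^+(\R)$ with trivial scalar action), so that for $g=\gamma g_\infty\kappa$ with $\gamma\in\Sp^J_{2n}(\Q)$, $g_\infty\in\Sp^J_{2n}(\R)$, $\kappa\in\Sp^J_{2n}(\hat\Z)$ the value $[F|_{\rho,1}^\star g_\infty](i,0)$ is well defined, i.e.\ independent of the chosen decomposition; this independence, which rests on $F|_{\rho,1}^\star\gamma_0=F$ for $\gamma_0\in\Sp^J_{2n}(\Z)=\Sp^J_{2n}(\Q)\cap\Sp^J_{2n}(\R)\Sp^J_{2n}(\hat\Z)$, is really what makes $\Phi_F$ a function on $\Sp^J_{2n}(\A)$ at all, and I would dispatch (1) and (2) simultaneously with this observation. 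For (1): replacing $g$ by $\gamma' g$ with $\gamma'\in\Sp^J_{2n}(\Q)$ only changes the $\Sp^J_{2n}(\Q)$-part of the decomposition, hence leaves the value unchanged. For (2): replacing $g$ by $g\kappa'$ with $\kappa'\in\Sp^J_{2n}(\hat\Z)$ only changes the $\Sp^J_{2n}(\hat\Z)$-part. The strong approximation statement $\Sp^J_{2n}(\A)=\Sp^J_{2n}(\Q)\,\Sp^J_{2n}(\R)\,\Sp^J_{2n}(\hat\Z)$, which follows from strong approximation for $\Sp_{2n}$ together with $\A_f=\Q+\hat\Z$ applied coordinatewise to the Heisenberg part, guarantees that $\Phi_F$ is defined on all of $\Sp^J_{2n}(\A)$.

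For (3), I would take $g\in\Sp^J_{2n}(\R)$, $r=\mmatrix{\alpha}{\beta}{-\beta}{\alpha}\in K_\infty$, $z=([0,0],\kappa)\in\calZ(\R)$, and use that $r$ fixes the base point $i\in\frakH_n$ and $w=0\in\C^n$, together with $J(r,i)$. Concretely, $J(r,i)=-\beta\cdot i+\alpha=\alpha+i\beta$ (or its conjugate, depending on the sign convention for the identification $K_\infty\cong\U(n)$), so unwinding $[F|_{\rho,1}^\star(grz)](i,0)=[F|_{\rho,1}^\star g\,|_{\rho,1}^\star r\,|_{\rho,1}^\star z](i,0)$ produces the factor $\bfe(\kappa)$ from the central element, and the factor $\overline{\rho(\alpha+i\beta)}^{-1}$ in the holomorphic case, or $\det(\alpha+i\beta)\rho(\alpha+i\beta)^{-1}$ in the skew case (here the extra determinant comes from the factor $\tfrac{|\det J|}{\det J}$ in the skew slash operator, evaluated at a unitary matrix). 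This is the same bookkeeping as in the classical $n=1$ theory in \cite{bs}. For (4), smoothness of $x\mapsto\Phi_F(xg)$ follows because, after moving $g$ out using (1) and (2) to reduce to $g\in\Sp^J_{2n}(\R)$, the map $x\mapsto[F|_{\rho,1}^\star(xg)](i,0)$ is a composition of the real-analytic (indeed holomorphic in $Z$ and $w$, up to antiholomorphic pieces and the automorphy factors) function $F$ with the smooth action map $\Sp^J_{2n}(\R)\times(\frakH_n\times\C^n)\to\frakH_n\times\C^n$ evaluated at $(i,0)$ and the smooth automorphy factors; holomorphy of $F$ on $\frakH_n\times\C^n$ (condition (0) of the Jacobi form definition, respectively the mixed real-analytic/holomorphic condition in the skew case) is exactly what is needed.

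Property (5) is where the real content lies, and I expect it to be the main obstacle. The point is to translate the holomorphy (resp.\ mixed holomorphy) of $F$ into the annihilation statements for the Lie algebra actions $\frakp_\C^-$, $\frakq_\C^-$ (and, in the skew case, the combination $X^++Y_1^+Y_2^++Y_2^+Y_1^+$). I would proceed as in the classical case: differentiating the defining relation $[F|_{\rho,1}^\star g](i,0)=\Phi_F(g)$ and using that the slash action intertwines the left-invariant vector fields on $\Sp^J_{2n}(\R)$ with differential operators acting on $F$ in the $(Z,w)$ variables. The elements of $\frakq_\C^-$ correspond (via the explicit formulae recalled before Lemma~\ref{lw} for the Fock model, transported to the classical model) to $\partial/\partial\bar w$-type operators and $\frakp_\C^-$ to $\partial/\partial\bar Z$-type operators, so that $Y^-\cdot\Phi_F=0$ encodes holomorphy of $F$ in $w$ and $X^-\cdot\Phi_F=0$ encodes holomorphy in $Z$; in the skew-holomorphic case $F$ is not holomorphic in $Z$, but the twisted combination $X^++Y_1^+Y_2^++Y_2^+Y_1^+$ is precisely the operator annihilating the skew-holomorphic Jacobi forms (this is the degree-$n$ analogue of the heat-operator/Cauchy--Riemann characterization of skew-holomorphic Jacobi forms; it is the operator identity already implicit in the Fock-model computation used for Lemma~\ref{lw}(2)). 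The careful part is matching the normalizations ($8\pi$, $16\pi$, $\tfrac{1}{4\pi}$ etc.) between the Fock model and the classical model and keeping track of the weight $\rho$; I would cite the $n=1$ computation in \cite{bs} and Lemma~\ref{lw} and indicate that the higher-degree case is identical. Finally, (6): $\Phi_F$ cuspidal means $\int_{N^J(\Q)\backslash N^J(\A)}\Phi_F(ng)\,dn=0$ for the unipotent radical $N$ of every proper standard parabolic of $\Sp_{2n}$; after the usual reduction this integral is a finite sum of Fourier coefficients $A(N',r')$ of $F$ along the corresponding rational parabolic, and the cusp condition on $F$ (condition (3): $A(N',r')=0$ unless $4N'-r'\,{}^{\mathrm t}r'>0$, resp.\ $<0$) forces the degenerate Fourier coefficients entering these integrals to vanish. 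This is the direct generalization of the classical fact that a Jacobi cusp form has vanishing "boundary" Fourier--Jacobi coefficients, and I would carry it out by writing the constant term along $N^J$ in terms of the Fourier expansion of $F$ and observing that only indices with a degenerate $4N'-r'\,{}^{\mathrm t}r'$ contribute.
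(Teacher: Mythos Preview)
Your proposal is correct and follows essentially the same route as the paper: properties (1)--(4) and (6) are dispatched there just as you suggest (with (6) referred to \cite[Lemma~5]{as}), and (5) is indeed the substantive step. For the skew case of (5) the paper carries out exactly the computation you anticipate, showing by an explicit calculation in the classical model that $[(X^++Y_1^+Y_2^++Y_2^+Y_1^+)\Phi_F](g)=\tfrac{1}{2\pi}\tr\bigl((8\pi i\,\partial/\partial Z-\partial^2/\partial w^2)H(i,0)\cdot A\bigr)$ with $H=F|_{\rho,1}^{\mathrm{skew}}g$, and then proves the heat-operator identity $(8\pi i\,\partial/\partial Z-\partial^2/\partial w^2)H(i,0)=0$ in a separate lemma via the Fourier expansion of $F$; so your reference to the ``heat-operator/Cauchy--Riemann characterization'' is exactly on target, though note that this computation is done from scratch for general $n$ rather than reduced to the $n=1$ case in \cite{bs}.
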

\begin{proof}
(1)-(4) follow immediately from the definitions, and the proof of assertion (6) is similar to that of \cite[Lemma 5]{as}.

Let us prove (5).
If $\star=\mathrm{hol}$, then $F(Z,w)$ is holomorphic in $(Z,w) \in \frakH_n \times \C^n$, and if $\star=\mathrm{skew}$, then $F(Z,w)$ is holomorphic in $w \in \C^n$ for any $Z \in \frakH_n$.
Thus by a similar argument to \cite[Lemma 7]{as}, one can see
\begin{align*}
\begin{cases}
X^-\cdot \Phi_F = Y^-\cdot\Phi_F=0,&  \star=\mathrm{hol},\\
Y^-\cdot\Phi_F=0,& \star=\mathrm{skew},
\end{cases}
\end{align*}
for any $X^- \in \frakp_\C^-$, $Y^- \in \frakq_\C^-$.

Now we assume $\star=\mathrm{skew}$ and show that $(X^++Y^+_1Y^+_2+Y^+_2Y^+_1)\cdot \Phi_F=0$ for $X^+=\mmatrix{A}{iA}{iA}{-A}\in\frakp_\C^+, Y^+_j = ([\Lambda_j, i\Lambda_j], 0) \in \frakq_\C^+$ ($j=1,2$) such that $A=8\pi(\Lambda_1\tp{\Lambda_2}+\Lambda_2\tp{\Lambda_1})$.
We may assume that $\Lambda_j \in \R^n$, (j=1,2) and $A\in\Sym_n(\R)$.
For any $g \in \Sp^J_{2n}(\R)$, we have
\begin{align*}
[X^+\cdot\Phi_F](g)
=\left. \frac{d}{dt}\right|_{t=0} \Phi_F \left( ge^{t\xi_1} \right)
+i \left. \frac{d}{dt}\right|_{t=0} \Phi_F \left( ge^{t\xi_2}\right)
+i \left. \frac{d}{dt}\right|_{t=0} \Phi_F \left( ge^{t\xi_3} \right),
\end{align*}
where $\xi_1 = \mmatrix{A}{0}{0}{-A}$, $\xi_2 = \mmatrix{0}{A}{0}{0}$, and $\xi_3 = \mmatrix{0}{0}{A}{0}$.
Put $H=F|_\rho^\mathrm{skew} g$.
Then the first term is equal to
\begin{align*}
\left. \frac{d}{dt}\right|_{t=0} \rho(e^{tA}) H(ie^{2tA},0)
=d\rho(A) H(i,0) + \tr\left(\frac{\partial H}{\partial Y}(i,0) \cdot 2A \right),
\end{align*}
and the second is
\begin{align*}
i\left. \frac{d}{dt}\right|_{t=0} H(i + tA,0)
=i \tr\left(\frac{\partial H}{\partial X} (i,0) \cdot A\right),
\end{align*}
where we write
\begin{align*}
&\frac{\partial}{\partial X}=\left( 2^{\delta_{i,j}-1} \frac{\partial}{\partial x_{i,j}} \right)_{i,j},
\quad
\frac{\partial}{\partial Y}=\left( 2^{\delta_{i,j}-1} \frac{\partial}{\partial y_{i,j}} \right)_{i,j},&
&Z=X+iY.&
\end{align*}
Here note that $X$ and $Y$ are symmetric.
The third term is equal to
\begin{align}\label{skhi1}
i \left. \frac{d}{dt}\right|_{t=0} \frac{|\det(1+itA)|}{\det(1+itA)} \rho(1-itA)\inv H(i (itA+1)\inv, 0).
\end{align}
To calculate this, let $\alpha_1, \ldots,\alpha_n \in\C$ be the eigenvalues of $A$.
Then we have
\begin{align*}
\left. \frac{d}{dt}\right|_{t=0}\frac{|\det(1+itA)|}{\det(1+itA)}
&=\left. \frac{d}{dt}\right|_{t=0} \prod_{j=1}^n\frac{|1+it\alpha_j|}{1+it\alpha_j}\\
&=\sum_{j=1}^n \left. \frac{d}{dt}\right|_{t=0} (1+\alpha_j^2t^2)^\frac{1}{2} (1+it\alpha_j)\inv
=-i\tr(A).
\end{align*}
Therefore \eqref{skhi1} is equal to
\begin{align*}
\tr(A)H(i,0) -d\rho(A) H(i,0) + i \tr\left(\frac{\partial H}{\partial X}(i,0) \cdot A\right).
\end{align*}
Thus, we obtain that
\begin{align*}
[X^+\cdot\Phi_F](g)
=\tr(A)H(i,0) +4i \tr\left(\frac{\partial H}{\partial Z} (i,0) A\right),
\end{align*}
where $\frac{\partial}{\partial Z}=\frac{1}{2}(\frac{\partial}{\partial X} - i\frac{\partial}{\partial Y})$.

Next, we calculate $(Y^+_1Y^+_2 + Y^+_2Y^+_1)\cdot \Phi_F$.
By definition, we have
\begin{align*}
[Y^+_1Y^+_2\Phi_F](g)
&=\left. \frac{d}{dt}\right|_{t=0} [Y^+_2\Phi_F](ge^{([t\Lambda_1,0],0)})
 +i\left. \frac{d}{dt}\right|_{t=0}[Y^+_2\Phi_F](ge^{([0,t\Lambda_1],0)})\\
&=\left. \frac{d^2}{dtds}\right|_{t=s=0} \Phi_F(ge^{([t\Lambda_1,0],0)}e^{([s\Lambda_2,0],0)})
 +i\left. \frac{d^2}{dtds}\right|_{t=s=0} \Phi_F(ge^{([t\Lambda_1,0],0)}e^{([0,s\Lambda_2],0)})\\
&\quad+i\left. \frac{d^2}{dtds}\right|_{t=s=0} \Phi_F(ge^{([0,t\Lambda_1],0)}e^{([s\Lambda_2,0],0)})
 -\left. \frac{d^2}{dtds}\right|_{t=s=0} \Phi_F(ge^{([0,t\Lambda_1],0)}e^{([0,s\Lambda_2],0)}).
\end{align*}
The first term is
\begin{align*}
\left. \frac{d^2}{dtds}\right|_{t=s=0} \Phi_F(ge^{([t\Lambda_1,0],0)}e^{([s\Lambda_2,0],0)})
&=\left. \frac{d^2}{dtds}\right|_{t=s=0} \bfe(it^2\tp{\Lambda_1}\Lambda_1+2ist\tp{\Lambda_1}\Lambda_2)\bfe(is^2\tp{\Lambda_2}\Lambda_2)H(i,is\Lambda_2+it\Lambda_1)\\
&=-4\pi\tp{\Lambda_1}\Lambda_2H(i,0) + \tr\left(\frac{\partial^2 H}{\partial v^2}(i,0) \Lambda_2\tp{\Lambda_1}\right),
\end{align*}
where we write
\begin{align*}
&\frac{\partial^2}{\partial v^2}
=(\frac{\partial^2}{\partial v_i \partial v_j})_{i,j},&
&w= u+ i v \in \C^n.&
\end{align*}
Similarly, we can calculate the other terms, and thus we obtain
\begin{align*}
[Y^+_1Y^+_2\Phi_F](g)
=-8\pi \tp{\Lambda_1}\Lambda_2 H(i,0) -4 \tr\left(\frac{\partial^2 H}{\partial w^2}(i,0) \Lambda_2\tp{\Lambda_1}\right),
\end{align*}
where
\begin{align*}
\frac{\partial^2}{\partial w^2}
=(\frac{\partial^2}{\partial w_i \partial w_j})_{i,j}.
\end{align*}
Therefore 
\begin{align*}
[(X^++Y^+_1Y^+_2+Y^+_2Y^+_1)\Phi_F](g)
&=\left(\tr(A)-8\pi (\tp{\Lambda_1}\Lambda_2+\tp{\Lambda_2}\Lambda_1) \right) H(i,0)\\
&\quad+4i \tr\left(\frac{\partial H}{\partial Z} (i,0) A\right) -4 \tr\left(\frac{\partial^2 H}{\partial w^2}H(i,0) (\Lambda_2\tp{\Lambda_1}+\Lambda_1\tp{\Lambda_2})\right)\\
&=\frac{1}{2\pi} \tr\left( (8\pi i\frac{\partial}{\partial Z}-\frac{\partial^2}{\partial w^2})H (i,0) A\right).
\end{align*}

Now it is enough to show that $(8\pi i\frac{\partial}{\partial Z}-\frac{\partial^2}{\partial w^2})H (i,0)$ is zero for any $g \in \Sp^J_{2n}(\R)$.
We shall show this in Lemma \ref{cut} below.
\end{proof}

We fix a nondegenerate bilinear form $\an{-,-}$ on $V \times V$ such that
\begin{align*}
\an{\rho(x)v_1, \overline{\rho}(x)v_2}
&=\an{v_1,v_2},&
v_1, v_2 &\in V, \quad x \in \U(n),
\end{align*}
where $\overline{\rho}(x)=\rho(\overline{x})$, and set $\varphi_{F,v}(g)=\an{v, \Phi_F(g)}$, for each $v \in V$.
By Lemma \ref{adlif} (1), (2), and (6), the functions $\varphi_{F, v} : \Sp^J_{2n}(\A) \to \C$, ($v \in V$) are left $\Sp^J_{2n}(\Q)$-invariant, right $\Sp^J_{2n}(\hat{\Z})$-invariant, and cuspidal.
Let $\pi_F$ be a cuspidal automorphic representation of $\Sp^J_{2n}(\A)$ generated by $\varphi_{F, v}$.
By Lemma \ref{adlif} (3), the central character of $\pi_F$ is the nontrivial additive character $\psi$ of $\Q\backslash\A$ such that $\psi_\infty=\bfe$.
Thus,
\begin{align*}
\pi_F \subset L^2_{\mathrm{cusp}}(\Sp^J_{2n}(\Q) \backslash \Sp^J_{2n}(\A))_{\psi}.
\end{align*}
Moreover, Lemma \ref{adlif} (3) shows that
\begin{align*}
r \cdot \varphi_{F, v}
=\begin{cases}
\varphi_{F, \rho(r)v}, & \text{with respect to the additive character $\bfe$, if $\star=\mathrm{hol}$},\\
\varphi_{F, \det\inv\otimes\rho(r)v}, & \text{with respect to the additive character $\overline{\bfe}$, if $\star=\mathrm{skew}$},
\end{cases}
\end{align*}
for any $r\in K_\infty$.
Here, recall that the identification $K_\infty = \U(n)$ depends on the choice of the additive character of $\R$.

As \cite[Theorem 7.5.5]{bs}, we have the following lemma.
\begin{lem}\label{adlif2}
Let $F \in J_{\rho_{\bfk}, 1}^{\star, \mathrm{cusp}}$ be a Hecke eigenform, where $\star\in\{\mathrm{hol}, \mathrm{skew}\}$.
Then the cuspidal automorphic representation $\pi_F \subset L^2_{\mathrm{cusp}}(\Sp^J_{2n}(\Q) \backslash \Sp^J_{2n}(\A))_{\psi}$ is irreducible.
Let us write $\pi_F=\bigotimes_v \pi_{F,v}$ for the decomposition of it into local components.
Then we have
\begin{itemize}
\item $\pi_{F, p}$ is unramified for all prime numbers $p$;
\item $\pi_{F, \infty}$ is isomorphic to
\begin{align*}
\begin{cases}
\pi_{\bfk-\frac{\mathbf{1}}{2}, \bfe} \otimes \pi_{\SW, \bfe},& \text{if $\star=\mathrm{hol}$},\\
\pi_{\bfk-\frac{\mathbf{1}}{2}, \overline{\bfe}} \otimes \pi_{\SW, \bfe},& \text{if $\star=\mathrm{skew}$},
\end{cases}
\end{align*}
where $\frac{\mathbf{1}}{2}$ denotes $(\frac{1}{2}, \ldots, \frac{1}{2})$.
\end{itemize}
\end{lem}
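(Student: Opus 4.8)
The plan is to mirror the degree-one argument of \cite[Theorem 7.5.5]{bs}, feeding in the higher-degree structural results established above. Throughout, $\psi$ denotes the standard additive character of $\Q\backslash\A$ with $\psi_\infty=\bfe$, so $\psi_p$ has order zero at every finite $p$; by Lemma \ref{adlif}(3) the representation $\pi_F$ lies in $L^2_{\mathrm{cusp}}(\Sp^J_{2n}(\Q)\backslash\Sp^J_{2n}(\A))_\psi$. First I would observe that $\pi_F$, being generated by the finitely many $K_\infty$-finite vectors $\varphi_{F,v}$, is admissible of finite length, and write $\pi_F=\bigoplus_i\pi^{(i)}$ for its decomposition into irreducible cuspidal automorphic representations of central character $\psi$; the aim is to show there is a single summand with the stated local components.

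At a finite prime $p$, Lemma \ref{adlif}(2) makes every $\varphi_{F,v}$ right $\Sp^J_{2n}(\hat{\Z})$-invariant, so each $\pi^{(i)}_p$ is $\Sp^J_{2n}(\Z_p)$-spherical and hence, by Theorem \ref{csu}, is the unramified representation $\pi(\alpha^{(i,p)}_1,\dots,\alpha^{(i,p)}_n;\psi_p)$ attached to a Satake parameter. Since $F$ is a simultaneous Hecke eigenform and the adelic lift carries the Jacobi Hecke operators at $p$ into the spherical Hecke algebra $\calH(\Sp^J_{2n}(\Q_p),\Sp^J_{2n}(\Z_p))_{\psi_p}\cong\C[X_1^{\pm1},\dots,X_n^{\pm1}]^{\frakS_n\ltimes\{\pm1\}^n}$ from the proof of Theorem \ref{csu}, this Satake parameter is determined by the Hecke eigenvalues of $F$, hence is independent of $i$; in particular $\pi_{F,p}$ is unramified. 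Here $p=2$ causes no trouble, because Theorem \ref{csu} is a statement about $\Sp^J_{2n}$ alone — it is only in the later transfer to $\Mp_4$ that Theorem \ref{ccu} becomes indispensable, and Theorem \ref{ccu} identifies the $\Mp_{2n}(\Q_p)$-side of each $\pi_{F,p}$ for all $p$.

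At the real place I would set $W=\myspan\{\varphi_{F,v}:v\in V\}\subseteq\pi_F$. The transformation law recorded after Lemma \ref{adlif} shows that $v\mapsto\varphi_{F,v}$ is $K_\infty$-equivariant for $\rho_\bfk$ if $\star=\mathrm{hol}$ and for $\det\inv\otimes\rho_\bfk\cong\rho_{\bfk-\mathbf{1}}$ if $\star=\mathrm{skew}$ (through the identification $K_\infty\cong\U(n)$ attached to $\bfe$, resp.\ to $\overline{\bfe}$); since $(\rho_\bfk,V)$ is irreducible and $W\neq0$, this map is injective and $W$ is an irreducible $K_\infty$-submodule of the corresponding type, annihilated by $\frakq_\C^-$ and by $\frakp_\C^-$ (holomorphic case) resp.\ by the operators $X^++Y^+_1Y^+_2+Y^+_2Y^+_1$ (skew case) by Lemma \ref{adlif}(5). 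Because $W$ generates $\pi_F$, its projection to each $\pi^{(i)}$ is nonzero, hence an isomorphic copy of $W$ with the same annihilation properties; writing $\pi^{(i)}_\infty=\sigma_i\otimes\pi_{\SW,\bfe}$ via Theorem \ref{ccr} and applying Lemma \ref{lw} to $\sigma_i$ gives $\sigma_i\cong\pi_{\bfk-\frac{\mathbf{1}}{2},\bfe}$ if $\star=\mathrm{hol}$ and $\sigma_i\cong\pi_{\bfk-\frac{\mathbf{1}}{2},\overline{\bfe}}$ if $\star=\mathrm{skew}$ — in the latter, the weight $\bfk-\mathbf{1}$ coming from the $\det\inv$-twist together with $\varepsilon_{\overline{\bfe}}=-1$ yields exactly $\bfk-\frac{\mathbf{1}}{2}$. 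Thus $\pi^{(i)}_\infty$ is the claimed representation, again independent of $i$.

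It remains to deduce irreducibility. All the $\pi^{(i)}$ now have the same component at every place, so $\pi_F\cong\pi^\circ\otimes\C^s$ as $\Sp^J_{2n}(\A)$-modules for one irreducible cuspidal $\pi^\circ$, where $s$ counts the summands. Inside $\pi^\circ\otimes\C^s$, the subspace of vectors fixed by $\Sp^J_{2n}(\hat{\Z})$, annihilated by the operators above, and of the same $K_\infty$-type as $W$ equals $V\otimes\C^s$: the spherical line is one-dimensional at each finite place by Theorem \ref{csu}, and the relevant $K_\infty$-type of $\pi^\circ_\infty$ occurs with multiplicity one by the characterization of $L^{\pm}(V_\bfa)$ recalled in \S\ref{Jacobi group} (and its proof, as used in Lemma \ref{lw}). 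Since $W$ is an irreducible $K_\infty$-submodule of $V\otimes\C^s$ isomorphic to $V$, Schur's lemma forces $W=V\otimes c$ for a single nonzero $c$, and the $\Sp^J_{2n}(\A)$-module it generates is $\pi^\circ\otimes c$; hence $\pi_F$ is irreducible, which together with the previous two paragraphs proves all three assertions. I expect the only delicate points to be the compatibility of the Jacobi Hecke operators $T^J_s(p)$ with the local spherical Hecke algebras (so that the eigenvalues of $F$ genuinely pin down the Satake parameters) and the bookkeeping of the two identifications $K_\infty\cong\U(n)$ and of $\varepsilon_\psi$ in the skew-holomorphic case; both are essentially handled by the results already in place, and the place $p=2$ is, contrary to first impression, unproblematic for this particular statement.
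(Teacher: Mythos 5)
Your proposal is correct and follows essentially the same route as the paper's own proof (which defers to Berndt--Schmidt, Theorem 7.5.5): decompose $\pi_F$ into irreducible cuspidal summands, identify the archimedean components via Lemma \ref{lw} together with Theorem \ref{ccr}, identify the finite components via Theorem \ref{csu} and the determination of Satake parameters by the Hecke eigenvalues of $F$, and conclude isotypy and then irreducibility from the multiplicity-one position of the distinguished spherical/lowest-$K_\infty$-type vector. Your Schur's-lemma bookkeeping at the end simply spells out the step the paper states tersely, and your weight computation $\bfk-\mathbf{1}$ plus $\varepsilon_{\overline{\bfe}}=-1$ giving $\bfk-\tfrac{\mathbf{1}}{2}$ in the skew case is consistent with the statement.
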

\begin{proof}
We shall prove it following the idea of Berndt-Schmidt \cite[Theorem 7.5.5]{bs}.
Since $\pi_F$ is cuspidal, it is a direct sum $\oplus_i \pi_i$ of finite numbers of irreducible cuspidal automorphic representations $\pi_i=\otimes_i \pi_{i,v}$.
As in the proof of \cite[Theorem 7.5.5]{bs}, we can specify the local components $\pi_{i, \infty}$ at the real place by using Lemma \ref{lw}.
In particular, all $\pi_{i,\infty}$ are isomorphic.
At any finite place $p$, by Theorem \ref{csu}, the local components $\pi_{i,p}$ are unramified.
As in the proof of \cite[Theorem 7.5.5]{bs}, their Satake parameters are completely determined by the Hecke eigenvalues of $F$.
In particular, $\pi_{i, p}$ are isomorphic for all $i$.
Thus $\pi_F$ is isotypic.

The $\Sp^J_{2n}(\hat{\Z})$-fixed part of $\pi_F$ is an isotypic $(\fraksp^J_{2n}, K_\infty)$-module generated by $\varphi_{F, v}$.
Hence it is irreducible, and so is $\pi_F$.
\end{proof}

Here we finish the proof of Lemma \ref{adlif}.
\begin{lem}\label{cut}
Let $F \in J_{\rho, 1}^{\mathrm{skew}, \mathrm{cusp}}$, $g \in \Sp^J_{2n}(\R)$, and put $H=F|_\rho^\mathrm{skew} g$.
Then we have
\begin{align*}
(8\pi i\frac{\partial}{\partial Z}-\frac{\partial^2}{\partial w^2})H (i,0)
=0
\end{align*}
\end{lem}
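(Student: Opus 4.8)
The plan is to prove the stronger statement that the skew heat operator $D:=8\pi i\,\frac{\partial}{\partial Z}-\frac{\partial^2}{\partial w^2}$ annihilates $H$ \emph{identically} on $\frakH_n\times\C^n$, and to deduce this from two facts: that $D$ kills $F$ itself, and that the property $D(\cdot)\equiv0$ is preserved under the slash action $|_\rho^{\mathrm{skew}}$. For the first, I would apply $D$ term by term to the Fourier expansion of $F$. The $(N,r)$-term equals $A(N,r)\,e^{2\pi i\tr(NX)}e^{\pi\tr((2N-r\tp r)Y)}e^{2\pi i\tp rw}$, and since $\frac{\partial}{\partial Z}=\frac12\bigl(\frac{\partial}{\partial X}-i\frac{\partial}{\partial Y}\bigr)$ one computes that $\frac{\partial}{\partial Z}$ multiplies this term by $\frac{i\pi}{2}r\tp r$ and $\frac{\partial^2}{\partial w^2}$ multiplies it by $-4\pi^2r\tp r$, so $D$ kills every term; differentiation under the sum is justified by the polynomial growth of the coefficients of a cusp form together with the decay of $e^{\pi\tr((2N-r\tp r)Y)}$ (note $2N-r\tp r<0$ when $4N-r\tp r<0$), whence $DF=0$ on $\frakH_n\times\C^n$. (The balance between the constant $8\pi i$ and the quadratic exponent is exactly what makes this work for index $1$.) From this step I keep the implication I actually need: for any $V$-valued $G$ that is holomorphic in $w$ and smooth in $Z$, if $DG\equiv0$ then $D(G|_\rho^{\mathrm{skew}}g)\equiv0$ for every $g\in\Sp^J_{2n}(\R)$.

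To prove that implication, write $g=g_0h$ with $g_0\in\Sp_{2n}(\R)$ and $h=([\lambda,\mu],\kappa)\in\calH_n(\R)$; since $|_\rho^{\mathrm{skew}}$ is a genuine group action of $\Sp^J_{2n}(\R)$, it is enough to propagate the vanishing of $D(\cdot)$ through $h$ and through $g_0$ separately. For $h$ one has $G|_\rho^{\mathrm{skew}}h=\phi\cdot G(Z,\,w+Z\lambda+\mu)$ with $\phi=\bfe(\tp\lambda Z\lambda+2\tp\lambda w+\tp\lambda\mu+\kappa)$ holomorphic in $Z$ and $w$, and a direct Leibniz computation shows that the terms proportional to $\lambda\tp\lambda$ and all the first-order cross terms cancel, leaving $D(G|_\rho^{\mathrm{skew}}h)=\phi\cdot(DG)(Z,\,w+Z\lambda+\mu)$, which is $\equiv0$ when $DG\equiv0$.

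For $g_0=\mmatrix{A}{B}{C}{D}\in\Sp_{2n}(\R)$, I would use that the holomorphic Wirtinger derivative $\frac{\partial}{\partial Z}$ annihilates anti-holomorphic functions, so it ignores the factor $\overline{\rho(J(g_0,Z))}\inv$ entirely, and, after writing $\frac{|\det J(g_0,Z)|}{\det J(g_0,Z)}=(\det J(g_0,Z))^{-1/2}(\overline{\det J(g_0,Z)})^{1/2}$, sees only the holomorphic factor $(\det J(g_0,Z))^{-1/2}$. Then, using the classical identities $d(g_0Z)=\tp{J(g_0,Z)}\inv\,dZ\,J(g_0,Z)\inv$, the symmetry of $J(g_0,Z)\inv C$, and $\frac{\partial}{\partial Z}\log\det J(g_0,Z)=J(g_0,Z)\inv C$, a chain-rule computation, in which every occurrence of $\frac{\partial}{\partial Z}G$ produced by the chain rule is replaced by $\frac1{8\pi i}\frac{\partial^2}{\partial w^2}G$ using $DG\equiv0$, shows that the term from differentiating $(\det J(g_0,Z))^{-1/2}$ cancels against the contributions of $\bfe(-\tp wJ(g_0,Z)\inv Cw)$ and of the chain rule in the $w$-argument, so $D(G|_\rho^{\mathrm{skew}}g_0)$ reduces to a nowhere-vanishing prefactor times a congruence transform of $(DG)$ evaluated at $(g_0Z,\tp{J(g_0,Z)}\inv w)$, hence $\equiv0$. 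Applying this with $G=F|_\rho^{\mathrm{skew}}g_0$ (for which $DG\equiv0$ by the $g_0$-case applied to $F$ together with the first paragraph) and then the Heisenberg case gives $DH=D\bigl((F|_\rho^{\mathrm{skew}}g_0)|_\rho^{\mathrm{skew}}h\bigr)\equiv0$; in particular $\bigl(8\pi i\,\frac{\partial}{\partial Z}-\frac{\partial^2}{\partial w^2}\bigr)H(i,0)=0$.

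I expect the main obstacle to be the $\Sp_{2n}(\R)$-step in the case $C\neq0$ (which, if preferred, can be reduced to $g_0=\mmatrix{0}{-1_n}{1_n}{0}$ together with the Siegel parabolic, where $C=0$). There one must follow the matrix-valued operator $\frac{\partial}{\partial Z}$, with its normalization $\bigl(2^{\delta_{i,j}-1}\,\partial/\partial x_{i,j}\bigr)_{i,j}$, carefully through every chain rule, correctly separate the holomorphic and anti-holomorphic parts of $\frac{|\det J|}{\det J}$ and of $\overline{\rho(J)}\inv$, and verify that all terms not proportional to $(DG)$ cancel. Everything else is routine once $DF=0$ is in hand.
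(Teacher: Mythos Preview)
Your approach is correct and in fact proves something stronger than needed (that $DH\equiv0$ identically on $\frakH_n\times\C^n$, not just at $(i,0)$), but it takes a genuinely different route from the paper. The paper sidesteps your hardest step entirely: it uses the Iwasawa decomposition $\Sp_{2n}(\R)=BK_\infty$ to reduce to $g$ of the form $\mmatrix{A}{B}{0}{\tp A\inv}([\lambda,\mu],\kappa)$, i.e.\ to $C=0$; this reduction is legitimate because the quantity $(DH)(i,0)$, viewed as a function of $g$, transforms under right $K_\infty$-translation by a linear automorphism (coming from the $K_\infty$-equivariance of $\frakp_\C^+$ and $\frakq_\C^+$ and the constraint $A=8\pi(\Lambda_1\tp\Lambda_2+\Lambda_2\tp\Lambda_1)$), so it suffices to check vanishing on a section modulo $K_\infty$. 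Having reduced to $C=0$, the paper then invokes the theta decomposition $F(Z,w)=\sum_\nu F_\nu(Z)\sum_{x\in\Z^n+\nu/2}\bfe(\tp xZx+2\tp xw)$ with $F_\nu$ anti-holomorphic, computes $H$ explicitly, and observes that it retains the shape ``anti-holomorphic in $Z$ times holomorphic theta-like in $(Z,w)$'', from which $DH\equiv0$ is immediate by the relations $\frac{\partial}{\partial Z}(\tp xZx)=x\tp x$ and $\frac{\partial}{\partial w}(\tp xw)=\tp x$. So the paper trades your chain-rule computation for $g_0$ with $C\neq0$ against an explicit structural computation using the theta decomposition; your approach is more self-contained (no theta decomposition, no Iwasawa reduction) and yields the identity on all of $\frakH_n\times\C^n$, at the price of the laborious $J_n$-case you correctly flagged as the main obstacle.
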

\begin{proof}
By the Iwasawa decomposition of $\Sp_{2n}(\R)$, we may assume that $g$ has a form $\mmatrix{A}{B}{0}{\tp{A}\inv}([\lambda,\mu],\kappa)$.
Since any skew-holomorphic Jacobi form $F(Z,w)$ can be written as
\begin{align*}
F(Z,w)
=\sum_{\nu\in(\Z/2\Z)^n} F_\nu(Z) \sum_{x \in \Z^n+\frac{\nu}{2}} \bfe(\tp{x}Zx+2\tp{x}w),
\end{align*}
where $F_\nu(Z)$ are anti-holomorphic functions, we have
\begin{align*}
H(Z,w)
&=\bfe(\tp{\lambda}Z\lambda+2\tp{\lambda}w+\tp{\lambda}\mu+\kappa) \frac{|\det \tp{A}\inv|}{\det \tp{A}\inv} \overline{\rho(\tp{A}\inv)}\inv F((AZ+B)\tp{A},A(w+Z\lambda+\mu))\\
&=\bfe(\tp{\lambda}Z\lambda+2\tp{\lambda}w+\tp{\lambda}\mu+\kappa) \sgn(\det A) \rho(\tp{A}) \\
&\quad \times\sum_{\nu\in(\Z/2\Z)^n} F_\nu((AZ+B)\tp{A}) \sum_{x \in \Z^n+\frac{\nu}{2}} \bfe(\tp{x}(AZ+B)\tp{A} x+2\tp{x}A(w+Z\lambda+\mu))\\
&=
\sum_{\nu\in(\Z/2\Z)^n} \sgn(\det A)\bfe(\tp{\lambda}\mu+\kappa) \rho(\tp{A}) F_\nu((AZ+B)\tp{A})\\
&\quad\times\sum_{x \in \Z^n+\frac{\nu}{2}}
\bfe(\tp{x}B\tp{A}x+2\tp{x}A\mu)
\bfe(\tp(\tp{A}x+\lambda)Z(\tp{A}x+\lambda)+2\tp(\tp{A}x+\lambda)w)\\
&=
\sum_{\nu\in(\Z/2\Z)^n} H_\nu(Z)
\sum_{x \in \tp{A}(\Z^n+\frac{\nu}{2})+\lambda}
c_x\bfe(\tp{x}Zx+2\tp{x}w),
\end{align*}
where $H_\nu(Z)=\sgn(\det A)\bfe(\tp{\lambda}\mu+\kappa) \rho(\tp{A}) F_\nu((AZ+B)\tp{A})$ are anti-holomorphic functions, and $c_x=\bfe(\tp(x-\lambda)A\inv B(x-\lambda)+2\tp(x-\lambda)\mu)$ are coefficients.
Then, relations
\begin{align*}
&\frac{\partial(\tp{x}Zx)}{\partial Z}=x\tp{x},&
&\frac{\partial(\tp{x}w)}{\partial w}=\tp{x},&
\end{align*}
imply that
\begin{equation*}
(8\pi i\frac{\partial}{\partial Z}-\frac{\partial^2}{\partial w^2})H (i,0)=0.
\end{equation*}
\end{proof}
\subsection{The adelic lift of $F' \in J_{(k,j),1}^{\mathrm{hol}, \mathrm{cusp}}$ or $J_{(k,j),1}^{\mathrm{skew}, \mathrm{cusp}}$}
As in \S \ref{adlifato}, let $j\geq0$ be an even integer, $k\geq3$ a positive integer.
As in the last subsection, let $\psi : \Q \backslash \A \to \C^1$ be the nontrivial character such that $\psi_\infty=\bfe$.
Let $F' \in J_{(k,j),1}^{\star, \mathrm{cusp}}$ be a Hecke eigenform, where $\star=\mathrm{hol}$ if $l\equiv k \pmod 2$ and $\star=\mathrm{skew}$ if $l\equiv k-1 \pmod 2$.
Put $F=\Psi(F') \in S_{k-\frac{1}{2},j}^+(\Gamma_0(4), \left(\frac{-1}{\cdot}\right)^l)$, which is the corresponding half-integral weight cusp form via the isomorphism $\Psi$ of Theorem \ref{5.1}.
Of course $F$ is also a Hecke eigenform.
Let $\pi_{F'}$ be the irreducible cuspidal automorphic representation of $\Sp^J_4(\A)$ with central character $\psi$ constructed in \S \ref{aljg}, and $\pi'=\bigotimes_v \pi'_v$ the irreducible cuspidal automorphic representation of $\Mp_4(\A)$ such that $\pi_{F'} \cong \pi' \otimes \pi_{\SW, \psi}$ (Corollary \ref{ccg}).
Then we have the following lemmas.
\begin{lem}\label{lfn}
We have
\begin{align*}
L(s, F)
=L(s-k-j+\tfrac{3}{2}, \pi', \overline{\psi}).
\end{align*}
\end{lem}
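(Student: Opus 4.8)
The plan is to prove the identity one Euler factor at a time: for every prime $p$,
\[
L(s,F)_p = L\bigl(s-k-j+\tfrac32,\ \pi'_p,\ \overline{\psi}_p\bigr).
\]
By Lemma \ref{adlif2} the representation $\pi_{F'}$, and hence $\pi'$, is unramified at every finite place, so $\pi_{F',p}$ is, by Theorem \ref{csu}, of the form $\pi(\alpha_{1,p},\alpha_{2,p};\psi_p)$ for a Satake parameter $(\alpha_{1,p},\alpha_{2,p})$, and by Theorem \ref{ccu} we get $\pi'_p=\pi_{\overline{\psi}_p}(\alpha_{1,p},\alpha_{2,p})$, whose $L$-parameter with respect to $\overline{\psi}_p$ is $\chi_{1,p}\oplus\chi_{2,p}\oplus\chi_{1,p}^\vee\oplus\chi_{2,p}^\vee$ with $\chi_{i,p}$ unramified and $\chi_{i,p}(p)=\alpha_{i,p}$. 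Hence
\[
L(s,\pi'_p,\overline{\psi}_p)^{-1}=\prod_{i=1}^2(1-\alpha_{i,p}p^{-s})(1-\alpha_{i,p}^{-1}p^{-s})=1-e_{1,p}p^{-s}+e_{2,p}p^{-2s}-e_{1,p}p^{-3s}+p^{-4s},
\]
where $e_{1,p}=\alpha_{1,p}+\alpha_{1,p}^{-1}+\alpha_{2,p}+\alpha_{2,p}^{-1}$ and $e_{2,p}=(\alpha_{1,p}+\alpha_{1,p}^{-1})(\alpha_{2,p}+\alpha_{2,p}^{-1})+2$; here I use that the quadruple of parameters is inversion-stable, so the $p^{-3s}$-coefficient equals the $p^{-s}$-coefficient.

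Next I would identify the Hecke eigenvalues entering $L(s,F)_p$ with symmetric functions of $(\alpha_{1,p},\alpha_{2,p})$. For odd $p$, Theorem \ref{5.1} shows that the $T_s(p)$-eigenvalue of $F=\Psi(F')$ equals $p^{-3-s/2}\left(\frac{-1}{p}\right)^{(k+\delta)s}$ times the $T^J_s(p)$-eigenvalue of $F'$, while for $p=2$ the operators $T_s(2)$ on $F$ are \emph{defined} (end of \S\ref{Jacobi form}) so that the $T_s(2)$-eigenvalue equals $2^{-3-s/2}$ times the $T^J_s(2)$-eigenvalue of $F'$. In either case, since $\pi_{F',p}$ is spherical with Satake parameter $(\alpha_{1,p},\alpha_{2,p})$, the $T^J_s(p)$-eigenvalue of $F'$ is the value at $(X_1,X_2)=(\alpha_{1,p},\alpha_{2,p})$ of the image of $T^J_s(p)$ under the Satake isomorphism $\calH(\Sp^J_4(\Q_p),\Sp^J_4(\Z_p))_{\psi_p}\cong\C[X_1^{\pm1},X_2^{\pm1}]^{\frakS_2\ltimes\{\pm1\}^2}$ of \cite{mu} used in the proof of Theorem \ref{csu}. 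Computing (or quoting from \cite{mu}) these images for $s=0,1$ expresses $\eta(p)$ and $\omega(p)$, hence $\eta^*(p)=\left(\frac{-1}{p}\right)^l p^{k+j-7/2}\eta(p)$ and $\omega^*(p)=p^{2k+2j-7}\omega(p)$, as explicit Laurent polynomials in $\alpha_{1,p},\alpha_{2,p}$; in particular the factor $\left(\frac{-1}{p}\right)^{(k+\delta)s}$ from Theorem \ref{5.1} and the twist $\left(\frac{-1}{p}\right)^l$ in $\eta^*(p)$ cancel because $k+\delta\equiv l\pmod 2$ by the choice of $\star$.

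Finally I would substitute into
\[
L(s,F)_p^{-1}=1-\eta^*(p)p^{-s}+\bigl(p\,\omega^*(p)+p^{\nu-2}(1+p^2)\bigr)p^{-2s}-\eta^*(p)p^{\nu-3s}+p^{2\nu-4s},\qquad \nu=2k+2j-3,
\]
and check the numerical identities $\eta^*(p)=p^{k+j-3/2}e_{1,p}$, $p\,\omega^*(p)+p^{\nu-2}(1+p^2)=p^{2k+2j-3}e_{2,p}$, $p^{2\nu}=p^{4k+4j-6}$, which turn $L(s,F)_p^{-1}$ into the polynomial of the first display with $p^{-s}$ replaced by $p^{k+j-3/2}p^{-s}=p^{-(s-k-j+3/2)}$; this is precisely $L(s-k-j+3/2,\pi'_p,\overline{\psi}_p)^{-1}$. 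Taking the product over all primes and recalling that $L(s,\pi',\overline{\psi})$ is the product of its finite local factors yields the lemma.

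The step I expect to be the main obstacle is the second one: pinning down the Satake transforms of $T^J_0(p)$ and $T^J_1(p)$ in the exact normalisation of Theorem \ref{5.1}, and then bookkeeping the many powers of $p$ and the quadratic-character factors (in particular verifying that they combine so that the displayed polynomial identity falls out on the nose, including the case $p=2$). Everything else is either definitional or an immediate consequence of Theorems \ref{csu}, \ref{ccu}, \ref{5.1}, Corollary \ref{ccg}, and Lemma \ref{adlif2}.
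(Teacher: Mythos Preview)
Your proposal is correct and follows essentially the same route as the paper: reduce to local Euler factors, use Lemma \ref{adlif2} and Theorem \ref{ccu} to identify the Satake parameter $(\alpha_1,\alpha_2)$ of $\pi'_p$ relative to $\overline{\psi}_p$, express the Jacobi Hecke eigenvalues $\eta^J(p),\omega^J(p)$ in terms of $\alpha_1,\alpha_2$, pass to $\eta(p),\omega(p)$ via Theorem \ref{5.1} (with the observation $k+\delta\equiv l\pmod 2$ handling the quadratic-character bookkeeping, and the definition at $p=2$ making that case automatic), and then match the quartic polynomials. The only cosmetic difference is that the paper computes $\eta^J(p),\omega^J(p)$ by plugging the explicit coset representatives from \cite{az} into the spherical vector $\Phi_{F'}$, whereas you phrase this as evaluating the Satake transform via \cite{mu}; these are the same calculation, and the paper records the outcome as $\eta^J(p)=p^{11/2}e_{1,p}$ and $\omega^J(p)=p^6\bigl(e_{2,p}-1-p^{-2}\bigr)$ in your notation.
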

\begin{proof}
Let $p$ be a prime number.
It suffices to show that
\begin{align*}
L(s, F)_p
=L(s-k-j+\tfrac{3}{2}, \pi'_p, \overline{\psi}_p).
\end{align*}
By Lemma \ref{adlif2} and Theorem \ref{ccu}, the representation $\pi'_p$ is unramified with respect to $\overline{\psi}_p$.
Let $(\alpha_1, \alpha_2)$ be the Satake parameter of $\pi'_p$, i.e.,
\begin{align*}
\pi'_p &=\pi_{\overline{\psi}_p}(\alpha_1,\alpha_2),&
\pi_{F',p} &=\pi(\alpha_1, \alpha_2; \psi_p).
\end{align*}
Since a complete system of representatives $\{g_{s,t}\}_t$ of
\begin{align*}
\Sp_4(\Z) K_s(p^2) \Sp_4(\Z)
=\bigsqcup_t \Sp_4(\Z) g_{s,t}
\quad (s=1,2),
\end{align*}
is explicitly given in \cite[p.143]{az} and $\Phi_{F'}$ gives the spherical vector of $\pi_{F',p}$, the eigenvalues of $T^J_s(p)$ ($s=0, 1, 2$) associated with $F'$ can be expressed explicitly in terms of $\alpha_1$ and $\alpha_2$.
We shall write $\omega^J(p)$ and $\eta^J(p)$ for the eigenvalues of $T^J_0(p)$ and $T^J_1(p)$, respectively.
Then it can be shown by straightforward calculation that
\begin{align*}
\omega^J(p)
&=p^6(\alpha_1\alpha_2+\alpha_1\alpha_2\inv+\alpha_1\inv\alpha_2+\alpha_1\inv\alpha_2\inv+1-p^{-2}),\\
\eta^J(p)
&=p^\frac{11}{2}(\alpha_1+\alpha_2+\alpha_1\inv+\alpha_2\inv).
\end{align*}
Here, note that the calculations are algebraic, and hence independent of whether $F'$ is holomorphic or not.
Combining this with Theorem \ref{5.1}, we obtain
\begin{align*}
L(s,F)_p
&=\left\{
(1-\alpha_1 p^{-(s-k-j+\tfrac{3}{2})}) (1-\alpha_2 p^{-(s-k-j+\tfrac{3}{2})})
(1-\alpha_1\inv p^{-(s-k-j+\tfrac{3}{2})}) (1-\alpha_2\inv p^{-(s-k-j+\tfrac{3}{2})})
\right\}\inv\\
&=L(s-k-j+\tfrac{3}{2}, \pi'_p, \overline{\psi}_p).
\end{align*}
\end{proof}

\begin{lem}\label{ds}
The representation $\pi'_\infty$ is discrete series, and its $L$-parameter $\varphi'_\infty$ is
\begin{align*}
\varphi'_\infty=\calD_{k+j-\frac{3}{2}} \oplus \calD_{k-\frac{5}{2}},
\end{align*}
with respect to both $\psi_\infty$ and $\overline{\psi}_\infty$.
Moreover, the corresponding character of $S_{\varphi'_\infty} \cong (\Z/2\Z)^2$ is
\begin{align*}
(\Z/2\Z)^2 \ni (c,d) \mapsto
\begin{cases}
(-1)^c,& \text{$\star=\mathrm{hol}$},\\
(-1)^d,& \text{$\star=\mathrm{skew}$},
\end{cases}
\end{align*}
with respect to $\overline{\psi}_\infty$.
\end{lem}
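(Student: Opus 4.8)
The plan is to read off everything at the archimedean place from the explicit description of $\pi'_\infty$ provided by Lemma~\ref{adlif2}, combined with the classification of genuine discrete series of $\Mp_4(\R)$ and the Gan--Savin local Langlands correspondence \cite{gs}.

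First I would establish (1) and compute the $L$-parameter. By Lemma~\ref{adlif2} we have $\pi'_\infty\cong\pi_{\bfk-\frac{\1}{2},\,\psi_\infty}$, where $\bfk=(k+j,k)$ is the highest weight of $\rho=\det^k\Sym_j$, so $\bfa:=\bfk-\frac{\1}{2}=(k+j-\tfrac12,\,k-\tfrac12)$. Since $k\geq3$ we have $a_2=k-\tfrac12>2=n$, hence $\pi'_\infty$ is a discrete series by the fact recorded just before Lemma~\ref{lw}; this is (1). To pin down the $L$-parameter I would compute the infinitesimal character of the $(\fraksp_4(\C),\wtil{K_\infty})$-module $L^{-\varepsilon}(V_\bfa)$ with $\varepsilon=\varepsilon_{\psi_\infty}$: the highest-weight vector of the minimal $\wtil{K_\infty}$-type $V_\bfa$ is annihilated by the positive compact root vector of $\frakk_\C$ and, by the very definition of $M^{-\varepsilon}(V_\bfa)$, by $\frakp_\C^{-\varepsilon}$; hence it is a highest-weight vector for the positive system in which the roots of $\frakp_\C^{-\varepsilon}$ are positive, and the infinitesimal character equals $\bfa+\rho'$ for the corresponding half-sum $\rho'$. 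A one-line computation (with $n=2$, using the root system fixed in \S\ref{Jacobi group}) gives the dominant regular representative $(k+j-\tfrac32,\,k-\tfrac52)$, which is therefore the Harish--Chandra parameter. Under the Gan--Savin correspondence \cite{gs}, the genuine discrete series $L$-packet of $\Mp_4(\R)$ with Harish--Chandra parameter $(\lambda_1>\lambda_2>0)$ has $L$-parameter $\calD_{\lambda_1}\oplus\calD_{\lambda_2}$, whence $\varphi'_\infty=\calD_{k+j-\frac32}\oplus\calD_{k-\frac52}$. Because $\calD_b\otimes\sgn\cong\calD_b$ and replacing $\psi_\infty$ by $\overline{\psi_\infty}$ twists the correspondence by the sign character of $W_\R$, this $L$-parameter is the same for $\psi_\infty$ and $\overline{\psi_\infty}$, proving (2). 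Both summands are $2$-dimensional symplectic (as $k+j-\tfrac32,\,k-\tfrac52\in\tfrac12+\Z$) and distinct (their difference is $j+1\geq1$), so $S_{\varphi'_\infty}=\pi_0\bigl(\cent(\image\varphi'_\infty,\Sp_4(\C))\bigr)\cong(\Z/2\Z)^2$, with generators $z_1,z_2$ acting by $-1$ on the $\calD_{k+j-\frac32}$- resp.\ $\calD_{k-\frac52}$-block; I identify $(c,d)\leftrightarrow z_1^cz_2^d$.

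For (3) I would determine which member of this discrete series $L$-packet $\pi'_\infty$ is. Since $\varphi'_\infty$ is tempered, $\Pi_{\varphi'_\infty,\overline{\psi_\infty}}(\Mp_4)$ is an $L$-packet of discrete series, and from Lemma~\ref{adlif}(5) — or equivalently from the Fock-model action recalled in \S\ref{Jacobi group} together with Lemma~\ref{lw} — the minimal $\wtil{K_\infty}$-type of $\pi'_\infty$ is annihilated by $\frakp_\C^{-}$ in the case $\star=\mathrm{hol}$ and by $\frakp_\C^{+}$ in the case $\star=\mathrm{skew}$; keeping in mind that $\varepsilon_{\psi_\infty}=-i\psi_\infty(1/4)$ equals $1$ when $\psi_\infty=\bfe$ and $-1$ when $\psi_\infty=\overline{\bfe}$, and that the identification $K_\infty\cong\U(n)$ fixed in \S\ref{Jacobi group} depends on $\psi_\infty$, this means $\pi'_\infty$ is the holomorphic discrete series in the holomorphic case and the anti-holomorphic one in the skew case. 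Feeding this into Gan--Savin's parametrization of the $L$-packet normalized with respect to $\overline{\psi_\infty}$ — equivalently, tracking the $\overline{\psi_\infty}$-theta lift to the relevant real form of $\SO_5$, by which that parametrization is defined — yields the character $(c,d)\mapsto(-1)^c$ in the holomorphic case and $(c,d)\mapsto(-1)^d$ in the skew case; the asymmetry between $c$ and $d$ reflects that the holomorphic discrete series lies at the end of the relevant chamber where the sign flips on the larger block $\calD_{k+j-\frac32}$, and the anti-holomorphic one at the opposite end. I would sanity-check the normalization against the rank-one case (genuine discrete series of $\Mp_2(\R)$ and the classical Shimura correspondence), where the analogous statement is well known.

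The main obstacle is step (3): correctly matching the bookkeeping of conventions — Gan--Savin's $\psi$-normalization of the metaplectic LLC and of the tempered $A$-packets, the labelling of the four members of a regular discrete series $L$-packet of $\Mp_4(\R)$ by characters of $S_{\varphi'_\infty}$ (Shelstad's formulas in the metaplectic form of \cite{gs}), and the effect of replacing $\psi_\infty$ by $\overline{\psi_\infty}$ — all of which must be aligned with the choices ($\psi_\infty=\bfe$, the identification $K_\infty\cong\U(n)$, and the sign of $\varepsilon_\psi$) made in \S\ref{Jacobi group}. Parts (1) and (2), by contrast, reduce to a standard generalized Verma module infinitesimal-character computation together with the definitions already in place.
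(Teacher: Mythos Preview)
Your approach is essentially the paper's: identify $\pi'_\infty$ via Lemma~\ref{adlif2} and then read off the $L$-parameter and component-group character. The paper compresses your steps (2) and (3) into a single citation of \cite[\S C.2.1]{gi20}, which tabulates precisely the $L$-parameter and character attached to the representations $\pi_{\bfa,\bfe}$ and $\pi_{\bfa,\overline{\bfe}}$; your infinitesimal-character computation and your appeal to the Gan--Savin packet labelling are in effect re-deriving that table by hand.

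One small slip to fix: your opening identification $\pi'_\infty\cong\pi_{\bfk-\frac{\1}{2},\,\psi_\infty}$ (with $\psi_\infty=\bfe$ throughout the paper) is only what Lemma~\ref{adlif2} gives for $\star=\mathrm{hol}$; for $\star=\mathrm{skew}$ it gives $\pi'_\infty\cong\pi_{\bfk-\frac{\1}{2},\,\overline{\bfe}}$. You effectively recover this distinction later when you track which of $\frakp_\C^\pm$ annihilates the minimal $\wtil{K_\infty}$-type, but the first paragraph should already split into the two cases. As you correctly flag, step~(3) is entirely a matter of aligning normalizations (the $\psi$-dependence of the metaplectic LLC, the labelling of the four discrete series by characters of $S_{\varphi'_\infty}$, and the sign of $\varepsilon_\psi$); this is exactly the bookkeeping that the paper outsources to \cite[\S C.2.1]{gi20}.
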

\begin{proof}
Since $\det^k \Sym_j=\rho_{(k+j,k)}$, by Lemma \ref{adlif2} we have
\begin{align}\label{rr}
\pi'_\infty \cong
\begin{cases}
\pi_{(k+j-\frac{1}{2},k-\frac{1}{2}), \bfe},& \text{if $\star=\mathrm{hol}$},\\
\pi_{(k+j-\frac{1}{2},k-\frac{1}{2}), \overline{\bfe}},& \text{if $\star=\mathrm{skew}$}.
\end{cases}
\end{align}
Then the assertion follows from \cite[\S C.2.1]{gi20}.
\end{proof}

\begin{lem}\label{apj}
If $l=1$, then there is a unique irreducible everywhere unramified cuspidal symplectic automorphic representation $\tau_F$ of $\GL_4(\A)$ such that
\begin{align*}
L(s,F)
&=L(s-j-k+\tfrac{3}{2}, \tau_F),\\
\tau_{F,\infty}
&=\calD_{k+j-\frac{3}{2}} \oplus \calD_{k-\frac{5}{2}}.
\end{align*}

On the other hand, if $l=0$, then there exists one and only one of such a $\tau_F$ or a pair $(\sigma_F, \sigma_F')$ of irreducible cuspidal symplectic automorphic representations $\sigma_F$ and $\sigma_F'$ of $\GL_2(\A)$ such that
\begin{gather*}
L(s,F)
=L(s-j-k+\tfrac{3}{2}, \sigma_F)L(s-j-k+\tfrac{3}{2}, \sigma_F'),\\
\sigma_{F,\infty}
=\calD_{k+j-\frac{3}{2}}, \qquad
\sigma_{F,\infty}'
=\calD_{k-\frac{5}{2}}.
\end{gather*}
\end{lem}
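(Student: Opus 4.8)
The plan is to run, on the $\Mp_4$ side, an argument that closely parallels the proof of Lemma \ref{api}. Applying Corollary \ref{ccg} to $\pi_{F'}$ gives the irreducible cuspidal automorphic representation $\pi'$ of $\Mp_4(\A)$, and Gan--Ichino's multiplicity formula (Theorem \ref{gimf}, applied with the additive character $\overline{\psi}$) attaches to $\pi'$ a unique elliptic $A$-parameter $\phi_F$ for $\Mp_4$, equivalently for $\SO_5$. I would first list the six possible shapes of $\phi_F$, exactly the six appearing in the proof of Lemma \ref{api}. By Lemma \ref{adlif2} and Theorem \ref{ccu}, $\pi'_p$ is $\overline{\psi}_p$-unramified for every finite prime $p$; hence every cuspidal constituent of $\phi_F$ is everywhere unramified, so any quadratic character occurring is trivial and any cuspidal $\GL_2$- or $\GL_4$-constituent is everywhere unramified. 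In particular the shape $\sigma\boxtimes S_2$ with $\sigma$ a (necessarily dihedral) cuspidal $\GL_2$-representation is excluded outright, and in the shapes $\1\boxtimes S_4$ and $\chi\boxtimes S_2\oplus\chi'\boxtimes S_2$ the characters collapse.

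Next I would pin down the archimedean localization $\phi_{F,\infty}$ using Lemma \ref{ds}: $\pi'_\infty$ is the genuine discrete series with $L$-parameter $\calD_{k+j-\frac32}\oplus\calD_{k-\frac52}$. Invoking the archimedean $A$-packet computations of \cite{mr1, mr2}, transported to $\Mp_4$ through Gan--Ichino's local correspondence exactly as in the proof of Lemma \ref{api}, a genuine discrete series with this $L$-parameter lies in the local $A$-packet $\Pi_{\phi_{F,\infty},\overline{\psi}_\infty}(\Mp_4)$ only when $\phi_{F,\infty}$ is the tempered parameter $\calD_{k+j-\frac32}\boxtimes S_1\oplus\calD_{k-\frac52}\boxtimes S_1$ or, only when $k=3$ so that $\calD_{k-\frac52}=\calD_{1/2}$, the parameter $\calD_{j+\frac32}\boxtimes S_1\oplus\1\boxtimes S_2$. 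This rules out $\1\boxtimes S_4$ and $\chi\boxtimes S_2\oplus\chi'\boxtimes S_2$, rules out the Saito--Kurokawa shape $\1\boxtimes S_2\oplus\sigma\boxtimes S_1$ whenever $k\geq4$, and shows that in the remaining tempered shapes the archimedean components of the $\GL_2$/$\GL_4$ pieces are forced to be $\calD_{k+j-\frac32}$ and $\calD_{k-\frac52}$ (respectively $\calD_{k+j-\frac32}\oplus\calD_{k-\frac52}$).

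It then remains to rule out the shape $\1\boxtimes S_2\oplus\sigma_F\boxtimes S_1$ when $k=3$ (for all $l$), and the shape $\sigma_F\boxtimes S_1\oplus\sigma'_F\boxtimes S_1$ when $l=1$. For the first, $\sigma_{F,\infty}=\calD_{j+\frac32}$, and the reasoning mirrors case (4) of Lemma \ref{api}: Gan--Ichino's character $\wtil{\epsilon}_{\phi_F}$ of $S_{\phi_F}\cong(\Z/2\Z)^2$ is computed from $\epsilon(\tfrac12,\sigma_F)=\epsilon(\tfrac12,\calD_{j+\frac32})=(-1)^j=1$, since $j$ is even, so $\wtil{\epsilon}_{\phi_F}$ is trivial; but $\pi'_\infty$ corresponds to a nontrivial character of $S_{\phi_{F,\infty}}$ in the archimedean $A$-packet of $\calD_{j+\frac32}\boxtimes S_1\oplus\1\boxtimes S_2$, and since $\pi'_p$ is unramified (so the relevant $\eta_p$ is trivial) at every finite place, this contradicts Theorem \ref{gimf}. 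For the second shape, $\sigma_{F,\infty}=\calD_{k+j-\frac32}$ and $\sigma'_{F,\infty}=\calD_{k-\frac52}$; here $\sigma_F\times\sigma'_F$ is orthogonal, the cross terms drop, and $\wtil{\epsilon}_{\phi_F}$ is the character sending $(a_{\sigma_F},a_{\sigma'_F})$ to $\bigl(\epsilon(\tfrac12,\sigma_F),\epsilon(\tfrac12,\sigma'_F)\bigr)=\bigl((-1)^{k+j-1},(-1)^{k}\bigr)$. On the other hand, by Lemma \ref{ds} the character carried by $\pi'_\infty$ is $(c,d)\mapsto(-1)^c$ if $\star=\mathrm{hol}$ and $(c,d)\mapsto(-1)^d$ if $\star=\mathrm{skew}$; matching this against $\wtil{\epsilon}_{\phi_F}$ through the localization map (using again that $\eta_p$ is trivial for finite $p$), recalling that $j$ is even and that $\star=\mathrm{hol}\Leftrightarrow l\equiv k$ while $\star=\mathrm{skew}\Leftrightarrow l\equiv k-1\pmod2$, forces $l\equiv0\pmod2$ in both cases. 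Hence this shape is impossible when $l=1$, whereas for $l=0$ both it and $\tau\boxtimes S_1$ survive, which is precisely the dichotomy in the statement.

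Finally, the $L$-function identities follow by combining Lemma \ref{lfn} with the fact that the standard $L$-function $L(s,\pi',\overline{\psi})$ equals the $L$-function of $\phi_F$, namely $L(s,\tau_F)$ in the shape $\tau_F\boxtimes S_1$ and $L(s,\sigma_F)L(s,\sigma'_F)$ in the shape $\sigma_F\boxtimes S_1\oplus\sigma'_F\boxtimes S_1$ (the Arthur $\SL_2$-factors being trivial); the archimedean components of $\tau_F$, $\sigma_F$, $\sigma'_F$ were already determined above, and the uniqueness of $\tau_F$, resp. of the unordered pair $(\sigma_F,\sigma'_F)$, is inherited from the uniqueness of $\phi_F$. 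I expect the main obstacle to be the bookkeeping around Gan--Ichino's character $\wtil{\epsilon}_{\phi_F}$ and the archimedean $A$-packets for $\Mp_4$ at the non-tempered parameter: making the epsilon-factor computation precise, and keeping straight the interplay among $\star$, the parity of $k$, and the choice of $\psi$ versus $\overline{\psi}$; once those are nailed down, the rest is essentially a transcription of the $\SO_5$ argument of Lemma \ref{api}.
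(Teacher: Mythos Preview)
Your proposal is correct and follows essentially the same case-by-case elimination via Gan--Ichino's multiplicity formula that the paper uses: list the six shapes of $\phi_F$, use unramifiedness at finite places to trivialize the characters, use Lemma~\ref{ds} to pin down $\phi_{F,\infty}$, and then compare $\wtil{\epsilon}_{\phi_F}$ against the archimedean character to kill shapes (4) always and (5) when $l=1$.

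Two differences are worth noting. For shape (3), your shortcut (an orthogonal cuspidal $\GL_2$-representation is dihedral, hence cannot be everywhere unramified over~$\Q$) is valid and cleaner than the paper's route, which instead argues through the archimedean $A$-packet and the triviality of $\wtil{\epsilon}_{\phi_F}$. For shape (4), however, your sketch is looser than the paper's: you assert $\wtil{\epsilon}_{\phi_F}$ is trivial from the single computation $\epsilon(\tfrac12,\sigma_F)=1$, but that only gives triviality on one generator of $S_{\phi_F}\cong(\Z/2\Z)^2$; and you then invoke a nontrivial $A$-packet character for $\pi'_\infty$, which is a statement about $S_{\phi_{F,\infty}}$ rather than the $L$-packet group $S_{\varphi'_\infty}$ of Lemma~\ref{ds}. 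The paper handles this more carefully: it uses only the partial triviality of $\wtil{\epsilon}_{\phi_F}$, then appeals to the explicit tables in \cite[\S C.2.2]{gi20} to show that the unique discrete series in the non-tempered $A$-packet at~$\infty$ is the member of $\Pi_{\varphi'_\infty,\overline{\psi}_\infty}(\Mp_4(\R))$ with \emph{trivial} character of $S_{\varphi'_\infty}$, which directly contradicts Lemma~\ref{ds}. You anticipated exactly this bookkeeping as the main obstacle, and it is; once you consult \cite[\S C.2.2]{gi20} for the non-tempered packet, your argument and the paper's coincide.
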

\begin{proof}
Let $\phi_F$ be the $A$-parameter of $\pi'$ relative to $\overline{\psi}$.
The $A$-parameter $\phi_F$ for $\Mp_4$ is one of the following forms:
\begin{enumerate}[(1)]
\item $\phi_F=\chi \boxtimes S_4$, where $\chi$ is a quadratic character of $\Q^\times \backslash \A^\times$;
\item $\phi_F=\chi \boxtimes S_2 \oplus \chi' \boxtimes S_2$, where $\chi$ and $\chi'$ are distinct quadratic characters of $\Q^\times \backslash \A^\times$;
\item $\phi_F=\sigma \boxtimes S_2$, where $\sigma$ is an irreducible cuspidal orthogonal automorphic representation of $\GL_2(\A)$;
\item $\phi_F=\chi \boxtimes S_2 \oplus \sigma \boxtimes S_1$, where $\chi$ is a quadratic character of $\Q^\times \backslash \A^\times$, and $\sigma$ is an irreducible cuspidal symplectic automorphic representation of $\GL_2(\A)$;
\item $\phi_F=\sigma \boxtimes S_1 \oplus \sigma' \boxtimes S_1$, where $\sigma$ and $\sigma'$ are distinct irreducible cuspidal symplectic automorphic representations of $\GL_2(\A)$;
\item $\phi_F=\tau \boxtimes S_1$, where $\tau$ is an irreducible cuspidal symplectic automorphic representation of $\GL_4(\A)$.
\end{enumerate}

If the case is (1) or (2), then by \cite{gi20}, the local $A$-packet $\Pi_{\phi_{F, \infty}, \overline{\psi}_\infty}(\Mp_4(\R))$ does not contain any discrete series representation.
This contradicts Lemma \ref{ds}, and thus the cases (1) and (2) cannot occur.

If the case is (3), the local component $\sigma_\infty$ of $\sigma$ at the real place must be irreducible by the same reason why the case (2) cannot occur.
Then by the table in \cite[\S C.2.2]{gi20}, the discrete series representation $\pi'_\infty \in \Pi_{\phi_{F, \infty}, \overline{\psi}_\infty}(\Mp_4(\R))$ corresponds to the nontrivial character of $S_{\phi_{F, \infty}} \cong \Z/2\Z$.
On the other hand, local components $\pi'_p$ at the finite places are unramified.
These contradict the fact that Gan-Ichino's character \cite[\S 2.1]{gi20} $\wtil{\epsilon}_{\phi_F}$ is trivial, so the case (3) is impossible.

Assume that $\phi_F$ is of the form (4).
If the local component $\sigma_\infty$ of $\sigma$ at the real place is reducible, then the nonzero elements of the $A$-packet $\Pi_{\phi_{F, \infty}, \overline{\psi}_\infty}(\Mp_4(\R))$ associated to $\phi_{F, \infty}$ (relative to $\overline{\psi}_\infty$) are $\pi^{+,+}$ and $\pi^{+,-}$ in the notation of \cite[\S 8.1]{gi20}.
However, by \cite[\S 8.1, Lemma C8]{gi20}, neither one is a discrete series representation.
Thus $\sigma_\infty$ is irreducible.
Since $\pi'$ is unramified at every finite place, so is $\chi$.
Therefore $\chi$ is the trivial character of $\A^\times$, and Gan-Ichino's character $\wtil{\epsilon}_{\phi_F}$ is trivial on $\{0\} \oplus (\Z/2\Z) \subset S_{\phi_F} \cong (\Z/2\Z)^2$.
Combining this with the table in \cite[\S C.2.2]{gi20}, we can see that $k=3$, $\sigma_\infty=\calD_{j+\frac{3}{2}}$, and the $A$-packet $\Pi_{\phi_{F, \infty}, \overline{\psi}_\infty}(\Mp_4(\R))$ contains only one discrete series representation that is an element of the $L$-packet $\Pi_{\varphi'_\infty, \overline{\psi}_\infty}(\Mp_4(\R))$ and corresponds to the trivial character of $S_{\varphi'_\infty}$.
This contradicts Lemma \ref{ds}.

Assume that $\phi_F$ is of the form (5).
Then by the disjointness of $L$-packets, we have $\phi_{F, \infty}=\varphi'_\infty$.
We may assume that
\begin{align*}
\sigma_\infty &=\calD_{k+j-\frac{3}{2}},&
\sigma'_\infty &=\calD_{k-\frac{5}{2}}.
\end{align*}
Since $\pi'$ is unramified at every finite place, so are $\sigma$ and $\sigma'$.
Thus we have
\begin{align*}
\epsilon(\tfrac{1}{2}, \sigma)
&=\epsilon(\tfrac{1}{2}, \calD_{k+j-\frac{3}{2}}, \psi_\infty)
=i^{2k+2j-3+1}
=(-1)^{k-1},\\
\epsilon(\tfrac{1}{2}, \sigma')
&=\epsilon(\tfrac{1}{2}, \calD_{k-\frac{5}{2}}, \psi_\infty)
=i^{2k-5+1}
=(-1)^k,
\end{align*}
since $j$ is even.
Therefore, by Gan-Ichino's multiplicity formula, the corresponding character of $\pi'_\infty$ must be
\begin{align*}
S_{\varphi'_\infty} \cong (\Z/2\Z)^2 \ni (c,d) \mapsto
\begin{cases}
(-1)^d, & \text{if $k$ is odd},\\
(-1)^c, & \text{if $k$ is even}.
\end{cases}
\end{align*}
If $l=1$, this contradicts Lemma \ref{ds}.

Consequently, when $l=1$, the $A$-parameter $\phi_F$ of $\pi'$ relative to $\overline{\psi}$ is of the form $\tau_F \boxtimes S_1$ as in the case (6).
On the other hand, it is of the form $\sigma_F \boxtimes S_1 \oplus \sigma_F' \boxtimes S_1$ or $\tau_F \boxtimes S_1$ as in the cases (5) and (6).
Then by Lemmas \ref{lfn} and \ref{ds}, $\tau_F$ satisfies the conditions.

As Lemma \ref{api}, the uniqueness follows from that of the $A$-parameter.
\end{proof}

Since $\tau_F$ or $(\sigma_F, \sigma_F')$ is uniquely determined by $F$, we obtain an assignment $F \mapsto \tau_F$ or $(\sigma_F, \sigma_F')$.
As in \S \ref{adlifato}, we have the following lemma.
\begin{lem}\label{jap}
Let $\tau$ be an irreducible cuspidal symplectic automorphic representation of $\GL_4(\A)$ that is unramified everywhere and satisfies $\tau_\infty=\calD_{k+j-\frac{3}{2}} \oplus \calD_{k-\frac{5}{2}}$.
Then there exists a Hecke eigenform $F \in S_{k-\frac{1}{2},j}^+(\Gamma_0(4), \left(\frac{-1}{\cdot}\right)^l)$ such that $\tau_F=\tau$, and it is unique up to a scalar multiple.

Moreover, when $l=0$, let $(\sigma, \sigma')$ be a pair of irreducible cuspidal symplectic automorphic representations of $\GL_2(\A)$ that are unramified everywhere and satisfy $\sigma_\infty=\calD_{k+j-\frac{3}{2}}$ and $\sigma'_\infty=\calD_{k-\frac{5}{2}}$.
Then there exists a Hecke eigenform $F \in S_{k-\frac{1}{2},j}^+(\Gamma_0(4))$ such that $(\sigma_F, \sigma_F')=(\sigma, \sigma')$, and it is unique up to a scalar multiple.
\end{lem}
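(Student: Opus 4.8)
The plan is to run the construction of Lemma~\ref{apj} backwards: from the given automorphic data I build an elliptic $A$-parameter for $\Mp_4$, use Gan--Ichino's multiplicity formula (Theorem~\ref{gimf}) to produce an automorphic representation of $\Mp_4(\A)$ with the correct local components, transport it to the Jacobi group by Corollary~\ref{ccg}, extract a classical Jacobi cusp form from it, and pass to the plus space through the isomorphism $\Psi$ of Theorem~\ref{5.1}. This parallels the proof of Lemma~\ref{iap} on the $\SO_5$ side. Throughout, let $\star=\mathrm{hol}$ if $l\equiv k\pmod 2$ and $\star=\mathrm{skew}$ if $l\equiv k-1\pmod 2$.

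First I would attach the $A$-parameter. In the $\GL_4$ case set $\phi=\tau\boxtimes S_1$; in the pair case set $\phi=\sigma\boxtimes S_1\oplus\sigma'\boxtimes S_1$, which is an elliptic $A$-parameter for $\Mp_4$ because $\sigma\neq\sigma'$ (their archimedean components $\calD_{k+j-\frac{3}{2}}$ and $\calD_{k-\frac{5}{2}}$ are non-isomorphic, as $k\geq3$). Both parameters are tempered, with $S_\phi\cong\Z/2\Z$, resp.\ $(\Z/2\Z)^2$. Then I would single out the character $\eta=\bigotimes_v\eta_v\in\what{S}_{\phi,\A}$ determined by: $\eta_v$ trivial at each finite place, which is forced if $\pi_{\eta_v,\overline{\psi}_v}$ is to be the unramified constituent of the tempered---hence $L$-packet---of the unramified $\phi_v$; and $\eta_\infty$ the character of $S_{\varphi'_\infty}\cong(\Z/2\Z)^2$ corresponding, in the notation of Lemma~\ref{ds}, to the holomorphic (if $\star=\mathrm{hol}$) or anti-holomorphic (if $\star=\mathrm{skew}$) discrete series $\pi_{(k+j-\frac{1}{2},k-\frac{1}{2}),\bfe}$, resp.\ $\pi_{(k+j-\frac{1}{2},k-\frac{1}{2}),\overline{\bfe}}$, of $\Mp_4(\R)$; that is, $\eta_\infty\colon(c,d)\mapsto(-1)^c$ or $(-1)^d$.

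The essential point is the condition $\eta\circ\Delta=\wtil{\epsilon}_\phi$ of Theorem~\ref{gimf}. Since $\eta$ is trivial at the finite places, $\eta\circ\Delta$ is read off from $\eta_\infty$ through the diagonal map $S_\phi\to S_{\phi_\infty}$, and $\wtil{\epsilon}_\phi$ reduces to the archimedean root numbers $\epsilon(\tfrac{1}{2},\calD_{k+j-\frac{3}{2}},\bfe)$ and $\epsilon(\tfrac{1}{2},\calD_{k-\frac{5}{2}},\bfe)$, the finite places contributing trivially by unramifiedness. Using that $j$ is even these equal $(-1)^{k-1}$ and $(-1)^k$, so that $\epsilon(\tfrac{1}{2},\tau_\infty)=-1$ in the $\GL_4$ case while $\wtil{\epsilon}_\phi=\bigl((c,d)\mapsto(-1)^{(k-1)c+kd}\bigr)$ in the pair case; in either case this is precisely the character of Lemma~\ref{ds} for the chosen $\star$, so $\eta\circ\Delta=\wtil{\epsilon}_\phi$. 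This is the same root-number bookkeeping as in the proofs of Lemmas~\ref{apj} and \ref{ds}, run in reverse, and I would refer to it rather than repeat it. Hence by Theorem~\ref{gimf} the representation $\pi':=\pi_{\eta,\overline{\psi}}$ occurs in $L^2_{\mathrm{disc}}(\Mp_4)$ with multiplicity $m_\eta=1$, and, being tempered, it lies in the cuspidal spectrum.

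Finally I would descend to the classical side. By Corollary~\ref{ccg}, $\pi_{F'}:=\pi'\otimes\pi_{\SW,\psi}$ is an irreducible cuspidal automorphic representation of $\Sp^J_4(\A)$ with central character $\psi$, unramified at every finite place, with $\pi_{F',\infty}$ the representation described in Lemma~\ref{adlif2}. The subspace of $\pi_{F'}$ consisting of vectors fixed by $\Sp^J_4(\calO_v)$ for all finite $v$, transforming under $K_\infty$ by the appropriate twist of $\rho_{(k+j,k)}$, and annihilated by the holomorphy operators of Lemma~\ref{adlif} (5), is one-dimensional: at the finite places this is the uniqueness of the spherical vector (Theorem~\ref{csu}), and at infinity it is Lemma~\ref{lw}. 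This one-dimensional space is spanned by the adelic lift, in the sense of \S\ref{aljg}, of a holomorphic or skew-holomorphic Jacobi cusp form $F'\in J^{\star,\mathrm{cusp}}_{(k,j),1}$, unique up to scalar; since $\pi_{F'}$ is irreducible and unramified at all finite places, $F'$ is a Hecke eigenform, and therefore so is $F:=\Psi(F')\in S_{k-\frac{1}{2},j}^+(\Gamma_0(4),\left(\frac{-1}{\cdot}\right)^l)$ by Theorem~\ref{5.1} (the Hecke operator at $2$ being defined through $\Psi$). By construction the $A$-parameter attached to $F$ in Lemma~\ref{apj} is $\phi$, whence $\tau_F=\tau$, resp.\ $(\sigma_F,\sigma_F')=(\sigma,\sigma')$; and uniqueness up to scalar follows from $m_\eta=1$ together with the one-dimensionality just established. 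The main obstacle I expect is this last step: making the passage from the abstract representation $\pi_{F'}$ to classical Jacobi cusp forms of exactly the prescribed $\star$-type precise---the reverse of Lemmas~\ref{adlif} and \ref{adlif2}---and confirming that the archimedean $A$-packet/$L$-packet bookkeeping pins $\pi'_\infty$ down to the correct member, so that the resulting form genuinely lies in $J^{\star,\mathrm{cusp}}_{(k,j),1}$ with the matching parity of $l$.
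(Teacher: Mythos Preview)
Your proposal is correct and follows essentially the same approach as the paper: the paper's proof is a two-sentence sketch that invokes Gan--Ichino's multiplicity formula (Theorem~\ref{gimf}), the Jacobi/metaplectic correspondences (Theorems~\ref{ccp}, \ref{ccr}, \ref{cca}), the $A$-parameter computation of Lemma~\ref{apj}, and Theorem~\ref{5.1}, then refers the reader to \cite[Proposition~9.1.4~(i),(iii)]{cl} for the pattern of the argument. You have unpacked exactly this sketch---including the root-number verification of $\eta\circ\Delta=\wtil{\epsilon}_\phi$ and the descent to classical Jacobi forms---so there is nothing materially different to compare.
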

\begin{proof}
We have Gan-Ichino's multiplicity formula (Theorem \ref{gimf}) for $L^2_{\mathrm{disc}}(\Mp_4)$ and canonical bijective correspondences (Theorems \ref{ccp}, \ref{ccr}, and \ref{cca}) between representations of the metaplectic groups and the Jacobi groups.
Also, in Lemma \ref{apj}, we have already found the $A$-parameter corresponding to any Hecke eigenform in $J_{(k,j),1}^{\star, \mathrm{cusp}}$.
Combining this with Theorem \ref{5.1}, we can prove the lemma in a similar way to \cite[Proposition 9.1.4, (i) and (iii)]{cl}.
\end{proof}
\subsection{Proofs of Ibukiyama's conjectures}\label{Proof}
In this subsection, we finally give proofs of Ibukiyama's conjectures (Theorem \ref{main}, \ref{lifting}, and \ref{compl}).\\

First, we shall prove the Shimura type isomorphism on the Neben type (Theorem \ref{main}).
Since the spaces $S_{k-\frac{1}{2},j}^+(\Gamma_0(4), \left(\frac{-1}{\cdot}\right))$ and $S_{j+3, 2k-6}(\Sp_4(\Z))$ have bases consisting of Hecke eigenforms, the assertion follows from Lemmas \ref{api}, \ref{iap}, \ref{apj}, and \ref{jap}.

\begin{rem}
As pointed out by Ibukiyama \cite{ibuconj}, the Shimura type conjecture on the Neben type is false when $j$ is odd.
This can be confirmed by the multiplicity formulae.
By the proof of Lemma \ref{api}, Arthur's multiplicity formula shows the existence of Hecke eigenform in $S_{j+3, 2k-6}(\Sp_4(\Z))$ even if $j$ is odd.
However, the proof of Lemma \ref{apj} implies that Gan-Ichino's multiplicity formula shows that there is no automorphic representation corresponding to a Hecke eigenform in $S_{k-\frac{1}{2},j}^+(\Gamma_0(4), \left(\frac{-1}{\cdot}\right))$, i.e., the plus space is zero.\\
\end{rem}

Next, let us give a proof of Theorem \ref{lifting}.
As we have remarked after stating Theorem \ref{lifting}, we may assume that $k$ is greater than 7.
For any Hecke eigenforms $f \in S_{2k-4}(\SL_2(\Z))$ and $g \in S_{2k+2j-2}(\SL_2(\Z))$, let $\tau_f$ and $\tau_g$ be the corresponding automorphic representation of $\GL_2(\A)$, respectively.
Then it is well known that the following properties hold:
\begin{itemize}
\item $\tau_f$ and $\tau_g$ are cuspidal;
\item $\tau_{f, \infty}=\calD_{k-\frac{5}{2}}$ and $\tau_g=\calD_{k+j-\frac{3}{2}}$;
\item $\tau_{f,p}$ and $\tau_{g,p}$ are unramified for all prime number $p$;
\item $L(s-j-1,f)=L(s-j-k+\frac{3}{2}, \tau_f)$ and $L(s,g)=L(s-j-k+\frac{3}{2}, \tau_g)$.
\end{itemize}
Therefore, by Lemma \ref{jap}, there exists a Hecke eigenform $F \in S_{k-\frac{1}{2},j}^+(\Gamma_0(4))$ such that $L(s,F)=L(s-j-1,f)L(s,g)$.
This gives us an injective linear map
\begin{equation*}
\calL : S_{2k-4}(\SL_2(\Z)) \otimes S_{2k+2j-2}(\SL_2(\Z)) \lra S_{k-\frac{1}{2},j}^+(\Gamma_0(4))
\end{equation*}
such that if $f \in S_{2k-4}(\SL_2(\Z))$ and $g \in S_{2k+2j-2}(\SL_2(\Z))$ are Hecke eigenforms, then so is $\calL(f\otimes g) \in S_{k-\frac{1}{2},j}^+(\Gamma_0(4))$, and they satisfy
\begin{equation*}
L(s, \calL(f\otimes g)) = L(s-j-1, f) L(s,g).
\end{equation*}

Now we come to the Shimura type isomorphism on Haupt type (Theorem \ref{compl}).
If $(\sigma, \sigma')$ is a pair of irreducible cuspidal symplectic automorphic representations of $\GL_2(\A)$ that are unramified everywhere and satisfy $\sigma_\infty=\calD_{k+j-\frac{3}{2}}$ and $\sigma'_\infty=\calD_{k-\frac{5}{2}}$, then it is known that there exist Hecke eigenforms $f \in S_{2k-4}(\SL_2(\Z))$ and $g \in S_{2k+2j-2}(\SL_2(\Z))$ such that the corresponding automorphic representations of $\GL_2(\A)$ are $\sigma'$ and $\sigma$, respectively.
Thus by the proof of Theorem \ref{lifting}, it follows from Lemmas \ref{apj} and \ref{jap} that the image of $\calL$ is spanned by the Hecke eigenforms $F \in S_{k-\frac{1}{2},j}^+(\Gamma_0(4))$ corresponding to the pairs $(\sigma_F, \sigma_F')$ of irreducible cuspidal symplectic automorphic representations of $\GL_2(\A)$, and its orthogonal complement $S_{k-\frac{1}{2},j}^{+,0}(\Gamma_0(4))$ is spanned by those corresponding to irreducible cuspidal symplectic automorphic representations $\tau_F$ of $\GL_4(\A)$.
Combining this with Lemmas \ref{api} and \ref{iap}, we obtain Theorem \ref{compl}.

\appendix

\section{The adelic lift of $F \in S_{k-\frac{1}{2}, j}^+(\Gamma_0(4), (\frac{-1}{\cdot})^l)$}\label{Appendix}
In this appendix, we consider the adelic lifts of Siegel cusp forms of half-integral weight.
Note that any argument here is not needed to prove Theorem \ref{main}.
Let $j\geq0$ and $k\geq3$ be integers, $l \in \Z/2\Z$,  and $\psi$ the nontrivial additive character of $\Q \backslash \A$ such that $\psi_\infty=\bfe$.
If $j$ is odd, then the plus space $S_{k-\frac{1}{2},j}^+(\Gamma_0(4), \left(\frac{-1}{\cdot}\right)^l)$ is zero, and the arguments in this section are trivially true.
Hence we assume that $j$ is even.

For any prime number $p$, the restriction of the Weil representation $\omega_{\psi_p}$ of $\Mp_4(\Q_p)$ to the metaplectic covering $\wtil{\Gamma}_0(4)_p$ over
\begin{align*}
\Gamma_0(4)_p
=\Set{
\mmatrix{A}{B}{C}{D} \in \Sp_4(\Z_p)
|
C \in 4\Z_p
}
\end{align*}
defines a genuine character $\varepsilon_p$ of $\wtil{\Gamma}_0(4)_p$ by
\begin{align*}
\omega_{\psi_p}(\gamma) 1_{\Z_p^2}
&=\varepsilon_p(\gamma)\inv 1_{\Z_p^2},&
\gamma&\in\wtil{\Gamma}_0(4)_p.
\end{align*}
Note that if $p\neq2$, $\varepsilon_p$ is quadratic and defines the splitting \eqref{spl} over $\Gamma_0(4)_p=\Sp_4(\Z_p)$.
Then, these characters define a genuine character $\varepsilon_\fin=\prod_p \varepsilon_p$ of a subgroup
\begin{align*}
\wtil{\Gamma}_0(4)_\fin = \wtil{\Gamma}_0(4)_2 \times \prod_{p\neq2} \Gamma_0(4)_p
\end{align*}
of $\Mp_4(\A)$.
At the real place, the Weil representation defines the factor of automorphy $j_\infty(g,Z)$ by
\begin{equation*}
\omega_{\psi_\infty}(g)\varphi_Z
=j_\infty(g,Z)\inv \varphi_{gZ},
\end{equation*}
for $g \in \Mp_4(\R)$ and $Z \in \frakH_2$, where
\begin{equation*}
\varphi_Z(x)
=\bfe(\tp{x}Zx) \in \calS(\R^2).
\end{equation*}
Note that the weight of the character $j_\infty(-, i) : \wtil{K_\infty} \to \C^\times$ relative to the additive character $\psi_\infty=\bfe$ is $(\frac{1}{2}, \ldots, \frac{1}{2})$.
Then it is known that for any $\gamma \in \Gamma_0(4)$, we have
\begin{align*}
\frac{\theta(\gamma Z)}{\theta(Z)}
=j_\infty((\gamma,1),Z) \varepsilon_\fin((\gamma,1)).
\end{align*}

The strong approximation theorem for $\Sp_4$ induces that for $\Mp_4$, hence we have
\begin{equation*}
\Mp_4(\A)
=\Sp_4(\Q) \Mp_4(\R) \wtil{\Gamma_0}(4)_\fin.
\end{equation*}
Let $F\in S_{k-\frac{1}{2},j}^+(\Gamma_0(4), \left(\frac{-1}{\cdot}\right)^l)$ be a Siegel modular form.
Now the adelic lift $\Phi_F$ of $F$ is defined by
\begin{align*}
\Phi_F(g)
=\left( \frac{-1}{\kappa} \right)^l \left\{ j_\infty(g_\infty, i) \varepsilon_\fin(\kappa) \right\}^{-(2k-1)} \Sym_j(J(g_\infty,i))\inv F(g_\infty i),
\end{align*}
where $g=\gamma g_\infty \kappa \in \Mp_4(\A)=\Sp_4(\Q) \Mp_4(\R) \wtil{\Gamma_0}(4)_\fin$.
Here, note that $\left(\frac{-1}{\kappa} \right)=\left(\frac{-1}{\kappa_2} \right)$, ($\kappa=(\kappa_p)_p$).
The function $\Phi_F$ is well-defined.

\begin{prop}\label{alh}
The function $\Phi_F : \Mp_4(\A) \to V_j$ satisfies the following:
\begin{enumerate}[(1)]
\item $\Phi_F$ is left $\Sp_4(\Q)$-invariant;
\item $\Phi_F$ is right $\Sp_4(\Z_p)$-invariant for $p\neq2$;
\item $\Phi_F(gx) = j_\infty(x,i)^{-2k+1} \Sym_j(J(x,i))\inv \Phi_F(g)$, for any $g\in\Mp_4(\R)$, $x\in \wtil{K_\infty}$;
\item $\frakp_\C^- \cdot \Phi_F = 0$;
\item $\Phi_F$ is cuspidal.
\end{enumerate}
\end{prop}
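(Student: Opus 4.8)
The plan is to check the five properties directly from the definition of $\Phi_F$, using the transformation law of $F$, the identity $\theta(\gamma Z)/\theta(Z)=j_\infty((\gamma,1),Z)\,\varepsilon_\fin((\gamma,1))$ for $\gamma\in\Gamma_0(4)$, the cocycle relations for $j_\infty$ and for $J(-,-)$, and the strong approximation decomposition $\Mp_4(\A)=\Sp_4(\Q)\,\Mp_4(\R)\,\wtil{\Gamma_0}(4)_\fin$. Properties (1) and (2) are essentially the well-definedness of $\Phi_F$ recorded just before the statement; (3) is pure cocycle algebra; (4) encodes the holomorphy of $F$; and (5) encodes the cuspidality of $F$ via its Fourier support.

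For (1): if $\delta\in\Sp_4(\Q)$ and $g=\gamma g_\infty\kappa$ is the chosen decomposition, then $\delta g=(\delta\gamma)g_\infty\kappa$ is again such a decomposition, and the defining formula depends only on $g_\infty$ and $\kappa$; the same computation that makes $\Phi_F$ well defined (the ambiguity of $\gamma$ by $\Gamma_0(4)$ is absorbed by the modularity $F|_{k-\frac12,j}\delta=(\tfrac{-1}{\delta})^lF$, the theta identity, and the fact that $j_\infty(-,i)\colon\wtil{K_\infty}\to\C^\times$ has weight $(\tfrac12,\dots,\tfrac12)$) gives the invariance. For (2): for an odd prime $p$ one has $\Gamma_0(4)_p=\Sp_4(\Z_p)$, and inside $\wtil{\Gamma_0}(4)_\fin$ this factor sits via the splitting $s_p$ of the metaplectic cover; on the image of $s_p$ the genuine character $\varepsilon_p$ is trivial (it is exactly the splitting that pins down the spherical vector $1_{\Z_p^2}$ of $\omega_{\psi_p}$), and $(\tfrac{-1}{\cdot})$ depends only on the component at $2$. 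Hence right translation by $\kappa'\in\Sp_4(\Z_p)$ replaces $\kappa$ by $\kappa\kappa'$ while leaving $(\tfrac{-1}{\kappa})^l\varepsilon_\fin(\kappa)^{-(2k-1)}$ unchanged, so $\Phi_F(g\kappa')=\Phi_F(g)$.

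For (3) it suffices (by writing $gx=\gamma(g_\infty x)\kappa$, noting $x$ commutes with $\kappa$) to take $g\in\Mp_4(\R)$, so $\Phi_F(g)=j_\infty(g,i)^{-(2k-1)}\Sym_j(J(g,i))\inv F(gi)$; since $x\cdot i=i$ for $x\in\wtil{K_\infty}$, the cocycle relations $j_\infty(gx,i)=j_\infty(g,i)j_\infty(x,i)$, $J(gx,i)=J(g,i)J(x,i)$ and the multiplicativity of $\Sym_j$ give exactly the asserted $\wtil{K_\infty}$-equivariance. For (4), the finite-place factors of $\Phi_F$ are locally constant, hence annihilated by $\frakp_\C^-$, so we may again restrict to $\Mp_4(\R)$; there $\Phi_F$ is $F$ pulled back to the group through the automorphy factors $j_\infty(g,i)^{-(2k-1)}$ and $\Sym_j(J(g,i))\inv$, both holomorphic in $Z=g\cdot i$, and $\frakp_\C^-$ acts on functions of $Z$ by $\overline{\partial}$-type operators, so holomorphy of $F$ forces $\frakp_\C^-\cdot\Phi_F=0$; this is the argument in the proof of Lemma~\ref{adlif}\,(5) (cf.\ \cite[Lemma~7]{as}). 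For (5), the integral of $\Phi_F$ over $N(\Q)\backslash N(\A)$, for the unipotent radical $N$ of any proper standard parabolic of $\Sp_4$, extracts the Fourier coefficients $A(T)$ of $F$ with $\det T=0$, all of which vanish since $F$ is a cusp form ($A(T)=0$ unless $T>0$); this is the computation of \cite[Lemma~5]{as} already used for Lemma~\ref{adlif}\,(6).

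The only real work is bookkeeping: carrying the genuine character $\varepsilon_\fin$ and the metaplectic sign cocycles consistently across all places, so that the naive manipulations above are actually legitimate in $\Mp_4(\A)$, and making the passage ``holomorphy of $F$ $\Leftrightarrow$ $\frakp_\C^-\cdot\Phi_F=0$'' precise via the explicit $\fraksp_4(\C)$-action attached to the factor of automorphy $j_\infty$. Both points are routine given the identities stated before the proposition and the parallel arguments already carried out for the Jacobi group in \S\ref{Jacobi group} and \S\ref{aljg}, so no genuinely new idea is needed.
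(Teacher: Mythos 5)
Your proposal is correct and follows essentially the same route as the paper, whose proof of Proposition~\ref{alh} simply defers to the argument of \cite[Theorem 1]{as}: well-definedness via the modularity of $F$, the theta identity and the strong approximation decomposition gives (1)--(2), cocycle algebra gives (3), and the holomorphy and cuspidality of $F$ give (4) and (5) exactly as in \cite[Lemmas 5 and 7]{as} (the same references used for Lemma~\ref{adlif}). Your expansion of the bookkeeping with $\varepsilon_\fin$, the splittings $s_p$ at odd $p$, and the Fourier-coefficient computation for the constant terms matches what that citation is meant to cover, so no discrepancy with the paper's intended proof.
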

\begin{proof}
The proof is similar to that of \cite[Theorem 1]{as}.
\end{proof}

Since the complex conjugate $\overline{\Sym_j}$ is isomorphic to the contragredient representation of $\Sym_j$, by a similar argument to \cite[\S 4.5 and \S 6.3.4]{cl}, we can construct an automorphic cuspidal representation
\begin{align*}
\pi_F=\bigotimes_v \pi_{F,v} \subset L^2_\mathrm{disc}(\Mp_4).
\end{align*}
and check that it is a direct sum $\oplus_i \pi_i$ of a finite number of irreducible automorphic cuspidal representations $\pi_i=\otimes_v \pi_{i,v}$ such that
\begin{itemize}
\item $\pi_{i, p}$ are unramified and  isomorphic to each other for any odd prime $p$;
\item $\pi_{i,\infty} =\pi_{(k+j-\frac{1}{2}, k-\frac{1}{2}), \bfe} =\pi_{(-k+\frac{1}{2}, -j-k+\frac{1}{2}), \overline{\bfe}}$  for any $i$.
\end{itemize}
In particular, $\pi_i$ are nearly equivalent.
Moreover, we have the following lemma.
\begin{lem}\label{lp}
Let $p$ be an odd prime.
\begin{enumerate}[(1)]
\item Assume that $l\equiv k \pmod 2$, and let $(\alpha_1, \alpha_2)$ be the Satake parameter of $\pi_{i, p}$ with respect to $\overline{\psi}_p$.
Then we have
\begin{align*}
\eta(p)
&=\left(\frac{-1}{p}\right)^k p^2(\alpha_1+\alpha_2+\alpha_1\inv+\alpha_2\inv),\\
\omega(p)
&=p^3(\alpha_1\alpha_2+\alpha_1\alpha_2\inv+\alpha_1\inv\alpha_2+\alpha_1\inv\alpha_2\inv+1-p^{-2}),
\end{align*}
and hence
\begin{align*}
L(s, F)_p
=L(s-k-j+\tfrac{3}{2}, \pi_{i,p}, \overline{\psi}_p).
\end{align*}
\item Assume that $l\equiv k-1 \pmod 2$, and let $(\alpha_1, \alpha_2)$ be the Satake parameter of $\pi_{i, p}$ with respect to $\psi_p$.
Then we have
\begin{align*}
\eta(p)
&=\left(\frac{-1}{p}\right)^{k+1} p^2(\alpha_1+\alpha_2+\alpha_1\inv+\alpha_2\inv),\\
\omega(p)
&=p^3(\alpha_1\alpha_2+\alpha_1\alpha_2\inv+\alpha_1\inv\alpha_2+\alpha_1\inv\alpha_2\inv+1-p^{-2}),
\end{align*}
and hence
\begin{align*}
L(s,F)_p
=L(s-k-j+\tfrac{3}{2}, \pi_{i,p}, \psi_p).
\end{align*}
\end{enumerate}
\end{lem}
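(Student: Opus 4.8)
The plan is to compute the Hecke eigenvalues $\eta(p)$ and $\omega(p)$ of $F$ at an odd prime $p$ in terms of the Satake parameter of the local component $\pi_{i,p}$, by transporting the computation to the Jacobi side through the isomorphism $\Psi$ of Theorem \ref{5.1}, where the analogous calculation has already been carried out in the proof of Lemma \ref{lfn}.

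Concretely, I would first pass from $F$ to the Hecke eigenform $F' = \Psi\inv(F) \in J_{(k,j),1}^{\star,\mathrm{cusp}}$, with $\star = \mathrm{hol}$ when $l \equiv k \pmod 2$ and $\star = \mathrm{skew}$ when $l \equiv k-1 \pmod 2$, and let $\pi'$ be the irreducible cuspidal automorphic representation of $\Mp_4(\A)$ with $\pi_{F'} \cong \pi' \otimes \pi_{\SW,\psi}$ (Corollary \ref{ccg}, in the notation of \S\ref{aljg}). The key intermediate step is to identify, at each odd $p$, the local component $\pi_{i,p}$ of the half-integral-weight adelic lift with the local component $\pi'_p$ of the Jacobi-form lift, up to the quadratic twist by $\chi_{-1}$ that interchanges the $\psi_p$- and $\overline{\psi}_p$-normalizations of the local Langlands correspondence for $\Mp_4(\Q_p)$. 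Both are unramified --- $\pi_{i,p}$ by Proposition \ref{alh}(2), $\pi'_p$ by Lemma \ref{adlif2} together with Theorem \ref{ccu} --- so it suffices to match Satake parameters. For this one compares the two adelic lifts directly: the coset representatives entering $T_s(p)$ on the plus space are the $\Sp_4(\Q_p)$-components of the representatives $\wtil g_{s,t}$ (Theorem \ref{5.1} and the remark following it), which are the same representatives $g_{s,t}$ defining $T^J_s(p)$ on $J_{(k,j),1}^\star$; the only discrepancy between the two lifts at $p$ is the genuine finite character (built from the theta-multiplier, resp.\ from the finite Weil-representation data $\varepsilon_p$) versus the Heisenberg-translation sum, and this difference is transparent under the correspondence $\pi \mapsto \pi \otimes \pi_{\SW,\psi_p}$ of Theorem \ref{ccp}. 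Thus $\pi_{i,p} \otimes \pi_{\SW,\psi_p}$ is the spherical representation $\pi(\alpha_1,\alpha_2;\psi_p)$ of $\Sp^J_4(\Q_p)$ that already appeared in the proof of Lemma \ref{lfn}.

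Next I would invoke the explicit Hecke calculation. In the proof of Lemma \ref{lfn} it is shown, using the coset representatives of \cite[p.~143]{az} acting on the spherical vector of $\pi(\alpha_1,\alpha_2;\psi_p)$, that the eigenvalues of $T^J_0(p)$ and $T^J_1(p)$ on $F'$ equal $\omega^J(p) = p^6(\alpha_1\alpha_2 + \alpha_1\alpha_2\inv + \alpha_1\inv\alpha_2 + \alpha_1\inv\alpha_2\inv + 1 - p^{-2})$ and $\eta^J(p) = p^{11/2}(\alpha_1 + \alpha_2 + \alpha_1\inv + \alpha_2\inv)$. Feeding this through the intertwining relation $\Psi(F'|T^J_s(p)) = p^{3+s/2}\left(\tfrac{-1}{p}\right)^{(k+\delta)s}\Psi(F')|T_s(p)$ of Theorem \ref{5.1} (with $\delta = 0$ for $\star = \mathrm{hol}$ and $\delta = 1$ for $\star = \mathrm{skew}$) gives $\omega(p) = p^{-3}\omega^J(p)$ and $\eta(p) = \left(\tfrac{-1}{p}\right)^{k+\delta}p^{-7/2}\eta^J(p)$; rewriting the right-hand sides in terms of the Satake parameter of $\pi_{i,p}$ with respect to $\overline{\psi}_p$ in the case $l \equiv k$, resp.\ with respect to $\psi_p$ in the case $l \equiv k-1$ --- using the $\chi_{-1}$-twist relation from the previous paragraph --- produces the asserted formulas, including the precise power of $\left(\tfrac{-1}{p}\right)$. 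The claimed local $L$-factor identity then follows upon substituting these into the definition of $L(s,F)_p$ and comparing with the standard degree-four unramified $L$-factor of $\pi_{i,p}$ at the shifted argument.

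The step I expect to be the main obstacle is precisely this bookkeeping of quadratic signs across the two lifts: tracking the genuine finite character $\varepsilon_p$ and the factor $\left(\tfrac{-1}{\cdot}\right)^l$ built into $\Phi_F$, the factor $\left(\tfrac{-1}{p}\right)^{(k+\delta)s}$ of Theorem \ref{5.1}, and the twist by $\chi_{-1}$ relating the $\psi_p$- and $\overline{\psi}_p$-normalizations, so that the local identification $\pi_{i,p} \otimes \pi_{\SW,\psi_p} \cong \pi(\alpha_1,\alpha_2;\psi_p)$ is stated correctly in both the holomorphic and the skew cases, and in particular so that the distinction between $\overline{\psi}_p$ in part (1) and $\psi_p$ in part (2) comes out right. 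A self-contained alternative, avoiding the Jacobi form altogether, would be to compute the eigenvalues of the spherical Hecke operators corresponding to $T_s(p)$ directly on the unramified principal series of $\Mp_4(\Q_p)$ via the metaplectic Satake isomorphism of \cite{gs}; this is essentially the same calculation as in \cite{az} and yields the same formulas.
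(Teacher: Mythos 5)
Your overall strategy---transporting the Satake-parameter computation from the Jacobi side through $\Psi$---is not the paper's route, and as written it has a genuine gap at exactly the step you call the ``key intermediate step'': the identification of $\pi_{i,p}$ with $\pi'_p$ (up to the $\psi_p$/$\overline{\psi}_p$ normalization) at each odd $p$. In the paper's logical order this identification is the content of Theorem \ref{appthm}, and it is proved there by comparing local $L$-factors, i.e.\ by combining Lemma \ref{lfn} with Lemma \ref{lp}; so you cannot invoke it, or anything equivalent to it, to prove Lemma \ref{lp} without circularity. Your justification---that the two adelic lifts differ only by the genuine finite characters versus the Heisenberg translations, and that this is ``transparent'' under the correspondence of Theorem \ref{ccp}---is not a proof: $\Phi_F$ lives on $\Mp_4(\A)$ and $\Phi_{F'}$ on $\Sp^J_4(\A)$, the relation $F=\Psi(F')$ is an explicit Fourier-coefficient recipe from \cite{ibuconj}, and showing that under the theta decomposition of Theorem \ref{cca} the Jacobi lift $\Phi_{F'}$ has $\Phi_F$ (rather than merely some nearly equivalent vector) as its metaplectic component is a substantive claim the paper deliberately avoids; indeed it proves the weaker Theorem \ref{appthm} only indirectly (near equivalence, temperedness of the parameter, multiplicity one, and singleton unramified packets). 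Moreover, the relation $\Psi(F'|T^J_s(p))=p^{3+\frac{s}{2}}\left(\frac{-1}{p}\right)^{(k+\delta)s}\Psi(F')|T_s(p)$ only converts $\eta^J(p),\omega^J(p)$ into the classical eigenvalues $\eta(p),\omega(p)$; to express $\eta(p),\omega(p)$ in terms of the Satake parameter of $\pi_{i,p}$ you still need the dictionary between the classical operators $T_s(p)$ acting on $F$ and the spherical Hecke algebra of $\Mp_4(\Q_p)$ acting on $\Phi_F$, and that dictionary is precisely what has to be computed.

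The paper's actual proof is the computation you relegate to a closing aside: at odd $p$ the lift $\Phi_F$ is right $\Sp_4(\Z_p)$-invariant (Proposition \ref{alh}), explicit right-coset representatives $\wtil{g}_{s,t}$ for $\wtil{\Gamma}_0(4)(K_s(p^2),p^{1-\frac{s}{2}})\wtil{\Gamma}_0(4)$ are taken from Zhuravlev \cite{zhu}, and acting with them on the unramified vector gives $\eta(p)$ and $\omega(p)$ directly in terms of the Satake parameter of $\pi_{i,p}$; in particular the quadratic signs, including the discrepancy $\left(\frac{-1}{p}\right)^{k}$ versus $\left(\frac{-1}{p}\right)^{k+1}$ and the $\overline{\psi}_p$ versus $\psi_p$ normalization in the two cases, come out of that calculation rather than being imported from the Jacobi side. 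If you carry out that direct computation---your own suggested ``self-contained alternative''---the argument becomes correct and is essentially the paper's; as it stands, your main route assumes what is, in this paper, a downstream consequence of the lemma.
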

\begin{proof}
Complete systems $\{\wtil{g}_{s,t}\}_t$ of representatives of the right coset decompositions
\begin{align*}
\wtil{\Gamma}_0(4) (K_s(p^2), p^{1-\frac{s}{2}}) \wtil{\Gamma}_0(4)
=\bigsqcup_t \wtil{\Gamma}_0(4) \wtil{g}_{s,t}
\end{align*}
are explicitly given by Zhuravlev \cite{zhu}.
Thus the assertions are proved by straightforward calculation.
\end{proof}

Let us recall from Theorem \ref{5.1} that we have an isomorphism $\Psi$.
Put
\begin{align*}
F'=\Psi\inv(F) \in
\begin{cases}
J_{(k,j),1}^\mathrm{hol, cusp},&\text{ when $l\equiv k \pmod 2$},\\
J_{(k,j),1}^\mathrm{skew, cusp},&\text{ when $l\equiv k-1 \pmod 2$},
\end{cases}
\end{align*}
and write
\begin{align*}
\pi_{F'}
=\pi' \otimes \pi_{\SW, \psi}
=\bigotimes_v \left( \pi'_p \otimes \pi_{\SW, \psi_p} \right).
\end{align*}
Then $\pi_F$ and $\pi'$ are related as follows.
\begin{thm}\label{appthm}
The automorphic representation $\pi_F$ is irreducible, and if we write
\begin{align*}
\pi_F=\bigotimes_v \pi_{F, v},
\end{align*}
then we have the following.
\begin{enumerate}[(1)]
\item Assume that $l\equiv k \pmod 2$.
Then we have $\pi_{F, v}\cong \pi'_v$ for every place $v$, i.e., $\pi_F \cong \pi'$.
\item Assume that $l\equiv k-1 \pmod 2$.
Then the $L$-parameter and the character of its component group associated to $\pi_{F,v}$ relative to $\psi_v$  coincide with those associated to $\pi'_v$ relative to $\overline{\psi}_v$.
\end{enumerate}
\end{thm}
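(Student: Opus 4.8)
The plan is to realize both $\pi_F$ (the direct adelic lift of the half-integral weight Siegel form) and $\pi'$ (the metaplectic representation extracted from the Jacobi form $F'=\Psi\inv(F)$ via Corollary \ref{ccg}) inside the automorphic spectrum of $\Mp_4(\A)$, and then invoke Gan--Ichino's multiplicity formula (Theorem \ref{gimf}) to force them to agree --- up to the switch $\psi\leftrightarrow\overline\psi$ in the skew case, which is unavoidable because the adelic lift $\Phi_F$ of $F$ is built using $\psi_\infty=\bfe$ throughout (it is holomorphic, so $\frakp_\C^-\cdot\Phi_F=0$ regardless of $l$, cf. Proposition \ref{alh}), whereas the archimedean component of the Jacobi lift in the skew case is normalized by $\overline{\bfe}$ on the metaplectic factor (Lemma \ref{adlif2}).

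First I would establish the local matching at every place. At an odd prime $p$, both $\pi_{F,p}$ and $\pi'_p$ are unramified (Proposition \ref{alh}(2), Lemma \ref{adlif2}); the point is that the explicit Hecke eigenvalue formulas of Lemma \ref{lp} (expressing $\eta(p),\omega(p)$ in terms of the Satake parameter of $\pi_{F,p}$, relative to $\overline\psi_p$ when $l\equiv k$ and relative to $\psi_p$ when $l\equiv k-1$), the formulas for the Jacobi eigenvalues $\eta^J(p),\omega^J(p)$ from the proof of Lemma \ref{lfn} (in terms of the Satake parameter of $\pi'_p$ relative to $\overline\psi_p$), and the comparison constant $p^{3+s/2}(\tfrac{-1}{p})^{(k+\delta)s}$ of Theorem \ref{5.1} conspire so that these two Satake parameters coincide in $(\C^\times)^2/\frakS_2\ltimes\{\pm1\}^2$. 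Hence the relevant $L$-parameter of $\pi_{F,p}$ agrees with the $\overline\psi_p$-$L$-parameter of $\pi'_p$, and both are the unramified members of their $L$-packets, attached to the trivial character of the component group; in the holomorphic case both are normalized against $\overline\psi_p$, so one gets the honest isomorphism $\pi_{F,p}\cong\pi'_p$. At the real place, Lemmas \ref{adlif2}, \ref{ds} and the minimal $K$-type coming from $\det^k\Sym_j=\rho_{(k+j,k)}$ show that $\pi_{F,\infty}$ (relative to $\psi_\infty$) and $\pi'_\infty$ (relative to $\overline\psi_\infty$) are the holomorphic discrete series with $L$-parameter $\calD_{k+j-\frac32}\oplus\calD_{k-\frac52}$ and the same component-group character (and $\pi_{F,\infty}\cong\pi'_\infty$ when $l\equiv k$). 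At $p=2$ we cannot analyze $\pi_{F,2}$ directly --- this is the difficulty the whole paper is arranged to avoid --- but we do not need to: the common $A$-parameter $\phi_F$ produced by Lemma \ref{apj} is tempered and everywhere unramified, so its localization at $2$ is an unramified parameter valued in $\Sp_4(\C)$, and since $\Sp_4(\C)$ is simply connected the centralizer of the image (a single semisimple element) is connected; thus $S_{\phi_{F,2}}$ is trivial, the local $A$-packet at $2$ is the singleton consisting of the unramified representation, and $\pi_{F,2}$ is forced to be it, matching $\pi'_2$.

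With all local data in hand the global conclusion is immediate. Writing $\pi_F=\bigoplus_i\pi_i$ for the decomposition into nearly equivalent irreducible cuspidal constituents recorded just before Lemma \ref{lp}, the above shows that each $\pi_i$ lies in the near-equivalence class $L^2_{\phi_F,\psi}(\Mp_4)$ and that the character $\eta^{(i)}=\bigotimes_v\eta^{(i)}_v$ with $\pi_i\cong\pi_{\eta^{(i)}}$ is independent of $i$: it is trivial at every finite place (unramified at odd $p$, forced at $p=2$ since $S_{\phi_{F,2}}$ is trivial) and equals the fixed character attached to the fixed archimedean component. By the multiplicity-one statement of Theorem \ref{gimf}, all the $\pi_i$ coincide, so $\pi_F$ is irreducible. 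When $l\equiv k\pmod2$ we have $\pi_{F,v}\cong\pi'_v$ for all $v$, hence $\pi_F\cong\pi'$; when $l\equiv k-1\pmod2$ the place-by-place comparison of $L$-parameters and component-group characters relative to $\psi$ versus $\overline\psi$ is precisely assertion (2).

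The main obstacle I expect is the bookkeeping in the skew case: tracking, at each place, which additive character each Satake parameter, $L$-parameter and component-group character is normalized against, and verifying that the comparison constant of Theorem \ref{5.1} together with the constants in Lemmas \ref{lp} and \ref{lfn} (and, at $\infty$, in the description of the discrete series, and in Gan--Ichino's character $\wtil{\epsilon}$) conspire to make the $\psi_p$-Satake parameter of $\pi_{F,p}$ literally equal the $\overline\psi_p$-Satake parameter of $\pi'_p$. The second point that must not be overlooked --- conceptually the crux --- is the simple-connectedness of $\Sp_4(\C)$, which is exactly what permits the argument to bypass any direct study of $\pi_{F,2}$.
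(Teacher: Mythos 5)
Your proposal is correct and follows essentially the same route as the paper's proof: matching the local components at $\infty$ and at odd primes through the Hecke-eigenvalue/$L$-factor comparison (Lemmas \ref{lfn}, \ref{lp} and Theorem \ref{5.1}), identifying the resulting tempered, everywhere unramified $A$-parameter $\phi_F$ from Lemma \ref{apj}, pinning down $\pi_{F,2}$ via the singleton local packet of an unramified parameter, and deducing irreducibility and the identification with $\pi'$ (up to the $\psi\leftrightarrow\overline{\psi}$ switch in the skew case) from Gan--Ichino's multiplicity one. Your appeal to Steinberg's connectedness theorem for the simply connected group $\Sp_4(\C)$ merely supplies a justification for the paper's assertion that the $L$-packet of an unramified $L$-parameter for $\Mp_4$ is a singleton, so it does not change the argument.
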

\begin{proof}
(1)
At the real place, by \eqref{rr} we have
\begin{align*}
\pi_{i,\infty} \cong \pi_{(k+j-\frac{1}{2}, k-\frac{1}{2}), \bfe} \cong \pi'_\infty.
\end{align*}
At every finite place $p$ except $2$, by Lemmas \ref{lfn} and \ref{lp} we have
\begin{align*}
L(s-k-j+\tfrac{3}{2}, \pi_{i,p}, \overline{\psi}_p)
=L(s, F)_p
=L(s-k-j+\tfrac{3}{2}, \pi'_p, \overline{\psi}_p).
\end{align*}
Since both of $\pi_{i,p}$ and $\pi'_p$ are unramified representation of $\Mp_4(\Q_p)$, they are isomorphic.
In particular, $\pi_i$ and $\pi'$ are nearly equivalent, and hence the $A$-parameter of $\pi_i$ relative to $\overline{\psi}$ is that of $\pi'$ relative to $\overline{\psi}$.
By the proof of Lemma \ref{apj}, the $A$-parameter of $\pi'$ relative to $\overline{\psi}$ is tempered.
(Note that the case (5) may occur.)
Since the local $L$-packet of an unramified $L$-parameter for $\Mp_4$ is a singleton, this implies that
\begin{align*}
\pi_{i,2} \cong \pi'_2.
\end{align*}
Hence $\pi_F$ is isomorphic to a direct sum of $\pi'$.
Since Gan-Ichino's multiplicity formula tells us that $\pi'$ appears in $L^2_\mathrm{disc}(\Mp_4)$ with multiplicity one, the assertion follows.\\

(2)
At the real place, since
\begin{align*}
\pi_{i,\infty} &\cong \pi_{(k+j-\frac{1}{2}, k-\frac{1}{2}), \bfe},&
\pi'_\infty &\cong \pi_{(k+j-\frac{1}{2},k-\frac{1}{2}), \overline{\bfe}},
\end{align*}
the $L$-parameter and the character of its component group associated to $\pi_{i,\infty}$ relative to $\psi_\infty$ coincide with those associated to $\pi'_\infty$ relative to $\overline{\psi}_\infty$.
At every finite place $p$ except $2$, by Lemmas \ref{lfn} and \ref{lp} we have
\begin{align*}
L(s-k-j+\tfrac{3}{2}, \pi_{i,p}, \psi_p)
=L(s,F)_p
=L(s-k-j+\tfrac{3}{2}, \pi'_p, \overline{\psi}_p).
\end{align*}
Since the local $L$-packet of an unramified $L$-parameter for $\Mp_4$ is a singleton, there is a similar relation between $\pi_{i,p}$ and $\pi'_p$.
Hence the $A$-parameter of $\pi_i$ relative to $\psi$ is that of $\pi'$ relative to $\overline{\psi}$, which is tempered.
Then the assertion follows from the same argument as the proof of (1).
\end{proof}



%
\end{document}